\numberwithin{equation}{section}
\theoremstyle{plain}
\newtheorem{theorem}[equation]{Theorem}
\newtheorem*{mainthm}{Main Theorem}
\newtheorem*{claim*}{Claim}
\newtheorem{lemma}[equation]{Lemma}
\newtheorem{sublemma}[equation]{Sublemma}
\newtheorem{claim}[equation]{Claim}
\newtheorem{prop}[equation]{Proposition}
\newtheorem{cor}[equation]{Corollary}
\theoremstyle{remark}
\newtheorem{remark}[equation]{Remark}
\theoremstyle{definition}
\newtheorem{definition}[equation]{Definition}
\newtheorem{notation}[equation]{Notation}
\newtheorem{example}[equation]{Example}
\newcommand{\cJ}{{\mathcal J}}
\newcommand{\ocJ}{{\overline{\mathcal J}}}
\newcommand{\cY}{{\mathcal Y}}
\def\Pic{\operatorname{Pic}}
\def\Spec{\operatorname{Spec}}
\def\Hom{\operatorname{Hom}}
\def\Ext{\operatorname{Ext}}
\newcommand{\bP}{\mathbb{P}}
\newcommand{\sF}{\mathscr{F}}
\newcommand{\bC}{\mathbb{C}}
\newcommand{\calA}{\mathcal{A}}
\newcommand{\calC}{\mathcal{C}}
\newcommand{\calF}{\mathcal{F}}
\newcommand{\calM}{\mathcal{M}}
\newcommand{\calL}{\mathcal{L}}
\newcommand{\calP}{\mathcal{P}}
\newcommand{\sD}{\mathscr{D}}
\newcommand{\im}{\operatorname{Im}}
\newcommand{\id}{\mathrm{id}}
\newcommand{\Hyp}{\mathcal Hyp}
\newcommand{\bV}{{\mathbf V}}
\newcommand{\ov}{\overline}
\newcommand{\wt}{\widetilde}
\newcommand{\J}[1]{\ov \Jac{(#1)}}
\newcommand{\ovPrym}[1]{\ov{\operatorname{Prym}}{(#1)}}
\newcommand{\mc}{\mathcal}
\def\Shom{\mc {H}{om}}
\def\Shend{\mc {E}{nd}}
\def\Shext{\mc {E}{xt}}
\newcommand{\wtC}{\wt{\mathcal C}}
\DeclareMathOperator{\Def}{Def}
\DeclareMathOperator{\PGL}{PGL}
\DeclareMathOperator{\Hilb}{Hilb}
\DeclareMathOperator{\Proj}{Proj}
\DeclareMathOperator{\Sing}{Sing}
\DeclareMathOperator{\Jac}{Jac}
\DeclareMathOperator{\Prym}{Prym}
\DeclareMathOperator{\Fix}{Fix}
\DeclareMathOperator{\Nm}{Nm}
\newcommand{\be}{\begin{equation}}
\newcommand{\ee}{\end{equation}}
\begin{document}

 \title[Relative compactified Intermediate Jacobians]{A hyper-K\"ahler compactification of the Intermediate Jacobian fibration associated to a cubic fourfold}

 \author[R. Laza]{Radu Laza}
\address{Stony Brook University,  Stony Brook, NY 11794, USA}
\email{radu.laza@stonybrook.edu}

 \author[G. Sacc\`a]{Giulia Sacc\`a}
\address{Stony Brook University,  Stony Brook, NY 11794, USA}
\email{giulia.sacca@stonybrook.edu}

\author[C. Voisin]{Claire Voisin}
\address{Coll\`ege de France, 3 rue d'Ulm, 75005 Paris, France}
\email{claire.voisin@imj-prg.fr}

\begin{abstract}
Let $X$ be a general cubic fourfold. It was observed by Donagi and Markman that the relative intermediate Jacobian fibration  $\cJ_U/U$ (with $U=(\bP^5)^\vee\setminus X^\vee$) associated to the smooth hyperplane sections of $X$ carries a natural holomorphic symplectic form making the fibration Lagrangian. In this paper, we obtain a smooth projective compactification $\overline \cJ$ of $\cJ_U$  with the property that the holomorphic symplectic form on $\cJ_U$ extends to a holomorphic symplectic form on $\overline \cJ$. In particular, $\overline \cJ$ is a $10$-dimensional compact hyper-K\"ahler manifold, which we show to be deformation equivalent to the exceptional example of O'Grady. This proves a conjecture by Kuznetsov and Markushevich.
\end{abstract}

\maketitle

\bibliographystyle{amsalpha}

\section*{Introduction}
It is a problem of significant interest to construct and classify   compact hyper-K\"ahler (HK) manifolds.
 In dimension $2$, the HK manifolds are $K3$ surfaces. All known higher dimensional
  examples are obtained from $K3$s or abelian varieties, by a moduli construction and
   a deformation. Specifically, Beauville \cite{beauville} has
   given two series of examples: the Hilbert scheme of points on $K3$s,
   and respectively generalized Kummer varieties. Mukai \cite{mukai} has
    given a more general construction, namely he has shown that the moduli
    space of semi-stable sheaves on $K3$s carries a symplectic form, and thus is a
     HK manifold if it is smooth. Unfortunately, these examples are deformation
     equivalent to those of Beauville (\cite{yoshioka}). Starting from a singular
      moduli space of sheaves on $K3$s, O'Grady \cite{og10,og6} has produced two
      genuinely new examples: a $10$-dimensional and a $6$-dimensional one, that we
       call $OG10$ and $OG6$ respectively. It was subsequently
       verified (\cite{kls}) that these are the only two new examples that can be obtained by this method.

It is natural to expect that hyper-K\"ahler manifolds can be constructed from lower dimensional
 objects of similar nature. Specifically, we recall that the moduli space of polarized HK manifolds
  in a fixed deformation class is birational via the period map to a locally symmetric variety $\sD/\Gamma$,
  where $\sD$ is a Type IV domain, and $\Gamma$ an arithmetic group. Such a locally symmetric variety contains
  divisors $\sD'/\Gamma'$ (known as Noether--Lefschetz or Heegner divisors) which are of the same type.
   It is natural to expect that some of these NL divisors are associated (at least motivically) to moduli
   spaces of lower dimensional HK manifolds. For instance, the Beauville--Mukai construction will
   give such an example (i.e. $\Hilb^n(K3)$ form a NL divisor in the corresponding moduli space of
   $2n$-dimensional HK manifolds). Another NL divisor in the moduli space,
that we will call a Lagrangian NL divisor, is obtained by considering the HK manifolds that admit
 a Lagrangian fibration. Again, one can hope that they can be constructed from lower dimensional
 geometric objects.

 As polarized $K3$ surfaces have only $19$ parameters,
 Hodge theory and abstract deformation theory show that many
       hyper-K\"ahler varieties built from polarized $K3$ surfaces
       have projective deformations which are unrelated to $K3$
       surfaces. However, the problem of constructing explicit
       projective models for these deformations is usually hard, one reason being
        the fact that most of these
       deformation spaces are of general type (\cite{ghs}). Cubic fourfolds have $20$ moduli and they are well-known
        to have a Hodge structure with Hodge numbers $h^{3,1}=1,\,h^{2,2}_{prim}=20$. In several
       instances, cubic fourfolds have been used to provide via an auxiliary construction such an
       algebro-geometric
       deformation  of Hilbert schemes of $K3$ surfaces. For instance, one of the key papers in the field,  \cite{bedo} shows that the Fano variety of lines of a
       cubic $X$ is a deformation of a ${\rm Hilb}^2(K3)$. More recently,   \cite{llss} constructs a HK manifold from the variety of cubic rational curves
       in $X$, which  is then shown in \cite{addle} to be
 deformation equivalent to a ${\rm Hilb}^4(K3)$.

 The $10$-dimensional examples by O'Grady have $b_2=24$, which means that
 polarized deformations of them have $21$ moduli, and complete families of such varieties
 with Picard number $2$ have $20$ moduli. The construction by O'Grady in \cite{og10} provides (infinitely many)
 $19$-parameters families of such examples with Picard number $3$, parametrized by any moduli
 space of
 polarized $K3$ surfaces.
 This paper provides an algebro-geometric realization of the moduli space of cubic fourfolds as a $20$-dimensional
 moduli space of deformations of O'Grady's $10$-dimensional examples, and more precisely, as
 a Lagrangian NL divisor in the larger $21$-dimensional deformation space (our varieties are canonically polarized).  Note first that this embedding is a priori given
 by lattice considerations. Indeed,  the moduli space of cubic fourfolds
 is birational to a $20$-dimensional locally symmetric variety $\sD'/\Gamma'$, which is associated to the lattice  $A_2\oplus E_8^2\oplus U^2$ (where $A_2$ and $E_8$ are the standard positive definite root lattices, and $U$ is the hyperbolic plane). On the other hand, by work of Rapagnetta \cite{rapagnetta}, it is known that the second cohomology of $OG10$ equipped with the Beauville--Bogomolov form is isometric to the lattice $A_2(-1)\oplus E_8(-1)^2\oplus U^3$. This shows that from an arithmetic point of view, the situation is similar to that of elliptic $K3$s with a section (lattice $E_8(-1)^2\oplus U^2$), aka unigonal $K3$s, versus general $K3$s (lattice $E_8(-1)^2\oplus U^3$). Furthermore, it is not hard to embed the period space of cubic fourfolds as a NL divisor into a $21$-dimensional period domain of {\it polarized} $OG10$ manifolds (for $K3$s, the unigonal $K3$s form a Heegner divisor in any of the polarized period domains).

 Our contribution in this paper is to realize geometrically this abstract
 embedding by actually constructing a family
 of OG10 polarized manifolds parametrized by the moduli space of cubic $4$-folds. This is done by
 realizing the following  program
 that has been started by Donagi--Markman and developed by Markushevich, with  further evidence
 provided
 by work of Kuznetsov  and O'Grady-Rapagnetta: Starting from a general cubic fourfold $X$, one has the universal family  $\cY/B$ of cubic threefolds over $B:=(\bP^5)^\vee$ obtained as hyperplane sections of $X$, and then the associated relative Intermediate Jacobian fibration $\cJ_U/U$, where $U=B\setminus X^\vee\subset B$ is the locus of smooth hyperplane sections. In 1993, Donagi and Markman \cite{DM1,DM2} had the  insight that this fibration, which they showed to be algebraic, carries a holomorphic symplectic  form. The question naturally was raised, as to whether or not it admitted a holomorphic symplectic compactification.  If such a holomorphic symplectic compactification $\overline \cJ/B$ existed, then it would have Picard number at least $2$ and transcendental second cohomology containing the transcendental cohomology of the cubic $X$. Thus, $b_2(\ocJ)\ge 24$, showing in particular  that $\ocJ$ can not be deformation equivalent to $K3^{[5]}$ type, but potentially equivalent to $OG10$. Markushevich and Kuznetsov \cite{Mark-Kuzn} (with further supporting evidence by O'Grady and others) conjectured that indeed a good compactification $\ocJ$ exists, and that it is of $OG10$ type. In this paper, we verify this conjecture. Specifically, we prove

 \begin{mainthm}\label{theomain20sep} (Cf. Theorem \ref{theoconst} and Corollary \ref{cordefoeq}.)
 Let $X\subset \bP^5$ be a general cubic fourfold. Let $B=(\bP^5)^\vee$, $U=(B\setminus X^\vee)$, and $\cJ_U\rightarrow U$ the associated Intermediate Jacobian fibration. Then there exists a smooth projective compactification $\overline{\cJ}\rightarrow B$, which carries a holomorphic symplectic form, with respect to which the fibration is Lagrangian. Furthermore, $\overline{\cJ}$ is deformation equivalent to $OG10$.
 \end{mainthm}

 The key issue for the theorem, and the main new content of the paper, is the construction of a smooth projective compactification of the Intermediate Jacobian fibration that has a nondegenerate holomorphic $2$-form. In order to do so, it is important to  understand degenerations of intermediate Jacobians as the hyperplane section of $X$ becomes singular. The study of intermediate Jacobians from a different perspective was done in \cite{casaetal,casaetal2}, and the main tool used there is Mumford's construction of the intermediate Jacobian as a Prym variety. This is our approach also here.
A key point that allows us to construct the compactification $\ocJ/B$ is the observation that much of the Prym construction goes through for mildly singular cubic threefolds. Namely, we recall that for a smooth cubic threefold $Y$, the projection from a generic line $l$, realizes $Y$ as a conic bundle over $\bP^2$ with discriminant locus a smooth quintic $C$.  Then, Mumford showed that $J(Y)\cong \Prym(\widetilde C/C)$, where $\widetilde C$ is an \'etale double cover of $C$. In \cite{casaetal}, the authors have noted that for mildly singular cubics $Y$ there still exist {\it good lines} (see Def. \ref{defigood}) $l$ such that the  associated quintic $C$ has a $1$-to-$1$ correspondence with the singularities of $Y$ (including the type), and that the covering $\widetilde C/C$ is still \'etale. This reduces questions about degenerations of cubic threefolds to degenerations of curves. In \cite{casaetal2}, the degeneration of the Prym variety $\Prym(\widetilde C/C)$ is studied from the perspective of stable abelian varieties (in the sense of Alexeev).  Here we study this from the perspective of compactified relative Jacobians (or rather compactified Pryms) instead. For curves, one has a very good understanding of the degeneration of Jacobians. Namely, from the perspective of moduli spaces the correct statement is the theorem of Mumford--Namikawa that there exists an extended period map from $\overline \calM_g$ to $\overline\calA_g^{Vor}$ (and thus to a family of DM stable curves one can associate a family of Alexeev stable abelian varieties). If one insists instead on associating to an arbitrary family of curves $\calC/B$ a family of degenerate abelian varieties, without modifying the base, one gets into the theory of compactified relative Jacobians (to pass from the relative compactified Jacobian to the family of stable abelian varieties, one needs to perform a simultaneous semistable reduction, see \cite{cml2} and the references within). The situation for Prym varieties is more complicated, but still well understood (see \cite{FS}, \cite{ABH}, \cite{casaprym}) when the point of view is that of  Beauville's admissible covers compactification; the relative compactified Prym is less studied but a few cases, namely when the families of curves come from linear systems on surfaces, were studied in  \cite{mark_tik}, \cite{ASF}, and \cite{thesisg}. In our situation, due to the existence of very good lines, we can make an important simplifying assumption: namely, we consider only \'etale double covers of planar curves, and furthermore we can assume that both the cover and the base are irreducible.

Finally, we should remark that this construction also provides the first example of hyper-K\"ahler manifolds admitting a Lagrangian fibration in principally polarized abelian varieties that are not Jacobians of curves. Indeed, the known constructions of compact holomorphic symplectic varieties that are fibered in principally polarized abelian varieties are either the relative compactified Jacobian of a linear system on a K3 surface (Beauville--Mukai system), or the relative compactified Prym associated to a linear system on a K3 with an anti--symplectic involution  (\cite{mark_tik}, \cite{ASF}). In these last examples, either the relative Prym varieties are isomorphic to Jacobian of curves, in which case they are hyper-K\"ahler manifolds deformation to the Hilbert scheme of points on a $K3$ surface, or the total space of the family has singularities that admit no symplectic resolution.

Let us briefly describe  the main steps of the paper.

\subsection*{The hyper-K\"ahler structure} In Section 1, we review the construction of the relative Intermediate Jacobian fibration $\cJ_U\rightarrow U$ and the existence of a holomorphic symplectic $2$-form. We then show, without much difficulty, that this symplectic form extends over the locus of cubics with a single ordinary node $\cJ_{U_1}\rightarrow U_1$, providing a nondegenerate holomorphic $2$-form on $\cJ_{U_1}$.  These results are well known, and are essentially completely contained in \cite{DM1}, \cite[\S8.5.2]{DM2}, but the cycle theoretic method used here has the advantage of providing an extension to a  holomorphic closed $2$-form on any smooth compactification $\overline \cJ$
 of $\cJ_{U_1}$. If furthermore, $\cJ_{U_1}\subset \overline \cJ$ has codimension larger than $2$, then the extended form is everywhere non-degenerate. This allows us to concentrate on the problem of constructing a smooth compactification of $\cJ_{U_1}$ which is flat over $B$. The fact that the variety we construct is irreducible holomorphic symplectic  (or hyper-K\"ahler) makes use of \cite{llss}. Indeed, the intermediate Jacobian fibration contains
 a divisor which is birationally a $\mathbb{P}^1$-bundle on the hyper-K\"ahler $8$-fold recently constructed in \cite{llss}.

\subsection*{Local compactification} In Section 2, we briefly recall the Prym construction, and discuss the existence of a good line for all hyperplane sections of a general cubic fourfold.  Roughly speaking the existence of a good line guarantees that the deformation theory of cubic threefolds and their singularities can be identified locally, up to a smooth factor, with the corresponding deformation theory of quintic curves (see \cite[Sect. 3]{casaetal2}).  Studying the degenerations of intermediate Jacobians thus reduces to studying degenerations of abelian varieties associated to curves (more precisely Prym varieties). Some of the results and ideas in this section occur previously in \cite{casaetal,casaetal2}. Here, we obtain a slight strengthening applicable to our context: for $Y$ any hyperplane section of a general cubic fourfold, there exists a very good line $l$, that is, a line such that the associated cover $\widetilde C/C$ is \'etale, and both curves are irreducible (with singularities in $1$-to-$1$ correspondence with those of the cubic threefold $Y$).
With these assumptions,  there exists a (canonical) relative compactified Prym and
  our main result here is Theorem \ref{smoothness properties}  which says that this compactification has the property that the total space is smooth, provided that the family of (base) curves gives a simultaneous versal deformation of the singularities (an analogue of the corresponding result for Jacobians). Moreover, this relative compactified Prym is equidimensional, in particular
 flat over a smooth base. These results are discussed in Section \ref{sectprym}.

 While Section \ref{sectprym} gives a general construction for a smooth compactification of a family of Pryms (under suitable assumptions), the fact that this is applicable to our situation follows from the versality statements of Section   \ref{sectrans18janvier} (e.g. Corollary \ref{corotransdef}).  It is here, in Section  \ref{sectrans18janvier}, that the   generality assumption on the cubic fourfold $X$ is essential. Moreover, for the purpose of proving the deformation equivalence of our compactified fibration $\ocJ/B$ to OG10, we need to show that the versality statements still hold for general Pfaffian cubic fourfolds (\S\ref{sectranpf}).

\subsection*{Descent}
Let $\sF\rightarrow B$ be the relative Fano surface associated to the family $u:\cY\rightarrow B$ of hyperplane sections of a fixed cubic fourfold, and $\sF^0\rightarrow B$ the open subset of very good lines. The results of the previous three sections give that $\sF^0\rightarrow  B$ is a surjective smooth map (with $2$ dimensional fibers), and the existence of a relative compactified Prym $\overline \calP\to \sF^0$ which is proper over $\sF^0$, with smooth total space $\overline \calP$. Our compactification $\overline \cJ$ is a descent of the family $\overline \calP\rightarrow \sF^0$ to $B$. More precisely, in Section \ref{descent section}, using the relative theta divisor and a relative $\Proj$ construction, we conclude that $\overline \calP$ descends to $B$ giving a smooth compactification $\overline \cJ\rightarrow B$ of $\cJ_U\rightarrow U$ extending $\cJ_{U_1}\rightarrow U_1$, equidimensional over $B$. The arguments of Section \ref{sectintjacbun} now allow us to conclude that $\overline \cJ$ is a $10$-dimensional compact HK manifold, and that $\overline \cJ$ is a Lagrangian fibration (whose general fiber is an intermediate Jacobian). This concludes (see Theorem \ref{theoconst}) the proof of the first half of the Main Theorem.

\subsection*{Relationship with O'Grady's varieties} In the final section, we establish that the constructed object is in fact deformation equivalent to the $OG10$ example \cite{og10}
(see Corollary \ref{cordefoeq}). Assuming the existence of a smooth compactification of $\cJ_U$, partial results in this direction had been established by O'Grady-Rapagnetta. Their idea was to use the
degeneration of the cubic fourfold to the chordal cubic (the secant variety of the Veronese
surface), for which one can show that the compactification of the limiting family of intermediate Jacobians
is birational to the O'Grady moduli space for an adequate $K3$ surface. A similar construction had been
done by Hwang and Nagai \cite{hwangnagai} in the case of a singular cubic fourfold, for which the $K3$ surface
is the surface of lines through the singular point. Unfortunately, in both cases, we could not deduce
from the existence of these birational maps the fact that our compactified Jacobian fibration is deformation equivalent to the O'Grady moduli space, because we have no control of the singularities of the compactified Jacobian fibration at these points.
 We study instead, as suggested also by Markushevich and Kuznetsov \cite{Mark-Kuzn}, the Intermediate Jacobian fibration in the case of a Pfaffian cubic fourfold. By Beauville-Donagi \cite{bedo}, such a cubic $X$ is Hodge theoretic equivalent to a degree $14$ K3 surface $S$. Using \cite{ilievmarku},
\cite{markutikho}, and  \cite{kuznetsov} we show that the fibration $\cJ_U$ (or the compact version)  is  birational to the O'Grady moduli space of sheaves on $S$.
On the other hand, using versality statements established in Section \ref{sectrans18janvier}, we are able to prove that the compactified  Jacobian fibration is smooth also for a general Pfaffian cubic fourfold, so that Huybrechts' fundamental theorem \cite{huybrechts} applies, allowing us to conclude the equivalence to the OG10 example.

\subsection*{Two remaining questions}  We point out that there are two families of intermediate Jacobians associated to the family of hyperplane sections of a cubic fourfold $X$: in addition to the $\cJ_U\rightarrow U$ considered here, there is a twisted family $\cJ'_U\rightarrow U$ parametrizing $1$-cycles of degree $1$ (or $2$) in the fibers of
$u:\mathcal{Y}_U\rightarrow U$. In this paper, we are compactifying the untwisted family $\cJ_U$. A natural question, that will be addressed elsewhere, is the existence of a compactification for the twisted case. Here we only note that once a line has been chosen on a cubic threefold, the intermediate Jacobian and the twisted intermediate Jacobian are naturally identified. Thus, up to the descent argument of Section \ref{descent section} everything goes through unchanged.

On a related note, our construction is somewhat indirect (e.g. it involves the auxiliary choice of a line). It is natural to ask if a more direct construction is possible, in particular one wonders if there is a modular construction for our compactification $\ocJ\rightarrow B$ at least up to natural birational modifications (e.g. resolution of symplectic singularities, or contraction of some divisor on which the symplectic form is degenerate). We remind the reader that O'Grady's original construction for OG10 is indeed modular in this sense (it is the resolution of the moduli of sheaves on $K3$s for a specific choice of Mukai vector). Results of \cite{beauvillevb}, \cite{markutikho} in a relative setting
show that after blowing-up $\mathcal{J}_U$ (or rather its twisted version)  along the universal family of lines,
 one gets  a variety which is  a relative moduli space of coherent sheaves supported on smooth hyperplane sections
 of $X$, and it is maybe possible to recontract it to a symplectic moduli space of coherent sheaves on $X$. The compactification of the present paper is birational to a moduli space of sheaves on the Fano variety of lines on $X$ (supported on the Fano surfaces of lines on the hyperplane sections of $X$). A possible strategy to find a modular compactification is therefore to study the smoothness of this moduli space of sheaves.

\subsection*{Acknowledgement} This work was started while the authors were visiting IAS as part of the special program ``Topology of Algebraic Varieties''. We are grateful to IAS for the wonderful research environment. The IAS stay of the first author was partially supported by NSF grant DMS-1128155. He also acknowledges the support through NSF grants DMS-125481 and DMS-1361143. The second named author  acknowledges the support of the Giorgio and Elena Petronio Fellowship Fund II and of NSF grant DMS-1128155. Claire Voisin was a distinguished visiting Professor at IAS during AY 2014/15 and acknowledges the generous support of the Charles Simonyi Fund and of the Fernholz Foundation.

We learned about  this conjecture from a talk by D. Markushevich at the Hyper--K\"ahler Geometry Workshop at the Simons Center for Geometry and Physics in Fall 2012. It was he who initiated the project and subsequently and independently realized many steps towards its completion. The main result of the paper should thus partly be attributed to him. K. O'Grady and A. Rapagnetta  also significantly contributed to the general project.
 We also acknowledge helpful discussions with E. Arbarello, S. Casalaina-Martin, K. Hulek, A. Kuznetsov, E. Macr\`i, L. Migliorini, K. O'Grady, and A. Rapagnetta. Finally, we are grateful to the referee for careful reading and constructive comments that helped us improve the paper.

 \subsection*{Notation and Conventions} Unless otherwise specified, $X\subset \bP^5$ denotes a (Hodge) general cubic fourfold. We let $B=(\bP^5)^\vee$, $U=B\setminus X^\vee$, and $U_1=B\setminus (X^\vee)^{sing}$ parametrizing hyperplane sections with at most one single ordinary node. Thus, $U\subset U_1\subset B$ and $U_1$ has boundary of codimension $2$ in $B$. Let $u:\cY\rightarrow B$ be the universal family of cubic threefolds obtained as hyperplane sections of $X$, and $u:\cY_U\rightarrow U$, $u:\cY_{U_1}\rightarrow U_1$ its restrictions to $U$ and $U_1$ respectively. Typically, $Y$ or $Y_t$ will denote hyperplane sections of $X$ or fibers of $\cY$.  $\pi_U:\cJ_U\rightarrow U$ will denote the intermediate Jacobian fibration
 associated to the family $\cY_U\rightarrow U$ and similarly for $\pi_{U_1}:\cJ_{U_1}\rightarrow U_1$. $\cJ_U$ and $\cJ_{U_1}$ are smooth quasi-projective varieties together with a holomorphic symplectic form, and the restriction of $\cJ_{U_1}$ to $U$ is $\cJ_U$ (cf. \cite{DM1,DM2} and Section \ref{sectintjacbun}). $\ocJ\rightarrow B$ will denote a proper algebraic extension of $\cJ_U\rightarrow U$ (and of  $\cJ_{U_1}\rightarrow U_1$) over $B$. Of course, such $\ocJ$ always exists; the goal of the paper is to establish the existence of a smooth $\ocJ$ which is flat over $B$, hence holomorphically symplectic.

 Let $Y$ be a cubic threefold, and $l\subset Y$ a line (both $Y$ and $l$ need to satisfy some mild assumptions, to be specified in the text). The projection from $l$ realizes $Y$ as a conic bundle over $\bP^2$ with discriminant a plane quintic $C$. The lines in $Y$ incident to $l$  are parametrized by a curve $\widetilde C$, which is a double cover (\'etale for general $l$) of $C$. $\calF\rightarrow B$ will denote the universal family of Fano surfaces, and $(\widetilde \calC,\calC)$ the relative family of double covers over (a suitable open of) $\calF$.

 We will say that two hypersurface singularities $(V(f),0)\subset \bC^n$ and $(V(g),0)\subset \bC^{n+k}$ have the same type if they differ by a suspension, i.e. in suitable analytic coordinates $g(x_1,\dots,x_{n+k})=u\cdot (f(x_1,\dots,x_n)+x_{n+1}^2+\dots+x_{n+k}^2)$, with $u$ a unit in $\mathcal O_{(\bC^{n+k},0)}$. The deformation spaces for singularities of the same type (and also the local monodromies if $k\equiv 0 \pmod{2}$) are naturally identified.

\section{Holomorphic $2$-forms on Jacobian fibrations}\label{sectintjacbun}
In  \cite{DM2,DM1}, Donagi and Markman have performed the infinitesimal study of
algebraically integrable systems, also called Lagrangian fibrations, which consist
of a holomorphic family of complex tori, equipped with a nondegenerate $(2,0)$-form for which the fibers are
Lagrangian.
In this section, we provide an alternative way to construct a structure of Lagrangian or rather  isotropic fibration on
certain families of intermediate Jacobians. That is, we give a cycle and Hodge theoretic argument  to construct
a closed holomorphic $2$-form vanishing on fibers of  such families. The non-degeneracy of the holomorphic $2$-form needs to be checked by hand. However, a strong point of our construction is that it easily implies  that the $(2,0)$-form extends
to any algebraic smooth compactification of the family of intermediate Jacobians.

\subsection{The general case}
Let $X$ be a smooth projective variety of dimension $2k$, and let
$L$ be a line bundle on $X$. Assume that the smooth members
$Y$  of the linear system $|L|$ have the following property:
\begin{eqnarray}
\label{eqvan} H^{p,q}(Y)=0,\,p+q=2k-1,\,(p,q)\not\in \{(k,k-1),(k-1,k)\}.
\end{eqnarray}
Note that by \cite{griffiths} (see also \cite[I,12.1,12.2]{voisinbook}) (\ref{eqvan}) is  implied by the following
property:
\begin{eqnarray*}
 \textrm{\it  ($\star$) The Abel-Jacobi map $\Phi_Y:{\rm CH}^k(Y)_{hom}\rightarrow J^{2k-1}(Y)$ is surjective.}
\end{eqnarray*}

Conversely, (\ref{eqvan}) should imply ($\star$), according to the Hodge conjecture (see \cite[2.2.5]{voisindecomp}).
Under the assumption (\ref{eqvan}), the intermediate Jacobians $J(Y)$ are abelian varieties. Let $U\subset B:= |L|$ be the Zariski open set parametrizing  smooth members, $\mathcal{Y}\subset B\times X$ be the universal family, and $\mathcal{Y}_{U}$ its restriction to $U$. The family of intermediate Jacobians
 is under the same assumption a quasiprojective variety $\mathcal{J}_U$ with a smooth projective morphism
$$\pi_U:\mathcal{J}_U\rightarrow U$$
with fiber $J(Y_t)$ over the point $t\in U$.

Let now $\eta\in H^{k+1,k-1}(X)$ and assume $\eta_{\mid Y}=0$ in $H^{k+1,k-1}(Y)$ for any smooth member
$Y$ of $|L|$. (In our main application, where $X$ is the cubic fourfold and $k=2$, this assumption will be automatic, since $H^{k+1,k-1}(Y_t)$ will be $0$.)
We are  going to construct a holomorphic  $2$-form $\sigma_U$ on $\mathcal{J}_U$ associated with the above data and establish Theorem
\ref{theogen2form}. We will first do this assuming ($\star$) and will explain at the end
how to adapt the proof when they only satisfy property (\ref{eqvan}).

\vspace{0.5cm}

\noindent{\bf Construction of the holomorphic $2$-form.} We denote by $u:\mathcal{Y}_U\rightarrow U$ the first projection, where
$\mathcal{Y}_U\subset U\times X$ is the universal hypersurface.
We have:
\begin{lemma}\label{lepenible} Assuming ($\star$), there exists a codimension $k$ cycle
$$\mathcal{Z}\in {\rm CH}^k(\mathcal{J}_U\times_U \mathcal{Y}_U)_\mathbb{Q}$$
such that the Betti  cohomology class $\alpha:=[\mathcal{Z}]\in H^{2k}(\mathcal{J}_U\times_U \mathcal{Y}_U,\mathbb{Q})$, or rather its image $\alpha_0$ in $H^0(U,R^{2k}(\pi_u,u)_*\mathbb{Q})$,
satisfies the condition that $$\alpha_0^*:R^{2k-1}u_*\mathbb{Q}\rightarrow R^1\pi_{U*}\mathbb{Q}$$
is the natural isomorphism.
\end{lemma}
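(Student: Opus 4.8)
The lemma asks for a cycle $\mathcal{Z}$ on the fiber product $\mathcal{J}_U \times_U \mathcal{Y}_U$ whose cohomology class induces the canonical isomorphism between $R^{2k-1}u_*\mathbb{Q}$ and $R^1\pi_{U*}\mathbb{Q}$.

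This is a universal Abel-Jacobi / algebraic-representative statement. For a single smooth $Y$, the intermediate Jacobian $J(Y) = J^{2k-1}(Y)$ is an abelian variety under hypothesis $(\star)$, and there's a canonical isomorphism $H^1(J(Y),\mathbb{Q}) \cong H^{2k-1}(Y,\mathbb{Q})$ (as Hodge structures, up to Tate twist). So fiberwise the statement is just this standard identification. The content is making the cycle $\mathcal{Z}$ exist in families, i.e. algebraically over $U$.

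**How I'd prove it.**

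The key tool is the surjectivity of the Abel-Jacobi map $\Phi_Y : \mathrm{CH}^k(Y)_{hom} \to J(Y)$ from hypothesis $(\star)$. The plan:

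*Step 1 — Find a family of cycles providing a section-like structure.* Since $\Phi_Y$ is surjective and $J(Y)$ is an abelian variety of dimension $g = h^{k-1,k}(Y)$, I want to produce, at least after passing to a finite cover or working with $\mathbb{Q}$-coefficients, a correspondence realizing the inverse of $\Phi$. Concretely: the universal cycle. On $\mathcal{J}_U \times_U \mathcal{Y}_U$, I want a cycle $\mathcal{Z}$ such that over each $t \in U$, the restriction $\mathcal{Z}_t \subset J(Y_t) \times Y_t$ is a "universal" family of homologically trivial $k$-cycles in $Y_t$ parametrized by $J(Y_t)$, with the property that the point $[z] \in J(Y_t)$ is sent by $\Phi_{Y_t}$ to itself — i.e. $\mathcal{Z}_t$ is (a multiple of) the graph of the Abel-Jacobi map's inverse.

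*Step 2 — Construct $\mathcal{Z}$ by spreading out.* Because $U$ is quasi-projective and all the objects ($\mathcal{J}_U$, $\mathcal{Y}_U$, the Abel-Jacobi map) are algebraic (Donagi-Markman, as cited), a universal cycle exists at least locally / after a base change, and standard spreading-out plus a norm (trace) argument over $\mathbb{Q}$ lets me descend to a genuine cycle class $\mathcal{Z} \in \mathrm{CH}^k(\mathcal{J}_U \times_U \mathcal{Y}_U)_{\mathbb{Q}}$. A clean way to get existence: the Abel-Jacobi map is induced by an actual family of cycles on $Y$ (e.g. the surjectivity in $(\star)$ is realized by an algebraic family of cycles $\{z_s\}_{s \in W}$ for some variety $W$ surjecting onto $J$; pulling back along $W \to J$ and using a multisection gives the desired correspondence up to a factor).

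*Step 3 — Identify the induced map on cohomology.* Compute $\alpha_0^*$ on $R^{2k-1}u_*\mathbb{Q}$. Fiberwise, the cycle class $[\mathcal{Z}_t] \in H^{2k}(J(Y_t) \times Y_t)$ has a Künneth component in $H^1(J(Y_t)) \otimes H^{2k-1}(Y_t)$, and the condition that $\mathcal{Z}_t$ represents the (inverse of the) Abel-Jacobi map is exactly the statement that this Künneth component is the canonical element realizing $H^{2k-1}(Y_t) \cong H^1(J(Y_t))$. This is the classical fact that the Abel-Jacobi map, being a morphism of abelian varieties up to isogeny onto $J(Y)$ which induces the identity on $H^{2k-1}$ (the relevant Hodge structure), has graph class inducing the canonical iso. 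So $\alpha_0^*$ is the natural isomorphism fiberwise, hence globally as a map of local systems on $U$.

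**The main obstacle.**

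The delicate point is Step 2: producing a single *global* algebraic cycle $\mathcal{Z}$ over all of $U$ (rather than just fiberwise classes), with $\mathbb{Q}$-coefficients, whose Künneth component is *uniformly* the canonical isomorphism. Fiberwise one has the isomorphism from $(\star)$, but assembling these into an algebraic cycle requires either a universal family of cycles realizing Abel-Jacobi algebraically (which is where the algebraicity results of Donagi-Markman, or a Lefschetz-type argument producing cycles, enter), or a spreading-out + norm argument to kill the ambiguity of the base change. Controlling that the normalization factor is a nonzero rational scalar (so that after rescaling $\mathcal{Z}$ one gets *exactly* the natural isomorphism, not just an isogeny) is the technically fussy part. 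I'd expect the proof to handle this by first building $\mathcal{Z}$ up to a scalar and then rescaling, using that fiberwise the induced map is a nonzero multiple of the canonical iso by the irreducibility of the monodromy action on $R^{2k-1}u_*\mathbb{Q}$.

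(The extension from $(\star)$ to merely $(\ref{eqvan})$ presumably replaces the honest cycle by an absolute Hodge / motivated class argument, which is why the authors defer it to the end.)
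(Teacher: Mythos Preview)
Your outline is essentially the paper's proof: use $(\star)$ fiberwise to produce, via a parameter variety $W_t \to J(Y_t)$ (made generically finite of degree $N$ by slicing), a pushforward cycle on $J(Y_t)\times Y_t$ inducing $N$ times the identity; spread this over a generically finite cover $V\to U$, extend and trace down, then divide by $NM$ in $\mathrm{CH}^k(\cdot)_{\mathbb{Q}}$. The one point where the paper is cleaner than your sketch is the normalization: you suggest invoking monodromy irreducibility to pin down the scalar, but the paper never needs this, since the pushforward along a degree-$N$ map $W_t\to J(Y_t)$ automatically yields exactly $N$ times the canonical isomorphism, so a single rational rescaling suffices.
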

\begin{proof} By assumption ($\star$), for each fiber $Y_t$, $t\in U$, there exist
a smooth and projective variety
 $W_t$ and a family
of codimension $k$ cycles $\mathcal{T}_t\in {\rm CH}^k( W_t\times Y_t)$,  such that $\mathcal{T}_{t\mid \{w\}\times Y_t}$ is homologically trivial, with the property that
  the Abel-Jacobi map $$\Phi_{\mathcal{T}_t}:W_t\rightarrow J(Y_t),\,\,
  w\in W_t\mapsto \Phi_{Y_t}(\mathcal{T}_{t,w})$$ is surjective. It follows
  that there exists a codimension $k$ cycle $\mathcal{Z}'_t\in {\rm CH}^k(J(Y_t)\times Y_t)$
  such that the Abel-Jacobi map
  $$\Phi_{\mathcal{Z}'_t}:J(Y_t)\rightarrow J(Y_t),\,\,
  w\in J(Y_t)\mapsto \Phi_{Y_t}(\mathcal{T}_{t,w})$$ is $N$ times the identity of
  $J(Y_t)$ for some integer $N>0$. Indeed, we may assume that the surjective
  morphism
  $\Phi_{\mathcal{T}_t}:W_t\rightarrow J(Y_t)$ is generically finite of degree $N$, by replacing $W_t$ by a linear section if necessary.
  Then we set
  $$\mathcal{Z}'_t:=(\Phi_{\mathcal{T}_t},Id_{Y_t})_*\mathcal{T}_t.$$
  The  cycles $\mathcal{Z}'_t$ have been defined fiberwise, but standard argument show that
   for an adequate choice of $N$ they  can be constructed in family over a smooth generically finite cover
  $V$ of $U$ by spreading the original cycles $W_{t,s}$. This provides
  a codimension $k$ cycle
  $\mathcal{Z}''\in {\rm CH}^k(\mathcal{J}_V\times_V\mathcal{Y}_V)$ such
  that
  the class $\alpha'':=[\mathcal{Z}'']$ satisfies
  $${\alpha''_0}^*:R^{2k-1}u'_*\mathbb{Q}\rightarrow R^1\pi'_*\mathbb{Q}$$
is $N$ times the natural isomorphism, where $u':\mathcal{Y}_V\rightarrow V,\,\pi':\mathcal{J}_V\rightarrow V$
are the natural maps.
We can choose a partial smooth completion $\overline{V}$ of $V$ such that the morphism
$V\rightarrow U$ extends to a proper morphism $r:\overline{V}\rightarrow U$. We next extend
the cycle $\mathcal{Z}''$ to a cycle
$\overline{\mathcal{Z}''}\in {\rm CH}^k(\mathcal{J}_{\overline{V}}\times_{\overline{V}}\mathcal{Y}_{\overline{V}})$.
If $M={\rm deg}\,r$, the cycle $\frac{1}{MN}\tilde{r}_* \overline{\mathcal{Z}''}\in {\rm CH}^k(\mathcal{J}_{{U}}\times_{{U}}\mathcal{Y}_{{U}})$ satisfies the desired property, where
$\tilde{r}:\mathcal{J}_{\overline{V}}\times_{\overline{V}}\mathcal{Y}_{\overline{V}}\rightarrow \mathcal{J}_{{U}}\times_{{U}}\mathcal{Y}_{{U}}$ is the natural degree $M$ induced map.
\end{proof}
Having the lemma, we now observe that there is a natural proper morphism
$q'=(Id,q):\mathcal{J}_U\times_U\mathcal{Y}_U\rightarrow \mathcal{J}_U\times X$, where $q:\mathcal{Y}_U\rightarrow X$ is the second projection restricted to $\mathcal{Y}_U\subset
U\times X$,   and we  thus get
a codimension $k+1$ cycle $\mathcal{Z}_q:=q'_*\mathcal{Z}\in {\rm CH}^{k+1}(\mathcal{J}_U\times X)_\mathbb{Q}$
with Betti cohomology class
$[\mathcal{Z}_q]\in H^{2k+2}(\mathcal{J}_U\times X,\mathbb{Q})$ and Dolbeault cohomology
class $[\mathcal{Z}_q]^{k+1,k+1}\in H^{k+1}(\mathcal{J}_U\times X,\Omega_{\mathcal{J}_U\times X}^{k+1})$.
For any $\eta\in H^{k-1}(X,\Omega_X^{k+1})$, the corresponding class
$\sigma_U\in H^0(\mathcal{J}_U,\Omega_{J_U}^2)$ is defined by the formula
\begin{eqnarray}\label{eqsigma} \sigma_U=([\mathcal{Z}_q]^{k+1,k+1})^*(\eta),
\end{eqnarray}
where $([\mathcal{Z}_q]^{k+1,k+1})^*:H^{k-1}(X,\Omega_{X}^{k+1})\rightarrow H^0(\mathcal{J}_U,\Omega_{J_U}^2)$ is defined by
$$([\mathcal{Z}_q]^{k+1,k+1})^*(\omega)=pr_{1*}([\mathcal{Z}_q]^{k+1,k+1}\cup pr_2^*\omega ),$$
the $pr_i$'s being   the two projections defined on $\mathcal{J}_U\times X$.
This completes the construction of the form $\sigma_U$.

The following notation will be used below: As $\eta_{\mid Y_t}=0$ in $H^{k-1}(\Omega_{Y_t}^{k+1})$ and $H^{k-2}(Y_t,\Omega_{Y_t}^{k+1})=0$ by (\ref{eqvan}), $\eta$ determines a class
\begin{eqnarray}\label{eqtildeetat}
\tilde{\eta}_t\in H^{k-1}(Y_t,\Omega_{Y_t}^{k}(-L))
\end{eqnarray} using  the exact sequence
$$0\rightarrow \Omega_{Y_t}^{k}(-L)\rightarrow \Omega_{X\mid Y_t}^{k+1}\rightarrow \Omega_{Y_t}^{k+1}\rightarrow 0.$$
\begin{theorem} \label{theogen2form} The  holomorphic $2$-form
$$\sigma_U\in H^0(\mathcal{J}_U,\Omega_{\mathcal{J}_U}^2)$$
constructed above
satisfies the following properties:
\begin{itemize}
\item[(i)] The fibers of the fibration $\pi_U:\mathcal{J}_U\rightarrow U$ are isotropic for $\sigma_U$.
\item[(ii)] At any point $t\in U$, the map
$\lrcorner\sigma_t:T_{U,t}\rightarrow H^0(\mathcal{J}_t,\Omega_{\mathcal{J}_t})=H^{k-1}(Y_t,\Omega_{Y_t}^k)$ induced by $\sigma_U$ using (i) identifies with the multiplication   map
$$T_{U,t}=H^0(Y_t,L_{\mid Y_t})\rightarrow H^{k-1}(Y_t,\Omega_{Y_t}^{k})$$
 by the class $\tilde{\eta}_t$ of (\ref{eqtildeetat}).
\item[(iii)] For any smooth algebraic variety
$\overline{\mathcal{J}}$ containing $\mathcal{J}_U$ as a Zariski open set, the $2$-form $\sigma_U$ extends to a holomorphic $2$-form
on $\overline{\mathcal{J}}$.
\item[(iv)]  The $2$-form $\sigma_U$ is closed.
\end{itemize}
\end{theorem}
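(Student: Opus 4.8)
The plan is to derive (iv) from (iii) and to prove (i) and (ii) together by a single infinitesimal computation, while (iii) comes from simply ``spreading out'' the defining cycle to the compactification. Note first that (iv) is automatic once (iii) is known: choosing a smooth projective compactification $\bar{\mathcal{J}}$ of $\mathcal{J}_U$ (which exists by Nagata and Hironaka), the extension $\bar\sigma\in H^0(\bar{\mathcal{J}},\Omega^2_{\bar{\mathcal{J}}})$ is a holomorphic $2$-form on a compact K\"ahler manifold, hence $d$-closed; restricting to $\mathcal{J}_U$ shows $\sigma_U$ is closed. The elegant point to exploit is that the single hypothesis $\eta_{\mid Y_t}=0$ simultaneously forces the isotropy in (i) and produces, via the conormal sequence, the class $\tilde\eta_t$ governing (ii).

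For (i) I would compute $\sigma_U|_{\mathcal{J}_t}$ directly. By the projection formula, $\sigma_U=p_{1*}([\mathcal{Z}]^{k,k}\cup\rho^*\eta)$, where $p_1\colon\mathcal{J}_U\times_U\mathcal{Y}_U\to\mathcal{J}_U$ and $\rho\colon\mathcal{J}_U\times_U\mathcal{Y}_U\to X$ is the structural map to $X$. Restricting the base to $\{t\}$ (proper base change), $j_t^*\sigma_U$ is the pushforward along $\mathcal{J}_t\times Y_t\to\mathcal{J}_t$ of $[\mathcal{Z}|_t]^{k,k}\cup(\rho^*\eta)|_{\mathcal{J}_t\times Y_t}$. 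But $\rho$ sends $\mathcal{J}_t\times Y_t$ into $Y_t\subset X$, so $(\rho^*\eta)|_{\mathcal{J}_t\times Y_t}=pr_{Y_t}^*(\eta_{\mid Y_t})$, which vanishes because $\eta_{\mid Y_t}=0$ in $H^{k-1}(Y_t,\Omega^{k+1}_{Y_t})$. Hence $\sigma_U|_{\mathcal{J}_t}=0$, i.e. the fibre is isotropic; this also shows that $\lrcorner_v\sigma_U$ restricted to a fibre is independent of the horizontal lift of $v\in T_{U,t}$, so that $\lrcorner_v\sigma_t$ in (ii) is well defined.

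The substance is (ii), and this is where I expect the main obstacle. I would identify the ``mixed'', Leray-bidegree-$(1,1)$ component of $\sigma_U$, namely the map $\lrcorner\sigma_t\colon T_{U,t}\to\pi_{U*}\Omega^1_{\mathcal{J}_U/U}=H^{k-1}(Y_t,\Omega^k_{Y_t})$, with multiplication by $\tilde\eta_t$. By Lemma \ref{lepenible} the cycle $\mathcal{Z}$ realizes, compatibly with the Gauss--Manin connection and the Hodge filtrations, the canonical isomorphism $\alpha_0\colon R^{2k-1}u_*\mathbb{Q}\xrightarrow{\sim}R^1\pi_{U*}\mathbb{Q}$; this intertwining reduces the computation of $\lrcorner\sigma_t$ to the derivative along $U$ of the variation of Hodge structure of the hypersurface family $\mathcal{Y}_U\to U$, evaluated against $\eta$. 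Since $\eta_{\mid Y_t}=0$ while $H^{k-2}(Y_t,\Omega^{k+1}_{Y_t})=0$ by (\ref{eqvan}), the restriction $\eta_{\mid Y_t}$ lifts uniquely through the conormal sequence to $\tilde\eta_t\in H^{k-1}(Y_t,\Omega^k_{Y_t}(-L))$ as in (\ref{eqtildeetat}); and the Gauss--Manin derivative in a direction $v\in T_{U,t}=H^0(Y_t,L_{\mid Y_t})$ of the fibrewise-vanishing section $\eta$ is precisely the coboundary of that conormal sequence twisted by $v$, i.e. the multiplication map $v\mapsto v\cdot\tilde\eta_t$ into $H^{k-1}(Y_t,\Omega^k_{Y_t})$. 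The hard part is the bookkeeping: one must verify that the correspondence $\mathcal{Z}$ carries the Gauss--Manin connection of the intermediate Jacobians to that of the threefolds (so the two IVHS agree under $\alpha_0$), and then match the normal derivative of $\eta$ with the conormal coboundary, which is a Griffiths-type residue computation.

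Finally, for (iii) I would extend the \emph{construction} rather than the form. Given any smooth algebraic $\bar{\mathcal{J}}\supset\mathcal{J}_U$, take the Zariski closure $\bar{\mathcal{Z}}_q\subset\bar{\mathcal{J}}\times X$ of $\mathcal{Z}_q$, a codimension $k+1$ cycle whose class is algebraic, hence of Hodge type $(k+1,k+1)$. Because $\eta$ is a fixed global class on the projective $X$ and $pr_1$ is proper along the compact factor $X$, the formula $\bar\sigma:=pr_{1*}([\bar{\mathcal{Z}}_q]^{k+1,k+1}\cup pr_2^*\eta)$ yields, by the same bidegree count as in the construction of $\sigma_U$, an element of $H^0(\bar{\mathcal{J}},\Omega^2_{\bar{\mathcal{J}}})$. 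Since cycle restriction and the correspondence operations commute with restriction to the Zariski open set $\mathcal{J}_U$, one gets $\bar\sigma|_{\mathcal{J}_U}=\sigma_U$, giving the holomorphic extension. For a non-projective $\bar{\mathcal{J}}$ one first embeds it as an open subset of a smooth projective compactification (where the bidegree count is rigorous thanks to pure Hodge structures) and restricts, which simultaneously secures (iv).
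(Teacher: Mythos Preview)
Your arguments for (i), (iii), and (iv) are exactly those of the paper: restrict the correspondence formula to a fibre and use $\eta_{\mid Y_t}=0$; extend the cycle $\mathcal{Z}_q$ to $\bar{\mathcal{J}}\times X$ and apply the same formula; deduce closedness from a projective compactification.

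For (ii) your route diverges. You frame the computation in terms of the Gauss--Manin connection and IVHS, arguing that because $\alpha_0$ is an isomorphism of local systems it intertwines the connections, so that $\lrcorner_v\sigma_t$ becomes the Gauss--Manin derivative of the fibrewise-vanishing class $q^*\eta$, which you then identify with the conormal coboundary. This is correct in outline, but the paper's argument is more elementary and avoids invoking the connection at all. The paper works directly with the rewritten formula $\sigma_U=p_{1*}([\mathcal{Z}]^{k,k}\cup p_2^*q^*\eta)$ on $\mathcal{J}_U\times_U\mathcal{Y}_U$, and defines for each $v\in T_{U,t}$ an ``interior product'' $\mathrm{int}(v)q^*\eta\in H^{k-1}(Y_t,\Omega^k_{Y_t})$ via the exact sequence
\[
0\to\Omega_{U,t}\otimes\Omega^k_{Y_t}\to\Omega^{k+1}_{\mathcal{Y}_U\mid Y_t}/L^2\to\Omega^{k+1}_{Y_t}\to 0
\]
obtained from the relative cotangent sequence of $u$. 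The identification with multiplication by $\tilde\eta_t$ then comes from a single commutative diagram: the differential $q^*$ maps the conormal sequence of $Y_t\subset X$ to this cotangent sequence, and taking $(k+1)$-st exterior powers shows that the lift $\tilde\eta_t$ of $\eta_{\mid Y_t}$ in the conormal sequence maps to the lift $\widetilde{q^*\eta}_t$ in the cotangent sequence. This sidesteps the ``bookkeeping'' you flag (no need to verify that $\mathcal{Z}$ carries one Gauss--Manin to the other), since the cycle enters only through the fibrewise identification $([\mathcal{Z}]^{k,k}_{\mid\mathcal{J}_t\times Y_t})^*\colon H^{k-1}(Y_t,\Omega^k_{Y_t})\xrightarrow{\sim}H^0(\mathcal{J}_t,\Omega_{\mathcal{J}_t})$, which is already given by Lemma~\ref{lepenible}. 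Your approach would certainly work, but the paper's diagram chase is shorter and makes the role of the two exact sequences defining $\tilde\eta_t$ and $\widetilde{q^*\eta}_t$ transparent.
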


\begin{proof} With the notation $p_1:\mathcal{J}_U\times_U\mathcal{Y}_U\rightarrow \mathcal{J}_U$ for the
first projection, $p_2:\mathcal{J}_U\times_U\mathcal{Y}_U\rightarrow \mathcal{Y}_U$ for the second projection,
formula (\ref{eqsigma}) gives as well, using  the projection formula and
the fact that  $[\mathcal{Z}]_q^{k+1,k+1}=q'_*([\mathcal{Z}]^{k,k})$
\begin{eqnarray}\label{eqsigma1} \sigma_U=([\mathcal{Z}]^{k,k})^*({q}^*\eta),
\end{eqnarray}
where $([\mathcal{Z}]^{k,k})^*:H^{k-1}(\mathcal{Y}_U,\Omega_{\mathcal{Y}_U}^{k+1}))\rightarrow H^0(\mathcal{J}_U,\Omega_{J_U}^2)$ is defined by
$$([\mathcal{Z}]^{k,k})^*(\omega)=p_{1*}([\mathcal{Z}]^{k,k}\cup p_2^*\omega ).$$

(i) follows immediately from (\ref{eqsigma1}) which gives for $t\in U$
$$\sigma_{U\mid \mathcal{J}_t}=([\mathcal{Z}]^{k,k}_{\mid \mathcal{J}_t\times Y_t})^*(\eta_{\mid Y_t}),$$
and from the fact that $\eta_{\mid Y_t}=0$ in $H^{k+1,k-1}(Y_t)$ by assumption. (Here
we identify the fiber $Y_t$ of the universal family and its image in $X$.)

(iii) We observe that if $\overline{\mathcal{J}}\supsetneq \mathcal{J}_U$ is a smooth algebraic partial compactification of $\mathcal{J}_U$,
the cycle $\mathcal{Z}_q$ extends to a cycle $\overline{\mathcal{Z}_q}\in{\rm CH}^{k+1}(\overline{\mathcal{J}}\times X)$, so that its cohomology class $[\mathcal{Z}_q]^{k+1,k+1}$ extends to a class
$$[\overline{\mathcal{Z}_q}]^{k+1,k+1}\in H^{k+1}(\overline{\mathcal{J}}\times X,\Omega_{\overline{\mathcal{J}}\times X}^{k+1}).$$
It thus follows that the form  $\sigma_U$ extends to a $2$-form $\sigma\in H^0(\overline{\mathcal{J}},\Omega_{\overline{J}}^2)$
given by the formula
\begin{eqnarray}\sigma=([\overline{\mathcal{Z}_q}]^{k+1,k+1})^*(\eta).
\end{eqnarray}

(iv)  This is an immediate consequence of (iii). Indeed, choosing a smooth projective
compactification $\overline{\mathcal{J}}\supsetneq \mathcal{J}_U$ of $\mathcal{J}_U$, the $2$-form $\sigma_U$ extends by (iii) to a holomorphic $2$-form
$\sigma$  on
$\overline{\mathcal{J}}$. The $2$-form $\sigma$ is closed, hence the original form $\sigma_U$ is closed.

(ii)  Consider the class $q^*\eta\in H^{k-1}(\mathcal{Y}_U,\Omega_{\mathcal{Y}_U}^{k+1})$. As it vanishes
on fibers $Y_t,\,t\in U,$ of $u$, it provides for any $t\in U$ a morphism
\begin{eqnarray}
\label{eqprodint} {\rm int}(\cdot)q^*\eta:T_{U,t}\rightarrow H^{k-1}(Y_t,\Omega_{Y_t}^{k})
\end{eqnarray}
which for $k=1$ is simply obtained by taking interior product of the $2$-form $\eta$ with a local lift
of the considered tangent vector on the base, and for arbitrary $k$ is
constructed as follows:  the cotangent bundle  sequence of $u$
$$0\rightarrow \Omega_{U,t}\otimes \mathcal{O}_{Y_t}\rightarrow \Omega_{\mathcal{Y}_{U\mid Y_t}}\rightarrow \Omega_{Y_t}\rightarrow 0$$
induces an exact sequence
\begin{eqnarray}
\label{eqex1aa}
0\rightarrow \Omega_{U,t}\otimes \Omega_{Y_t}^{k}\rightarrow \Omega_{\mathcal{Y}_U\mid Y_t}^{k+1}/L^2\Omega_{\mathcal{Y}_U\mid Y_t}^{k+1}\rightarrow \Omega_{Y_t}^{k+1}\rightarrow 0,
\end{eqnarray}
where
$L^2\Omega_{\mathcal{Y}_{U\mid Y_t}}^{k+1}\subset \Omega_{\mathcal{Y}_{U\mid Y_t}}^{k+1}$ is the subbundle
$u^*\Omega_{U,t}^2\wedge\Omega_{\mathcal{Y}_{U\mid Y_t}}^{k-1}$.
From (\ref{eqex1aa}), using the fact that $H^{k-2}(Y_t,\Omega_{Y_t}^{k+1})=0$, we deduce that
the class $q^*\eta_{\mid Y_t}\in H^{k-1}(Y_t,{\Omega_{\mathcal{Y}_U}^{k+1}}_{\mid Y_t})$
lifts to a unique class $\widetilde{q^*\eta}_t$ in $\Omega_{U,t}\otimes H^{k-1}(Y_t,\Omega_{Y_t}^{k})={\rm Hom}\,(T_{U,t},H^{k-1}(Y_t,\Omega_{Y_t}^{k}))$, giving the desired morphism ${\rm int}(\cdot)q^*\eta
$ of (\ref{eqprodint}).
We use now formula
(\ref{eqsigma1}) which makes obvious that
for any $t\in U,\,v\in T_{U,t}$,
\begin{eqnarray}
\label{eqintcomp} v \lrcorner \sigma_U=([\mathcal{Z}]^{k,k}_{\mid \mathcal{J}_t\times Y_t})^*({\rm int}(v)q^*\eta)\,\,{\rm in}\,\,H^0(\mathcal{J}_t,\Omega_{\mathcal{J}_t}).
\end{eqnarray}
In the right hand side, we recall that, by construction, the morphism
$$([\mathcal{Z}]^{k,k}_{\mid \mathcal{J}_t\times Y_t})^*:H^k(Y_t,\Omega_{Y_t}^{k-1})\rightarrow H^0(\mathcal{J}_t,\Omega_{\mathcal{J}_t})$$
is the natural isomorphism.
It thus only remains to analyze the morphism
${\rm int}(\cdot)q^*\eta$. We observe now that
the cotangent bundle sequence (\ref{eqex1aa}) is compatible with
the conormal bundle sequence of $Y_t$ in $X$, since via the differential ${q}^*$ of the morphism
$q:\mathcal{Y}\rightarrow X$, we
get the following commutative diagram :

\begin{eqnarray}\label{eqcomdiag}
 \label{diagram} \xymatrix{
0\ar[r]&\mathcal{O}_{Y_t}(-L)\ar[r]\ar[d]&\Omega_{X\mid Y_t} \ar[r]\ar[d]& \Omega_{Y_t} \ar[r]\ar[d]&0&\\
0\ar[r]&\Omega_{U,t}\ar[r]&\Omega_{\mathcal{Y}_U\mid Y_t}\ar[r]& \Omega_{Y_t}\ar[r]&0&}
\end{eqnarray}

The first vertical map  is the natural inclusion dual to the evaluation
map $T_{U,t}=H^0(Y_t,L_{\mid Y_t})\rightarrow L_{\mid Y_t}$.
Taking $(k+1)$-th exterior powers, we get
the following commutative diagram:
\begin{eqnarray}\label{eqcomdiagext}
  \xymatrix{
0\ar[r]&\Omega_{Y_t}^{k}(-L)\ar[r]\ar[d]^f&\Omega_{X\mid Y_t}^{k+1} \ar[r]\ar[d]& \Omega_{Y_t}^{k+1} \ar[r]\ar[d]&0&\\
0\ar[r]&\Omega_{U,t}\otimes \Omega_{Y_t}^{k}\ar[r]&\Omega^{k+1}_{\mathcal{Y}_U\mid Y_t}/L^2\Omega^{k+1}_{\mathcal{Y}_U\mid Y_t}\ar[r]& \Omega_{Y_t}^{k+1}\ar[r]&0&}
\end{eqnarray}
It clearly follows from the commutativity of  diagram (\ref{eqcomdiagext}) that
\begin{eqnarray}\label{eqetaeta}f(\tilde{\eta}_t)=\widetilde{q^*\eta}_t\,\,\,{\rm in}\,\,\Omega_{U,t}\otimes H^{k-1}(Y_t,\Omega_{Y_t}^{k})
\end{eqnarray}
(where $\tilde{\eta}_t$ is as in \eqref{eqtildeetat}).
The proof of (ii) is now a consequence of (\ref{eqintcomp}) and (\ref{eqetaeta}). Indeed, for any $v\in T_{U,t}$ we have
$$v \lrcorner \sigma_U=([\mathcal{Z}]^{k,k}_{\mid \mathcal{J}_t\times Y_t})^*({\rm int}(v)q^*\eta)=([\mathcal{Z}]^{k,k}_{\mid \mathcal{J}_t\times Y_t})^*(\widetilde{q^*\eta}_t(v))
\,\,{\rm in}\,\,H^0(\mathcal{J}_t,\Omega_{\mathcal{J}_t}),$$
where the first equality is (\ref{eqintcomp}) and the second equality is by definition of $\widetilde{q^*\eta}_t$.
The equality (\ref{eqetaeta}) then tells that
$$\widetilde{q^*\eta}_t(v)=v\tilde{\eta}_t,$$
where on the right, $v$ is seen as an element of $H^0(\mathcal{O}_{Y_t}(-L))$ and
$v\tilde{\eta}_t$ is the product of $v$ and ${\eta}_t$. The proof is thus finished since
$([\mathcal{Z}]^{k,k}_{\mid \mathcal{J}_t\times Y_t})^*$ is the natural identification
between $H^{k-1}(Y_t,\Omega^k_{Y_t})$ and $H^0(\mathcal{J}_t,\Omega_{\mathcal{J}_t})$.
\end{proof}
The construction of the $2$-form
$\sigma_U$ and the proof of Theorem \ref{theogen2form}, assuming property ($\star$), are now complete. We conclude this section explaining
how to modify the arguments to get the same results only assuming (\ref{eqvan}).
If we examine the proofs given above, we see that the key tool is the algebraic cycle
$\mathcal{Z}\in{\rm CH}^k(\mathcal{J}_U\times_U \mathcal{Y}_U)_\mathbb{Q}$ and its image $\mathcal{Z}_q\in{\rm CH}^{k+1}(\mathcal{J}_U\times X)_\mathbb{Q}$. These cycles appear only through their
Dolbeault classes
$[\mathcal{Z}]^{k,k}$, $[\mathcal{Z}_q]^{k+1,k+1}$, which are better seen, after extensions to smooth projective varieties,
as Hodge classes.
In the absence of the cycle $\mathcal{Z}$ that we constructed using the assumption ($\star$) we still
have the desired Hodge classes, as follows from the following lemma. Below,  a Hodge class
on a smooth quasi-projective variety $Y$ is by definition the restriction of a Hodge class on a smooth projective compactification
$\overline{Y}$ of $Y$. The set of Hodge classes on $Y$ does not depend on the compactification
$\overline{Y}$. In fact, according to Deligne \cite{deligne}, Hodge classes in $H^{2k}(Y,\mathbb{Q})$ are identified with ${\rm Hdg}^{2k}({W}_{2k}H^{2k}(Y,\mathbb{Q}))$, where ${W}_{2k}H^{2k}(Y,\mathbb{Q})$ is the smallest weight part of
$H^{2k}(Y,\mathbb{Q})$, which is also the image of the restriction map
$H^{2k}(\overline{Y},\mathbb{Q})\rightarrow H^{2k}({Y},\mathbb{Q})$ for any smooth
projective compactification
$\overline{Y}$ of $Y$. Hodge classes $\alpha\in {\rm Hdg}^{2k}(H^{2k}(Y,\mathbb{Q}))$ have
a Dolbeault counterpart $\alpha^{k,k}\in H^k(Y,\Omega_Y^k)$ (which usually does not determine $\alpha$ in the non projective situation).
\begin{lemma} \label{lesubstit} Let $X, \,k,\,\,L$ be as above, satisfying condition (\ref{eqvan}). Then there
exists a Hodge class $\alpha\in{\rm Hdg}^{2k}(\mathcal{J}_U\times_U\mathcal{Y}_U,\mathbb{Q})$ with the property
that the class $\alpha_0\in H^0(U,R^{2k}(\pi_U,u)_*\mathbb{Q})$ induces the natural
isomorphism $H^{2k-1}(Y_t,\mathbb{Q})\cong H^1(\mathcal{J}_t,\mathbb{Q})$ at any point $t\in U$.
\end{lemma}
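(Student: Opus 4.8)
The goal is to produce a Hodge class $\alpha$ on the quasi-projective variety $\mathcal{J}_U\times_U\mathcal{Y}_U$ whose ``fiberwise'' component $\alpha_0\in H^0(U,R^{2k}(\pi_U,u)_*\mathbb{Q})$ induces, at each $t\in U$, the canonical isomorphism $H^{2k-1}(Y_t,\mathbb{Q})\cong H^1(\mathcal{J}_t,\mathbb{Q})$. In Lemma \ref{lepenible} this class was obtained as $[\mathcal{Z}]$ for an honest cycle $\mathcal{Z}$, using property ($\star$) to realize the Abel--Jacobi map by families of algebraic cycles. Under the weaker assumption (\ref{eqvan}) we no longer have the cycle, but, as the text emphasizes, the form $\sigma_U$ and all of Theorem \ref{theogen2form} only ever used the Dolbeault/Hodge \emph{classes} $[\mathcal{Z}]^{k,k}$ and $[\mathcal{Z}_q]^{k+1,k+1}$. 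So the plan is to construct $\alpha$ directly as a Hodge class, bypassing the cycle entirely.

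The key observation is that the desired isomorphism $H^{2k-1}(Y_t,\mathbb{Q})\cong H^1(J(Y_t),\mathbb{Q})$ is a \emph{tautological} one: for an abelian variety $A=J(Y_t)$ the group $H^1(A,\mathbb{Q})$ is by definition $H^{2k-1}(Y_t,\mathbb{Q})$ (with its polarized weight-one Hodge structure coming from (\ref{eqvan}), which forces all of $H^{2k-1}(Y_t)$ into the two Hodge pieces $(k,k-1),(k-1,k)$). Hence the canonical class inducing it is the identity endomorphism of this Hodge structure, viewed via the Künneth-type decomposition as a class in $H^{2k-1}(Y_t)^\vee\otimes H^1(\mathcal{J}_t)=H^{2k-1}(Y_t)\otimes H^1(\mathcal{J}_t)$ (using the polarization to self-dualize), which is a Hodge class on the fiber $\mathcal{J}_t\times Y_t$. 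The strategy is therefore: first, work over a base point and identify the correct fiberwise class $\alpha_{0,t}$ as this identity morphism; second, spread it out over $U$ to a global section $\alpha_0\in H^0(U,R^{2k}(\pi_U,u)_*\mathbb{Q})$ — this is automatic since the identity morphism of the variation of Hodge structure $R^{2k-1}u_*\mathbb{Q}$ is monodromy invariant and of type $(k,k)$ throughout; and third, and crucially, lift $\alpha_0$ from a section of the local system to an actual Hodge class $\alpha\in\mathrm{Hdg}^{2k}(\mathcal{J}_U\times_U\mathcal{Y}_U,\mathbb{Q})$ on the total space.

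The main obstacle is precisely the third step: producing a global Hodge class on the (non-projective) total space whose image in $H^0(U,R^{2k}(\pi_U,u)_*\mathbb{Q})$ is the prescribed $\alpha_0$. A priori the edge map $H^{2k}(\mathcal{J}_U\times_U\mathcal{Y}_U,\mathbb{Q})\to H^0(U,R^{2k}\mathbb{Q})$ from the Leray spectral sequence need not be surjective, and even a class in the image need not be Hodge. The plan to overcome this is to invoke the theory of Hodge classes on quasi-projective varieties via Deligne's description recalled in the excerpt: $\mathrm{Hdg}^{2k}(H^{2k}(Y,\mathbb{Q}))=\mathrm{Hdg}^{2k}(W_{2k}H^{2k}(Y,\mathbb{Q}))$, where $W_{2k}$ is the image of restriction from any smooth projective compactification. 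Concretely I would choose smooth projective compactifications, extend the smooth family to a family with suitable (e.g. normal-crossings) degenerations, and use the global invariant cycle theorem together with the degeneration of the Leray spectral sequence (Deligne) for the smooth projective morphism over $U$. The degeneration gives that $\alpha_0$, being a section of $R^{2k}(\pi_U,u)_*\mathbb{Q}$ of Hodge type $(k,k)$, lifts to a genuine cohomology class; the global invariant cycle theorem, applied to a smooth projective compactification, guarantees that such an invariant class comes by restriction from a class on the compactification, and one checks it can be taken to be a Hodge class there (the Hodge type is preserved because the restriction map is a morphism of Hodge structures). Its restriction is then the sought Hodge class $\alpha$ on $\mathcal{J}_U\times_U\mathcal{Y}_U$ with $[\alpha]_0=\alpha_0$. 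The remaining bookkeeping — that this $\alpha$ has the stated component and that the fiberwise map is the natural isomorphism — is then immediate from the construction of $\alpha_{0,t}$ in the first step.
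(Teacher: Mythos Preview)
Your proposal is correct and follows essentially the same route as the paper: identify the fiberwise canonical isomorphism as a Hodge class $\alpha_t$ on $\mathcal{J}_t\times Y_t$ (using that (\ref{eqvan}) makes it a morphism of Hodge structures of bidegree $(-k+1,-k+1)$), note that these give a monodromy-invariant section of the local system, and then invoke Deligne's global invariant cycle theorem on a smooth projective compactification to lift this to a class $\beta$ on the compactification. The paper makes one point slightly more explicit than you do: to upgrade $\beta$ to a \emph{Hodge} class it appeals to the semisimplicity of the category of polarized rational Hodge structures (the restriction map being a morphism of polarized Hodge structures, one can project $\beta$ onto the Hodge summand mapping to $\alpha_t$); your phrase ``one checks it can be taken to be a Hodge class'' is exactly this step, and you should name semisimplicity as the reason.
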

\begin{proof} The only observation to make is that the canonical
isomorphism $H^{2k-1}(Y_t,\mathbb{Q})\cong H^1(\mathcal{J}_t,\mathbb{Q})$ is an isomorphism of Hodge structures of bidegree $(-k+1,-k+1)$, by the vanishing condition (\ref{eqvan}). Such an isomorphism of Hodge structures
provides a degree $2k$ Hodge class $\alpha_t$ on the product $\mathcal{J}_t\times Y_t$. We thus have
a section of the local system $R^{2k}(\pi_U,u)_*\mathbb{Q}$ which is a Hodge class at any point of
$t$ of $U$. Deligne's global invariant cycle theorem \cite{deligne} then says that
for any smooth projective compactification $M$ of $\mathcal{J}_U\times_U\mathcal{Y}_U$, there exists
a cohomogy class $\beta\in H^{2k}(M,\mathbb{Q})$ such that $\beta_{\mid \mathcal{J}_t\times Y_t}=\alpha_t$
for any $t\in U$.
Using the facts that $\alpha_t$ is a Hodge class and the restriction morphism $H^{2k}(M,\mathbb{Q})\rightarrow H^{2k}(\mathcal{J}_t\times Y_t,\mathbb{Q})$ is a morphism of polarized Hodge structures, the semisimplicity of the category of polarized rational Hodge structures
allows us to  conclude that the class
$\beta$ can be chosen to be Hodge on $M$ (see \cite[2.2.1]{voisindecomp}). The restriction of $\beta$ to $\mathcal{J}_U\times_U\mathcal{Y}_U$ is then the
desired Hodge class $\alpha$ on $\mathcal{J}_U\times_U\mathcal{Y}_U$.
\end{proof}
This concludes the proof of Theorem \ref{theogen2form} assuming only (\ref{eqvan}).
We conclude this section observing that, except for (ii), we did not use the condition that $\mathcal{Y}$ is
the universal family of smooth divisors in $X$. Any smooth projective family mapping to $X$, or even only having a correspondence with $X$, with fibers satisfying
condition (\ref{eqvan})  will do. In practice, (ii) gives a way of deciding whether the constructed
$2$-form is degenerate or not. So
our arguments prove more generally the following variant of Theorem \ref{theogen2form}:
\begin{theorem}\label{theovariant} Let $X$ be a smooth
projective variety of dimension $n$ and let $f:\mathcal{Y}\rightarrow U$ be a smooth projective morphism between smooth quasi-projective varieties. Let $l,\,k$ be  integers
and let $Z\in{\rm CH}^{n-l+k-1}( \mathcal{Y}\times X)_\mathbb{Q}$ be a codimension $n-l+k-1$ cycle.  Assume  that the fibers
$Y_t$ of $f$ satisfy condition (\ref{eqvan}) for the given  integer $k$. Let
$\pi_U:\mathcal{J}_U\rightarrow U$ be the family of intermediate Jacobians $J^{2k-1}(Y_t),\,t\in U$. Then
\begin{itemize}
\item[(i)] For any class $\eta\in H^{l+2,l}(X)$ such that
$Z^*\eta_{\mid Y_t}=0$ in $H^{k+1,k-1}(Y_t)$ for any $t$ in $U$, there is a closed $(2,0)$-form
$\sigma_U\in H^0(\mathcal{J}_U,\Omega_{\mathcal{J}_U}^2)$ for which the fibers of $\pi_U$ are isotropic.
\item[(ii)] For any smooth algebraic partial compactification $\mathcal{J}_U\subset \overline{\mathcal{J}}$, the
$(2,0)$-form $\sigma_U$ extends to a $(2,0)$-form $\sigma$ on
$\overline{\mathcal{J}}$.
\end{itemize}
\end{theorem}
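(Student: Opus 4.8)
The plan is to rerun the construction of $\sigma_U$ and the proof of Theorem \ref{theogen2form} essentially verbatim, making the single substitution of the correspondence $Z$ for the restriction morphism $q\colon\mathcal{Y}_U\to X$, and dropping exactly the one part of that proof—part (ii), the identification of $\lrcorner\,\sigma_t$—that genuinely used the divisorial structure $Y_t\subset X$ (through the conormal sequence \eqref{eqcomdiag} and the evaluation map on $H^0(Y_t,L_{\mid Y_t})$). Since we assume only \eqref{eqvan} and not ($\star$), I would work throughout with Hodge classes and their Dolbeault components rather than honest cycles, invoking Lemma \ref{lesubstit} in place of Lemma \ref{lepenible}.

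First I would fix the bookkeeping. The fibers $Y_t$ of $f$ have dimension $2k-1$, so that Lemma \ref{lesubstit} applies and produces a Hodge class $\alpha\in{\rm Hdg}^{2k}(\mathcal{J}_U\times_U\mathcal{Y}_U,\mathbb{Q})$ inducing fiberwise the natural isomorphism $H^{2k-1}(Y_t,\mathbb{Q})\cong H^1(\mathcal{J}_t,\mathbb{Q})$. Because $Z\in{\rm CH}^{n-l+k-1}(\mathcal{Y}\times X)$, the operation $Z^*$ shifts Hodge bidegree by $(k-l-1,k-l-1)$; hence for $\eta\in H^{l+2,l}(X)$ one gets $Z^*\eta\in H^{k-1}(\mathcal{Y},\Omega_{\mathcal{Y}}^{k+1})$, whose fiberwise restriction $Z^*\eta_{\mid Y_t}=(Z_{\mid Y_t\times X})^{*}\eta$ lands in $H^{k+1,k-1}(Y_t)$. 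This is precisely the class whose vanishing is assumed, and it plays the role of $q^*\eta_{\mid Y_t}$ in \eqref{eqsigma1}. I would then define, in perfect analogy with \eqref{eqsigma1},
\[
\sigma_U:=\big(\alpha^{k,k}\big)^{*}\big(Z^*\eta_{\mid\mathcal{Y}_U}\big)=p_{1*}\big(\alpha^{k,k}\cup p_2^{*}(Z^*\eta)\big)\in H^0(\mathcal{J}_U,\Omega_{\mathcal{J}_U}^2).
\]
The pushforward makes sense because $p_1\colon\mathcal{J}_U\times_U\mathcal{Y}_U\to\mathcal{J}_U$ is proper (its fibers are the projective $Y_t$), and the degree count—$\alpha^{k,k}$ of type $(k,k)$, $p_2^{*}(Z^*\eta)$ of type $(k+1,k-1)$, Gysin pushforward along fibers of dimension $2k-1$—lands $\sigma_U$ in $H^0(\Omega^2)$. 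Isotropy of the fibers, the first half of (i), is then immediate exactly as in the proof of Theorem \ref{theogen2form}(i): restricting to $\mathcal{J}_t$ gives $\sigma_{U\mid\mathcal{J}_t}=\big(\alpha^{k,k}_{\mid\mathcal{J}_t\times Y_t}\big)^{*}(Z^*\eta_{\mid Y_t})$, which vanishes because $Z^*\eta_{\mid Y_t}=0$ in $H^{k+1,k-1}(Y_t)$ by hypothesis.

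For the extension statement (ii) I would mimic the proof of Theorem \ref{theogen2form}(iii) in the Hodge-class setting. On $\mathcal{J}_U\times_U\mathcal{Y}_U\times X$, with projections $\rho_{12},\rho_{23},\rho_{13}$ to $\mathcal{J}_U\times_U\mathcal{Y}_U$, $\mathcal{Y}_U\times X$ and $\mathcal{J}_U\times X$ respectively, form the composite class
\[
\gamma:=\rho_{13*}\big(\rho_{12}^{*}\alpha\cup\rho_{23}^{*}[Z_{\mid\mathcal{Y}_U\times X}]\big).
\]
The map $\rho_{13}$ is proper (fibers $\cong Y_t$), so $\gamma$ is defined; being assembled from the Hodge class $\alpha$ and the algebraic class $[Z]$ by pullback, cup product and Gysin pushforward, it is again a Hodge class on $\mathcal{J}_U\times X$, of type $(n-l,n-l)$, and by the functoriality of correspondences (applied to Dolbeault components) $\sigma_U=pr_{1*}\big(\gamma^{n-l,n-l}\cup pr_2^{*}\eta\big)$, where $pr_1,pr_2$ are the projections of $\mathcal{J}_U\times X$. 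Now, given any smooth algebraic partial compactification $\mathcal{J}_U\subset\overline{\mathcal{J}}$, choose a smooth projective compactification $\overline{\mathcal{J}}\subset\widehat{\mathcal{J}}$. By Deligne's theorem (recalled before Lemma \ref{lesubstit}) $\gamma$ extends to a Hodge class on $\widehat{\mathcal{J}}\times X$; restricting that extension to $\overline{\mathcal{J}}\times X$ yields a Hodge class $\overline{\gamma}$ with $\overline{\gamma}_{\mid\mathcal{J}_U\times X}=\gamma$, whose Dolbeault component extends $\gamma^{n-l,n-l}$. Hence $\sigma:=pr_{1*}(\overline{\gamma}^{\,n-l,n-l}\cup pr_2^{*}\eta)$ is a holomorphic $2$-form on $\overline{\mathcal{J}}$ restricting to $\sigma_U$, proving (ii). Closedness (the remaining half of (i)) then follows exactly as for Theorem \ref{theogen2form}(iv): applying (ii) to a smooth projective $\overline{\mathcal{J}}$, the extension $\sigma$ is a holomorphic $2$-form on a compact Kähler manifold, hence closed, so $\sigma_U$ is closed.

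I expect no genuine obstacle, only two points to keep aligned, both built into the hypotheses. First, the two Gysin pushforwards $p_1$ and $\rho_{13}$ must be proper, which holds because $f$ is projective, so the fibers $Y_t$ are proper. Second, the Hodge bidegrees must match up, which is guaranteed by the assumption $\eta\in H^{l+2,l}(X)$ together with the shift $(k-l-1,k-l-1)$ of $Z$, arranged precisely so that $Z^*\eta$ is of type $(k+1,k-1)$ on the fibers. Everything else is a formal transcription of the argument for Theorem \ref{theogen2form}, the essential observation being that $q$ entered that argument only through the pulled-back class $q^*\eta$, whose role $Z^*\eta$ now plays.
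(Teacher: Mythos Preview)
Your proposal is correct and is exactly the argument the paper has in mind: the paper gives no separate proof of Theorem~\ref{theovariant}, observing only that ``except for (ii), we did not use the condition that $\mathcal{Y}$ is the universal family of smooth divisors in $X$,'' and you have faithfully carried out that transcription, replacing $q^{*}\eta$ by $Z^{*}\eta$, using Lemma~\ref{lesubstit} in place of Lemma~\ref{lepenible}, and omitting the part of Theorem~\ref{theogen2form}(ii) that relied on the conormal sequence. The bookkeeping on Hodge types and on the properness of $\rho_{13}$ is right, and the extension argument via a smooth projective $\widehat{\mathcal{J}}\supset\overline{\mathcal{J}}$ is the correct way to transport the Hodge class $\gamma$ to $\overline{\mathcal{J}}\times X$.
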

\begin{example}{\rm In  \cite{ilievmanivel}, Iliev and Manivel construct a Lagrangian fibration structure on the
family of intermediate Jacobians of smooth cubic fivefolds containing a given cubic fourfold
$X$. We recover the $(2,0)$-form as an application of Theorem \ref{theovariant}: The family
$\mathcal{Y}_U$ in this case is the universal  family of these cubic fivefolds and the integer $k$ is $3$. The cycle
$Z\subset X\times \mathcal{Y}_U\cong \mathcal{Y}_U\times X$ is isomorphic to
$X\times U$ and will be given by the embedding of $X$ in
$Y_t$ for any $t\in U$, hence we have  $l=1,\,n=4,\,n-l+k-1=5$ in this case.
Of course, some more work as in Theorem
\ref{theogen2form}(ii) is needed to show that the $(2,0)$-form is nondegenerate, but our approach
shows that this forms extends to any smooth projective compactification.
}
\end{example}
\subsection{The case of the cubic fourfold}\label{seccasecub}
 The paper will be devoted to the case where $X\subset
\mathbb{P}^5$ is a cubic fourfold,  $L=\mathcal{O}_X(1)$ and $k=2$. One has ${\rm dim}\,H^{3,1}(X)=1$ by Griffiths' theory, and a generator $\eta$ of $H^{3,1}(X)$ provides thus by Theorem
\ref{theogen2form} a $(2,0)$-form
$\sigma_U$ on the family of intermediate Jacobians of smooth hyperplane sections of $X$.
We have  the following:
\begin{prop}\label{theonondeg} If $X$ is a smooth cubic fourfold, the holomorphic $2$-form
$\sigma_{U}$ is nondegenerate on $\mathcal{J}_{U}$.
\end{prop}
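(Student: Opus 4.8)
The plan is to reduce the nondegeneracy of $\sigma_U$ to a pointwise statement about the map studied in Theorem \ref{theogen2form}(ii), and then to evaluate that map through Griffiths' description of the Hodge theory of the cubic threefold. First I would record the relevant linear algebra. Fix $x\in\mathcal{J}_U$ lying over $t\in U$ and apply the differential of $\pi_U$, giving the exact sequence
\[
0\to T_{\mathcal{J}_t,x}\to T_{\mathcal{J}_U,x}\to T_{U,t}\to 0 .
\]
Here $\dim\mathcal{J}_U=10$ while $T_{\mathcal{J}_t,x}$ and $T_{U,t}$ are both $5$-dimensional, and by Theorem \ref{theogen2form}(i) the vertical subspace $T_{\mathcal{J}_t,x}$ is isotropic for $\sigma_U$. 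Consequently $\sigma_U$ induces a well-defined pairing $T_{\mathcal{J}_t,x}\times T_{U,t}\to\bC$, and since the vertical subspace has half the dimension of $T_{\mathcal{J}_U,x}$, the form $\sigma_U$ is nondegenerate at $x$ if and only if this pairing is perfect. Identifying $T_{\mathcal{J}_t,x}^\vee$ with the translation-invariant $1$-forms $H^0(\mathcal{J}_t,\Omega_{\mathcal{J}_t})$, one adjoint of this pairing is exactly the map $\lrcorner\,\sigma_t\colon T_{U,t}\to H^0(\mathcal{J}_t,\Omega_{\mathcal{J}_t})$ of Theorem \ref{theogen2form}(ii), which depends only on $t$. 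Thus $\sigma_U$ is everywhere nondegenerate if and only if, for every $t\in U$, the map $\lrcorner\,\sigma_t$ is an isomorphism.

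By Theorem \ref{theogen2form}(ii) this map is multiplication by the class $\tilde\eta_t$,
\[
m_{\tilde\eta_t}\colon H^0(Y_t,\mathcal{O}_{Y_t}(1))\longrightarrow H^1(Y_t,\Omega^2_{Y_t})=H^{2,1}(Y_t),
\]
a map between two $5$-dimensional spaces, so it suffices to prove it is an isomorphism. I would test this against the polarization: composing $m_{\tilde\eta_t}$ with the perfect cup-product pairing $H^{2,1}(Y_t)\times H^{1,2}(Y_t)\to H^{3,3}(Y_t)\cong\bC$ yields a pairing
\[
P\colon H^0(Y_t,\mathcal{O}_{Y_t}(1))\times H^{1,2}(Y_t)\to\bC,\qquad P(v,\beta)=\int_{Y_t}(v\cdot\tilde\eta_t)\cup\beta,
\]
and $m_{\tilde\eta_t}$ is an isomorphism precisely when $P$ is perfect.

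Finally I would compute $P$ through Griffiths' residue theory. Writing $R=R^{Y_t}$ for the Jacobian (Milnor) ring of the cubic threefold $Y_t\subset\mathbb{P}^4$, which is Artinian Gorenstein with socle in degree $5$, and $R^X$ for that of $X$, Griffiths' isomorphisms give $H^0(Y_t,\mathcal{O}_{Y_t}(1))\cong R_1$, $H^{2,1}(Y_t)\cong R_1$, and $H^{1,2}(Y_t)\cong R_4$, under which the cup product corresponds to the multiplication $R_1\times R_4\to R_5\cong\bC$. Since $\eta$ generates the one-dimensional space $H^{3,1}(X)\cong R^X_0$, its restriction $\tilde\eta_t$ should correspond to a nonzero element of $R_0$, i.e.\ to a unit; hence $P$ is, up to a nonzero scalar, the Macaulay pairing $R_1\times R_4\to R_5$, which is perfect by the Gorenstein duality of $R$, yielding the nondegeneracy. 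The main obstacle is precisely this last identification: one must match the Donagi--Markman coupling $P$ with the Jacobian-ring multiplication and check that $\tilde\eta_t$ is represented by a unit rather than by a class that could drop rank. This is where the smoothness of $X$ enters, guaranteeing $\dim H^{3,1}(X)=1$ and that Griffiths' theory applies; the computation is in spirit the classical one of Griffiths and Clemens--Griffiths for cubic threefolds, and alternatively one may invoke the infinitesimal Torelli/Yukawa-coupling results for hypersurfaces to conclude the perfectness of $P$ directly.
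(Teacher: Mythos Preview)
Your reduction is exactly the paper's: both of you use Theorem \ref{theogen2form}(i)--(ii) to reduce nondegeneracy to the assertion that, for every $t\in U$, multiplication by $\tilde\eta_t\in H^1(Y_t,\Omega_{Y_t}^2(-1))$ gives an isomorphism $H^0(Y_t,\mathcal{O}_{Y_t}(1))\to H^1(Y_t,\Omega_{Y_t}^2)$, and both of you invoke Griffiths' residue theory for the cubic threefold to conclude.

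The difference is only in how the last step is packaged. The paper identifies $\Omega_{Y_t}^2(-1)\cong T_{Y_t}(-3)$ and shows that $\tilde\eta_t$ is a nonzero multiple of the extension class $e$ of the normal bundle sequence $0\to T_{Y_t}\to T_{\mathbb{P}^4\mid Y_t}\to\mathcal{O}_{Y_t}(3)\to 0$; the isomorphism then follows from the long exact sequence and the elementary vanishings $H^0(Y_t,T_{\mathbb{P}^4\mid Y_t}(-2))=0$ and $H^1(Y_t,T_{\mathbb{P}^4\mid Y_t}(-2))=0$. Your route via the Macaulay pairing $R_1\times R_4\to R_5$ and Gorenstein duality is an equivalent formulation of the same Griffiths isomorphism. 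The one point you correctly flag as ``the main obstacle'' --- that $\tilde\eta_t$ really corresponds to a \emph{unit} in $R_0$, i.e.\ is nonzero --- is handled in the paper by two concrete facts: $H^1(X,\Omega_X^3(-1))=0$ forces $\eta_{\mid Y_t}\neq 0$, and $\dim H^1(Y_t,\Omega_{Y_t}^2(-1))=1$ (again from the normal bundle sequence), so $\tilde\eta_t$ generates this line. Supplying these two vanishing/dimension computations would close your argument completely; without them, your ``should correspond to a nonzero element of $R_0$'' is the only soft spot.
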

 Proposition \ref{theonondeg}  already appears in \cite{DM2},
\cite{ilievmanivel},
\cite{manimarku}, \cite{marku}. The proof given here is slightly different, being an easy application
of Theorem \ref{theogen2form}.
\begin{proof}[Proof of Proposition \ref{theonondeg}] We apply
Theorem \ref{theogen2form}. In the case of the family of hyperplane sections of a cubic fourfold, the base $U$ and the fiber $J(Y_t)$ of the family $\mathcal{J}_U\rightarrow U$ are of dimension $5$.
The $2$-form $\sigma_U$ vanishes along the fibers of $\pi_U$ and in order to prove it is nondegenerate, it suffices to show that at any point $t\in U$, the map
$\lrcorner(\cdot)\sigma_U: T_{U,t}\rightarrow H^0(\mathcal{J}_t,\Omega_{\mathcal{J}_t})$ is an isomorphism.
Theorem \ref{theogen2form}(ii) tells us that
$\lrcorner(\cdot)\sigma_U$ is the following map: the generator
$\eta$ induces for each point $t\in U$ a class
$\tilde{\eta}_t\in H^1(Y_t,\Omega_{Y_t}^2(-1))$.
Then, using the identification $H^0(\mathcal{J}_t,\Omega_{\mathcal{J}_t})\cong H^1(Y_t,\Omega_{Y_t}^2)$,
 $\lrcorner(\cdot)\sigma_U:T_{U,t}=H^0(Y_t,\mathcal{O}_{Y_t}(1))\rightarrow H^1(Y_t,\Omega_{Y_t}^2)$ is
the multiplication map by $\tilde{\eta}_t$. So the statement of Proposition \ref{theonondeg}
is the following:

\begin{claim*}For any $t\in U$, the class $\tilde{\eta}_t\in H^1(Y_t,\Omega_{Y_t}^2(-1))$ induces an isomorphism
\begin{eqnarray}\label{eqmult} H^0(Y_t,\mathcal{O}_{Y_t}(1))\rightarrow H^1(Y_t,\Omega_{Y_t}^2).
\end{eqnarray}
\end{claim*}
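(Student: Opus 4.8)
The plan is to reduce the Claim to an explicit computation in Griffiths' Jacobian ring, where both the source and the target become the \emph{same} five-dimensional space and multiplication by $\tilde{\eta}_t$ becomes manifestly invertible. Choose coordinates $x_0,\dots,x_5$ on $\bP^5$ so that $Y=Y_t=X\cap\{x_5=0\}$ sits in $\bP^4=\{x_5=0\}$, write $X=V(F)$, $f=F|_{x_5=0}$, and set $q=(\partial F/\partial x_5)|_{x_5=0}$. Let $S'=\bC[x_0,\dots,x_4]$, $J_f=(\partial_0 f,\dots,\partial_4 f)$, and $R_f=S'/J_f$. Smoothness of $Y$ means the $\partial_i f$ have no common zero, so $R_f$ is an Artinian Gorenstein ring with socle in degree $5$ and Hilbert function $1,5,10,10,5,1$. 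Now $H^0(Y,\mathcal{O}_Y(1))=S'_1=(R_f)_1$ tautologically, while Griffiths' theory (\cite{voisinbook}) identifies $H^1(Y,\Omega^2_Y)=H^{2,1}(Y)$ with $(R_f)_1$. Hence the map in the Claim is an endomorphism of the five-dimensional space $(R_f)_1$, and it suffices to prove it is invertible.

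The crux is to identify $\tilde{\eta}_t$. The class $\eta$ generates $H^{3,1}(X)$, which Griffiths' theory for the fourfold identifies with $(R_F)_0=\bC$ (generated by the residue of a rational $5$-form with pole of order $2$ along $X$, where $R_F=\bC[x_0,\dots,x_5]/(\partial_0F,\dots,\partial_5F)$). I would take an explicit residue (or \v{C}ech) representative of $\eta$ and run it through the defining sequence of $\tilde{\eta}_t$,
\[
0\to\Omega^2_Y(-1)\to\Omega^3_{X}|_Y\to\Omega^3_Y\to 0,
\]
in order to show that, under the identifications above, the multiplication $\ell\mapsto \ell\,\tilde{\eta}_t\colon H^0(\mathcal{O}_Y(1))\to H^1(\Omega^2_Y)$ is identified with multiplication in $R_f$ by a nonzero element of $(R_f)_0$. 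Concretely, the surjection $S\to S'$, $x_5\mapsto 0$, carries $(\partial_iF)_{i\le 4}$ to the $\partial_i f$ and $\partial_5 F$ to $q$, inducing $R_F/(x_5)\cong R_f/(q)$, which in degree $0$ is the identity $\bC\to\bC$; and $\tilde{\eta}_t$ is the image of the generator $\eta$. Being a nonzero constant, $\tilde{\eta}_t$ makes $\ell\mapsto\ell\,\tilde{\eta}_t$ a nonzero scalar times the canonical identity of $(R_f)_1$, hence an isomorphism. This proves the Claim, and therefore Proposition \ref{theonondeg}.

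The main obstacle is precisely this residue bookkeeping: one must track $\eta$ through the restriction $H^1(X,\Omega^3_X)\to H^1(Y,\Omega^3_X|_Y)$ and the connecting map into $H^1(Y,\Omega^2_Y(-1))$, and verify that its leading term is the \emph{constant} class — in particular that $\tilde{\eta}_t$ does \emph{not} involve the quadric $q$. The Fermat fourfold $F=\sum_i x_i^3$ is a useful sanity check, and guards against the tempting wrong guess that the map is multiplication by $q$: there $q=(\partial F/\partial x_5)|_{x_5=0}=0$, so the infinitesimal variation of Hodge structure of the family $\{Y_t\}$ degenerates at this section while $\sigma_U$ stays nondegenerate, which is consistent only because $\lrcorner\sigma_U$ is the tautological identification rather than the Gauss--Manin derivative.

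If one prefers to avoid pinning down $\tilde{\eta}_t$ on the nose, an equivalent route is to prove injectivity directly. From the sequence above, $\ell\,\tilde{\eta}_t=0$ forces $\ell\cdot(\eta|_Y)=0$ in $H^1(Y,\Omega^3_X(1)|_Y)$, and one checks by the same calculus — using that $\eta$ is a nonzero primitive class and that $R_f$ is Gorenstein — that $\ell\mapsto\ell\cdot(\eta|_Y)$ is injective; since source and target of the map in the Claim both have dimension $5$, a dimension count then finishes the argument.
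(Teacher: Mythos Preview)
Your plan is pointed in the right direction, but it is a sketch with an acknowledged gap rather than a proof, and the paper's argument closes exactly that gap by a different, much shorter route.

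The place where your argument stalls is the identification of $\tilde{\eta}_t$ inside the Jacobian ring picture. You yourself flag this as ``the main obstacle'' and propose to ``take an explicit residue representative of $\eta$ and run it through the defining sequence of $\tilde{\eta}_t$'', but you do not carry this out. Your ``concretely'' paragraph computes the ring map $R_F/(x_5)\cong R_f/(q)$; however this is the restriction of Jacobian rings, not an identification of $H^1(Y,\Omega_Y^2(-1))$ with $(R_f)_0$, and it does not by itself show that cup product with $\tilde{\eta}_t$ becomes multiplication by a unit on $(R_f)_1$. The Fermat sanity check correctly rules out the wrong guess ``multiplication by $q$'', but it does not establish the right one. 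Your alternative route (``prove injectivity directly'') is likewise left as ``one checks by the same calculus'', i.e.\ unproved.

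The paper avoids this bookkeeping entirely. It observes two elementary facts:
\begin{itemize}
\item[(i)] $H^1(Y_t,\Omega_{Y_t}^2(-1))\cong H^1(Y_t,T_{Y_t}(-3))$ is one-dimensional (an immediate consequence of the normal bundle sequence of $Y_t\subset\mathbb{P}^4$), and $\tilde{\eta}_t\neq 0$ (because $H^1(X,\Omega_X^3(-1))=0$ forces $\eta_{\mid Y_t}\neq 0$). Hence $\tilde{\eta}_t$ is a nonzero multiple of the extension class $e$ of
\[
0\to T_{Y_t}\to T_{\mathbb{P}^4\mid Y_t}\to \mathcal{O}_{Y_t}(3)\to 0.
\]
\item[(ii)] Multiplication by $e$ from $H^0(\mathcal{O}_{Y_t}(1))$ to $H^1(\Omega_{Y_t}^2)\cong H^1(T_{Y_t}(-2))$ is the connecting map of this sequence twisted by $\mathcal{O}_{Y_t}(-2)$, and is an isomorphism because $H^0(Y_t,T_{\mathbb{P}^4\mid Y_t}(-2))=0$ and $H^1(Y_t,T_{\mathbb{P}^4\mid Y_t}(-2))=0$. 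This connecting map \emph{is} the Griffiths residue isomorphism $S'_1\xrightarrow{\sim}(R_f)_1\cong H^{2,1}(Y_t)$.
\end{itemize}
So the paper reaches precisely the conclusion you are aiming for---that the map is, up to a nonzero scalar, the tautological Griffiths identification---but obtains it by the one-dimensionality of $H^1(\Omega_{Y_t}^2(-1))$ together with a two-line vanishing check, rather than by tracking a residue representative through several identifications. If you want to keep your Jacobian ring framing, the cleanest fix is to insert exactly these two observations in place of your ``residue bookkeeping'' step.
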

The proof of the claim follows from the following lemma:
\begin{lemma}\begin{itemize} \item[(i)] The class $\tilde{\eta}_t\in H^1(Y_t,\Omega_{Y_t}^2(-1))$ is a nonzero multiple of the extension
class $e$ of the normal bundle sequence \begin{eqnarray}
\label{eqextpourY}
0\rightarrow T_{Y_t}\rightarrow T_{\mathbb{P}^4\mid Y_t}\rightarrow \mathcal{O}_{Y_t}(3)\rightarrow 0,
 \end{eqnarray}
 using the natural identification
$ \Omega_{Y_t}^2(-1)\cong T_{Y_t}(-3)$.
\item[(ii)] The extension class $e$ has the property that
the multiplication map by $e: H^0(Y_t,\mathcal{O}_{Y_t}(1))\rightarrow H^1(Y_t,\Omega_{Y_t}^2)$ is an isomorphism.
\end{itemize}
\end{lemma}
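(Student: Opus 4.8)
My plan is to prove both parts by reducing them to standard cohomology computations on the cubic threefold $Y_t\subset\mathbb{P}^4$ (and, for the nonvanishing in (i), on $X$ itself), using the normal and Euler sequences together with Serre duality on cubic hypersurfaces. Throughout I use the identification $\Omega^2_{Y_t}\cong T_{Y_t}(-2)$ (so $\Omega^2_{Y_t}(-1)\cong T_{Y_t}(-3)$), which comes from $K_{Y_t}=\mathcal{O}_{Y_t}(-2)$ and the wedge pairing $\Omega^2_{Y_t}\otimes\Omega^1_{Y_t}\to\Omega^3_{Y_t}=K_{Y_t}$; this turns every cotangent statement into a tangent-bundle computation.

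For the structural content of (i), I would first show that $H^1(Y_t,\Omega^2_{Y_t}(-1))=H^1(Y_t,T_{Y_t}(-3))$ is one-dimensional and generated by $e$. Twisting the normal bundle sequence by $\mathcal{O}_{Y_t}(-3)$ gives $0\to T_{Y_t}(-3)\to T_{\mathbb{P}^4\mid Y_t}(-3)\to\mathcal{O}_{Y_t}\to 0$, whose connecting map $H^0(Y_t,\mathcal{O}_{Y_t})\to H^1(Y_t,T_{Y_t}(-3))$ sends $1$ to the extension class $e$. Using the Euler sequence restricted to $Y_t$ and twisted by $\mathcal{O}(-3)$, together with the vanishing $H^{\le 2}(Y_t,\mathcal{O}_{Y_t}(-2))=H^{\le 2}(Y_t,\mathcal{O}_{Y_t}(-3))=0$ (read off from the ideal-sheaf sequence in $\mathbb{P}^4$, where $\mathcal{O}(-2),\mathcal{O}(-3)$ have vanishing cohomology below the top degree), I obtain $H^0(Y_t,T_{\mathbb{P}^4\mid Y_t}(-3))=H^1(Y_t,T_{\mathbb{P}^4\mid Y_t}(-3))=0$. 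Hence the connecting map is an isomorphism and $H^1(Y_t,T_{Y_t}(-3))\cong\mathbb{C}\cdot e$. Given this, (i) is equivalent to the single nonvanishing assertion $\tilde{\eta}_t\ne 0$.

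The nonvanishing $\tilde{\eta}_t\ne 0$ is the one step where the hypothesis on $X$ is used, and I expect it to be the conceptual heart of the argument. By construction $\tilde{\eta}_t$ is the unique lift of the restriction $\eta_{\mid Y_t}\in H^1(Y_t,\Omega^3_{X\mid Y_t})$ along the injection $H^1(Y_t,\Omega^2_{Y_t}(-1))\hookrightarrow H^1(Y_t,\Omega^3_{X\mid Y_t})$, injective because $H^0(Y_t,\Omega^3_{Y_t})=H^{3,0}(Y_t)=0$. So it suffices to prove $\eta_{\mid Y_t}\ne 0$. Restricting the locally free sheaf $\Omega^3_X$ yields
\[
0\to\Omega^3_X(-1)\to\Omega^3_X\to\Omega^3_{X\mid Y_t}\to 0,
\]
and in the associated long exact sequence $\eta_{\mid Y_t}$ is the image of the generator $\eta\in H^1(X,\Omega^3_X)=H^{3,1}(X)$ under the restriction map. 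Thus $\eta_{\mid Y_t}\ne 0$ follows once $H^1(X,\Omega^3_X(-1))=0$. As $\Omega^3_X\cong T_X(-3)$ on the fourfold, this is $H^1(X,T_X(-4))=0$, which I would verify by the same normal-plus-Euler bookkeeping for $X\subset\mathbb{P}^5$: twisting the normal sequence by $\mathcal{O}(-4)$ reduces it to $H^0(X,\mathcal{O}_X(-1))=0$ and $H^1(X,T_{\mathbb{P}^5\mid X}(-4))=0$, the latter following from the twisted Euler sequence and the vanishing $H^{\le 2}(X,\mathcal{O}_X(-3))=H^{\le 2}(X,\mathcal{O}_X(-4))=0$.

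For (ii), I would twist the normal bundle sequence by $\mathcal{O}_{Y_t}(-2)$ to get $0\to\Omega^2_{Y_t}\to T_{\mathbb{P}^4\mid Y_t}(-2)\to\mathcal{O}_{Y_t}(1)\to 0$. Since twisting by a line bundle does not change the extension class, the connecting homomorphism $\delta\colon H^0(Y_t,\mathcal{O}_{Y_t}(1))\to H^1(Y_t,\Omega^2_{Y_t})$ is exactly cup product with $e$, i.e.\ the multiplication map of the statement; identifying $\delta$ with $\cup\,e$ is the only non-computational point in this part. It is an isomorphism provided $H^0(Y_t,T_{\mathbb{P}^4\mid Y_t}(-2))=H^1(Y_t,T_{\mathbb{P}^4\mid Y_t}(-2))=0$, which again follow from the Euler sequence twisted by $\mathcal{O}(-2)$ together with the vanishing of $H^{\le 1}(Y_t,\mathcal{O}_{Y_t}(-1))$ (in fact all cohomology of $\mathcal{O}_{Y_t}(-1)$ vanishes) and of $H^{\le 2}(Y_t,\mathcal{O}_{Y_t}(-2))$. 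As a consistency check, source and target are both five-dimensional, $h^0(\mathcal{O}_{Y_t}(1))=5=h^{2,1}(Y_t)$, so injectivity and surjectivity are equivalent. The genuine obstacle in the whole argument is the nonvanishing $\tilde{\eta}_t\ne 0$ of the third paragraph; every remaining step is a mechanical application of the normal and Euler sequences, and the resolution of that step is itself purely cohomological, resting on $H^1(X,\Omega^3_X(-1))=0$, which is precisely where the fact that $X$ is a cubic fourfold with a nonzero one-dimensional $H^{3,1}$ enters.
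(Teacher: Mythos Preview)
Your proof is correct and follows essentially the same strategy as the paper: for (i) you show $H^1(Y_t,T_{Y_t}(-3))$ is one-dimensional via the twisted normal sequence and then prove $\tilde{\eta}_t\neq 0$ from $H^1(X,\Omega_X^3(-1))=0$, exactly as the paper does; for (ii) you identify the multiplication map with the connecting homomorphism of the normal sequence twisted by $\mathcal{O}(-2)$ and deduce the isomorphism from vanishing of $H^0$ and $H^1$ of $T_{\mathbb{P}^4\mid Y_t}(-2)$. The only cosmetic difference is that the paper invokes Griffiths' residue isomorphism and cites $H^2(Y_t,T_{Y_t}(-2))=0$ for (ii), whereas your direct verification of $H^1(Y_t,T_{\mathbb{P}^4\mid Y_t}(-2))=0$ is the cleaner (and correct) vanishing for surjectivity.
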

\begin{proof} (ii) is Griffiths' residue isomorphism (see \cite[II, 6.1.3]{voisinbook}) and in this case, the statement immediately follows from the
exact sequence (\ref{eqextpourY}) and
the fact that $H^0(Y_t,T_{\mathbb{P}^4\mid Y_t}(-2))=0$, and
 $H^2(Y_t,T_{Y_t}(-2))=0$. As for (i), this simply follows from the
 fact that the class $\tilde{\eta}_t\in H^1(Y_t,\Omega_{Y_t}^2(-1))$ is nonzero because $\eta\not=0$, and
 $H^1(X,\Omega_X^3(-1))=0$, so that
 $\eta_{\mid Y_t}\not=0$. On the other hand, $H^1(Y_t,\Omega_{Y_t}^2(-1))=H^1(Y_t,T_{Y_t}(-3))$ is $1$-dimensional, as follows
 from the normal bundle sequence
 (\ref{eqextpourY}).
 \end{proof}
The proof of Proposition \ref{theonondeg} is finished.
\end{proof}

\subsection{Another example: Quadric sections of cubic fourfolds}Note that the cubic fourfold $X$ has another family of smooth divisors $Y\subset X$ satisfying condition
(\ref{eqvan}), namely the smooth complete intersections $Q\cap X$, where $Q$ is a quadric in $\mathbb{P}^5$.
The corresponding family $\mathcal{J}_Q\rightarrow U_Q$ of intermediate Jacobians has a basis $U_Q$ of dimension
$20=h^0(\mathbb{P}^5,\mathcal{O}_{\mathbb{P}^5}(2))-1$ and   fibers of dimension $20$. Theorem
\ref{theogen2form} shows that $\mathcal{J}_Q$ has a closed holomorphic $2$-form $\sigma_Q$ which extends
to any smooth algebraic compactification $\overline{\mathcal{J}_Q}$  of $\mathcal{J}_Q$.
However the $2$-form in this case is only generically nondegenerate:
\begin{lemma} The $2$-form $\sigma_Q$ is nondegenerate along a fiber $\mathcal{J}_{Q,t}=J(Y_t)$,
where $Y_t=Q_t\cap X$,
if and only if the quadric $Q_t$ is nondegenerate.
\end{lemma}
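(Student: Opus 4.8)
The plan is to reduce the statement, exactly as in the proof of Proposition \ref{theonondeg}, to a cohomological computation on the threefold $Y_t=Q_t\cap X$ in which the rank of the quadratic form $q_t$ defining $Q_t$ enters explicitly. First I would record the reduction. Here $\dim U_Q=20=h^{2,1}(Y_t)=\dim \mathcal{J}_t$, and the fibers of $\pi_{U_Q}$ are isotropic for $\sigma_Q$ by Theorem \ref{theogen2form}(i). Since the pairing between $T_{U_Q,t}$ and the tangent space to the fiber induced by $\sigma_Q$ (using isotropy of the fiber) is between spaces of the same dimension, $\sigma_Q$ is nondegenerate at every point of $\mathcal{J}_{Q,t}$ if and only if the induced map $\lrcorner\sigma_t\colon T_{U_Q,t}\to H^0(\mathcal{J}_t,\Omega_{\mathcal{J}_t})$ is an isomorphism. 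By Theorem \ref{theogen2form}(ii) this map is multiplication by $\tilde{\eta}_t\in H^1(Y_t,\Omega^2_{Y_t}(-2))$,
\[
H^0(Y_t,\mathcal{O}_{Y_t}(2))\longrightarrow H^1(Y_t,\Omega^2_{Y_t}),
\]
so everything reduces to deciding when this is an isomorphism. Using $K_{Y_t}=\mathcal{O}_{Y_t}(-1)$ (adjunction for the $(3,2)$ complete intersection $Y_t\subset\mathbb{P}^5$), I would rewrite $\Omega^2_{Y_t}\cong T_{Y_t}(-1)$, so the target is $H^1(Y_t,T_{Y_t}(-1))$ and $\tilde{\eta}_t\in H^1(Y_t,T_{Y_t}(-3))$.

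The key geometric input, replacing Lemma (i)--(ii) of Proposition \ref{theonondeg}, is to identify $\tilde{\eta}_t$ with the extension class $e_{Q_t}$ of the normal bundle sequence of $Y_t$ inside the \emph{quadric} $Q_t$,
\[
0\to T_{Y_t}\to T_{Q_t}|_{Y_t}\to N_{Y_t/Q_t}=\mathcal{O}_{Y_t}(3)\to 0;
\]
this sequence makes sense because the smooth $Y_t$ avoids the vertex of $Q_t$, and it is precisely $T_{Q_t}|_{Y_t}$ that remembers the degeneracy of $Q_t$. For the identification I would first show, from the tangent sequence of $Y_t\subset\mathbb{P}^5$ twisted by $\mathcal{O}(-3)$ together with the Euler sequence and the absence of intermediate cohomology of the complete intersection $Y_t$, that $H^1(Y_t,T_{Y_t}(-3))$ is one-dimensional. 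Then it suffices to check that both classes are nonzero: $\tilde{\eta}_t\neq 0$ because $\eta|_{Y_t}\neq 0$, which follows from $H^1(X,\Omega^3_X(-2))=0$ by Bott vanishing exactly as in Proposition \ref{theonondeg} (the vanishing $H^{3,0}(Y_t)=0$ makes the lift $\tilde{\eta}_t$ unique and nonzero as soon as $\eta|_{Y_t}$ is); while $e_{Q_t}\neq 0$ because $H^0(Y_t,T_{Q_t}|_{Y_t}(-3))=0$ (it injects into $H^0(Y_t,T_{\mathbb{P}^5}|_{Y_t}(-3))=0$), so the displayed sequence cannot split. Being two nonzero elements of a one-dimensional space, $\tilde{\eta}_t$ and $e_{Q_t}$ are proportional, and multiplication by $\tilde{\eta}_t$ is, up to a nonzero scalar, the connecting homomorphism $\delta\colon H^0(Y_t,\mathcal{O}_{Y_t}(2))\to H^1(Y_t,T_{Y_t}(-1))$ obtained by twisting the sequence by $\mathcal{O}(-1)$.

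Finally I would compute $\coker\delta$ and watch the rank appear. From the long exact sequence of $0\to T_{Y_t}(-1)\to T_{Q_t}|_{Y_t}(-1)\to\mathcal{O}_{Y_t}(2)\to 0$, using $H^1(\mathcal{O}_{Y_t}(2))=0$, one gets $\coker\delta\cong H^1(Y_t,T_{Q_t}|_{Y_t}(-1))$. The tangent sequence of $Q_t$ restricted to $Y_t$ and twisted by $\mathcal{O}(-1)$,
\[
0\to T_{Q_t}|_{Y_t}(-1)\to T_{\mathbb{P}^5}|_{Y_t}(-1)\to \mathcal{O}_{Y_t}(1)\to 0,
\]
together with the Euler sequence (which gives $H^0(Y_t,T_{\mathbb{P}^5}|_{Y_t}(-1))=\mathbb{C}^6$ and $H^1=0$), identifies $H^1(Y_t,T_{Q_t}|_{Y_t}(-1))$ with the cokernel of the map $\psi\colon \mathbb{C}^6\to H^0(Y_t,\mathcal{O}_{Y_t}(1))=\mathbb{C}^6$ sending the Euler frame $\partial_{x_i}$ to the partials $\partial q_t/\partial x_i$. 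In the bases $\{e_i\}$ and $\{x_j|_{Y_t}\}$ this $\psi$ is exactly the symmetric matrix of $q_t$, so $\coker\delta\cong\coker\psi$ has dimension $6-\rank(q_t)$. Hence $\delta$ is surjective — and, the two sides having equal dimension, an isomorphism — if and only if $q_t$ has rank $6$, i.e. if and only if $Q_t$ is nondegenerate. The main obstacle is the identification of $\tilde{\eta}_t$ with $e_{Q_t}$ in the second paragraph: recognizing that the tangent bundle of the quadric is the correct object, and establishing the one-dimensionality and the two nonvanishings needed to pin down the proportionality; once this is in place, the rank criterion falls out of the linear-algebra computation of $\psi$.
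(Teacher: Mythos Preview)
Your proof is correct and follows essentially the same approach as the paper: identify $\tilde{\eta}_t$ with the extension class of the normal bundle sequence of $Y_t\subset Q_t$, twist by $\mathcal{O}(-1)$, and reduce the isomorphism question to the (non)vanishing of a cohomology group of $T_{Q_t}|_{Y_t}(-1)$. The only minor differences are that the paper obtains the identification $\tilde{\eta}_t=e_{Q_t}$ by tracing the class of $\eta$ through the restricted normal bundle sequence of $X\subset\mathbb{P}^5$ rather than by your one-dimensionality argument, and it tests injectivity via $H^0(Y_t,T_{Q_t}|_{Y_t}(-1))$ (the kernel of your matrix $\psi$) whereas you test surjectivity via $H^1(Y_t,T_{Q_t}|_{Y_t}(-1))\cong\coker\psi$; your explicit identification of $\psi$ with the symmetric matrix of $q_t$ makes the rank criterion particularly transparent.
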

\begin{proof} The generator $\eta$ of $H^1(X,\Omega_X^3)=H^1(X,T_X(-3))$
is the extension class of the normal bundle sequence
$$0\rightarrow T_X\rightarrow T_{\mathbb{P}^5\mid X}\rightarrow \mathcal{O}_X(3)
\rightarrow 0.$$
Restricting to
$Y_t=Q_t\cap X$, we get
the exact sequence
$$0\rightarrow T_{X\mid Y_t}\rightarrow T_{\mathbb{P}^5\mid Y_t}\rightarrow \mathcal{O}_{Y_t}(3)
\rightarrow 0,$$
whose extension class must come from
the extension class $e_t$ of the normal bundle sequence
\begin{eqnarray}
\label{eqtruc15}0\rightarrow T_{Y_t}\rightarrow T_{Q\mid Y_t}\rightarrow \mathcal{O}_{Y_t}(3)
\rightarrow 0\end{eqnarray}
of $Y_t$
in $Q_t$.
In other words, the class $\tilde{\eta}_t\in H^1(\Omega_{Y_t}^2(-2))=H^1(Y_t,T_{Y_t}(-3))$
is nothing but the extension class $e_t$.
It follows that the multiplication map
$$\tilde{\eta}_t: H^0(Y_t, \mathcal{O}_{Y_t}(2))\rightarrow H^1(Y_t,\Omega_{Y_t}^2)$$
identifies with the map
$H^0(Y_t, \mathcal{O}_{Y_t}(2))\rightarrow H^1(Y_t,\Omega_{Y_t}^2)\cong H^1(Y_t,T_{Y_t}(-1))$ induced by the
exact sequence (\ref{eqtruc15}) twisted by $ \mathcal{O}_{Y_t}(-1)$.
Looking at the long exact sequence associated to (\ref{eqtruc15}), we find
that this map is an isomorphism if and only if
$H^0(Y_t,T_{Q_t\mid Y_t}(-1))=0$. But
$H^0(Y_t,T_{Q_t\mid Y_t}(-1))=0$ if and only if $Q_t$ is not singular.
\end{proof}
\subsection{Extensions to nodal fibers}
Let $X$ be a smooth cubic fourfold, and $\eta\in H^1(X,\Omega_X^3)$ be a generator of
$H^1(X,\Omega_X^3)$.  We use as before the notation
$\mathcal{Y}\rightarrow B,\,\mathcal{Y}_U\rightarrow U$ for the universal family of hyperplane sections of $X$.
Let $U_1\subset |\mathcal{O}_X(1)|$ be the Zariski open set parametrizing hyperplane sections of $X$ with at most one ordinary double point.
The Jacobian fibration $\pi_U:\mathcal{J}_U\rightarrow U$ has a flat projective extension $\pi_{U_1}:\mathcal{J}_{U_1}\rightarrow U_1$ with smooth total space (see Lemma \ref{leisoenhaut} for the smoothness statement). As the vanishing cycle of the degeneration is not trivial at a point $t\in U_1\setminus U$, the fiber of $\pi_{U_1}$ over $t$
 is a singular compactification
of a $\mathbb{C}^*$-bundle over $J(\widetilde{Y_t})$, where $\widetilde{Y_t}$ is the desingularization of $Y_t$ obtained by blowing-up the node. 
We will denote below by $\mathcal{J}_{U_1}^{\circ}$ the quasiabelian part of $\mathcal{J}_{U_1}$. Note that $\mathcal{J}_{U_1}^{\circ}\subset \mathcal{J}_{U_1}$ has a complement of codimension $2$, consisting of the singular loci of the compactified Jacobians
over $U_1\setminus U$.
By Theorem \ref{theogen2form}(iii), the $2$-form $\sigma_U$ extends to a $2$-form
$\sigma_{U_1}$ on $\mathcal{J}_{U_1}$, for which the fibers of $\pi_{U_1}$ are isotropic by
Theorem \ref{theogen2form}(i) (and the fact that the fibers of $\pi_{U_1}$ are equidimensional).
Next, the smooth locus $J({Y_t})_{reg}=\mathcal{J}_{U_1,t}^{\circ}$ is a quasiabelian variety with cotangent space isomorphic to
$H^{1}(\widetilde{Y_t},\Omega_{\widetilde{Y_t}}^{2}({\rm log} \, E_Y))$,
 where $E_Y$ is the exceptional divisor
of the resolution $\widetilde{Y_t}\rightarrow Y_t$.

Our goal in this section is to reprove the following result which can be found in \cite[\S8.5.2]{DM2}:
\begin{prop} \label{proextnondeg} The extended $2$-form $\sigma_{U_1}$ is everywhere nondegenerate
on $\mathcal{J}_{U_1}^{\circ}$, hence also on 
 $\mathcal{J}_{U_1}$.
\end{prop}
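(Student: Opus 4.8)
The plan is to reduce the nondegeneracy over the nodal locus $U_1\setminus U$ to the nondegeneracy already established over $U$ (Proposition \ref{theonondeg}), using Theorem \ref{theogen2form}(ii) adapted to the boundary, together with the codimension statement. First I would observe that the question is local on $\mathcal{J}_{U_1}^\circ$ and that, since $\sigma_{U_1}$ vanishes on the (equidimensional) fibers of $\pi_{U_1}$, nondegeneracy of $\sigma_{U_1}$ at a point of $\mathcal{J}_{U_1,t}^\circ$ is equivalent to the induced contraction map
$$\lrcorner\sigma_{U_1}:T_{U_1,t}\longrightarrow H^0(\mathcal{J}_{U_1,t}^\circ,\Omega_{\mathcal{J}_{U_1,t}^\circ})=H^1(\widetilde{Y_t},\Omega_{\widetilde{Y_t}}^2(\log E_Y))$$
being an isomorphism, exactly as in the smooth case. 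Both sides have dimension $5$: the base $U_1$ is an open subset of $B=(\mathbb{P}^5)^\vee$, and the cotangent space of the quasiabelian variety $\mathcal{J}_{U_1,t}^\circ$ was identified above with $H^1(\widetilde{Y_t},\Omega_{\widetilde{Y_t}}^2(\log E_Y))$.

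The key step is to show that the contraction map is computed by the same Griffiths/residue recipe as over $U$, but now on the resolution $\widetilde{Y_t}$ with a logarithmic twist along the exceptional divisor $E_Y$. I would run the argument of Theorem \ref{theogen2form}(ii) fiberwise at the nodal point $t$: the generator $\eta\in H^1(X,\Omega_X^3)$ restricts to a class $\widetilde\eta_t\in H^1(\widetilde{Y_t},\Omega_{\widetilde{Y_t}}^2(\log E_Y)(-L))$ via the logarithmic cotangent sequence for the pair $(\widetilde{Y_t},E_Y)$, and the contraction $\lrcorner\sigma_{U_1}$ identifies with multiplication by $\widetilde\eta_t$
$$H^0(\widetilde{Y_t},L_{\mid\widetilde{Y_t}})\longrightarrow H^1(\widetilde{Y_t},\Omega_{\widetilde{Y_t}}^2(\log E_Y)).$$
As in the smooth case, $\widetilde\eta_t$ is (a nonzero multiple of) the relevant extension class, and the needed isomorphism is a logarithmic version of Griffiths' residue isomorphism for the nodal cubic threefold; the requisite vanishings of $H^0$ and $H^2$ of the appropriate twisted log tangent sheaf on $\widetilde{Y_t}$ follow by the same cohomology computation as over $U$, since blowing up a single ordinary node changes these groups only in a controlled way.

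The cleanest way to finish, and the step I expect to be the main obstacle, is to avoid recomputing everything by hand and instead argue by continuity plus the codimension bound. Since $\mathcal{J}_{U_1}^\circ\subset\mathcal{J}_{U_1}$ has complement of codimension $2$, and $\sigma_{U_1}$ is a holomorphic (hence continuous) section of $\bigwedge^2\Omega$ that is nondegenerate on the dense open $\mathcal{J}_U$, the nondegeneracy locus is open; the real content is that it does not drop rank generically along the codimension-one boundary $\pi_{U_1}^{-1}(U_1\setminus U)\cap\mathcal{J}_{U_1}^\circ$. I would establish this by proving that the degeneracy divisor of $\sigma_{U_1}$, if nonempty, would have to be $\pi_{U_1}$-vertical and contained in the boundary, and then rule this out by the fiberwise residue computation of the previous paragraph at a single boundary point $t$: the isomorphism \eqref{eqmult}-type statement there shows $\lrcorner\sigma_{U_1}$ is an isomorphism at $t$, so the degeneracy locus misses the boundary entirely. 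Combined with the already-known nondegeneracy on $\mathcal{J}_U$, this yields nondegeneracy on all of $\mathcal{J}_{U_1}^\circ$; nondegeneracy then holds on $\mathcal{J}_{U_1}$ as well since, the form being holomorphic and $\mathcal{J}_{U_1}\setminus\mathcal{J}_{U_1}^\circ$ having codimension $2$, the top exterior power $\sigma_{U_1}^{\wedge 5}$ is a nowhere-vanishing section of the trivial-away-from-codimension-$2$ canonical bundle and hence vanishes nowhere.
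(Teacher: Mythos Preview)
Your overall strategy matches the paper's: reduce nondegeneracy at a nodal point $t$ to showing that the contraction
\[
\lrcorner\sigma_{U_1}:T_{U_1,t}\longrightarrow H^1(\widetilde{Y_t},\Omega_{\widetilde{Y_t}}^2(\log E_Y))
\]
is an isomorphism, and identify this map with multiplication by a logarithmic extension class. The final passage from $\mathcal{J}_{U_1}^\circ$ to $\mathcal{J}_{U_1}$ via the codimension-$2$ complement is also fine.

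However, the proposal skips the two places where the actual work lies, and the ``continuity'' paragraph does not buy you anything. First, the identification of $\lrcorner\sigma_{U_1}$ with multiplication by a specific class is not a formal consequence of Theorem~\ref{theogen2form}(ii): that theorem is proved over the smooth locus of $u$, and extending it to the boundary requires a separate argument (the paper's Lemma~\ref{theoextend}). One has to construct the class $e_Y\in H^1(\widetilde{Y_t},\Omega^2_{\widetilde{Y_t}}(\log E_Y)(-1))$ from the logarithmic normal bundle sequence of $\widetilde X$ in $\widetilde{\mathbb{P}^5}$, check it is well defined (this uses specific vanishings such as $H^i(\widetilde{Y_t},\mathcal{O}_{\widetilde{Y_t}}(-2))=0$), and then prove that the interior product equals multiplication by $e_Y$---which in turn requires showing that the restriction map $H^1(\widetilde{Y_t},\Omega_{\widetilde{Y_t}}^2(\log E_Y))\to H^1(Y_{t,\mathrm{reg}},\Omega_{Y_{t,\mathrm{reg}}}^2)$ is an isomorphism (the paper's Sublemma). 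None of this is addressed by ``run the argument of Theorem~\ref{theogen2form}(ii) fiberwise.''

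Second, and more importantly, the injectivity of multiplication by $e_Y$ is the entire content of the proposition, and your phrase ``the requisite vanishings \ldots\ follow by the same cohomology computation as over $U$'' is not a proof. In the smooth case the relevant vanishing was $H^0(Y_t,T_{\mathbb{P}^4\mid Y_t}(-2))=0$; here the paper needs the different and more delicate vanishing
\[
H^0\bigl(\widetilde{Y_t},\,T_{\widetilde{\mathbb{P}^4}}(\log E)_{\mid\widetilde{Y_t}}(-2)(2E_Y)\bigr)=0,
\]
which involves a positive twist by $2E_Y$ and is not simply ``controlled'' by the smooth computation. Your continuity/degeneracy-divisor detour is circular: you correctly observe that it reduces back to the fiberwise isomorphism at a single boundary $t$, which is exactly the computation you have not done. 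So the proposal has the right architecture but is missing the load-bearing lemma (the logarithmic analogue of Theorem~\ref{theogen2form}(ii)) and the specific cohomological vanishing that makes the multiplication map injective.
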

As an immediate corollary, we get the following result:
\begin{cor}\label{coroextend} Assume $\mathcal{J}_{U_1}$ has a smooth  compactification
$\overline{\mathcal{J}}$ with boundary $\overline{\mathcal{J}}\setminus \mathcal{J}_{U_1}$ of codimension $\geq2$ in $\overline{\mathcal{J}}$.
Then $\overline{\mathcal{J}}$ is holomorphically symplectic.
\end{cor}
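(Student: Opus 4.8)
The plan is to produce on $\ocJ$ a closed holomorphic $2$-form that restricts to $\sigma_{U_1}$ on the dense open set $\mathcal{J}_{U_1}$, and then to check that it remains nondegenerate across the boundary. The two inputs are Theorem \ref{theogen2form}, which supplies an extension of $\sigma_U$ to any smooth compactification together with its closedness, and Proposition \ref{proextnondeg}, which gives nondegeneracy on the dense open part $\mathcal{J}_{U_1}$. The only genuinely new ingredient is a codimension argument propagating nondegeneracy over the boundary, and this is exactly where the hypothesis enters.

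First I would invoke Theorem \ref{theogen2form}(iii): since $\ocJ$ is a smooth algebraic variety containing $\mathcal{J}_U$ (hence $\mathcal{J}_{U_1}$) as a Zariski-dense open subset, the form $\sigma_U$ extends to a holomorphic $2$-form $\sigma\in H^0(\ocJ,\Omega^2_{\ocJ})$. Because $\sigma$ and $\sigma_{U_1}$ both restrict to $\sigma_U$ on the dense open $\mathcal{J}_U$, they agree on $\mathcal{J}_{U_1}$, so $\sigma$ genuinely extends $\sigma_{U_1}$. Closedness of $\sigma$ is immediate from Theorem \ref{theogen2form}(iv): $d\sigma$ is a holomorphic $3$-form vanishing on the dense open $\mathcal{J}_{U_1}$, hence identically zero by irreducibility of $\ocJ$.

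The main point is nondegeneracy on all of $\ocJ$. Writing $2n=\dim\ocJ=10$, I would consider the top wedge power $\sigma^{\wedge n}=\sigma^{\wedge 5}$, a holomorphic section of the canonical bundle $K_{\ocJ}=\Omega^{10}_{\ocJ}$. At a point of $\ocJ$ the form $\sigma$ is nondegenerate precisely when $\sigma^{\wedge 5}$ is nonzero there. By Proposition \ref{proextnondeg}, $\sigma^{\wedge 5}$ is nowhere vanishing on $\mathcal{J}_{U_1}$, so its zero locus $Z=\{\sigma^{\wedge 5}=0\}$ is contained in the boundary $\ocJ\setminus\mathcal{J}_{U_1}$, which by hypothesis has codimension $\geq 2$. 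On the other hand, the zero locus of a section of a line bundle on a smooth irreducible variety is either empty or a hypersurface, i.e. of pure codimension $1$. Since $Z$ lies inside a set of codimension $\geq 2$, it cannot be a divisor, forcing $Z=\emptyset$. Thus $\sigma^{\wedge 5}$ is nowhere zero, $\sigma$ is everywhere nondegenerate, and $(\ocJ,\sigma)$ is holomorphically symplectic.

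I expect the extension and closedness to be essentially formal consequences of the cited results; the crux — and the sole place the codimension hypothesis is used — is the final dichotomy that a section of a line bundle whose vanishing locus sits in codimension $\geq 2$ must be nowhere zero. The one point to keep straight is the irreducibility of $\ocJ$, needed so that ``pure codimension $1$'' is the correct alternative for the vanishing locus; this is harmless, since $\ocJ$ is smooth and contains the irreducible variety $\mathcal{J}_U$ as a dense open, hence is connected and irreducible.
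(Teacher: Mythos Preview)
Your proof is correct and follows exactly the paper's approach: extend the form via Theorem \ref{theogen2form}(iii), use Proposition \ref{proextnondeg} for nondegeneracy on $\mathcal{J}_{U_1}$, and then conclude by the codimension argument that $\sigma^{\wedge 5}$ cannot vanish on a locus of codimension $\geq 2$. The paper compresses this into a single sentence, but the underlying reasoning is identical to what you have written out.
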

\begin{proof} Indeed, the extended $2$-form, being nondegenerate away from a codimension $2$ closed analytic subset, is
everywhere nondegenerate. \end{proof}

The proof of Proposition \ref{proextnondeg} is based on the following Lemma
\ref{theoextend}.
 For $t\in U_1\setminus U$, we have the inclusions
$$\widetilde{Y_t}\subset\widetilde{X}\subset \widetilde{\mathbb{P}^5},$$
where $\widetilde{X}$, resp. $\widetilde{\mathbb{P}^5}$, is the blow-up of $X$, resp. $\mathbb{P}^5$ at the singular
point of $Y_t$.
We denote by $E$ the exceptional divisor of $\widetilde{\mathbb{P}^5}$ and
$E_X$  the exceptional divisor of
$\widetilde{X}$, so $E_Y=E_X\cap\widetilde{Y_t}$.
As  $\widetilde{X}$ is transverse to the exceptional
divisor $E$ of  $\widetilde{\mathbb{P}^5}$ and belongs to the
linear system $|\mathcal{O}_{\widetilde{\mathbb{P}^5}}(3)(-E)|$, we have a
logarithmic tangent bundles sequence
\begin{eqnarray}\label{eqlogextnorm}0\rightarrow T_{\widetilde{X}}({\rm log}\,E_X)
\rightarrow T_{\widetilde{\mathbb{P}^5}}({\rm log}\,E)_{\mid \widetilde{X}}\rightarrow \mathcal{O}_{\widetilde{X}}(3)(-E_X)\rightarrow0.
\end{eqnarray}
Here we recall that the logarithmic tangent bundle of a variety equipped with a smooth
divisor $D$ is the dual of the logarithmic cotangent bundle determined by $D$ and can be defined as the
sheaf of vector fields tangent to $D$ along $D$.
As  $E_X\cap \widetilde{Y}_t=E_Y$, we  also get
  natural inclusions for any $l$:
\begin{eqnarray} \label{eqqenplus24oct} T_{\widetilde{Y_t}}({\rm log}\,E_Y)(lE_Y)(-3)\subset T_{\widetilde{X}}({\rm log}\,E_X)(lE_X)(-3)_{\mid \widetilde{Y}_t}.
\end{eqnarray}

\begin{lemma}\label{theoextend} The induced map
$\lrcorner\sigma_{U_1,t}:T_{U_1,t}\rightarrow H^{1}(\widetilde{Y_t},\Omega_{\widetilde{Y_t}}^{2}({\rm log}\, E_Y))$ is constructed as follows.
\begin{itemize}
\item[(i)] The extension class $e\in H^1(T_{\widetilde{X}}({\rm log}\,E_X)(E_X)(-3))$ of (\ref{eqlogextnorm}), maps naturally to an element
$e'\in H^1(T_{\widetilde{X}}({\rm log}\,E_X)(2E_X)(-3))$, which restricted to $\widetilde{Y_t}$ comes via (\ref{eqqenplus24oct}) from a uniquely defined element
\begin{eqnarray}\label{eqeY} e_Y\in H^1(\widetilde{Y_t},T_{\widetilde{Y_t}}({\rm log}\,E_Y)(2E_Y)(-3)).
\end{eqnarray}
\item[(ii)] One has $T_{\widetilde{Y_t}}({\rm log}\,E_Y)(2E_Y)(-3)=\Omega^2_{\widetilde{Y_t}}({\rm log}\,E_Y)(-1)$, thus
$$e_Y\in H^1(\widetilde{Y_t},\Omega^2_{\widetilde{Y_t}}({\rm log}\,E_Y)(-1)).$$
\item[(iii)]  The interior product $\lrcorner\sigma_{U_1,t}:T_{U_1,t}\rightarrow H^1(\widetilde{Y_t},\Omega^2_{\widetilde{Y_t}}({\rm log}\,E_Y))$ is given by
multiplication $H^0(\widetilde{Y_t},\mathcal{O}_{\widetilde{Y_t}}(1))\stackrel{e_Y}{\rightarrow }
H^1(\widetilde{Y_t},\Omega^2_{\widetilde{Y_t}}({\rm log}\,E_Y))$.
\end{itemize}
\end{lemma}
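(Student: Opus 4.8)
The plan is to carry out the logarithmic analogue of the Griffiths residue computation of Theorem \ref{theogen2form}(ii), transplanted from the smooth fiber to the resolution $\widetilde{Y_t}\subset\widetilde{X}\subset\widetilde{\mathbb{P}^5}$, using the identification (recalled just before the lemma) of the cotangent space of the quasiabelian variety $\mathcal{J}_{U_1,t}^{\circ}=J(Y_t)_{reg}$ with $H^1(\widetilde{Y_t},\Omega^2_{\widetilde{Y_t}}(\log E_Y))$. I would dispose of (ii) first, since it is a pure adjunction computation on the smooth threefold $\widetilde{Y_t}$. As $K_X=\mathcal{O}_X(-3)$ and $\widetilde{X}$ is the blow-up of $X$ at the node, $K_{\widetilde{X}}=\mathcal{O}_{\widetilde{X}}(-3)(3E_X)$, while the proper transform $\widetilde{Y_t}$ lies in $|\mathcal{O}_{\widetilde{X}}(1)(-2E_X)|$; adjunction then gives $K_{\widetilde{Y_t}}=\mathcal{O}_{\widetilde{Y_t}}(-2)(E_Y)$, whence $\Omega^3_{\widetilde{Y_t}}(\log E_Y)=K_{\widetilde{Y_t}}(E_Y)=\mathcal{O}_{\widetilde{Y_t}}(-2)(2E_Y)$. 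Tensoring the wedge-duality isomorphism $\Omega^2_{\widetilde{Y_t}}(\log E_Y)\cong T_{\widetilde{Y_t}}(\log E_Y)\otimes\Omega^3_{\widetilde{Y_t}}(\log E_Y)$ with $\mathcal{O}(-1)$ yields exactly $\Omega^2_{\widetilde{Y_t}}(\log E_Y)(-1)\cong T_{\widetilde{Y_t}}(\log E_Y)(2E_Y)(-3)$.

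For (i) I would use the logarithmic conormal sequence of $\widetilde{Y_t}$ in $\widetilde{X}$, coming from $E_Y=E_X\cap\widetilde{Y_t}$ and $\widetilde{Y_t}\in|\mathcal{O}_{\widetilde{X}}(1)(-2E_X)|$:
$$0\to T_{\widetilde{Y_t}}(\log E_Y)\to T_{\widetilde{X}}(\log E_X)|_{\widetilde{Y_t}}\to N\to 0,\qquad N=\mathcal{O}_{\widetilde{Y_t}}(1)(-2E_Y).$$
Twisting by $(2E_Y)(-3)$, the quotient becomes $N(2E_Y)(-3)=\mathcal{O}_{\widetilde{Y_t}}(-2)$. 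The restriction to $\widetilde{Y_t}$ of the class $e'\in H^1(T_{\widetilde{X}}(\log E_X)(2E_X)(-3))$ must then descend through \eqref{eqqenplus24oct} to a class $e_Y$. Uniqueness is immediate from $H^0(\widetilde{Y_t},\mathcal{O}_{\widetilde{Y_t}}(-2))=0$, which makes the map $H^1(T_{\widetilde{Y_t}}(\log E_Y)(2E_Y)(-3))\to H^1(T_{\widetilde{X}}(\log E_X)(2E_X)(-3)|_{\widetilde{Y_t}})$ injective; for existence I would check that the image of $e'|_{\widetilde{Y_t}}$ in $H^1(\widetilde{Y_t},\mathcal{O}_{\widetilde{Y_t}}(-2))$ vanishes. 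This is the logarithmic counterpart of the vanishing $H^{k-2}(Y_t,\Omega^{k+1}_{Y_t})=0$ used in the smooth case, and reduces to a cohomology vanishing on $\widetilde{Y_t}$ of the normal component of $e$ (the extra twist by $\mathcal{O}(E_X)$ being precisely what sends it into the subbundle).

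The heart is (iii), which I would prove by setting up the logarithmic avatar of the compatible cotangent/conormal diagrams \eqref{eqcomdiag}--\eqref{eqcomdiagext}. The generator $\eta\in H^1(X,\Omega^3_X)$ is the extension class of the normal bundle sequence of $X$ in $\mathbb{P}^5$; after blowing up the node it becomes the class $e$ of the logarithmic normal sequence \eqref{eqlogextnorm} on $\widetilde{X}$. One then restricts the logarithmic cotangent sequence of the family $u$ over $U_1$ (log-smooth after resolving the total space along the node) and compares it, via the differential of the second projection $q$, with the logarithmic conormal sequence of $\widetilde{Y_t}$ in $\widetilde{X}$. This produces a commutative diagram whose connecting map carries $e_Y$ to the unique lift $\widetilde{q^*\eta}_t\in\Omega_{U_1,t}\otimes H^1(\widetilde{Y_t},\Omega^2_{\widetilde{Y_t}}(\log E_Y))$ of $q^*\eta$, exactly as in the smooth computation. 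Since the extended cycle class $([\mathcal{Z}]^{k,k})^*$ induces over all of $U_1$ the natural identification of $H^1(\widetilde{Y_t},\Omega^2_{\widetilde{Y_t}}(\log E_Y))$ with $H^0(\mathcal{J}_{U_1,t}^{\circ},\Omega)$, formula \eqref{eqsigma1} then gives $v\lrcorner\sigma_{U_1,t}=v\cdot e_Y$ under $T_{U_1,t}=H^0(\widetilde{Y_t},\mathcal{O}_{\widetilde{Y_t}}(1))$, which is the assertion.

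I expect the main obstacle to lie precisely in the Hodge-theoretic bookkeeping underlying (iii): establishing that the abstractly extended interior-product map — built from the cycle class $\mathcal{Z}$ and the extended form $\sigma_{U_1}$, hence governed by the limiting mixed Hodge structure of the nodal cubic threefold $Y_t$ — is computed on the nose by the logarithmic de Rham data on $\widetilde{Y_t}$, i.e. that the naive restriction of the extension class with log poles along $E_Y$ correctly records the derivative of the variation at the boundary point $t\in U_1\setminus U$. Verifying the commutativity of the logarithmic diagram together with the requisite unique-lift vanishings ($H^0(\widetilde{Y_t},\mathcal{O}_{\widetilde{Y_t}}(-2))=0$ and its companion controlling the descent in (i)) is where the real work lies.
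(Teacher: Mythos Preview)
Your treatment of (i) and (ii) matches the paper's essentially verbatim: the same logarithmic normal bundle sequence for $\widetilde{Y_t}\subset\widetilde{X}$ twisted by $(2E_Y)(-3)$, with existence and uniqueness of the lift $e_Y$ coming from $H^0(\widetilde{Y_t},\mathcal{O}_{\widetilde{Y_t}}(-2))=0$ and $H^1(\widetilde{Y_t},\mathcal{O}_{\widetilde{Y_t}}(-2))=0$ (the paper simply asserts the $H^1$ vanishing rather than tracking the image of $e'$), and the same adjunction/wedge-duality computation for (ii).

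For (iii), however, the paper takes a genuinely different and much shorter route than the one you outline. Rather than rebuilding the diagrams \eqref{eqcomdiag}--\eqref{eqcomdiagext} in a logarithmic setting and confronting the mixed-Hodge bookkeeping you flag as the main obstacle, the paper observes that $u:\mathcal{Y}_{U_1}\to U_1$ is still \emph{smooth} along the open part $Y_{t,reg}\subset Y_t$, so the proof of Theorem~\ref{theogen2form}(ii) applies verbatim there and already gives the claimed identity after composing with the restriction
\[
H^1(\widetilde{Y_t},\Omega^2_{\widetilde{Y_t}}(\log E_Y))\longrightarrow H^1(Y_{t,reg},\Omega^2_{Y_{t,reg}}).
\]
The entire burden then shifts to a short Sublemma showing that this restriction map is an \emph{isomorphism}: one writes $H^1(Y_{t,reg},\Omega^2_{Y_{t,reg}})$ as the direct limit $\varinjlim_k H^1(\widetilde{Y_t},\Omega^2_{\widetilde{Y_t}}(\log E_Y)(kE_Y))$ and checks, using $E_Y\cong\mathbb{P}^1\times\mathbb{P}^1$ with $\mathcal{O}_{E_Y}(E_Y)=\mathcal{O}(-1,-1)$ together with the residue sequence on $E_Y$, that every transition map is an isomorphism. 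This trick completely bypasses the difficulty you anticipate---no log-smooth resolution of the total space, and no direct comparison of the cycle-theoretic $\sigma_{U_1}$ with logarithmic de~Rham data at the boundary, is needed. Your direct logarithmic approach would presumably also succeed, but it is the harder road; the reduction to $Y_{t,reg}$ is the key simplification you are missing.
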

\begin{proof} (i) We write the logarithmic normal bundle sequence for
$\widetilde{Y_t}\subset \widetilde{X}$:
$$0\rightarrow T_{\widetilde{Y_t}}({\rm log}\,E_Y)\rightarrow T_{\widetilde{X}}({\rm log}\,E_X)_{\mid \widetilde{Y_t}}\rightarrow \mathcal{O}_{\widetilde{Y_t}}(1)(-2E_Y)\rightarrow0$$
which we twist by $\mathcal{O}_{\widetilde{Y_t}}(2E_Y)(-3)$.
The conclusion then follows from the following easy vanishing statements:
$$H^1(\widetilde{Y_t},\mathcal{O}_{\widetilde{Y_t}}(-2))=0,\,\,
H^0(\widetilde{Y_t},\mathcal{O}_{\widetilde{Y_t}}(-2))=0.$$
\begin{remark}{\rm It is easy to check that $e_Y$ is in fact the class of the logarithmic tangent bundles exact sequence
$$0\rightarrow T_{\widetilde{Y_t}}({\rm log}\,E_Y)\rightarrow T_{\widetilde{\mathbb{P}^4}}({\rm log}\,E_{\mathbb{P}^4})_{\mid \widetilde{Y_t}}\rightarrow \mathcal{O}_{\widetilde{Y_t}}(3)(-2E_Y)\rightarrow0$$
associated to the embedding of $\widetilde{Y_t}$ in the blow-up
$\widetilde{\mathbb{P}^4}$ of the hyperplane ${\mathbb{P}^4}$
 containing $Y_t$ at the singular point of $Y_t$. }
\end{remark}
(ii) This follows from the fact that $T_{\widetilde{Y_t}}({\rm log}\,E_Y)$ is dual to
$\Omega_{\widetilde{Y_t}}({\rm log}\,E_Y)$ and that the later has determinant
$K_{\widetilde{Y_t}}(E_Y)=\mathcal{O}_{\widetilde{Y_t}}(-2)(2E_Y)$. Thus
$$\Omega^2_{\widetilde{Y_t}}({\rm log}\,E_Y)\cong T_{\widetilde{Y_t}}({\rm log}\,E_Y)(K_{\widetilde{Y_t}}(E_Y))=
T_{\widetilde{Y_t}}({\rm log}\,E_Y)(-2)(2E_Y).$$

(iii)  The morphism $u:\mathcal{Y}_{U_1}\rightarrow U_1$ is smooth along the
smooth locus ${Y}_{t,reg}$ of the fiber ${Y}_{t}$. It follows that
the arguments used in the proof of Theorem \ref{theogen2form}(ii) apply along ${Y}_{t,reg}$, so that we
can  conclude that the conclusion of (iii) holds true in $H^1({Y}_{t,reg},\Omega_{Y_{t,reg}}^2)$, that is, after
composing with the
restriction map $H^1(\widetilde{{Y}_{t}},\Omega_{\widetilde{{Y}_{t}}}^2({\rm log}\,E_Y))\rightarrow H^1({Y}_{t,reg},\Omega_{{Y}_{t,reg}}^2)$. The proof is then finished using the following sublemma:
\begin{sublemma} The restriction map $H^1(\widetilde{{Y}_{t}},\Omega_{\widetilde{{Y}_{t}}}^2({\rm log}\,E_Y))\rightarrow H^1({Y}_{t,reg},\Omega_{{Y}_{t,reg}}^2)$ is an isomorphism.
\end{sublemma}
\begin{proof} Note that ${Y}_{t,reg}=\widetilde{{Y}_{t}}\setminus E_Y$. Denoting
by $j: {Y}_{t,reg}\rightarrow \widetilde{{Y}_{t}}$ the inclusion map,
$j$ is an affine map and we have $\Omega_{{Y}_{t,reg}}^2=j^*(\Omega_{\widetilde{{Y}_{t}}}^2({\rm log}\,E_Y))$, so that
\begin{eqnarray}\label{eqomegayreg} H^1({Y}_{t,reg},\Omega_{{Y}_{t,reg}}^2)=
H^1(\widetilde{{Y}_{t}},R^0j_*(\Omega_{\widetilde{{Y}_{t}}}^2({\rm log}\,E_Y)_{\mid{Y}_{t,reg}}))\\
\nonumber
=\underset{\underset{k}\rightarrow}{\rm lim} \, H^1(\widetilde{{Y}_{t}},\Omega_{\widetilde{{Y}_{t}}}^2({\rm log}\,E_Y)(kE_Y)).
\end{eqnarray}
The lemma then follows from the following exact sequence:
\begin{eqnarray}
\label{eqexpourEYlog}
0\rightarrow \Omega_{E_Y}^2\rightarrow \Omega_{\widetilde{Y_t}}^2({\rm log}\,E_Y)_{\mid E_Y}\rightarrow \Omega_{E_Y}\rightarrow 0.
\end{eqnarray}
Indeed, we recall that $E_Y\cong \mathbb{P}^1\times \mathbb{P}^1$ and that
$\mathcal{O}_{E_Y}(E_Y)=\mathcal{O}_{E_Y}(-1,-1)$. It follows that
for any $k>0$,
$$H^1(E_Y,\Omega_{E_Y}(kE_Y))=0,\,\,H^1(E_Y,\Omega_{E_Y}^2(kE_Y))=0,$$
and for any $k\geq0$,
$$H^0(E_Y,\Omega_{E_Y}(kE_Y))=0,\,\,H^0(E_Y,\Omega_{E_Y}^2(kE_Y))=0,$$
Using the exact sequence (\ref{eqexpourEYlog}), this
shows that $$H^1(E_Y,\Omega_{\widetilde{{Y}_{t}}}^2({\rm log}\,E_Y)_{\mid E_Y}(kE_Y))=0$$
for $k>0$ and
$H^0(E_Y,\Omega_{\widetilde{{Y}_{t}}}^2({\rm log}\,E_Y)_{\mid E_Y}(kE_Y))=0$
for $k\geq0$. It follows that the map
$$H^1(\widetilde{{Y}_{t}},\Omega_{\widetilde{{Y}_{t}}}^2({\rm log}\,E_Y)(kE_Y))
\rightarrow H^1({{Y}_{t}},\Omega_{{{Y}_{t}}}^2({\rm log}\,E_Y)((k+1)E_Y))$$
is an isomorphism for $k\geq0$, proving
the lemma by (\ref{eqomegayreg}).
\end{proof}
The proof of Lemma \ref{theoextend} is now complete.
\end{proof}

\begin{proof}[Proof of Proposition \ref{proextnondeg}] We  have to prove that $\sigma_{U_1}$ is nondegenerate  at any point of $\mathcal{J}_{U_1}^{\circ}$ over  $t\in U_1\setminus U$. This is equivalent
to proving that
the map
$$\lrcorner\sigma_{U_1}:T_{U_1,t}\rightarrow H^0_{inv}(\mathcal{J}_{U_1,t}^{\circ},\Omega_{\mathcal{J}_{U_1,t}^{\circ}})= H^1(\widetilde{Y_t},\Omega^2_{\widetilde{Y_t}}({\rm log}\,E_Y)(-E_Y))$$
is an isomorphism, where $H^0_{inv}$ here denotes the space of translation invariant $1$-forms.
 Using Lemma \ref{theoextend}, the last statement is equivalent to
 the fact that the multiplication map
 \begin{eqnarray}
 \label{eqmulteY}e_Y:H^0(\widetilde{Y_t},\mathcal{O}_{\widetilde{Y_t}}(3))\rightarrow
 H^1(\widetilde{Y_t},\Omega^2_{\widetilde{Y_t}}({\rm log}\,E_Y)(-E_Y))
 \end{eqnarray}
 is an isomorphism, where
 $e_Y\in H^1(\widetilde{Y_t},\Omega^2_{\widetilde{Y_t}}({\rm log}\,E_Y)(-E_Y)(-1))$.
 We have
 $$\Omega^2_{\widetilde{Y_t}}({\rm log}\,E_Y)(-E_Y)(-1)\cong T_{\widetilde{Y_t}}({\rm log}\,E_Y)(-3)(E_Y)$$
 and the class $e_Y$ maps to the extension class $e'_Y$
 \begin{eqnarray}\label{eqnouveext}
 0\rightarrow T_{\widetilde{Y_t}}({\rm log}\,E_Y)
 \rightarrow T_{\widetilde{\mathbb{P}^4}}({\rm log}\,E)_{\mid\widetilde{Y_t}}
 \rightarrow \mathcal{O}_{\widetilde{Y_t}}(3-2E_Y)\rightarrow0\end{eqnarray}
 of the logarithmic normal bundle sequence
 of $\widetilde{\mathcal{Y}}_t$ in $\widetilde{\mathbb{P}^4}$, via the
 natural map
 $$H^1(\widetilde{Y_t},T_{\widetilde{Y_t}}({\rm log}\,E_Y)(-3)(E_Y))\rightarrow H^1(\widetilde{Y_t},T_{\widetilde{Y_t}}({\rm log}\,E_Y)(-3)(2E_Y)).$$
 An element in the kernel of the multiplication map by $e_Y$ is thus also
 in ${\rm Ker}\,e'_Y: H^0(\widetilde{Y_t},\mathcal{O}_{\widetilde{Y_t}}(1))\rightarrow
 H^1(\widetilde{Y_t},T_{\widetilde{Y_t}}({\rm log}\,E_Y)(-2)(2E_Y))$ induced by
 (\ref{eqnouveext}), hence comes from an element of $$H^0(\widetilde{Y_t},T_{\widetilde{\mathbb{P}^4}}({\rm log}\,E)_{\mid\widetilde{Y_t}}(-2)(2E_Y)),$$
  and it is easily shown that this space reduces to zero.  The map
 $\lrcorner\sigma_{U_1}:T_{U_1,t}\rightarrow H^0_{inv}(\mathcal{J}_{U_1,t}^{\circ},\Omega_{\mathcal{J}_{U_1,t}^{\circ}})$ is thus injective, and hence an isomorphism.
 \end{proof}

\section{Good and very good lines \label{secgoodline}}
Our main tool for studying degenerations of intermediate Jacobians of cubic threefolds is Mumford's description of the intermediate Jacobian $J(Y)$ as a Prym variety $\Prym(\widetilde C/C)$. The curve $C$ (and its \'etale double cover $\widetilde C$) are obtained by projecting from a generic line on the smooth cubic $Y$. In \cite{casaetal} (this is also subsequently used in \cite{casaetal2}), it is noted that much of the Prym construction carries on to the mildly singular case provided a careful choice of a line $l$ on (the possibly singular) $Y$. This provides a powerful tool for studying the degenerations of intermediate Jacobians. We caution the reader that the context in the current paper is slightly different from that of \cite{casaetal,casaetal2} (e.g. see Remark \ref{diffcml}) forcing us to reprove (under slightly different hypotheses) and strengthen certain results. For convenience, we have tried to make the exposition below mostly self-contained.

\begin{notation} If $X$ is a cubic fourfold and $Y\subset X$ is a hyperplane section,
we denote by $F(Y)$, resp. $F(X)$, $F(Y)\subset F(X)$, the varieties of lines in $Y$, resp. $X$.
We denote by
$[l]\in F(Y)$, resp. $F(X)$  the point parametrizing $l\subset Y$, resp. $l\subset X$.
\end{notation}

\subsection{Good Lines}
\begin{definition}[{Cf. \cite[Def. 3.4]{casaetal}}]
 \label{defigood} Let $Y\subset \mathbb{P}^4$ be a cubic threefold not containing any plane.  A line $l\subset Y$ is {\it good} if for any plane $P\subset \mathbb{P}^4$ containing $l$,  $P\cap Y$ consists in three distinct lines.
\end{definition}
The notion of a good line is obviously important from the point of view of the Prym construction of the
intermediate Jacobian of a cubic threefold. Projecting $Y$ from $l$, we get a conic bundle
$\widetilde{Y}_l\rightarrow \mathbb{P}^2$, where
$\widetilde{Y}_l$ is the blow-up of $Y$ along $l$, and
the discriminant curve $C_l\subset \mathbb{P}^2$ parametrizing reducible conics has degree $5$.
The curve $\widetilde C_l$ of lines in $Y$ intersecting $l$ is the double cover of $C_l$ with fiber
over the point $c$ parametrizing a reducible conic
$C$ the set of
components of $C$. Thus if $l$ is good  the natural involution acting on $\widetilde C_l$ has no fixed point.

\begin{prop}\label{propgoodline} Let $X$ be a general cubic fourfold. Then any hyperplane section
$Y$ of $X$ has a good line.
\end{prop}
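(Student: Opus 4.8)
The plan is to reduce the statement to a dimension count on the universal family of lines, showing that the ``bad'' locus is too small to cover every hyperplane section of a general $X$. First I would recall what it means for a line $l\subset Y$ to fail to be good: there exists a plane $P\supset l$ with $P\cap Y$ \emph{not} consisting of three distinct lines. Since $P\cap Y$ is a plane cubic containing the line $l$, we have $P\cap Y = l\cup Q$ for a residual conic $Q\subset P$, and the failure of goodness means either $Q$ is singular (two lines meeting, or a double line) or $Q$ meets $l$ in a way that destroys transversality, i.e. the three lines are not distinct. So the bad configurations are governed by the residual conic degenerating. The key geometric input is that for a line $l$ on a cubic threefold, the planes $P$ through $l$ are parametrized by a $\mathbb{P}^2$ (the pencil... more precisely the planes in $\mathbb{P}^4$ containing a fixed line form a $\mathbb{P}^2$), and the assignment $P\mapsto$ (residual conic) is the conic-bundle / discriminant-quintic picture recalled just before the statement.

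Next I would set up the incidence variety. Let $\mathcal{F}(X)=F(X)$ be the Fano variety of lines on $X$, which is smooth of dimension $4$ for $X$ general, and consider the relative Fano scheme $\mathcal{F}\to B=(\mathbb{P}^5)^\vee$ whose fiber over $[Y]$ is $F(Y)$, the lines contained in the hyperplane section $Y$. I would then define the locus $\Sigma\subset \mathcal{F}$ of pairs $([Y],l)$ such that $l$ is \emph{not} a good line of $Y$, and analyze its dimension. The strategy is to bound $\dim\Sigma$ and $\dim B$ and show that the projection $\Sigma\to B$ cannot be surjective; equivalently, for a general $X$, the open condition of possessing at least one good line holds for \emph{every} hyperplane section. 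The cleanest route is to first prove the statement for a \emph{general} cubic threefold $Y$ (where a general line is good — this is classical and implicit in Mumford's construction and in \cite{casaetal}), and then upgrade to ``every hyperplane section of a general $X$'' by controlling how badly $Y$ can degenerate: since $X$ is general, the only singular hyperplane sections have mild singularities (at worst, over $U_1$, a single node, and more generally the singularities are constrained by the genericity of $X$), and one checks that the good-line condition persists under these controlled degenerations.

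The technical heart, and the step I expect to be the main obstacle, is verifying that even the \emph{most singular} hyperplane sections $Y$ of a general $X$ still admit a good line. A general cubic fourfold contains no plane and its hyperplane sections can acquire several singular points, but the dual variety $X^\vee$ and its singular locus have controlled dimension, so the hyperplane sections with many or bad singularities form a small stratum of $B$. For each such stratum I would produce a good line by a parameter count: the lines $l\subset Y$ avoiding the finitely many singular points form an open dense subset of $F(Y)$ (using that $\dim F(Y)\ge 2$ and the singular points impose only finitely many conditions), and among these the bad lines — those admitting a plane $P$ with degenerate residual conic — form a proper closed subset. The delicate point is that one must rule out the degenerate case where \emph{every} line through the relevant region is bad, which could a priori happen if $Y$ contains a special surface swept out by lines (e.g. a cone); here one invokes that $X$ contains no plane and is otherwise general to exclude such pathologies. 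I would carry this out stratum by stratum according to the singularity type of $Y$, using the classification of singularities occurring on hyperplane sections of a general cubic fourfold, and in each case exhibit an explicit open nonempty set of good lines.
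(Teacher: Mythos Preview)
Your proposal contains a logical misstep and does not identify the key tools the paper uses.

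First, the dimension-count framing is misdirected. You define $\Sigma\subset\mathcal{F}$ as pairs $([Y],l)$ with $l$ \emph{not} good, and propose to show $\Sigma\to B$ is not surjective. But non-surjectivity would only say that \emph{some} $Y$ has every line good; the proposition requires that \emph{every} $Y$ has at least one good line, i.e.\ that each fibre $\Sigma_{[Y]}\subsetneq F(Y)$ is proper. These are not equivalent, and bounding $\dim\Sigma$ does not control individual fibres. You then pivot to a stratum-by-stratum analysis by singularity type of $Y$, but this is only a restatement of the goal: you assert that the bad lines ``form a proper closed subset'' of $F(Y)$ in each stratum without giving any mechanism to prove it. The difficulty is precisely that for the worst hyperplane sections one must rule out the possibility that \emph{every} line is bad, and a parameter count alone does not do this.

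The paper's argument is quite different and does not stratify by singularity type at all. It works uniformly on any hyperplane section $Y$ of a general $X$ by exploiting the global geometry of $F(X)$. The key ingredients are: (a) the rational self-map $\phi:F(X)\dashrightarrow F(X)$ sending $[l]$ to the residual line in the unique tangent plane $P_l$; a line $l'\subset Y$ which is non-special in $X$ and is neither in the image of $F(Y)$ under $\tilde\phi$ nor sent into $F(Y)$ by $\phi$ is automatically good; (b) a lemma that $F(Y)$ is irreducible, reduced, and Lagrangian in $F(X)$ for every hyperplane section of a general $X$ (proved via Hodge-theoretic constraints on Lagrangian classes when $X$ is very general); (c) if no good line exists, one is forced into the situation that the general line $l'\subset Y$ is special for $Y$, which is excluded by a normal-bundle argument, or that there is a one-parameter family of cubic surfaces in $Y$ singular along a line, which is excluded by a separate lemma showing (again for general $X$) that only finitely many such surfaces exist in any hyperplane section. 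None of these ingredients---the map $\phi$, the irreducibility/Lagrangian property of $F(Y)$, or the finiteness of singular cubic surfaces---appears in your outline, and they are what make the argument work uniformly across all $Y$ rather than case-by-case.
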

\begin{remark}\label{diffcml}The existence of a good line is proved in \cite{casaetal} when $Y$ has singularities of type
$A_k$ for $k\leq 5$ or $D_4$ (i.e. the singularities relevant in the GIT context). Unfortunately, we need to allow some additional ADE singularities (e.g.  $D_5$)  as these can appear as singularities of hyperplane sections of general cubic fourfolds. It is very likely
that the arguments of \cite{casaetal} could be extended to cover the cases needed in this paper, but we prefer to give an alternative proof here.
\end{remark}
\begin{proof}[Proof of Proposition \ref{propgoodline}] Let us say that a line $l$ in  $X$ is {\it special} in $X$, resp. in $Y$, if the restriction map
$J_X\rightarrow H^0(l,\mathcal{O}_l(2))$, resp.  $J_Y\rightarrow H^0(l,\mathcal{O}_l(2))$,
has rank $\leq2$, where $J_X$, resp. $J_Y$ denotes the degree $2$ part of the Jacobian ideal of $X$, resp. $Y$.
As $X$ is general,
lines which are special in $X$ are parametrized  by a smooth surface
$\Sigma_{sp}\subset F(X)$ (see \cite{amerik}).
Recall from \cite{voisinfano} that the variety $F(X)$ has a rational self-map
$\phi:F(X)\dashrightarrow F(X)$. The map $\phi$ associates
to $[l]\in F(X)$ the point $[l']$ parametrizing the line $l'\subset X$
constructed as follows: if $l$ is not special in $X$, that is $[l]\not\in \Sigma_{sp}$,
there is a unique plane $P_l\subset \mathbb{P}^5$ such that
$P_l\cap X=2l+l'$ as a divisor of $P_l$, where $l'\subset X$ is the desired  line in $X$.
When $X$ contains no plane, the indeterminacy locus of $\phi$ is exactly the surface $\Sigma_{sp}$, along which the plane $P_l$ above
is not unique. Furthermore, the indeterminacies of the map $\phi$ are solved after blowing-up
the surface $\Sigma_{sp}$ and the induced morphism
$\tilde\phi:\widetilde{F(X)}\rightarrow F(X)$ is finite if $X$ is general (see \cite{amerik}).
Note that the condition on a line $l'\subset Y$ to being good will be implied by the slightly stronger
 fact that $l'$ is non-special in $X$ (so $\phi$ is well-defined at $[l']$)
and for no
point $[l]\in F(Y)$, one has $\tilde\phi([l])=[l']$ or $\phi([l'])=[l]$. (For the first of these conditions,
one has  rather to consider a point over $[l]$  in $\widetilde{F(X)}$.)

We first have:
\begin{lemma} \label{leFYirred} \begin{itemize}
\item[(i)] If $X$ is smooth, $F(Y)$ is a surface for any hyperplane section $Y$ of $X$. Furthermore $F(Y)_{red}\subset F(X)$ is Lagrangian for the holomorphic $2$-form
     $\sigma$ on the smooth hyper-K\"ahler manifold $F(X)$ (see \cite{bedo}).
\item[(ii)] If $X$ is general, $F(Y)$ is irreducible and reduced for any hyperplane section of $X$.
\end{itemize}
\end{lemma}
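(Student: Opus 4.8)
The plan is to present $F(Y)$ as the zero locus of a section of a rank-$2$ bundle on $F(X)$ and combine this with the isotropy coming from the Beauville--Donagi form. Realize $F(X)\subset \Gr(2,6)$ and let $\mathcal{S}$ be the restriction to $F(X)$ of the tautological rank-$2$ subbundle. A linear form $h$ cutting out $H$ restricts to a section $s_h\in H^0(F(X),\mathcal{S}^\vee)$ whose zero locus is exactly $F(Y)$. Since $\mathcal{S}^\vee$ has rank $2$ and $F(X)$ is smooth of dimension $4$, every irreducible component of $F(Y)=Z(s_h)$ has dimension $\ge 2$. For the reverse inequality I would use the holomorphic symplectic form $\sigma$ on $F(X)$: up to scale $\sigma$ is the image of the generator $\eta\in H^{3,1}(X)$ under the cylinder isomorphism of \cite{bedo}, and at a point $[l]$ its restriction to $T_{[l]}F(Y)=H^0(l,N_{l/Y})$ is computed from $\eta_{|Y}$, which vanishes since $H^{3,1}(Y)=0$ for the cubic threefold $Y$ (its $H^4$ is of pure type $(2,2)$). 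Hence the pull-back of $\sigma$ to the smooth locus of $F(Y)_{red}$ vanishes at every $[l]$ with $l\cap\Sing Y=\emptyset$. As $X$ is smooth its Gauss map is finite, so $\Sing Y$ is a finite set and such lines are dense in each component; by holomorphy the restricted form vanishes identically and $F(Y)_{red}$ is isotropic. An isotropic subvariety of the symplectic fourfold $F(X)$ has dimension $\le 2$, so $\dim F(Y)=2$ and $F(Y)_{red}$ is Lagrangian, proving (i).

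For (ii), note first that by (i) $F(Y)$ is pure of dimension $2$, so $Z(s_h)$ has the expected codimension $2$ and is a local complete intersection in the smooth $F(X)$; in particular $F(Y)$ is Cohen--Macaulay, hence $S_2$. Thus both reducedness and irreducibility follow once I show that $F(Y)$ is connected and normal, and normality reduces (given $S_2$) to the Serre condition $R_1$, that is, to the finiteness of $\Sing F(Y)$. For connectedness I would globalize: let $\mathcal{F}=\{([l],H): l\subset Y_H\}\subset F(X)\times B$ be the universal Fano scheme. Projection to $F(X)$ exhibits $\mathcal{F}$ as the $\mathbb{P}^3$-bundle of hyperplanes through $l$; since $F(X)$ is smooth and irreducible of dimension $4$, $\mathcal{F}$ is irreducible of dimension $7$. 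The other projection $\mathcal{F}\to B$ is proper, dominant, with general fibre the irreducible classical Fano surface of a smooth cubic threefold, so Stein factorization over the normal base $B$ forces every fibre $F(Y_H)$ to be connected.

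It remains to bound $\Sing F(Y)$, and this is where I expect the real difficulty to lie. From $0\to N_{l/Y}\to N_{l/X}\to N_{Y/X|l}=\mathcal{O}_l(1)\to 0$ one computes that $N_{l/Y}$ is a rank-$2$, degree-$0$ bundle on $l\cong\mathbb{P}^1$, and that $F(Y)$ is singular at $[l]$ precisely when $N_{l/Y}$ is unbalanced, i.e. $N_{l/Y}\cong\mathcal{O}_l(a)\oplus\mathcal{O}_l(-a)$ with $a\ge 1$ (equivalently $h^0(N_{l/Y})\ge 3$, equivalently the differential $ds_h\colon H^0(N_{l/X})\to H^0(\mathcal{O}_l(1))$ is not surjective). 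The main obstacle, and the only place where the generality of $X$ enters, is the \emph{uniform} statement that this unbalanced locus meets \emph{every} fibre $F(Y_H)$ in a finite set. I would analyse the universal unbalanced locus $\Sigma\subset\mathcal{F}$ fibrewise over $F(X)$: for fixed $[l]$ the surjection $N_{l/X}\to\mathcal{O}_l(1)$, and hence the splitting type of its kernel, depends only on the class of $H$ in $\mathbb{P}(\Hom(N_{l/X},\mathcal{O}_l(1)))$, so the bad hyperplanes through $l$ form a proper closed subset of the $\mathbb{P}^3$. Controlling its dimension uniformly requires separating lines of the first type ($N_{l/X}\cong\mathcal{O}(1)\oplus\mathcal{O}^2$) from the surface of special (second-type) lines studied in \cite{amerik,voisinfano}; for general $X$ a dimension count on $\Sigma$ should show that $\Sigma\to B$ is quasi-finite, so $\Sing F(Y_H)$ is finite for all $H$.

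Granting this last point, $F(Y)$ satisfies $S_2$ and $R_1$, hence is normal and in particular reduced; being also connected, it is irreducible, which proves (ii). The whole weight of the argument rests on the uniform finiteness of the unbalanced-normal-bundle locus, which is precisely the input supplied by the genericity of $X$ through the structure of special lines recalled in the proof of Proposition \ref{propgoodline}.
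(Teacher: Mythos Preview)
Your argument for (i) is fine and close in spirit to the paper's, though the paper bounds $\dim F(Y)$ from above directly (lines through a point of $Y$ cannot fill a $3$-fold) rather than via isotropy, and handles the Lagrangian property for singular $Y$ by pulling $\eta$ back to a resolution $\widetilde{Y}$ (rationally connected, so $H^{3,1}(\widetilde{Y})=0$); your ``$\eta_{|Y}=0$'' is a shorthand for the same thing.

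For (ii), however, the strategy via normality breaks down. The crux is your claim that $\Sing F(Y_H)$ is finite for \emph{every} $H$. This is false whenever $Y=Y_H$ is singular, which happens along the dual hypersurface $X^\vee\subset B$. If $p\in\Sing Y$, then for any line $l\subset Y$ through $p$ the morphism $N_{l/X}\to\mathcal{O}_l(1)$ induced by $dh$ vanishes at $p$, so the map $ds_h\colon H^0(N_{l/X})\to H^0(\mathcal{O}_l(1))$ factors through $H^0(\mathcal{O}_l)\subsetneq H^0(\mathcal{O}_l(1))$ and is not surjective. Thus every point of the curve $C_p=\{[l]:p\in l\subset Y\}$ lies in $\Sing F(Y)$, so $\Sing F(Y)$ has codimension $1$ and $F(Y)$ fails $R_1$. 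In particular $F(Y)$ is not normal for singular $Y$, and your implication ``$S_2+R_1\Rightarrow$ normal $\Rightarrow$ reduced and (with connectedness) irreducible'' collapses precisely on the fibres where the lemma is not already classical. Your universal-locus count cannot save this: the fibre of $\Sigma\to B$ over any $H$ with $Y_H$ singular already contains the whole curve $C_p$.

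The paper takes a completely different, Hodge-theoretic route for (ii). For very general $X$ one has $\dim_{\mathbb Q}\mathrm{Hdg}^4(F(X))=2$, and since $l^2$ is not Lagrangian the space of Lagrangian Hodge classes is at most one-dimensional. By (i) the class $[F(Y)]$ is Lagrangian, so it cannot split as a sum of two nonproportional Lagrangian effective classes; primitivity of $[F(Y)]$ (checked by specializing to $X$ containing a plane, where $[F(Y)]\cdot[P^*]=1$) rules out proportional splittings. Hence $F(Y)$ is irreducible and reduced. This argument never looks at the local scheme structure of $F(Y)$ and so is insensitive to the non-normality you would have to confront.
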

\begin{proof} (i)  It is classical that $F(Y)$ is smooth of dimension $2$ at any point $[l]$
parametrizing a line $l$ contained in the smooth locus of $Y$. Moreover, $Y$ has isolated singularities, and
the families of lines in $Y$ through any point $y\in Y$ cannot be $3$-dimensional as otherwise it would be
the whole set of lines in $H_Y$ passing through $y$, where we denote by $H_Y$ the hyperplane cutting $Y$ in $X$. This proves the first statement.
 The $(2,0)$-form
$\sigma$ on $F(X)$ is deduced from the class $\alpha$ generating $H^{3,1}(X)$ by the formula
$$\sigma=P^*\alpha\,\,{\rm in}\,\,H^{2,0}(F(X)),$$
where $P\subset F(X)\times X$ is the incidence correspondence, so that $p:P\rightarrow F(X)$ is a $\mathbb{P}^1$-bundle
over $F(X)$.
 Denoting by $P_Y\subset F(Y)\times Y$ the incidence correspondence of $Y$,
we observe that, since $Y$ has only isolated singularities, $P_Y$ lifts
to a correspondence
$P_{\widetilde{Y}}\subset F(Y)\times \widetilde{Y}$, where $\widetilde{Y}$ is a desingularization of $Y$.
If $U$ is any open set contained in the regular locus of $F(Y)_{red}$, we then have
$$ \sigma_{\mid U}=P^*(\alpha)_{\mid U}=P_{\widetilde{Y}}^*(j^*\alpha)\,\,{\rm in}\,\,H^{2,0}(U),$$
where $j:\widetilde{Y}\rightarrow X$ is the desingularization map.
Thus the vanishing of $ \sigma_{\mid U}$ follows from the vanishing
of $j^*\sigma$ in $H^{3,1}(\widetilde{Y})$. To get the last vanishing, observe that $\widetilde{Y}$ is smooth of dimension
$3$ and rationally connected, so that we have  $H^{2,0}(\widetilde{Y})=0$ hence also $H^{3,1}(\widetilde{Y})=0$.
Thus $F(Y)_{red}$ is Lagrangian for $\sigma$.

(ii) The stated property is Zariski open, so it suffices to prove it when
$X$ is very general. In this case, the space ${\rm Hdg}^4(F(X))$ of rational Hodge classes of degree $4$ on $F(X)$
is of dimension $2$. Let us say that a class $\gamma\in {\rm Hdg}^4(F(X))$ is  Lagrangian if
$\gamma\cup [\sigma]=0$ in $H^6(F(X),\mathbb{C})$. The class $l^2$, where $l$ is a Pl\"ucker hyperplane section
of $F(X)$, is not Lagrangian by the second Hodge-Riemann bilinear relations and thus the space ${\rm Hdg}^4(F(X))_{lag}$ of Lagrangian rational Hodge classes  on $F(X)$ is of dimension $\leq 1$.
 It follows that
the class $[F(Y)]\in {\rm Hdg}^4(F(X))_{lag}$ cannot be written as the sum of two nonproportional Lagrangian classes.
In fact, coming back to integer coefficients, it can neither be written as the sum of two proportional nonzero effective classes.
Indeed, the class $[F(Y)]\in {\rm Hdg}^4(F(X),\mathbb{Z})$ is primitive, that is,
not divisible by any nonzero integer $\not=\pm1$, because when
$X$ contains a plane $P$, $F(X)$ contains the dual plane $P^*$ and
$[F(Y)]\cdot[P^*]=1$. We thus  proved that $F(Y)$ is irreducible and reduced.
\end{proof}

Coming back to the proof of Proposition \ref{propgoodline}, it is clear that
for any hyperplane section $Y$ of $X$, there is a  line  contained in $Y$ which is nonspecial in $X$.
Indeed, the surface $\Sigma_{sp}$ of  lines which are special in $X$ is irreducible  and not contained in the surface
of lines in $Y$ because  it is smooth
connected and not Lagrangian, see \cite{amerik}; thus it can intersect $F(Y)$ only along a proper subset. Next, assume to the contrary that
there is no good line in $Y$. This then means that
for a general  $[l]\in F(Y)$, (hence nonspecial for $X$), either (1) there is
a $[l']\in F(Y)$ such that for some plane $P\subset H_Y$,
$P\cap Y=2l+l'$, that is $\phi(l)=l'$,  or  (2) there is
a $[l']\in F(Y)$ such that for some plane $P\subset H_Y$, $P\cap Y=l+2l'$. In  case (1), the map $\phi$ is well-defined at the general point
$[l]\in F(Y)$ hence of maximal rank at $[l]$ because $\phi^*\sigma=-2\sigma$, hence $\phi(F(Y))=F(Y)$ by irreducibility of $F(Y)$. But then  $[l']$ is also general in $F(Y)$, which implies that
(2) occurs as well. So we just have to exclude (2).
Note that the line $l'$ is then special for $Y$.

There are two possibilities:
\begin{itemize}
\item[(a)]  The point $[l']\in F(Y)$ moves in a surface contained in $F(Y)$, hence by Lemma \ref{leFYirred}, any line in $Y$ is special
for $Y$.
\item[(b)] The point $[l']$ moves in a curve  $D\subset F(Y)$ and this curve is contained $\Sigma_{sp}$.
Furthermore, for any $[l']\in D$, the $3$-dimensional
projective space $Q_l=\cap_{x\in l}H_{X,x}$ is contained in $H_Y$, where  $H_{X,x}$
 denotes the hyperplane tangent to $X$ at $x$.
\end{itemize}

In case (a), we get a contradiction as follows: the general line $l'\subset Y$ does not pass through a
singular point of $Y$ and the fact that $l'$ is special for $Y$
says exactly, by taking global sections in the normal bundle sequence
$$0\rightarrow N_{l'/Y}(-1)\rightarrow N_{l'/\mathbb{P}^4}(-1)\rightarrow \mathcal{O}_{l'}(2)\rightarrow 0,$$
that $H^0(N_{l'/Y}(-1))\not=0$, hence that $N_{l'/Y}\cong \mathcal{O}_{l'}(1)\oplus \mathcal{O}_{l'}(-1)$.
But the fact that $N_{l'/Y}$ has this form says equivalently that the
map
$q:P_Y\rightarrow Y$ is not submersive at any point of  the fiber $P_{Y,[l']}\subset P_Y$ of $p:P_Y\rightarrow
F(Y)$ over the point $[l']$.
As $q(P_Y)=Y$, this contradicts the fact that $[l']$ is general in $F(Y)$.

The case (b) is excluded by the following lemma which we will use again later.

\begin{lemma}\label{lefinitesingsurfline} Let $X$ be a general cubic fourfold. Then any
hyperplane section $Y\subset X$ contains only finitely many cubic surfaces which are singular
along a line.
\end{lemma}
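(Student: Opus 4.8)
The plan is to reduce the statement to a finiteness statement about \emph{special} lines, for which the surface $\Sigma_{sp}\subset F(X)$ and Amerik's results are available. First I would translate the condition geometrically. A cubic surface contained in the cubic threefold $Y=X\cap H_Y$ necessarily spans a $\mathbb{P}^3\subset H_Y$ and hence has the form $S=P\cap Y$ with $P=\mathbb{P}^3\subset H_Y$. For a point $x\in S$ in the smooth locus of $Y$, the surface $S$ is singular at $x$ exactly when $P\subset H_{Y,x}$, i.e. (both being hyperplanes of $H_Y$) when $P=H_{Y,x}$. Thus $S$ is singular along a line $l\subset S$ iff the tangent hyperplane $H_{Y,x}=H_{X,x}\cap H_Y$ is constant equal to $P$ as $x$ runs through $l$. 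Since $l\subset X$ forces $l\subset H_{X,x}$ for every $x\in l$, this is equivalent to $P\subset H_{X,x}$ for all $x$, i.e. to $P\subset Q_l:=\cap_{x\in l}H_{X,x}$. As $Q_l$ is a proper linear subspace containing $l$ and $P=\mathbb{P}^3$, this forces $Q_l=P$, so that $l$ is special in $X$ (equivalently $[l]\in\Sigma_{sp}$) with $Q_l=\mathbb{P}^3\subset H_Y$ and $S=Q_l\cap Y$; conversely any special $l$ with $Q_l\subset H_Y$ produces such a surface. Hence it suffices to prove that for each $H_Y$ there are only finitely many $[l]\in\Sigma_{sp}$ with $Q_l\subset H_Y$.

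Next I would organize this into an incidence variety. By \cite{amerik}, for $X$ general $\Sigma_{sp}$ is a smooth irreducible surface, on which $Q_l$ is generically a $\mathbb{P}^3$; set $J:=\{([l],H)\in\Sigma_{sp}\times B : Q_l\subset H\}$. The fibre of $J\to\Sigma_{sp}$ over $[l]$ is the pencil of hyperplanes through $Q_l=\mathbb{P}^3$, a $\mathbb{P}^1$, so $J$ is irreducible of dimension $3$. The lemma is now equivalent to the assertion that the second projection $J\to B$ has finite fibres. Since $\dim J=3<5=\dim B$, a general fibre is empty and the generic nonempty fibre is finite; all the content lies in excluding positive-dimensional fibres, i.e. in ruling out a curve $D\subset\Sigma_{sp}$ and a fixed hyperplane $H_0$ with $Q_l\subset H_0$ for every $[l]\in D$.

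To exclude such a $D$ I would use the structure of $\phi$ together with the irreducibility of $F(Y)$ (Lemma \ref{leFYirred}). For special $l$ the planes $P$ tangent to $X$ along $l$ form a $\mathbb{P}^1$, all contained in $Q_l$, and each meets $X$ residually in a line $l'_P\subset Q_l\cap X$; this is exactly the fibre structure resolved by the finite morphism $\tilde\phi:\widetilde{F(X)}\rightarrow F(X)$. If $D$ existed, then letting $[l]$ vary in $D$ and $P$ in its pencil, all the residual lines $l'_P$ would lie in $\cup_{[l]\in D}(Q_l\cap X)$, a subvariety of $Y=X\cap H_0$ of dimension at most $3$; since these lines sweep a two-dimensional family and $F(Y)$ is an irreducible surface, they would cover $F(Y)$, forcing every line of $Y$ to lie on the union of the cubic surfaces $Q_l\cap X$, i.e. exhibiting $Y$ as swept out by a one-parameter family of cubic surfaces each singular along a line. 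For $X$ general I would rule this out by treating separately the sub-case where $Q_l$ is constant along $D$ (confining all $l\in D$ to a single cubic surface $Q_0\cap X$ that then contains a one-parameter family of lines, hence is non-normal/ruled, excluded for general $X$) and the sub-case where $Q_l$ varies (handled via the finiteness of $\tilde\phi$, which controls the degeneration of the residual lines).

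The main obstacle is precisely this last step: the naive dimension count only yields generic finiteness of $J\to B$, so that excluding the jumping (positive-dimensional) fibres is where the generality of $X$ must genuinely be used. I expect the cleanest route is to combine the sweeping argument above with Amerik's finiteness of $\tilde\phi$; alternatively, after passing to a very general $X$ and invoking upper semicontinuity of fibre dimension exactly as in the proof of Lemma \ref{leFYirred}(ii), one can reduce the exclusion to a direct dimension count over the moduli of cubic fourfolds.
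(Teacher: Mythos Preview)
Your reformulation via special lines of $X$ is correct: a hyperplane $P\subset H_Y$ with $P\cap Y$ singular along $l$ exists precisely when $[l]\in\Sigma_{sp}$ and $Q_l=P\subset H_Y$, and your incidence variety $J$ is the right object. The gap is exactly where you say, and your proposed resolution does not close it. The sub-case $Q_l$ constant along $D$ is fine (the fixed surface $Q_0\cap X$ would then be singular along infinitely many distinct lines, impossible for a surface), but when $Q_l$ varies your sweeping argument via $\tilde\phi$ only yields that every line of $Y$ lies in $\bigcup_{[l]\in D}(Q_l\cap X)$; this one-parameter union of surfaces may well equal $Y$, and nothing further follows from finiteness of $\tilde\phi$ or irreducibility of $F(Y)$ alone.

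The paper's route supplies the missing idea: every such line $l$ must meet $\mathrm{Sing}(Y)$. In your language this is immediate: for $[l]\in\Sigma_{sp}$ (and $X$ general, so that $\dim Q_l=3$ exactly) the Gauss map of $X$ restricted to $l$ is given by two independent sections of $\mathcal{O}_l(2)$ with no common zero (else $X$ would be singular there), hence is a degree~$2$ surjection onto the pencil $\{H:Q_l\subset H\}\cong\mathbb{P}^1$; since $H_Y$ lies in this pencil, some $x\in l$ has $H_{X,x}=H_Y$, i.e.\ $x\in\mathrm{Sing}(Y)$. The paper phrases this equivalently via the Gauss map of $Y$: if $l\subset Y^{\mathrm{sm}}$ then the nonzero $\partial_i f_Y|_l$ cannot all be proportional, since a nonzero binary quadric necessarily has zeros and $Y$ would be singular at those points. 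As $Y$ has finitely many singular points, $D$ is then a component of the curve $C_y$ of lines through a fixed $y\in\mathrm{Sing}(Y)$; a direct coordinate calculation (writing $f_Y=X_4Q+T$ near $y$) shows each $[l]\in D$ is a singular point of the complete intersection $C_y=\{Q=T=0\}$, so $C_y$ would be nonreduced along $D$, which the paper rules out for general $X$ by a separate argument (Lemma~\ref{leredCy}) using $\mathrm{Hdg}^4(X)=\mathbb{Z}h^2$.
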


\begin{proof} Assume to the contrary that there is a curve $D$ of such surfaces and such lines $l'$ of singularities.
We note that any line $l'$ parametrized  by
a point $[l']\in D$ has to pass through a singular point of $Y$. Indeed, if $Y$ is smooth along
$l'$, then its Gauss map given by the partial derivatives of the defining equation
$f_Y$ of $Y$ in $H_Y$ is well-defined along $l'$, and
thus it cannot be constant along $l$, hence equivalently $Y\cap \mathbb{P}^3$ cannot be singular
at all points of $l'$ for any $\mathbb{P}^3$ containing $l$.
Next, a hyperplane section $Y$ of $X$ has finitely many singular points, hence we can assume
that in case (b), the curve $D$ consists of lines passing through a given
singular point $y$ of $Y$. By Lemma \ref{leFYirred}, the family $C_y$ of lines
in $X$ passing through
$y$ is a curve, and thus $D$ must be an irreducible  component of $C_y$.
In adequate homogeneous coordinates $X_0,\ldots,X_4$
on $H_Y$, the point $y$ has equations $X_i=0,\,i=0,\ldots,3$ and
$Y$ has equation $X_4Q(X_0,\ldots,X_3)+T(X_0,\ldots, X_3)$, where
$Q$ and $T$ are homogeneous of respective degrees $2,\,3$.
The curve $C_y$ of lines through $y$ (in $Y$ or $X$)
is defined by the equations $Q=T=0$.
Let $[l']\in C_y$ parametrize a line $l'$ in $Y$ such that some hyperplane
$H'$
in $H_Y$ containing $l'$ is tangent to $Y$ everywhere along $l'$. This
is saying that the equation
$f:=X_4Q(X_0,\ldots,X_3)+T(X_0,\ldots, X_3)$, restricted to a hyperplane
$H'$
of $\mathbb{P}^3$ passing through the point $[l']$,
has zero derivatives along $l'$. Thus the equations $Q$ and $T$ restricted to
$H'$, must have $0$ derivative at $[l']$. It follows that the two polynomials
$Q$ and $T$ have nonindependent derivatives at $[l']$, so that $[l']$ is a singular point
of the curve $C_y$. In conclusion, we found that under our assumption, the curve $C_y$ has a nonreduced component.
Hence the proof of Lemma \ref{lefinitesingsurfline} is concluded by the proof of  Lemma \ref{leredCy} below.
\end{proof}
\begin{lemma}\label{leredCy} If $X$ is general, the curve $C_y$ of lines through any point
$y\in X$ is reduced.
\end{lemma}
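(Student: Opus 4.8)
The plan is to prove the lemma by a parameter count on an incidence variety, after describing $C_y$ by explicit equations.

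First I would normalize coordinates so that $y=[0:\cdots:0:1]$. Since $y\in X$, the defining cubic has no $X_5^3$ term, so it can be written $F=X_5^2L+X_5Q+T$ with $L,Q,T\in\bC[X_0,\dots,X_4]$ homogeneous of degrees $1,2,3$. A line joining $y$ to a direction $[a_0:\cdots:a_4]$ lies on $X$ exactly when $L(a)=Q(a)=T(a)=0$, so $C_y=V(L,Q,T)\subset\bP^4$; equivalently, inside the hyperplane $\{L=0\}\cong\bP^3$ it is the $(2,3)$ complete intersection $V(\bar Q,\bar T)$ with $\bar Q=Q|_{L=0}$ and $\bar T=T|_{L=0}$. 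The elementary but crucial point is that $Q$ and $T$ are independent coefficient blocks of $F$ and that restriction to $\{L=0\}$ is surjective, so as $X$ runs over all cubics through $y$ the triple $(L,\bar Q,\bar T)$ runs over everything; in particular the property ``$C_y$ non-reduced'' depends only on $(\bar Q,\bar T)$.

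Next I would form the incidence variety $\mc B=\{(X,y)\in\bP^{55}\times\bP^5 : y\in X,\ C_y\text{ non-reduced}\}$ and reduce the lemma to the inequality $\dim\mc B<55$: this bound forces $\overline{\mathrm{pr}_1(\mc B)}\subsetneq\bP^{55}$, and any $X$ off this proper closed subset has $C_y$ reduced for every $y$. Since $\PGL_6$ acts transitively on $\bP^5$ and preserves $\mc B$, all fibers of the second projection are isomorphic, whence $\dim\mc B=5+\dim\mc B_y$ for the fiber over our fixed $y$. Using the description above (the choice of $L$, together with the surjective affine-linear map $(Q,T)\mapsto(\bar Q,\bar T)$, whose fibers are $20$-dimensional), a straightforward bookkeeping of fiber dimensions reduces the whole problem to the plane-geometric estimate
\[
\dim\mc N\le 23,\qquad \mc N:=\{(\bar Q,\bar T)\in\bP^9\times\bP^{19} : V(\bar Q,\bar T)\subset\bP^3\text{ non-reduced}\},
\]
that is, to the assertion that $\mc N$ has codimension at least $5$ in $\bP^9\times\bP^{19}$; granting it, $\dim\mc B<55$ and the lemma follows.

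Finally I would prove this estimate by stratifying according to the rank of $\bar Q$. If $\bar Q$ is smooth, then $V(\bar Q,\bar T)$ is the divisor cut by $\bar T$ on $\bar Q\cong\bP^1\times\bP^1$ in the class $\scrO(3,3)$, and its non-reduced members are those of the form $2D+E$; the largest such family, with $D$ of bidegree $(1,0)$, has codimension $7$ in $|\scrO(3,3)|$, so this open stratum of $\mc N$ has dimension $21$. If $\bar Q$ is a double plane $\ell^2$, then $V(\bar Q,\bar T)$ is non-reduced for every $\bar T$, but double planes form only a $3$-dimensional family in $\bP^9$, so this stratum has dimension $3+19=22$. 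It then remains to treat the two intermediate strata, $\bar Q$ a quadric cone (rank $3$) and $\bar Q$ a pair of distinct planes (rank $2$); here one checks directly that non-reducedness of $V(\bar Q,\bar T)$ still imposes enough conditions on $\bar T$ to keep each stratum of dimension at most $23$. Taking the maximum over strata gives $\dim\mc N\le 23$, completing the argument. The main obstacle is precisely this last point: the quadric cone sits in codimension only $1$ in $\bP^9$, so one must verify carefully that non-reducedness of the intersection cuts out a locus of codimension at least $4$ in the cubics $\bar T$ (the double plane, by contrast, imposes no condition on $\bar T$ but is rescued by the codimension $6$ of rank-one quadrics).
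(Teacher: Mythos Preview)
Your approach is correct and genuinely different from the paper's. You reduce the question to a pure parameter count on the locus $\mathcal{N}\subset\mathbb{P}^9\times\mathbb{P}^{19}$ of non-reduced $(2,3)$-intersections, and your stratification by the rank of $\bar Q$ does give $\dim\mathcal{N}\le 22$: the rank-$4$ stratum has dimension $21$, the rank-$1$ stratum has dimension $22$, and the intermediate strata are smaller (for rank $3$ the dominant case is a doubled ruling, giving $8+13=21$; for rank $2$ one of the two plane cubics must be non-reduced, giving $6+14=20$). Your flagged ``main obstacle'' is thus harmless, though you should actually carry out these two checks rather than leave them as assertions. You should also note, for completeness, that the degenerate loci $\bar Q=0$ or $\bar T\mid \bar Q$-component (where $C_y$ fails to be a curve) contribute strata of even smaller dimension and hence do not occur for general $X$.

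The paper argues quite differently. It uses the Hodge-theoretic fact that for very general $X$ one has ${\rm Hdg}^4(X,\mathbb{Z})=\mathbb{Z}h^2$, so every irreducible surface in $X$ has degree divisible by $3$; applying this to the cones over the components of $C_y$ forces, if $C_y$ is non-reduced, that $C_y=2C_0$ with $C_0$ an irreducible cubic curve. It then rules out the twisted-cubic case by a dimension count on cones, and the plane-cubic case by observing it would force $\bar Q$ to be a double plane, which does not occur on general $X$. Your argument is more elementary in that it avoids the Noether--Lefschetz input entirely and treats all non-reduced configurations uniformly; the paper's argument is shorter once one has that input and singles out the genuinely dangerous case (the double plane) conceptually.
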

\begin{remark}{\rm It is not true that $C_y$ is irreducible for any $y$. Indeed, a general $X$ contains a cubic surface which is  a cone over an elliptic curve, with vertex $y\in X$. Hence the elliptic curve is an irreducible
component of $C_y$ in this case.}
\end{remark}
\begin{proof}[Proof of Lemma \ref{leredCy}]
 For any $y\in X$, the curve $C_y$ has degree $6$, and the cone over the curve $C_y$, with vertex $y$, is
a degree $6$ surface contained in $X$. We use now the fact that if $X$ is general, ${\rm Hdg}^4(X,\mathbb{Z})=\mathbb{Z}h^2$, where
$h=c_1(\mathcal{O}_X(1))$. Hence any surface in $X$ has degree divisible by
$3$. Applying this to the components of this cone, the only way the curve $C_y$ can be nonreduced is  if
$C_y$ is everywhere nonreduced with multiplicity $2$. The curve $C_{y,red}$ is then a curve of degree $3$
which can be either a plane cubic or a normal rational curve
of degree $3$. If $S$ is the cone over a normal cubic curve in $\mathbb{P}^3$, the
set of cubic hypersurfaces in $\mathbb{P}^5$ containing $S$ has codimension $ 22$ while the dimension
of the Hilbert scheme parametrizing  such an $S$ in $\mathbb{P}^5$ is $5+4+15-3=21$, so that a general
cubic does not contain such a surface.
In the case of the cone over an irreducible  plane cubic, the curve $C_y$ is the complete intersection
of a quadric and a cubic in $\mathbb{P}^3$ which contains an irreducible plane cubic with multiplicity $2$.
The only possibility is then that the quadric itself is a double plane. However, one can easily check  that
for general $X$, there is no point $y\in X$ where the Hessian of the defining equation
of $X$ defines a nonreduced quadric in $\mathbb{P}^3$.
\end{proof}

The proof of Proposition \ref{propgoodline} is now complete.
\end{proof}
\subsection{Existence of very good lines} For constructing compactified Jacobians (and similarly compactified Pryms) irreducibility assumptions are crucial. This leads us to the following strengthening of the requirement of good line.
\begin{definition} \label{defiverygood} Let $Y$ be a cubic threefold. We will say that a line $l\subset Y$ is {\it very good} if
$l$ is good (see Definition \ref{defigood}) and the curve $\widetilde C_l=\widetilde C_{l,Y}=\{{\rm lines\,\,in \,\,}Y\,\,{\rm meeting} \,\,l\}$ is irreducible.
\end{definition}
\begin{prop} \label{propirred} Let $X$ be a general cubic $4$-fold. Then for any hyperplane section
$Y\subset X$, there exists a line $l\subset Y$ such that
the curve $\widetilde C_{l,Y}$ is irreducible.
\end{prop}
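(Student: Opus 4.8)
The plan is to reduce the statement to a connectedness/monodromy question and to leverage the irreducibility of $F(Y)$ (Lemma \ref{leFYirred}) together with the existence of good lines (Proposition \ref{propgoodline}). First, recall that for a good line $l$ the double cover $\widetilde C_{l,Y}\to C_l$ is \'etale; since a connected \'etale cover of an irreducible curve is again irreducible, it suffices to produce a good line $l$ for which the discriminant quintic $C_l\subset\mathbb{P}^2$ is irreducible and the double cover $\widetilde C_{l,Y}\to C_l$ is connected (equivalently, the defining $2$-torsion class in $\Pic(C_l)$ is nonzero). Because $F(Y)$ is irreducible and the good lines form a dense open subset, it is enough to verify these two properties for a \emph{general} line $l\in F(Y)$.

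The key mechanism I would use is the degree $5$ covering $\psi_l\colon\widetilde C_{l,Y}\to l\cong\mathbb{P}^1$ sending a line $m$ meeting $l$ to the point $m\cap l$: through a general point $p$ of $l$ there pass exactly six lines of $Y$, one of which is $l$ itself, so the remaining five constitute the fiber $\psi_l^{-1}(p)$. Irreducibility of $\widetilde C_{l,Y}$ then follows from transitivity of the monodromy of $\psi_l$ on these five sheets. To control this monodromy I would exploit the incidence variety $P_Y\subset F(Y)\times Y$, which is a $\mathbb{P}^1$-bundle over $F(Y)$ and hence irreducible; thus the degree $6$ covering $P_Y\to Y$ (the six lines through a point) has transitive monodromy over $Y$, and a Lefschetz-type argument for a general line section $l\subset Y$ transports this transitivity to $\psi_l$ after removing the constant sheet corresponding to $l$.

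For the smooth members this is exactly Mumford's classical picture: when $Y$ is smooth and $l$ is general, $C_l$ is a smooth quintic and $\widetilde C_{l,Y}\to C_l$ is the connected \'etale double cover underlying $\Prym(\widetilde C_l/C_l)=J(Y)$, so $\widetilde C_{l,Y}$ is irreducible. The main obstacle is the singular hyperplane sections $Y$, where $C_l$ acquires singularities (in $1$-to-$1$ correspondence with those of $Y$ for a good line) and Mumford's theorem does not apply directly. I would treat these by a semicontinuity argument: the locus in the relative Fano $\mathcal{F}\to B$ over which the fiber $\widetilde C_{l,Y}$ is irreducible is open, and it is nonempty by the smooth case, so it remains to show that it meets every fiber $F(Y)$. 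Here the irreducibility of $F(Y)$ for all hyperplane sections of a general $X$ (Lemma \ref{leFYirred}) is decisive: if some $Y_0$ had \emph{all} of its good lines bad, then $\widetilde C_{l,Y_0}$ would be reducible for every $l$ in the irreducible surface $F(Y_0)$, contradicting the transitivity of the monodromy of $P_{Y_0}\to Y_0$ established above. Ruling out reducibility of $C_l$ for the chosen general $l$ is handled in the same spirit, since a splitting of the quintic would force the lines of $Y$ to lie in special sub-loci incompatible with the irreducibility of $F(Y)$, much as in Lemmas \ref{lefinitesingsurfline} and \ref{leredCy}.
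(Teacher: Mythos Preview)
Your proposal has a genuine gap at its central step. The irreducibility of $P_Y$ (equivalently, transitivity of the monodromy of the degree~$6$ cover $q:P_Y\to Y$) is \emph{not} enough to conclude that $\widetilde C_{l,Y}$ is irreducible. Over the line $l\subset Y$ the cover $q^{-1}(l)\to l$ has a distinguished constant sheet (the fiber $P_{Y,[l]}$), and what you need is that the monodromy acts transitively on the \emph{remaining five} sheets. This is precisely $2$-transitivity of the monodromy of $q$, i.e.\ irreducibility of $P_Y\times_Y P_Y\setminus\Delta_{P_Y}$, which does not follow from mere transitivity. Your ``Lefschetz-type argument'' cannot bridge this: even if the restriction to $l$ recovered the full monodromy group of $q$ (itself doubtful, since $l$ is a line in a threefold and no Lefschetz theorem applies), a transitive subgroup of $\mathfrak S_6$ fixing one letter need not act transitively on the other five. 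The paper confronts exactly this issue and proves the $2$-transitivity directly (Lemma~\ref{leirredfiberprod}), via a chain of nontrivial steps: showing that every component of $P_Y\times_Y P_Y\setminus\Delta_{P_Y}$ dominates $P_Y$, that the quotient by the residual-line involution $i$ is irreducible (hence $C_l$ is irreducible for general $l$), that there are at most two components and they are exchanged both by $i$ and by the factor-swap $\tau$, and finally a parity/cardinality contradiction on the fibers over a general point of $Y$.

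Your semicontinuity paragraph is also not an argument: the locus of $(l,t)$ with $\widetilde C_{l,Y_t}$ irreducible is indeed open in the relative Fano variety, but an open nonempty subset of a fibration need not meet every fiber, so this gives nothing for a fixed singular $Y_0$ without further input. Likewise, the claim that reducibility of $C_l$ ``would force the lines of $Y$ to lie in special sub-loci incompatible with the irreducibility of $F(Y)$'' is not justified; the paper's Lemma~\ref{leClirred} rules out line and conic components of $C_l$ by specific geometric arguments (existence of cubic surfaces singular along a line, nonexistence of certain ruled cubic surfaces in a very general $X$), none of which reduce to the bare irreducibility of $F(Y)$. In short, the missing idea is that one must establish $2$-transitivity of the six-lines monodromy for \emph{every} hyperplane section of a general $X$, and this requires the finer analysis the paper carries out rather than a transport of transitivity from $P_Y$.
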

\begin{cor} \label{coroverygood} If $X$ and $Y$ are as above, a general line in $Y$ is very good.
\end{cor}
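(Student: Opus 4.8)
The plan is to realize the two defining properties of a very good line (Definition \ref{defiverygood})---being good, and having $\widetilde C_l$ irreducible---as two dense Zariski-open conditions on $[l]\in F(Y)$, and then to intersect them. The mechanism is uniform: $F(Y)$ is irreducible (Lemma \ref{leFYirred}(ii)), so a nonempty open subset is automatically dense, and the intersection of two dense open subsets is again dense (hence nonempty). Thus it suffices, for each property, to prove that it is open and that it holds for at least one line; the latter is exactly the content of Propositions \ref{propgoodline} and \ref{propirred}.

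For goodness I would argue directly from Definition \ref{defigood}. Consider the incidence variety $I=\{([l],[P]): l\subset P\subset\mathbb P^4,\ P\cap Y \text{ is not a union of three distinct lines}\}$ inside the partial flag variety of pairs $(l,P)$ with $l\subset Y$ and $l\subset P$; the condition on the cubic curve $P\cap Y$ (that its residual conic degenerate, or share a component with $l$) is closed, and the projection $I\to F(Y)$ is proper, so the locus of non-good lines---its image---is closed. Hence $F(Y)^{\mathrm{good}}$ is open, and it is nonempty by Proposition \ref{propgoodline}, so it is dense in the irreducible variety $F(Y)$.

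For irreducibility of $\widetilde C_l$ I would use the relative family $(\widetilde{\mathcal C},\mathcal C)$ restricted to $F(Y)$. Over the good locus the discriminants $C_l$ are plane quintics whose singularities are in $1$-to-$1$ correspondence with the (finitely many) singularities of $Y$; in particular they are reduced of constant arithmetic genus, so $\widetilde{\mathcal C}\to F(Y)^{\mathrm{good}}$ is a proper flat family with geometrically reduced fibers (the covers being étale). By the openness of the geometrically-irreducible-fibers locus for a proper flat morphism of finite presentation (EGA IV, 12.2.4), the set $\{[l]:\widetilde C_l \text{ is irreducible}\}$ is open. It is nonempty by Proposition \ref{propirred}, hence---being open and nonempty in the irreducible $F(Y)$---dense.

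Finally, the very good lines form the intersection $F(Y)^{\mathrm{good}}\cap\{[l]:\widetilde C_l\text{ irreducible}\}$ of two dense open subsets of $F(Y)$, which is therefore dense open; so a general line in $Y$ is very good. The step I expect to require the most care is the openness of the irreducibility condition: it relies on $\widetilde{\mathcal C}\to F(Y)$ being flat with geometrically reduced fibers over a dense open set containing both the good locus and the line furnished by Proposition \ref{propirred}, and it is precisely the good-line hypothesis together with the $1$-to-$1$ correspondence of singularities that guarantees the requisite flatness and reducedness. Granting that, the nonemptiness from Proposition \ref{propirred} propagates to the general line exactly because $F(Y)$ is irreducible, so that the (open, nonempty) irreducibility locus is dense and meets the dense good locus $F(Y)^{\mathrm{good}}$.
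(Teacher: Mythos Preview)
Your proposal is correct and follows essentially the same approach as the paper: both argue that goodness and irreducibility of $\widetilde C_l$ are open conditions on the irreducible variety $F(Y)$ (Lemma \ref{leFYirred}(ii)), nonempty by Propositions \ref{propgoodline} and \ref{propirred} respectively, so their intersection is dense open. The paper's proof is terser and simply asserts the two openness statements, whereas you supply justifications (the incidence-variety argument for goodness, and flatness plus EGA~IV~12.2.4 for irreducibility of the fiber).

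The wrinkle you yourself flag---that Proposition \ref{propirred} does not guarantee its witness line lies in $F(Y)^{\mathrm{good}}$, where you established flatness---is genuine, and your fix (enlarging to a dense open containing both) is fine. A slightly cleaner alternative, implicit in the paper's setup, is to observe that the family $\widetilde{\mathcal C}\to F(Y)$ has irreducible total space (it is, up to a fiberwise-dense open, the variety $P_Y\times_Y P_Y\setminus\Delta_{P_Y}$, irreducible by Lemma \ref{leirredfiberprod}); hence the \emph{generic} fiber over $F(Y)$ is already irreducible, and so the irreducibility locus automatically contains a dense open, with no need to locate the particular witness line inside any prescribed flat locus.
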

\begin{proof}  Proposition
\ref{propgoodline} shows the existence of a good line, and this is an open property on $F(Y)$.
Proposition \ref{propirred} shows the existence a line $l\subset Y$ such that
the curve $\widetilde C_{l,Y}$ is irreducible
and this is also an open property on $F(Y)$. As we know by Lemma \ref{leFYirred} that $F(Y)$ is irreducible,
it follows that a general line is very good.
\end{proof}
\begin{proof}[Proof of Proposition \ref{propirred}] The incidence variety
$P_Y\subset F(Y)\times Y$ is a $\mathbb{P}^1$-bundle $p:P_Y\rightarrow F(Y)$ over $Y$.
We proved in Lemma \ref{leFYirred} that $X$ and $Y$ being as above, $F(Y)$ is irreducible and reduced, thus
$P_Y$ satisfies the same properties. In particular, the degree of the map
$q:P_Y\rightarrow Y$ is $6$ as for a smooth $Y$, and the degree of the map
$q'=pr_2: P_Y\times_YP_Y\setminus \Delta_{P_Y}\rightarrow P_Y$  is $5$.
 We have the following lemma:

\begin{lemma}\label{leintermediaire} Let $X$ be a general cubic fourfold and let $Y$ be any hyperplane section of $X$. If for all lines $l\subset Y$ the curve $\widetilde C_l$ is reducible, then
$P_Y\times_Y P_Y\setminus \Delta_{P_Y}$ has at least two irreducible components dominating $Y$.
\end{lemma}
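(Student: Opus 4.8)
The plan is to translate the statement into one about the irreducible components of a finite cover and then to count those components by monodromy. Write $\mathcal{D}:=P_Y\times_Y P_Y\setminus\Delta_{P_Y}$, so a point of $\mathcal{D}$ is a triple $(l_1,l_2,y)$ with $l_1\neq l_2$ two lines of $Y$ meeting at $y$. First I would record the structural maps: the second projection $pr_2\colon\mathcal{D}\to P_Y$, $(l_1,l_2,y)\mapsto(l_2,y)$, is finite of degree $5$ (its fibre over $(l,y)$ is the set of five lines $\neq l$ through $y$), and $q\circ pr_2\colon\mathcal{D}\to Y$ is finite of degree $30$. Since $P_Y$ is irreducible (a $\mathbb{P}^1$-bundle over the irreducible surface $F(Y)$, by Lemma \ref{leFYirred}) and $q\colon P_Y\to Y$ is finite and surjective, a subvariety of $\mathcal{D}$ dominates $Y$ if and only if it dominates $P_Y$ under $pr_2$. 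Hence the irreducible components of $\mathcal{D}$ dominating $Y$ are exactly the irreducible components of the degree-$5$ cover $pr_2$, and by covering-space theory these correspond to the orbits of its monodromy group $\Gamma\subset S_5$ on the five sheets. A standard computation identifies $\Gamma$ with the point-stabiliser $G_a\subset G\subset S_6$ acting on the remaining five lines, where $G$ is the (transitive, since $P_Y$ is irreducible) monodromy of the degree-$6$ cover $q\colon P_Y\to Y$. Thus \emph{$\mathcal{D}$ has at least two components dominating $Y$} is equivalent to \emph{$\Gamma=G_a$ is intransitive on five points}, i.e. to $G$ not being $2$-transitive.

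The second step is to locate $\widetilde C_l$ in this picture. Fixing a line $l$ and letting the intersection point vary, $\widetilde C_l$ is precisely the restriction of the degree-$5$ cover $pr_2$ to the fibre $l\cong\mathbb{P}^1$ of $p\colon P_Y\to F(Y)$: over $y_0\in l$ its fibre is the set of five lines $\neq l$ through $y_0$. Therefore $\widetilde C_l$ is irreducible if and only if the sub-monodromy $\Gamma_l\subset\Gamma$, the image of $\pi_1$ of $l$ minus its branch points, is transitive on the five sheets. By hypothesis every $\widetilde C_l$ is reducible, so every $\Gamma_l$ is intransitive. I would also record the geometric refinement to be used below: for a good line $l$ the cover $\widetilde C_l\to C_l$ onto the discriminant quintic is \emph{étale}, with fixed-point-free deck involution $\iota_l$ swapping the two lines of each degenerate plane section through $l$; this involution globalises to an involution $\iota$ of $\mathcal{D}$ over $F(Y)$, sending $(l_1,l_2,y)$ to $(l_3,l_2,\,l_3\cap l_2)$, where $\langle l_1,l_2\rangle\cap Y=l_1+l_2+l_3$.

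The main obstacle is that intransitivity of every $\Gamma_l$ does \emph{not} formally force intransitivity of $\Gamma$. Decomposing $\pi_1(P_Y^\circ)$ into ``fibre loops'' (inside an $l$) and ``base loops'' (in $F(Y)$), the hypothesis controls only the fibre loops: a priori the base monodromy could interchange the $\Gamma_l$-orbits, equivalently the geometric components of the generic fibre $\widetilde C_\eta$ over $\mathbb{C}(F(Y))$ could form a single Galois orbit, leaving $\mathcal{D}$ irreducible over $P_Y$. Ruling out this mixing is the real content of the lemma, and it is where the generality of $X$ must enter; a naive Bertini/connectedness argument is unavailable because the curves $l$ are merely the rational fibres of $p$, along which no positivity is gained.

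To defeat the mixing I would show that the sheet-partition determined by a generic $\widetilde C_l$ is preserved by the base monodromy, by pinning it down along a controlled locus of special lines. Using Lemma \ref{lefinitesingsurfline} together with the description of $F(Y)$ and of the self-map $\phi$ (with $\phi^*\sigma=-2\sigma$) from the proof of Proposition \ref{propgoodline}, one singles out lines $l$ at which the splitting of $\widetilde C_l$ is forced by the incidence geometry, so that the two families of components cannot be globally exchanged; the residual-line involution $\iota$, combined with the irreducibility and reducedness of $F(Y)$, then propagates a consistent labelling over all of $F(Y)$. Concretely, I expect to produce two distinct $\iota$-invariant subvarieties of $\mathcal{D}$, each surjecting onto $F(Y)$ with reducible general fibre, and finally to upgrade ``dominates $F(Y)$'' to ``dominates $Y$'' by a dimension count: a $3$-dimensional component cannot map into a surface $S\subsetneq Y$, since a general line $l$ meets $S$ in finitely many points whereas it carries a one-parameter family of incident lines, which would force infinitely many lines of $Y$ through a general point of $S$. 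This yields the two required components dominating $Y$.
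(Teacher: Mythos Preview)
Your setup is sound: the components of $\mathcal{D}$ dominating $P_Y$ (hence $Y$) correspond to $\Gamma$-orbits on the five sheets of $pr_2$, and the restriction of $pr_2$ to a fibre $l$ of $p$ is indeed $\widetilde C_l\to l$. You also correctly isolate the real difficulty: intransitivity of every $\Gamma_l$ does not by itself force intransitivity of $\Gamma$, because the base monodromy in $F(Y)$ could in principle permute the components of the generic $\widetilde C_l$.

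The gap is in your final paragraph. You write that you ``expect to produce'' two $\iota$-invariant subvarieties by ``pinning down'' the splitting along special lines, invoking Lemma~\ref{lefinitesingsurfline} and the self-map $\phi$, but you never actually do so; those ingredients are used elsewhere in the paper for unrelated purposes and do not furnish the consistent labelling you need. The residual-line involution $\iota$ does not help here either: it is an involution of $\widetilde C_l$ as an abstract curve, not of the degree-$5$ cover $\widetilde C_l\to l$ (the residual line $l_3$ meets $l$ at a different point than $l_1$ does), so it gives no constraint on the sheet monodromy of $pr_2$.

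The paper resolves the mixing problem by a much simpler device that you overlook: assign to each sheet the \emph{degree over $l$} of the component of $\widetilde C_l$ through it. This integer is preserved both by $\Gamma_l$ (tautologically) and by the base monodromy (it is a discrete invariant varying continuously with $l$), hence by all of $\Gamma$. So the partition of $5$ given by the generic $\widetilde C_l$ is itself a $\Gamma$-invariant datum. Unless this partition is $(1,1,1,1,1)$, it already singles out a proper nonempty $\Gamma$-invariant subset of the five sheets (e.g.\ the unique degree-$2$ component when the partition is $(2,3)$, or the union of all degree-$1$ components versus the rest), yielding at least two irreducible components of $\mathcal{D}$ dominating $P_Y$. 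The remaining case $(1,1,1,1,1)$ is where the involution enters, but in a different way: for a good $l$ the fixed-point-free involution $\iota$ permutes five components, hence must fix one of them, and $\iota$ restricted to that $\mathbb{P}^1$ would have fixed points --- a contradiction. This is the missing idea; once you have it, no monodromy bookkeeping with $G\subset S_6$, special-line loci, or $\phi$ is needed.
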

\begin{proof} We observe first that for any line $l\subset Y$, the curve $\widetilde C_l\subset F(Y)$ (minus the point $[l]$ when
 $l$ is special for $Y$) identifies naturally  with
$q^{-1}(l)$ away from its intersection with the vertical curve $P_{Y,[l]}=p^{-1}([l])$. Indeed, $q^{-1}(l)$ is, away from
the vertical fiber $P_{Y,[l]}$, the set of pairs $([l'],x)$ such that $[l]\not=[l']$ and
$x\in l\cap l'$. The curve $\widetilde C_l$ (away from $[l]$ when $l$ is special) thus maps to it  via the
map $$[l']\mapsto ([l'],x),\,\{x\}=l\cap l'.$$  We will in fact see $\widetilde C_l$ (minus the point $[l]$) as contained in
$P_Y\times_YP_Y\setminus \Delta_{P_Y}$ by the
map
\begin{eqnarray}
\label{eqmap}[l']\mapsto ([l'],[l],x),\,\{x\}=l\cap l'.
\end{eqnarray}

Now suppose that $\widetilde C_l$ is reducible for all $l$. For general $l$, $\widetilde C_l$ is reduced and the morphism
$q'_l=q'_{\mid \widetilde C_l}:\widetilde C_l\rightarrow l$ has degree $5$. So,  for general $l$,  we must be in one of the following situations:
\begin{enumerate}
\item\label{item1} $\widetilde C_l$ has no component of degree $1$ over $l$ and has a unique component $\widetilde C_{l,2}$ of degree $2$ over $l$. We then denote $\widetilde C_{l,3}$ the Zariski closure of $\widetilde C_{l}\setminus \widetilde C_{l,2}$.
\item\label{item2} $\widetilde C_l$ has $1\leq k<4$ components $\widetilde C_{l,1,i}$ of degree $1$ over $l$ and the rest $\widetilde C_{l,rem}=\overline{\widetilde C_l\setminus\cup_i{\widetilde C_l,1,i}}$ has all its components of degree $>1$ over $\widetilde C_l$.
\item \label{item3} $\widetilde C_l$ has five components of degree $1$ over $l$.
\end{enumerate}
If case \ref{item1} or case \ref{item2} happens, then $P_Y\times_Y P_Y\setminus \Delta_{P_Y}$ has at least two
irreducible components dominating $P_Y$, namely the two varieties $\cup_{[l]\in F(Y)}\widetilde C_{l,2}$ and
$\cup_{[l]\in F(Y)}\widetilde C_{l,3}$
  in case \ref{item1}, and the two varieties
 $\cup_{[l]\in F(Y), 1\leq i\leq k}\widetilde C_{l,1,i}$ and $\cup_{[l]\in F(Y)}\widetilde C_{l,rem}$ in case \ref{item2}.
 It thus suffices to show that \ref{item3} cannot happen. This however follows from
Proposition \ref{propgoodline}. Indeed we may assume that $l$ is good, so that the involution on
$\widetilde C_l$ has no fixed point. But if $\widetilde C_l$ has five irreducible components all isomorphic
to $\mathbb{P}^1$, one of them is fixed under the involution which then has fixed points.
\end{proof}

Proposition
\ref{propirred} now follows from Lemma \ref{leintermediaire} and from the  following:
\begin{lemma} \label{leirredfiberprod} If $X$ is general, for any hyperplane section $Y$ of $X$,
the variety $P_Y\times_Y P_Y\setminus \Delta_{P_Y}$ is irreducible.
\end{lemma}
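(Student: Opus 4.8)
```latex
The plan is to prove irreducibility of $P_Y\times_Y P_Y\setminus \Delta_{P_Y}$ by a specialization argument, exploiting the fact that we already control the generic (smooth $Y$) situation and that irreducibility of a flat family of varieties over an irreducible base can be checked by combining irreducibility of the generic fiber with connectedness/irreducibility of the special fibers, or rather by showing that no extra component can sprout as $Y$ degenerates. Concretely, since $X$ is general, the whole incidence setup fits into a family over $B=(\mathbb{P}^5)^\vee$: the relative Fano scheme $\mathcal{F}\to B$ with its tautological $\mathbb{P}^1$-bundle $\mathcal{P}\to\mathcal{F}$, and the relative fiber product $\mathcal{P}\times_{\mathcal{Y}}\mathcal{P}\setminus\Delta$. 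For a smooth hyperplane section $Y$, the variety $P_Y\times_Y P_Y\setminus \Delta_{P_Y}$ is classically irreducible (it is the standard "$5$ lines meeting a given line" incidence, irreducible for a general, hence every smooth, cubic threefold). So the generic member of this family over $B$ is irreducible, and the total space is irreducible; what must be ruled out is the appearance of a new component supported entirely over the discriminant $X^\vee\subset B$.

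First I would set up the morphism $q:P_Y\to Y$ of degree $6$ and the induced $q'=\mathrm{pr}_2:P_Y\times_Y P_Y\setminus\Delta_{P_Y}\to P_Y$ of degree $5$, as in the statement, and observe that irreducibility of the source is equivalent to the monodromy of this degree $5$ cover being transitive. For smooth $Y$ this is the classical fact that the monodromy group of the $27$-lines-type incidence (here the $5$ residual lines meeting a chosen line on the cubic threefold) acts transitively; indeed it is the full symmetric group. The key point is then that as $Y$ acquires the mild (ADE) singularities allowed here, this monodromy can only grow or stay the same when passing to an open dense locus, so transitivity is preserved on the generic point of each fiber. The cleanest way to phrase this: consider the étale (over a suitable open locus) degree $5$ cover and its Galois/monodromy group computed over the whole family; since $\mathcal{P}$ is irreducible (as $\mathcal{F}$ is irreducible by Lemma \ref{leFYirred} and $\mathcal{P}\to\mathcal{F}$ is a $\mathbb{P}^1$-bundle), the monodromy over the generic point of $\mathcal{P}$ already acts transitively, and restriction of this monodromy to the subfamily lying over a single (possibly singular) $Y$ can only fail to be transitive if $P_Y\times_Y P_Y\setminus\Delta_{P_Y}$ breaks into components — which is exactly the dichotomy analyzed in Lemma \ref{leintermediaire}.

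The heart of the argument is therefore to combine Lemma \ref{leintermediaire} with a dimension/specialization count: if for some singular $Y$ the fiber product split into $\geq 2$ components dominating $Y$, then by the analysis of Lemma \ref{leintermediaire} (cases \ref{item1}, \ref{item2}) the generic line $l\subset Y$ would carry a reducible $\widetilde C_l$ with a distinguished component structure varying algebraically with $[l]\in F(Y)$. This would produce a global splitting of the degree $5$ cover $q'$ over all of $P_Y$, hence a nontrivial block decomposition of the monodromy of $q'$ that is stable under the monodromy coming from moving $Y$ inside the family. But the full monodromy over $\mathcal{P}$ is transitive (indeed, for the universal family it is the symmetric group $S_5$, inherited from the smooth-fiber computation), which forbids any such invariant block decomposition. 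The contradiction shows no $Y$ can have all its $\widetilde C_l$ reducible, and combined with Proposition \ref{propgoodline} (which supplies a good line, forcing the fixed-point-free involution and thereby excluding the totally-split case \ref{item3}) this yields irreducibility.

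The main obstacle I anticipate is making the monodromy/specialization step fully rigorous: one must ensure that the degree $5$ cover $q'$ really does form a nice (generically étale, or at least finite flat) family over the irreducible base $\mathcal{P}$, so that a single global monodromy group governs all fibers, and that the component structure over a singular $Y$ genuinely comes from a monodromy-invariant partition. The delicacy is that over the discriminant the cover can ramify or degenerate, so the clean statement "transitive generic monodromy $\Rightarrow$ irreducible special fiber" is not automatic; one needs either to work over an open locus of $\mathcal{P}$ where $q'$ is étale and argue that the closure cannot acquire new dominating components (using that $\mathcal{P}\times_{\mathcal{Y}}\mathcal{P}\setminus\Delta$ is a local complete intersection of pure dimension, so no embedded or excess components arise), or to invoke a Stein-factorization argument showing that the number of components dominating $Y$ is upper semicontinuous and equals $1$ generically. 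Handling this degeneration carefully — and confirming the genericity hypotheses on $X$ guarantee the needed transversality — is where the real work lies.
```
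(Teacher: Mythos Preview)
Your proposal has a genuine gap at the specialization step, which you yourself flag but do not resolve. The claim that a splitting of $P_Y\times_Y P_Y\setminus\Delta_{P_Y}$ over a single singular $Y$ would force a monodromy-invariant block decomposition for the \emph{full} family over $\mathcal{P}$ is simply false: monodromy can (and generically does) shrink when restricting to a special fiber, so transitivity over the generic point of $\mathcal{P}$ says nothing about transitivity over the generic point of $P_Y$ for a fixed singular $Y$. Nor is the number of irreducible components of the fibers upper semicontinuous in the sense you need; Stein factorization controls connected components of proper morphisms, not irreducible components of an open variety like $P_Y\times_Y P_Y\setminus\Delta_{P_Y}$. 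So the argument as written does not close, and the obstacle you identify is not a technicality but the entire content of the lemma.

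The paper's proof takes a completely different, and much more elementary, route. It works fiberwise on a fixed $Y$ and exploits an extra symmetry you do not use: the birational $\mathfrak{S}_3$-action on $P_Y\times_Y P_Y\setminus\Delta_{P_Y}$ coming from permuting the three lines of a triangle $P\cap Y=l_1+l_2+l_3$. One first shows (via the plane quintic $C_l$) that the quotient by the ``residual line'' involution $i$ is irreducible, so there are at most two components, exchanged by $i$. Since $i$ and the swap $\tau$ are conjugate in $\mathfrak{S}_3$, the two components are also exchanged by $\tau$. Now comes a pure counting argument: over a general $y\in Y$ with $q^{-1}(y)=\{l_1,\dots,l_6\}$, the two components partition $E_y\times E_y\setminus\Delta$ into pieces of sizes $6k_1$ and $6k_2$ with $k_1+k_2=5$, $k_1\neq k_2$; but $\tau$-exchange forces these sizes to be equal. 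Contradiction. No monodromy or family argument is needed.
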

Let us prove a few intermediate statements:
\begin{lemma}\label{lecompsurj} If $X$ is general, and $Y$ is any hyperplane section of $X$,
any irreducible component of $P_Y\times_Y P_Y\setminus \Delta_{P_Y}$ dominates $P_Y$ by the second projection.
\end{lemma}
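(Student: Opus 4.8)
The plan is to recast the statement as a purely dimension-theoretic one and then to isolate the single geometric input that is really needed. First I would compute the fibres of $pr_2$. Over a point $([l],x)\in P_Y$ the fibre consists of the pairs $([l'],[l],x)$ with $x\in l'$ and $l'\neq l$, i.e.\ of the lines of $Y$ through $x$ other than $l$. Since $Y$ contains no plane, a smooth point of $Y$ lies on only finitely many lines, so this fibre is finite when $x\in Y_{\mathrm{reg}}$; when $x\in\Sing Y$ it is the curve $C_x\setminus\{[l]\}$ of lines through the singular point. Hence every fibre of $pr_2$ has dimension $\le 1$, and the locus where it is positive-dimensional is contained in $q^{-1}(\Sing Y)$, which is a curve because $Y$ has finitely many (isolated, ADE) singular points and each $C_x$ is a $(2,3)$-curve in the $\mathbb{P}^3$ of directions.

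Granting this, let $W$ be an irreducible component of $P_Y\times_Y P_Y\setminus\Delta_{P_Y}$ and suppose $pr_2$ does not dominate $P_Y$, so that $Z:=\overline{pr_2(W)}$ has dimension $\le 2$. Writing $\dim W=\dim Z+f$ with $f$ the generic fibre dimension of $pr_2|_W$, either $f=0$ and $\dim W=\dim Z\le 2$, or $f\ge 1$, in which case the generic point of $Z$ has a positive-dimensional $pr_2$-fibre, forcing $Z\subseteq q^{-1}(\Sing Y)$ and hence $\dim W\le\dim Z+1\le 2$. Thus a non-dominating component has dimension $\le 2$; equivalently, the lemma is reduced to showing that $P_Y\times_Y P_Y\setminus\Delta_{P_Y}$ is of pure dimension $3$.

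To control the possible small components I would work over the smooth locus. On $Y_{\mathrm{reg}}$ the diagonal $Y_{\mathrm{reg}}\hookrightarrow Y_{\mathrm{reg}}\times Y_{\mathrm{reg}}$ is a regular embedding of codimension $3$, so each component of $q^{-1}(Y_{\mathrm{reg}})\times_{Y_{\mathrm{reg}}}q^{-1}(Y_{\mathrm{reg}})$ has dimension $\ge 3$; being also finite over $q^{-1}(Y_{\mathrm{reg}})$ it has dimension exactly $3$. Removing the diagonal sheet, the open subset $\Sigma^{\mathrm{sm}}\subset P_Y\times_Y P_Y\setminus\Delta_{P_Y}$ of pairs meeting at a point of $Y_{\mathrm{reg}}$ is therefore of pure dimension $3$. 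Consequently any component $W$ with $\dim W\le 2$ is disjoint from the open set $\Sigma^{\mathrm{sm}}$, i.e.\ it is contained in the locus of pairs meeting at a singular point, namely $\bigcup_{x\in\Sing Y}\big(C_x\times C_x\setminus\Delta\big)$, which has dimension $2$. It remains to prove that for each $x\in\Sing Y$ this surface is not a component, i.e.\ that it lies in the closure $\overline{\Sigma^{\mathrm{sm}}}$.

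This absorption step is where I expect the real difficulty to lie, and it seems to require a local analysis at the singularity. Concretely, I would show that a generic pair $(l_1,l_2)$ of distinct lines through a singular point $x$ is a limit of pairs $(l_1(t),l_2(t))$ of lines meeting at a smooth point $x(t)\to x$. The naive expectation that this is impossible---because $l_1\cap l_2=\{x\}$ forces $x(t)\to x$, and after blowing up $x$ the strict transforms of $l_1,l_2$ hit the exceptional divisor $E_x$ in distinct points---is misleading: the flat limit of the moving lines in the blow-up acquires an exceptional $\mathbb{P}^1$ in $E_x$, and the intersection point $x(t)$ may tend to $x$ along a direction lying on that exceptional curve rather than on $l_1$ or $l_2$. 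I would make this precise by writing the deformation in coordinates adapted to the ADE singularity (equivalently, by checking that the incidence $\{(l_1,l_2,x'):x'\in l_1\cap l_2\}$ carries a $3$-dimensional component through $(l_1,l_2,x)$), which yields $C_x\times C_x\setminus\Delta\subset\overline{\Sigma^{\mathrm{sm}}}$ and finishes the proof. By the involution exchanging the two factors the same statement then holds for $pr_1$.
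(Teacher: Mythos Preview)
Your reduction to showing that every component of $P_Y\times_Y P_Y\setminus\Delta_{P_Y}$ has dimension $\ge 3$ is correct and is exactly what underlies the paper's brief argument. There are however two issues. First, a minor one: the claim that a smooth point of $Y$ lies on only finitely many lines because $Y$ contains no plane is false. At a smooth point $y$ where the second fundamental form vanishes identically on $T_yY$, the tangent section $Y\cap T_yY$ is a cubic cone and a plane cubic of lines passes through $y$, with no plane in $Y$ required. The paper avoids this by proving (its Claim) that for general $X$ and any hyperplane section $Y$ the set $\{y\in Y:\dim C_y\ge 1\}$ is finite, using the irreducibility of $F(Y)$; with $\Sing Y$ replaced by this finite set your bound $\dim W\le 2$ still holds.

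Second, and this is the real gap, the absorption step you single out as the crux is only sketched, and it can in fact be bypassed entirely by a global argument. Let $P=P_{\mathbb{P}^4}\subset G(2,5)\times\mathbb{P}^4$ be the universal incidence; then $P\times_{\mathbb{P}^4}P$ is smooth of dimension $10$, and $P_Y\times_Y P_Y$ is cut out in it by the section $(s_1,s_2)$ of $p_1^*\operatorname{Sym}^3S^\vee\oplus p_2^*\operatorname{Sym}^3S^\vee$. Since the evaluation-at-$x$ maps send both $s_i$ to the same section $q^*f$ of $q^*\mathcal{O}_{\mathbb{P}^4}(3)$, the pair $(s_1,s_2)$ is actually a section of the rank-$7$ kernel of $\mathrm{ev}_1-\mathrm{ev}_2$, so every component of its zero locus has dimension $\ge 10-7=3$. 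This holds uniformly over all points of $Y$, so the $2$-dimensional loci $C_y\times C_y$ are automatically absorbed. The paper's one-sentence implication---a non-dominating component would force a curve $W\subset Y$ of points with $\dim C_y\ge 1$---is precisely this dimension bound together with the fibre estimate; its Claim then rules out such a curve.
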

\begin{proof} The only possibility for an irreducible component $Z$ of $P_Y\times_Y P_Y$ not to dominate $P_Y$
by the second projection is if there is a curve $W\subset Y$ such that for any $y\in W$, there is a curve $D_y$
of lines in $Y$
passing through  $y$. However, this is impossible by the following claim.
\begin{claim}\label{claim} For general $X$, and for any hyperplane section $Y$ of $X$, there are only finitely many points
$y\in Y$ such that the set  of lines in $Y$ through $y$ contains a curve $D_y$.
\end{claim}
\begin{proof} The stated property is Zariski open, so it suffices to
prove it assuming $X$ is very general. Assume by contradiction that this set is a curve $W$. Then the union over
$y\in W$ of the $D_y$'s would then be a surface contained in $F(Y)$, and since we know by Lemma \ref{leFYirred} that $F(Y)$ is irreducible,
this surface would be the whole of $F(Y)$. Thus $Y$  has the property
that any line in $Y$ meets the curve $W\subset Y$. But the general  point $[l]\in F(Y)$ is  a smooth
point of $F(Y)$ parametrizing a line $l$
with normal bundle $N_{l/X}\cong \mathcal{O}_l\oplus \mathcal{O}_l$, which means that the morphism
$q:P_Y\rightarrow Y$ is \'etale in a neighborhood of  the fiber
$P_{Y,[l]}$ of $P_Y$ over $[l]\in F(Y)$, so that  the general  deformation of $l$
in $Y$ does not intersect $Z$.
\end{proof}
Lemma \ref{lecompsurj} is thus proved.
\end{proof}
The variety $P_Y\times_Y P_Y\setminus \Delta_{P_Y}$ has several rational involutions. We will denote by
$\tau$ the involution exchanging factors and by $i$
 the  involution of $P_Y\times_Y P_Y\setminus \Delta_{P_Y}$ which maps
 $(l_1,l_2),\,l_1\cap l_2\not=\emptyset$ to $(l_3,l_2)$ where $l_3$ is the residual line
 of the intersection $P_{l_1,l_2}\cap Y$, $P_{l_1,l_2}$ being the plane generated by $l_1$ and $l_2$.
 Recall from (\ref{eqmap}) that the fiber of the composite map
 $$ P_Y\times_Y P_Y\stackrel{pr_2}{\rightarrow} P_Y\stackrel{p}{\rightarrow} F(Y)$$
 over $[l]\in F(Y)$ identifies with  the curve $\widetilde C_l$ of lines in $Y$ meeting $l$ and that
 $i$ acts on $\widetilde C_l$ as the Prym involution. The quotient $\widetilde C_l/i$ is the discriminant curve
 $C_l$ of the conic bundle $\pi_l:\widetilde{Y}_l\rightarrow \mathbb{P}^2$.
\begin{lemma}\label{leClirred} If $X$ is general, and $Y$ is any hyperplane section
of $X$, the quotient $$(P_Y\times_Y P_Y\setminus \Delta_{P_Y})/i$$ is irreducible.
 \end{lemma}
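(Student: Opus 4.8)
The plan is to exhibit $\mathcal{C}:=(P_Y\times_Y P_Y\setminus\Delta_{P_Y})/i$ as a generically finite cover of the \emph{threefold} $Y$ and to reduce irreducibility to a monodromy transitivity statement for the six lines of $Y$ through a moving point. Concretely, a point of $P_Y\times_Y P_Y\setminus\Delta_{P_Y}$ is a pair of distinct coplanar lines $(l_1,l_2)\subset Y$ meeting at $x=l_1\cap l_2$ and spanning a plane $P=\langle l_1,l_2\rangle$ with $P\cap Y=l_1+l_2+l_3$; the involution $i$ fixes $l_2$ and exchanges $l_1\leftrightarrow l_3$, hence fixes the opposite vertex $l_1\cap l_3$. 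Thus $(l_1,l_2)\mapsto l_1\cap l_3$ descends to a dominant map $v\colon\mathcal{C}\to Y$, a morphism over the locus where the three lines are distinct. Over a general $y\in Y$ there are exactly six lines of $Y$ through $y$ (the degree of $q\colon P_Y\to Y$ is $6$), and $v^{-1}(y)$ is the set of the $\binom{6}{2}=15$ unordered pairs of such lines (each pair $\{l_1,l_3\}$ recovering the marked line $l_2$ as the residual line in $\langle l_1,l_3\rangle$). Hence $v$ is generically finite of degree $15$ onto $Y$, and $\mathcal{C}$ is pure of dimension $\dim Y=3$.

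First I would check that every irreducible component of $\mathcal{C}$ dominates $Y$ under $v$. By Lemma \ref{lecompsurj} every component of $P_Y\times_Y P_Y\setminus\Delta_{P_Y}$ dominates $P_Y$ under $pr_2$, hence dominates $Y$ under the common–point map $w=q\circ pr_2$, $(l_1,l_2)\mapsto l_1\cap l_2$. The residual construction on the first factor, $(l_1,l_2)\mapsto(l_1,l_3)$, is the birational involution $j=\tau i\tau$ of $P_Y\times_Y P_Y\setminus\Delta_{P_Y}$; it permutes the irreducible components, and $w\circ j=l_1\cap l_3$ agrees with $v$ composed with the quotient map. Since all components dominate $Y$ under $w$, applying $j$ shows they all dominate $Y$ under the residual–vertex map as well. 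As $Y$ is an irreducible cubic threefold and $\mathcal{C}$ is pure of dimension $3$ with all components dominating $Y$, the irreducible components of $\mathcal{C}$ correspond exactly to the orbits of the monodromy action of $\pi_1$ of the smooth locus of $Y$ on the fibre $v^{-1}(y)$.

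It therefore remains to prove that this monodromy is transitive on the $15$ pairs, equivalently that the monodromy group $H\le\mathfrak{S}_6$ of the six lines of $Y$ through a moving point (the monodromy of $q\colon P_Y\to Y$) is $2$-homogeneous. Transitivity of $H$ is immediate: $P_Y$ is a $\mathbb{P}^1$-bundle over the irreducible surface $F(Y)$ (Lemma \ref{leFYirred}), hence irreducible, so the degree-$6$ cover $q$ is connected. The hard part, and the main obstacle, is to upgrade transitivity to transitivity on pairs uniformly for \emph{every} hyperplane section $Y$, including the singular ones. I would produce a transposition in $H$ from a simple branch point of $q$ (a point $y$ where exactly two of the six lines come together), so that by Jordan's theorem $H=\mathfrak{S}_6$ as soon as $H$ is primitive; the remaining point is then to exclude a monodromy–invariant nontrivial partition of the six lines through each point (an imprimitivity system with blocks of size $2$ or $3$), which is exactly the configuration that would split the $15$ pairs into several orbits. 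I expect to settle primitivity either by a local analysis of the ramification of $q$ over the singular locus of $Y$, or by specializing $Y$ to a smooth cubic threefold — for which the $2$-transitivity of $H$ is classical — while checking that the monodromy of the lines through a point cannot degenerate to an imprimitive subgroup; note that the stronger conclusion $H=\mathfrak{S}_6$ would also yield the full irreducibility of $P_Y\times_Y P_Y\setminus\Delta_{P_Y}$ of Lemma \ref{leirredfiberprod}.
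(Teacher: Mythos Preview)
Your reduction is sound and genuinely different from the paper's: you fiber the quotient over $Y$ via the ``opposite vertex'' map $v$, with fibre the $15$ unordered pairs of lines through $y$, and you correctly use the $\mathfrak{S}_3$-action (your $j=\tau i\tau$) together with Lemma~\ref{lecompsurj} to see that every component of $\mathcal{C}$ dominates $Y$. So irreducibility of $\mathcal{C}$ is indeed equivalent to $2$-homogeneity of the monodromy $H\le\mathfrak{S}_6$ of $q:P_Y\to Y$. The paper instead fibers over $F(Y)$ via $p\circ pr_2$, whose general fibre is the plane quintic $C_l=\widetilde C_l/i$; since $F(Y)$ is irreducible (Lemma~\ref{leFYirred}) and all components dominate, it suffices to show $C_l$ is irreducible for general $l$, which is then done by a direct geometric analysis of the possible splittings (line$+$quartic, conic$+$cubic), each case being excluded by specific constraints on surfaces contained in a \emph{general} $X$ (Lemma~\ref{lefinitesingsurfline}, the Hodge condition $\mathrm{Hdg}^4(X)=\mathbb{Z}h^2$, and the nonexistence in $X$ of certain degree-$3$ scrolls).

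The gap in your argument is precisely the step you flag as ``the main obstacle'': you do not prove $2$-homogeneity (equivalently primitivity plus a transposition) of $H$ for an \emph{arbitrary} hyperplane section $Y$, and neither of your proposed routes closes it. Specializing to smooth $Y$ goes the wrong way: monodromy is upper-semicontinuous in the sense that the monodromy of a general fibre contains (a conjugate of) that of a special fibre, not conversely, so knowing $H=\mathfrak{S}_6$ for smooth $Y$ says nothing about the possibly smaller $H$ for a singular $Y$. Likewise, asserting the existence of a \emph{simple} branch point of $q$ (two lines coming together with the other four remaining distinct) for every singular $Y$ is exactly the kind of uniform statement that needs its own proof; a priori the ramification over the singular locus of $Y$ could be more degenerate. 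In short, the monodromy approach trades the concrete plane-quintic case analysis of the paper for a uniform group-theoretic statement that is at least as hard, and you have not supplied it. If you want to push your strategy through, one viable path is to mimic the paper's fibration over $F(Y)$ instead: irreducibility of $C_l$ for a single very good $l$ already gives you, via the incidence with $P_Y\times_Y P_Y$, the transitivity on pairs that you need, and the paper's case-by-case exclusion of reducible quintics is exactly what makes this work for every $Y$.
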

\begin{proof}  Using Lemma \ref{lecompsurj}, it suffices to show that, $X$ and $Y$ being as above, for general
$[l]\in F(Y)$, the curve $C_l$ is irreducible.
The curve $C_l$ is a quintic curve, so if it is not irreducible, it must decompose either
as

(a) the union of a line and a quartic, or

(b)  the union of a smooth conic and a cubic.

Case (a) is excluded as follows. A line component in the discriminant curve
provides a cubic surface $S=P_3\cap Y$, where $P_3$ is a $\mathbb{P}^3$ contained in $H_Y$, which contains a one parameter family of lines meeting $l$.
The surface $S$ is irreducible because $X$ is general so does not contain a plane or a quadric surface.
 Furthermore $S$ cannot be a cone over an elliptic curve, because $l$ is general, hence by Claim  \ref{claim}, through any point
$y$ of $l$ there are only finitely many lines in $Y$ through $y$.
By assumption, for any plane $P\subset P_3$ containing $l$, the intersection $P\cap S$ contains $l$ and a residual
reducible conic. The singular point of the conic moves in $S$, hence by Bertini,
 the singular locus of $S$ consists of a curve $\overline{Z}$ and thus must be a line $l'$. Indeed,
  any bisecant line to $\overline{Z}\subset {\rm Sing}\,S$ is contained in $S$, and the only alternative possibility would be that
 $\overline{Z}$ is a conic and  $S$ has a component which is a plane which  is excluded since $X$ is general. The line $l'$
   is  then a special line of $X$
 whose associated $\mathbb{P}^3_{l'}=\cap_{x\in l'}T_{X,x}$ is equal to $P_3$.
We know by Lemma \ref{lefinitesingsurfline} that there are finitely many such $\mathbb{P}^3_{l'}$ contained in $H_Y$ (or equivalently, cubic surfaces singular along a line
$l'$ and contained in $Y$), so the general line
$l$ in $Y$ cannot be contained in such a $\mathbb{P}^3_{l'}$.

Case (b) is excluded as follows : Suppose the discriminant curve $C_l$ has a component which is a smooth conic
$C$. As $l$ is a good line,
the double cover $r:\widetilde C_l\rightarrow C_l$ is \'etale, hence split over $C$: $r^{-1}(C)=C_1\cup C_2$.
Let $\pi_l:\widetilde{Y}_l\rightarrow \mathbb{P}^2$ be the linear projection from $l$, and let
$T:=\pi_l^{-1}(C)$. Then $T$ is a reducible surface, $T=T_1\cup T_2$, where $T_1$ is swept-out by lines in $C_1$ and
$T_2$ is swept-out by lines in $C_2$. On the other hand, as $X$ is very general,
any surface in $X$ has degree divisible $3$, thus $T_1$ and $T_2$ must have degree $3$.
The surfaces $T_1$ and $T_2$ are ruled surfaces using their $1$-parameter family of lines intersecting $l$, and none of them can
be contained in a projective subspace $P_3\subset H_Y$, since otherwise ${P}_3$ would contain
$l$ and thus would project via $\pi_l$ to a line in $\mathbb{P}^2$ while the image  $\pi_l(T_i)$ is our smooth
conic.

Finally, a ruled nondegenerate degree $3$ surface in $\mathbb{P}^4$ is a cone over a rational
normal curve or a projection of a Veronese surface from one of its points. The first case corresponds to the vector bundle $\mathcal{O}\oplus \mathcal{O}(3)$ on
$\mathbb{P}^1$ and the second case corresponds to the  vector bundle $\mathcal{O}(1)\oplus \mathcal{O}(2)$
on $\mathbb{P}^1$. We already explained by counting  parameters  that the general cubic hypersurface $X$ in $\mathbb{P}^5$
does not contain a cone over a rational cubic curve in $\mathbb{P}^3$. It is also true
that the general cubic hypersurface $X$ in $\mathbb{P}^5$
does not contain the projection of a Veronese surface from one of its point, but this does not follow from
an immediate dimension count. One has to argue as follows: this surface $V$ is smooth with
$c_1(V)^2=8$, $c_2(V)=4$. If $V\subset X$, the normal bundle
$N_{V/X}$ fits into the exact sequence
\begin{eqnarray}\label{eqex1}0\rightarrow T_V\rightarrow T_{X\mid V}\rightarrow N_{V/X} \rightarrow 0.
\end{eqnarray}
As $$c_2(T_X)=6h^2,\,\,c_1(T_X)=3h,$$
one gets from (\ref{eqex1}) the following equalities in $H^*(V,\mathbb{Q})$:
\begin{eqnarray}\label{eqex2} c_1(N_{V/X})=-c_1(T_V)+3h_V,\,\,\,c_2(N_{V/X})=-c_2(T_V)-c_1(N_{V/X})\cdot c_1(T_V)+6h_V^2
\\
\nonumber
=-c_2(T_V)-(-c_1(T_V)+3h_V)\cdot c_1(T_V)+6h_V^2
,\end{eqnarray}
where $h_V=c_1(\mathcal{O}_V(1))$ and $h_V^2=3$.
In the ruled surface $T=\mathbb{P}(\mathcal{O}_{\mathbb{P}^1}(1)\oplus \mathcal{O}_{\mathbb{P}^1}(2))\stackrel{\pi}{\rightarrow}{\mathbb{P}^1}$, one has
$$K_V=-c_1(T_V)=-2h_V+\pi^*\mathcal{O}_{\mathbb{P}^1}(3),$$
which combined with (\ref{eqex2}) gives
$$c_2(N_{V/X})=-4-(-2h_V+\pi^*c_1(\mathcal{O}_{\mathbb{P}^1}(3))+3h_V)\cdot (2h_V-\pi^*c_1(\mathcal{O}_{\mathbb{P}^1}(3)))+18=5
.$$
This shows that the self-intersection of $V$ in $X$ is equal to $5$ so that the class of $V$ is not equal to $h^2$.
Hence such a surface does not exist for general $X$.
\end{proof}
We get the following corollary (where
again $X$ is general and $Y$ is any hyperplane section of $X$):
\begin{cor} \label{coro2comp} \begin{itemize} \item[(i)] The  fibered product $P_Y\times_YP_Y\setminus \Delta_{P_Y}$ has at most two irreducible components, and if it is reducible, they are exchanged by the rational involution $i$ acting on each curve $\widetilde C_l\times[l]\subset P_Y\times_YP_Y$.
\item[(ii)] Let $\tau$ be the involution of $P_Y\times_YP_Y\setminus \Delta_{P_Y}$ exchanging factors. If $P_Y\times_YP_Y\setminus \Delta_{P_Y}$ is reducible,
its two components are exchanged by $\tau$.
\end{itemize}
\end{cor}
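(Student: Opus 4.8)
The plan is to derive both parts from two facts already established: every irreducible component of $W:=P_Y\times_Y P_Y\setminus\Delta_{P_Y}$ dominates $P_Y$ under the second projection (Lemma \ref{lecompsurj}), and the quotient $W/i$ is irreducible (Lemma \ref{leClirred}). First I would note that $W$ is pure of dimension $3$: the projection $pr_2:W\to P_Y$ is generically finite (of degree $5$) onto the irreducible $3$-fold $P_Y$, so by Lemma \ref{lecompsurj} each irreducible component $Z$ has $\dim Z=3$. Since $i$ is the (Prym) involution that is fixed-point free on the general fibre $\widetilde C_l$ --- as a general $l$ is good --- the quotient morphism $\pi:W\to W/i$ is finite of degree $2$ onto the irreducible $3$-fold $W/i$. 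Being finite, $\pi$ is closed and preserves dimension, so for each component $Z$ the image $\pi(Z)$ is closed, irreducible and of dimension $3$, hence equals $W/i$; thus every component dominates $W/i$. Counting the generic fibre of $\pi$ (two points) among these components yields at most two of them, which proves the first assertion of (i).

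For the second assertion of (i), assume $W=Z_1\cup Z_2$ with $Z_1\ne Z_2$; then each $\pi|_{Z_k}$ has degree $1$. If $i$ fixed $Z_1$, then over a general point $b\in W/i$ the unique point of $Z_1$ in $\pi^{-1}(b)$ would be fixed by $i$, contradicting that $i$ is generically fixed-point free. Hence $i(Z_1)=Z_2$, i.e. the two components are exchanged by $i$.

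For (ii), the crucial point is that the composite $\rho:=\tau\circ i$ is a birational automorphism of $W$ of order $3$. Indeed, over a general point the fibre of $W$ is identified with the set of ordered pairs of distinct lines among the three coplanar lines $l_1,l_2,l_3$ cut out on $Y$ by the plane $\langle l_1,l_2\rangle$; under these identifications $\tau$ sends $(l_1,l_2)$ to $(l_2,l_1)$ and $i$ sends $(l_1,l_2)$ to $(l_3,l_2)$, so $\rho$ acts as the rotation $(l_1,l_2)\mapsto(l_2,l_3)$, which has order $3$. A finite-order birational automorphism permutes the finitely many irreducible components of $W$, and since $W$ has at most two of them, the order-$3$ map $\rho$ must fix each one. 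Combining with $i(Z_1)=Z_2$ from (i) gives $\tau(Z_1)=\rho\bigl(i(Z_1)\bigr)=\rho(Z_2)=Z_2$, so $\tau$ exchanges the two components, as claimed.

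The main thing to pin down carefully is the order-$3$ claim for $\rho=\tau\circ i$: it amounts to the elementary but essential book-keeping of the $S_3$-symmetry of a triangle of lines $l_1+l_2+l_3=\langle l_1,l_2\rangle\cap Y$ acting on its six ordered pairs of sides, together with checking that $\tau$, $i$ and $\rho$ are defined on a common dense open of $W$ so that the induced permutations of components compose correctly. Everything else reduces to dimension counting and the finiteness of the degree-$2$ quotient $\pi$.
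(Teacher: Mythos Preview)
Your proof is correct and follows essentially the same route as the paper: part (i) is deduced from Lemmas \ref{lecompsurj} and \ref{leClirred} exactly as in the text (you just spell out the degree-$2$ quotient argument in more detail), and part (ii) rests on the same $\mathfrak{S}_3$-symmetry of labelled triangles $(l_1,l_2,l_3)$. The only cosmetic difference is in how you extract (ii) from this $\mathfrak{S}_3$-action: the paper observes that $i$ and $\tau$ are \emph{conjugate} in $\mathfrak{S}_3$, hence induce the same permutation of the two components, while you instead use that $\rho=\tau\circ i$ has order $3$ and therefore must fix each of the (at most two) components, whence $\tau=\rho\circ i$ swaps them --- two equivalent readings of the same group-theoretic fact.
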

\begin{proof} (i) is an immediate consequence of Lemmas \ref{leClirred} and
\ref{lecompsurj}. The proof of (ii) goes as follows:
We observe that the two rational involutions $\sigma$ and $\tau$ are
part of an action of the symmetric group $\mathfrak{S}_3$ on $P_Y\times_YP_Y\setminus \Delta_{P_Y}$ by
birational maps. Indeed, $P_Y\times_YP_Y\setminus \Delta_{P_Y}$ can also be seen as the set
of labelled triangles, that is, triples $(l_1,\,l_2,\,l_3)$ of lines  in $Y$, such that
 for some plane $P\subset H_Y$, $P\cap Y=l_1+l_2+l_3$. The action of $\mathfrak{S}_3$
is simply the permutation of the labels.
Note that these birational maps are well defined at each generic point of $P_Y\times_YP_Y\setminus \Delta_{P_Y}$
by Lemma \ref{lecompsurj}.
The involution $i$ is the involution $(l_1,l_2)\mapsto (l_3,l_2)$ while the involution
$\tau$ is the involution $(l_1,l_2)\mapsto (l_2,l_1)$. In any case, these two involutions are conjugate
in $\mathfrak{S}_3$. We know by (i) that if there are two irreducible
 components, they are exchanged by $i$. Hence they must be also
exchanged by $\tau$.
\end{proof}
\begin{proof}[Proof of Lemma \ref{leirredfiberprod}] let $X$ be a general cubic fourfold and
let $Y$ be any hyperplane section of $X$. Assume by contradiction that $P_Y\times_YP_Y\setminus \Delta_{P_Y}$ is not irreducible.  By Corollary
\ref{coro2comp}, it has then exactly two components
$\mathcal{C}_1,\,\mathcal{C}_2$. Both components dominate
$Y$ by Lemma \ref{lecompsurj}. Let $1\leq k_1<k_2,\,k_1+k_2=5$ be the respective degrees of $pr_2:\mathcal{C}_1
\rightarrow P_Y,\,\mathcal{C}_2\rightarrow P_Y$. One has $(k_1,k_2)=(2,3)$ or $(k_1,k_2)=(1,4)$.
For a general point $y\in Y$, denote by $\{l_1,\ldots,l_6 \}=q^{-1}(y)=:E_y\subset P_Y$.
For $l_i\not= l_j$, we have $(l_i,l_j)\in P_Y\times_YP_Y\setminus \Delta_{P_Y}$ and
thus we can write $E_y\times E_y\setminus \Delta_{E_y}$ as a disjoint union
\begin{eqnarray}\label{equnion} E_y\times E_y\setminus \Delta_{E_y}=E_{1,y}\sqcup E_{2,y},
\end{eqnarray}
where $$E_{1,y}:=(E_y\times E_y\setminus \Delta_{E_y})\cap \mathcal{C}_1,\,\,E_{2,y}:=(E_y\times E_y\setminus \Delta_{E_y})\cap \mathcal{C}_2.$$
The partition (\ref{equnion}) satisfies:

(a) For any $i\in\{1,\ldots,6\}$, the set of $j\not=i$ such that $(l_i,l_j)\in E_{1,y}$ has cardinality $k_1$ and the set
of $j\not=i$ such that $(l_i,l_j)\in E_{2,y}$ has cardinality $k_2$.

(b)  For any $i,\,j\in\{1,\ldots,6\}$ with $i\not=j$, $(l_i,l_j)\in E_{1,y}\Leftrightarrow (l_j,l_i)\in E_{2,y}$.

Indeed, (b) is exactly Corollary \ref{coro2comp}(ii).

The contradiction is now obvious: Indeed, (a) shows that the cardinality of $E_{1,y}$
is $6k_1$ and the cardinality of $E_{2,y}$ is $6k_2$, with
$6k_1\not=6k_2$, while (b) implies that both sets have the same cardinality.
\end{proof}
Proposition \ref{propirred} is thus proved.
\end{proof}

\section{Transversality arguments} \label{sectrans18janvier}
This section is devoted to  applying  transversality arguments in order to deduce that some statements
which hold in large codimension for   cubic threefolds in $\mathbb{P}^4$  hold for {\it any} hyperplane sections
 of a general cubic fourfold. In particular, we will first prove  Lemma
\ref{letransXgen}  which guarantees that
the versality statement of \cite{casaetal}  is actually satisfied by the family of quintic curves associated
to the family of hyperplane sections of a general cubic fourfold and a local choice of good lines in them.
This will be needed in  Section \ref{sectprym}.
In Section \ref{sectranpf}, we will extend this result to the case of a general Pfaffian cubic fourfold.
By applying a  similar transversality argument, we will also prove  the existence of a very good line in any hyperplane section
of a  general Pfaffian cubic fourfold. This will be needed in order to make the arguments of Section \ref{sectprym}
apply as well when the cubic fourfold is  a general Pfaffian cubic (see Section \ref{sectpfaffian}).
\subsection{Transversality results for general cubic fourfolds}
 Below, we denote by $\Hyp_{4,3}^0$ the open subset of
$\mathbb{P}(H^0(\mathbb{P}^5,\mathcal{O}_{\mathbb{P}^5}(3)))$ parametrizing smooth cubic fourfolds, and
by $\Hyp_{3,3}$ the projective space $\mathbb{P}(H^0(\mathbb{P}^4,\mathcal{O}_{\mathbb{P}^4}(3)))$.
By restriction from $\mathbb{P}^5$ to a given $\mathbb{P}^4\subset \mathbb{P}^5$ we get a morphism
$$r:\Hyp_{4,3}^0\rightarrow \Hyp_{3,3}$$
which is obviously smooth, since the fibers are Zariski open subsets of $H^0(\mathbb{P}^5,\mathcal{O}_{\mathbb{P}^5}(2))$. With these notations, the following transversality lemma holds:
\begin{lemma} \label{legentrans} \begin{itemize}\item[(i)] Let $Z\subset \Hyp_{3,3}$ be a closed algebraic subset of
codimension $\geq 6$, which is invariant under the action of
$\PGL(5)$. Then for a general $[X]\in \Hyp_{4,3}$, no hyperplane section of $X$ is isomorphic to a cubic threefold $Y$ parametrized  by a point of $Z$.
\item[(ii)] Let $\mathcal{M}'\subset \Hyp_{4,3}^0$ be a  hypersurface which is invariant under
$\PGL(5)$, and let $Z\subset \Hyp_{3,3}$ be a closed algebraic subset of
codimension $\geq 7$, which is invariant under the action of
$\PGL(5)$. Then for a general $[X]\in \mathcal{M}'$, no hyperplane section of $X$ is isomorphic to a cubic threefold  $Y$ parametrized  by a point of $Z$.
\end{itemize}
\end{lemma}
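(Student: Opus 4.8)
The plan is to set up the standard incidence-variety dimension count for both parts simultaneously, leveraging the $\PGL(6)$-action (note $\PGL(5)$ in the statement should be the projective linear group acting on $\mathbb{P}^4$, while the ambient group on $\mathbb{P}^5$ is $\PGL(6)$; I will be careful to track which group acts where). First I would record the two basic dimension facts: $\dim \Hyp_{4,3}=\binom{8}{3}-1=55$, $\dim \Hyp_{3,3}=\binom{7}{3}-1=34$, and the fiber of $r$ over a fixed $\mathbb{P}^4$ has dimension $55-34=21=h^0(\mathbb{P}^5,\mathcal{O}(2))-1$. The key structural point is that to a general $[X]\in\Hyp_{4,3}^0$ one associates its $5$-dimensional family of hyperplane sections, parametrized by $(\mathbb{P}^5)^\vee$; so the relevant object is the universal hyperplane-section family, and I must compare how a $\PGL$-invariant locus $Z$ meets it.

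The core of the argument is an incidence-correspondence count. I would introduce
\[
I=\{([X],H)\in \Hyp_{4,3}^0\times(\mathbb{P}^5)^\vee : X\cap H \text{ is isomorphic to some }Y\in Z\},
\]
and estimate its dimension via the second projection composed with the restriction-to-$H$ map. Fixing $H\cong\mathbb{P}^4$, the cubic threefolds $Y\subset H$ lying in $Z$ form a family of dimension $\dim Z=34-c$ (where $c=\operatorname{codim}Z\ge 6$); the cubic fourfolds $X$ inducing a given such $Y$ as $X\cap H$ form an affine space of dimension $21$ (choice of the degree-$3$ part modulo those vanishing on $H$, i.e. the quadric multiplier). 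Hence the fiber of $I\to(\mathbb{P}^5)^\vee$ has dimension $(34-c)+21=55-c$, and adding $\dim(\mathbb{P}^5)^\vee=5$ gives $\dim I\le 60-c$. For the conclusion in (i) I want the first projection $I\to\Hyp_{4,3}^0$ to be non-dominant, i.e. $\dim I<55$, which holds precisely when $c\ge 6$. Here the $\PGL$-invariance of $Z$ is what guarantees that the count of $X$'s over a \emph{fixed} $H$ (rather than up to isomorphism) is genuinely $21$-dimensional and does not secretly collapse: the isomorphism in the definition of $Z$ is absorbed because $Z$ is a union of $\PGL(5)$-orbits, so "isomorphic to some $Y\in Z$" is the same as "lies in $Z$" once we work with the actual embedded cubic threefold in $H$.

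For part (ii) the target is restricted to a $\PGL$-invariant hypersurface $\mathcal{M}'\subset\Hyp_{4,3}^0$, so $\dim\mathcal{M}'=54$, and I must show the analogous incidence variety $I_{\mathcal{M}'}=I\cap(\mathcal{M}'\times(\mathbb{P}^5)^\vee)$ does not dominate $\mathcal{M}'$. The naive bound would only give $\dim I_{\mathcal{M}'}\le (60-c)-1=59-c$, requiring $c\ge 6$ again, which is weaker than the hypothesis $c\ge 7$; so the subtlety is that imposing membership in $\mathcal{M}'$ costs one condition but I must check it is genuinely independent of the $Z$-condition. The clean way is to run the fiberwise count over fixed $H$ \emph{inside} $\mathcal{M}'$: the $X$'s with $X\cap H\in Z$ form a $(55-c)$-dimensional locus in $\Hyp_{4,3}^0$, and intersecting with the hypersurface $\mathcal{M}'$ drops the dimension to $\le 54-c$ provided this locus is not contained in $\mathcal{M}'$; then $\dim I_{\mathcal{M}'}\le (54-c)+5=59-c<54$ iff $c\ge 6$, and with $c\ge 7$ we get strict non-dominance with room to spare. \textbf{The main obstacle}, and the step deserving the most care, is exactly verifying that the $Z$-locus is \emph{not contained in} the hypersurface $\mathcal{M}'$, so that the extra invariant condition $\mathcal{M}'$ really cuts the dimension by one; equivalently, that a general cubic fourfold with a hyperplane section in $Z$ does not automatically lie on $\mathcal{M}'$. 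I expect this to follow from the genericity/irreducibility of the relevant loci together with the $\PGL$-invariance of both $Z$ and $\mathcal{M}'$, but it is the point where one cannot merely count dimensions and must instead argue that the two invariant conditions are transverse — which is why the hypothesis is strengthened from $\operatorname{codim}\ge 6$ to $\ge 7$ in the hypersurface case.
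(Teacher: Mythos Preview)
Your argument for (i) is correct and is essentially the paper's proof in different packaging: the paper observes that $r^{-1}(Z)$ has codimension $\ge 6$ and is preserved by the stabilizer $\PGL(6,5)$ of the fixed hyperplane, so its $\PGL(6)$-saturation has dimension $\le 5+\dim r^{-1}(Z)\le 54<55$; your incidence variety $I$ is exactly this saturation fibered over $(\mathbb{P}^5)^\vee$.

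For (ii) you take an unnecessary detour and leave the argument incomplete. You try to bound $\dim I_{\mathcal{M}'}$ by intersecting $I$ with $\mathcal{M}'\times(\mathbb{P}^5)^\vee$, correctly note that this requires knowing $I\not\subset\mathcal{M}'\times(\mathbb{P}^5)^\vee$, flag this as the ``main obstacle,'' and then do not resolve it. But no transversality is needed. Simply project $I$ to $\Hyp_{4,3}^0$: the image $B$ (the bad locus) is closed (by properness of $(\mathbb{P}^5)^\vee$) of dimension $\le 60-c$. With $c\ge 7$ this gives $\dim B\le 53<54=\dim\mathcal{M}'$, so $B$ cannot contain the hypersurface $\mathcal{M}'$, and a general $[X]\in\mathcal{M}'$ lies outside $B$. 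This is exactly the paper's one-line proof: $\operatorname{codim}\bigl(\PGL(6)\cdot r^{-1}(Z)\bigr)\ge 2$, hence it cannot contain a hypersurface. The strengthening from $c\ge 6$ to $c\ge 7$ is there precisely to force $\dim B<\dim\mathcal{M}'$, not to salvage a transversality claim. (Your intuition that under additional hypotheses on $\mathcal{M}'$ one should get away with $c\ge 6$ is in fact correct---the paper proves such a refinement later for the Pfaffian locus---but for the lemma as stated the direct dimension bound is the complete argument.)
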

\begin{proof} (i) Indeed, as $r$ is smooth (actually, flat would suffice), $r^{-1}(Z)\subset \Hyp_{4,3}^0$ has codimension $6$ in $\Hyp_{4,3}^0$. The group $\PGL(6,5)\subset \PGL(6)$ of automorphisms of $\mathbb{P}^5$ preserving $\mathbb{P}^4$
acts on $\Hyp_{4,3}^0$ preserving $r^{-1}(Z)$ since $Z$ is invariant under $\PGL(5)$.
It thus follows that
$${\rm dim}\,\PGL(6)\cdot r^{-1}(Z)\leq 5+{\rm dim}\,r^{-1}(Z),$$
or equivalently that ${\rm codim}\,\PGL(6)\cdot r^{-1}(Z)\geq 1$. Thus $\PGL(6)\cdot r^{-1}(Z)$ is not open in $\Hyp_{4,3}^0$, which proves (i).

(ii)   The same argument as in (1) shows that
$\PGL(6)\cdot r^{-1}(Z)$ has codimension at least $2$ in $\Hyp_{4,3}^0$, hence cannot contain the hypersurface
$\mathcal{M}'$.
\end{proof}
\begin{remark} We will see in Section \ref{sectranpf} an improved version of Lemma \ref{legentrans} (ii), where under a certain assumption on the hypersurface $\mathcal{M}'$,
the estimate on codimension of $Z$ will be also $6$, not $7$. The hypersurface of interest for us will be the locus of Pfaffian cubics.
\end{remark}

The above lemma will allow  us to exclude from our study highly singular cubic threefolds
 and to restrict ourselves to mildly singular cubic threefolds with  the following precise meaning:
\begin{definition}\label{defallow} Let $Y$ be a cubic threefold. We say $Y$ is {\it allowable} (or mildly singular) if $Y$ has at worst isolated singularities and $\tau_{tot}(Y)\le 6$.   Here $\tau$ denotes the Tjurina number of an isolated hypersurface singularity, and $\tau_{tot}(Y)$ is the sum of the associated Tjurina numbers, i.e. $\tau_{tot}(Y)=\sum_{p\in \Sing(Y)}\tau({Y_p})$ (where $Y_p$ denotes the germ of $Y$ at $p$).
\end{definition}
\begin{remark}\label{deftjurina}
We recall that for an isolated hypersurface singularity $(V(f),0)\subset \bC^n$, the Tjurina number  is defined to be $\tau(f)=\dim_\bC \bC[x_1,\dots,x_n]/\langle f,\frac{\partial f}{\partial x_1},\dots, \frac{\partial f}{\partial x_n}\rangle$, and  is the expected codimension in moduli to encounter that singularity. By dimension count, we thus expect that all hyperplane sections $Y$ of a general cubic fourfold $X$ are allowable in the sense of Definition \ref{defallow}. The results below say that this is indeed the case.
\end{remark}
\begin{remark}
The arguments involving good lines are closely related and inspired by those in \cite{casaetal}. However, as already noted in Remark \ref{diffcml}, the results of \cite{casaetal} do not suffice here. Namely, in \cite{casaetal} the focus was on GIT stable/semistable cubic threefolds, while here we focus on hyperplane sections of general cubic fourfolds (or general Pfaffian cubics). Thus, our notion of allowable is slightly different from that of \cite[Definition 2.2]{casaetal}.
\end{remark}

 \begin{prop}\label{proallow}
Let $Y$ be an allowable cubic threefold in the sense of  Definition \ref{defallow}. Then, the following hold:
\begin{itemize}
\item[(0)]  $Y$ has at worst ADE (in particular planar) singularities.
\item[(1)] The deformations of $Y$ in $\mathbb{P}^4$  induce  a simultaneous versal deformation of the singularities of $Y$.  This means
equivalently that
the natural map from the first order deformation space of $Y$, that is $H^0(Y,\mathcal{O}_Y(3))$, to
the product $\prod_{p\in  {\rm Sing}\,Y}T^1_{{Y_p}}$, where $T_{{Y_p}}^1$ classifies the first order deformations
of the germ of singularities of $Y$ at $p$, is surjective.
\item[(2)]  Assume additionally that there exists a good line $l\subset Y$. Let
 $\widetilde C_l$ be the curve of lines in $Y$ meeting $l$.
Then the singular points of $Y$ are in bijection with the singular points of the curve
$C_l=\widetilde C_l/\iota$ (which is  a plane quintic curve), the analytic types of corresponding singularities of
 $Y$ and $C_l$ coincide and the deformation theory of corresponding singular points of $Y$
and $C_l$ coincide. Furthermore, the deformations of $C_l$ give  simultaneous versal deformations of the singularities of $C_l$ (which is compatible with the deformations of the singularities of $Y$).
\item[(3)]  The locus of cubic  hypersurfaces $Y\subset \mathbb{P}^4$ with non-allowable singularities
has codimension $\ge 7$ in the space $\Hyp_{3,3}$ of all cubic threefolds.
\item[(4)] $Y$ has finite stabilizer.
\end{itemize}
\end{prop}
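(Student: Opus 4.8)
The plan is to reduce the statement to the non-existence of a linear syzygy among the partial derivatives of the defining equation, and then to rule such a syzygy out using the bound $\tau_{tot}(Y)\le 6$. Write $Y=V(f)\subset\bP^4$. The stabilizer of $[Y]$ in $\PGL(5)$ is finite exactly when its Lie algebra is trivial, i.e. when no linear vector field $v=\sum_{i,j}m_{ij}x_j\partial_{x_i}$ lying outside the scalars satisfies $L_vf\in\bC f$, where $L_v$ is the Lie derivative. Subtracting the appropriate multiple of the Euler field (for which $L_{\Id}f=3f$), this is equivalent to the absence of a nonzero $v$ with $L_vf=0$. Since $L_vf=\sum_i\ell_i\,\partial_{x_i}f$ with $\ell_i=\sum_jm_{ij}x_j$ linear, such a $v$ is exactly a linear syzygy among the quadrics $\partial_{x_0}f,\dots,\partial_{x_4}f$; equivalently, writing $R=\bC[x_0,\dots,x_4]/(\partial_{x_0}f,\dots,\partial_{x_4}f)$ for the Jacobian ring, one computes $\dim\mathrm{Stab}_{\PGL(5)}([Y])=\dim R_3-10$, so the assertion is that $\dim R_3=10$ (its value for a smooth cubic). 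I would argue by contradiction, assuming a nonzero $v$ with $L_vf=0$.

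Such a $v$ generates a one-parameter subgroup $G'\subset\PGL(5)$ (a copy of $\Gm$ if $v$ is semisimple, of $\mathbb{G}_a$ if $v$ is nilpotent) acting on $Y$. Because $Y$ is allowable it has only finitely many singular points, and the Jacobian scheme $\Sing(Y)$ is $G'$-invariant; as $G'$ is connected it must fix each singular point. In the semisimple case I would diagonalize $v$, so that $\Gm$ acts with integer weights $a_0,\dots,a_4$ (not all equal, since $v$ is nonscalar) and $f$ is a weight-zero eigenvector: every monomial $x_ix_jx_k$ occurring in $f$ satisfies $a_i+a_j+a_k=0$. Each singular point then lies in the fixed locus of $\Gm$ (a union of coordinate linear subspaces), and the germ of $Y$ there is quasi-homogeneous for the induced weights.

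The heart of the matter is the estimate that a cubic threefold carrying such a nontrivial $\Gm$-action and having only isolated singularities must have $\tau_{tot}\ge 7$ (or a positive-dimensional singular locus), contradicting allowability. The mechanism is that weight-zero invariance forces the local equation at a fixed singular point into a weighted-homogeneous form of very low order, producing a deep singularity; summed over the fixed points this overshoots $6$. For instance, the weight vector $(2,1,0,-1,-2)$ forces $f$ into the linear span of $x_2^3,\,x_0x_2x_4,\,x_0x_3^2,\,x_1x_2x_3,\,x_1^2x_4$, and a direct splitting-lemma computation shows that its only singularities are two $A_5$ points, at $[e_0]$ and $[e_4]$, already giving $\tau_{tot}=10$. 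I expect the main obstacle to be organizing the finite but genuine case analysis over all admissible weight systems $(a_0,\dots,a_4)$, showing that each either makes $f$ singular in codimension $\le 1$ or produces total Tjurina number at least $7$; this is precisely the Hilbert--Mumford analysis of destabilizing one-parameter subgroups underlying the GIT of cubic threefolds, much of which is already available in \cite{casaetal}.

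Finally, the unipotent case ($v$ nilpotent, $G'\cong\mathbb{G}_a$) would be treated by the same method applied to the flow of $v$ along its fixed locus: the nilpotent normal form again forces the invariant singularities to be too deep to be compatible with $\tau_{tot}(Y)\le 6$. I would handle this case directly rather than by degenerating to the associated graded cubic, since upper-semicontinuity of $\tau_{tot}$ controls the special fibre (which can only be \emph{more} singular) and therefore does not by itself bound $\tau_{tot}(Y)$ from below.
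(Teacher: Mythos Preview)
Your proposal addresses only item~(4) of the proposition --- finiteness of the stabilizer --- and says nothing about items~(0)--(3). The statement is a package of five assertions, and the paper treats each separately: (0) is the observation that any non-ADE hypersurface singularity has $\tau\ge 7$; (1) cites the versality theorem of Shustin--Tyomkin (and the stronger \cite{dpw}); (2) cites \cite[Proposition~3.6]{casaetal} for the bijection of singularities under projection from a good line, together with the analogous versality bound for plane quintics; (3) is a dimension count using the expected-codimension formula $\operatorname{codim}=\tau_{tot}$, plus a case check for the very degenerate strata. None of this is touched in your write-up, so as a proof of the proposition it is incomplete on its face.

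For item~(4) itself, your reduction to the non-existence of a nontrivial linear syzygy among the partials, and the formula $\dim\mathrm{Stab}_{\PGL(5)}([Y])=\dim R_3-10$, are correct and give a clean starting point. Your strategy --- classify the possible one-parameter subgroups by their weight data and show each forces $\tau_{tot}\ge 7$ or a non-isolated singular locus --- is essentially the Hilbert--Mumford analysis underlying the GIT of cubic threefolds, and it can be made to work. But you have not carried it out: you give one illustrative weight system and then defer both the full semisimple case analysis and the entire unipotent case. The paper's proof of~(4) short-circuits this by invoking existing classifications: for GIT semistable cubics with positive-dimensional stabilizer it cites Allcock, which already gives $\tau_{tot}\ge 10$ or non-isolated singularities; for GIT unstable cubics it splits into $\mathbb{G}_a$ (excluded via \cite{dpw2}) and $\mathbb{G}_m$ (reduced to a single case, $D_5+A_1$, then excluded by the associated $(2,3)$ curve). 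Your direct approach would reprove pieces of these references; that is legitimate, but as written it is a plan rather than a proof.
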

\begin{proof}
A non-ADE hypersurface singularity has Tjurina number $\tau\ge 7$, giving (0).

(1) The simultaneous versality statement (1) is a specialization of a result of Shustin--Tyomkin \cite[Main Theorem]{shustin}  to the case of cubic threefolds (in fact, $\tau_{tot}(Y)\le 7$ suffices; see also \cite[Lemma 3.3(i)]{dpw} which gives the stronger results that $\tau_{tot}(Y)\le 15$ suffices for cubic threefolds;  \cite[(5) on p. 35]{casaetal} gives the simultaneous versality for GIT stable cubics).

(2) The correspondence of singularities under the projection from a good line is
 \cite[Proposition 3.6]{casaetal}. Clearly we get $\tau_{tot}(C_l)\le 6$, which then implies that the deformations of $C_l$ give  simultaneous versal deformations of its singularities (in fact, \cite{shustincurves} says that $\tau_{tot}(C_l)< 4(d-1)=16$ suffices). Finally, the compatibility between the global-to-local deformations of $Y$ and $C_l$ is discussed in \cite[\S3.3]{casaetal2}.

 (3) The expected codimension for the equisingular deformations of a singular cubic $Y$ is $\tau_{tot}(Y)$. Thus, the locus of cubics with $\tau_{tot}(Y)\ge 7$ is expected to have codimension $7$ in $\Hyp_{3,3}$. The simultaneous versality statements cited above (esp. \cite[Lemma 3.3(i)]{dpw}) guarantee that the expected codimension is the actual codimension for cubics with $\tau_{tot}(Y)\le 15$. It remains to check that the more degenerate cases (cubics with $\tau_{tot}(Y)> 15$ or non-isolated singularities) have still codimension $\ge 7$. This is an easy case by case analysis that we omit (the main tool for this analysis is to study a singular cubic via the associated $(2,3)$ complete intersection in $\bP^3$, see \cite[\S3.1]{casaetal}).

(4) The cases when $Y$ is GIT semistable (in particular, if $Y$ has at worst $A_1,\dots,A_5$ or $D_4$ singularities) and has positive dimensional stabilizer are classified by Allcock (e.g. \cite{allcock}). It follows that either $\tau_{tot}(Y)\ge 10$ or $Y$ has non-isolated singularities (in fact $Y$ is the chordal cubic). Assuming that $Y$ is not GIT semistable and that $\tau_{tot}(Y)\le 6$ leads to a small number of cases that can be excluded by a case by case analysis. Namely, $Y$ is stabilized either by $G_m$ or $G_a$. The unipotent case can be seen not to occur using the classification of \cite{dpw2}. Finally, if $Y$ is stabilized by $G_m$ it has at least two singularities. Under our assumptions (in particular, GIT unstable), the only possibility is that $Y$ has a $D_5$ singularity and an $A_1$ singularity, which can be then excluded by studying the associated $(2,3)$ curve obtained by projecting from the $A_1$ singular point.
\end{proof}

An immediate consequence of Corollary \ref{coroverygood}  and Proposition \ref{proallow}, (3) is:
\begin{cor}\label{corallow} If $X$ is a general cubic fourfold, any hyperplane section $Y$
of $X$ is allowable, hence satisfies properties (0)-(4) of Proposition \ref{proallow}. Moreover, $Y$ has a very good line.
\end{cor}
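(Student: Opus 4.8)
The plan is to obtain the allowability of \emph{every} hyperplane section from the codimension estimate of Proposition \ref{proallow}(3) by means of the transversality Lemma \ref{legentrans}(i), and then to read off the remaining assertions directly from Proposition \ref{proallow} and Corollary \ref{coroverygood}.

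First I would let $Z\subset \Hyp_{3,3}$ denote the locus of cubic threefolds that fail to be allowable in the sense of Definition \ref{defallow}, i.e.\ those having non-isolated singularities or satisfying $\tau_{tot}>6$. Before applying the transversality lemma, two points must be verified: that $Z$ is a closed, $\PGL(5)$-invariant subset, and that it has codimension at least $6$ in $\Hyp_{3,3}$. Invariance is immediate, since having isolated singularities and the value of $\tau_{tot}$ depend only on the projective-isomorphism type of the cubic threefold and are therefore preserved by the $\PGL(5)$-action; closedness is clear as well. The codimension bound is exactly the content of Proposition \ref{proallow}(3), which in fact yields the stronger estimate $\operatorname{codim}_{\Hyp_{3,3}} Z\ge 7$, so the hypothesis $\operatorname{codim}\ge 6$ of Lemma \ref{legentrans}(i) is comfortably satisfied.

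Next I would apply Lemma \ref{legentrans}(i) to this $Z$. It gives that for a general $[X]\in \Hyp_{4,3}$ no hyperplane section $Y$ of $X$ is isomorphic to a member of $Z$; since allowability is invariant under isomorphism, this says precisely that every hyperplane section of a general cubic fourfold $X$ is allowable. Proposition \ref{proallow} then supplies properties (0)--(4) for each such $Y$. Finally, for the existence of a very good line, I would invoke Corollary \ref{coroverygood}, according to which, for $X$ general and $Y$ any hyperplane section, a general line in $Y$ is very good; in particular $Y$ contains a very good line.

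The argument contains no genuinely new difficulty: it is an assembly of results already established, the substantive work residing in the codimension count of Proposition \ref{proallow}(3), in the transversality estimate of Lemma \ref{legentrans}, and in the existence statement of Corollary \ref{coroverygood}. The only step meriting a moment's attention is checking that the non-allowable locus $Z$ satisfies the hypotheses of Lemma \ref{legentrans}(i)---that it is closed, $\PGL(5)$-stable, and of codimension at least $6$---which, as noted, is the logical hinge on which the whole deduction turns.
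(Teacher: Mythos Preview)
Your proof is correct and follows the same route as the paper, which simply records the corollary as ``an immediate consequence of Corollary \ref{coroverygood} and Proposition \ref{proallow}(3)''. You have made explicit the step the paper leaves implicit, namely the invocation of Lemma \ref{legentrans}(i) to pass from the codimension bound on the non-allowable locus in $\Hyp_{3,3}$ to the statement about hyperplane sections of a general $X$.
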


Note however that since we are restricting to the universal family $\cY/B$ of hyperplane sections of a fixed cubic fourfold $X$, the simultaneous versal statement of  Proposition \ref{proallow}(1) does not suffice for our purposes. What is needed instead is the following lemma which follows  from Proposition \ref{proallow} and a transversality argument.

\begin{lemma} \label{letransXgen} Let $X\subset \mathbb{P}^5$ be a general cubic fourfold, and let $Y$ be any hyperplane section
of $X$. Then  the natural morphism
$H^0(Y,\mathcal{O}_Y(1))\rightarrow \prod_{p\in {\rm Sing}\,Y}T^1_{{Y_p}}$ is surjective. In other words, the  family of deformations of
$Y$ in $X$ induces a versal deformation of the  singularities of $Y$.
\end{lemma}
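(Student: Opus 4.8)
The plan is to reduce the assertion — which a priori depends on the cubic fourfold $X$ through its whole pencil of hyperplane sections — to a $\PGL(5)$-invariant condition on the cubic threefold $Y$ alone, and then to bound the codimension of the failure locus inside $\Hyp_{3,3}$ and invoke the transversality Lemma \ref{legentrans}(i).

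First I would make the first-order deformation explicit. Writing $X=\{F=0\}$ and a hyperplane section $Y=X\cap\{x_5=0\}=\{f=0\}$ with $f=F|_{x_5=0}$, moving the hyperplane in the direction of a linear form $m$ deforms $f$ to first order by $m\cdot g$, where $g=\partial_{x_5}F|_{x_5=0}$ is a fixed quadric on $\mathbb{P}^4$; thus the map of the statement is $m\mapsto [m\,g]\in\prod_{p\in\Sing Y}T^1_{{Y_p}}$. The key observation is that at any singular point $p$ of $Y$ one has $g(p)\neq0$: since $X$ is smooth, $p$ is a smooth point of $X$, and $p$ being singular on the hyperplane section forces $\nabla F(p)$ to be conormal to the hyperplane, i.e.\ $\partial_{x_5}F(p)\neq0$, whence $g(p)=\partial_{x_5}F(p)\neq0$. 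Therefore $g$ is a unit in each local Tjurina algebra $T^1_{{Y_p}}$, multiplication by $g$ is an isomorphism of $\prod_pT^1_{{Y_p}}$, and the deformation map is surjective if and only if the plain restriction map $\rho_Y\colon H^0(Y,\mathcal{O}_Y(1))\to\prod_{p\in\Sing Y}T^1_{{Y_p}}$ is surjective — a condition depending only on $Y\in\Hyp_{3,3}$, not on $X$.

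Next I would set $Z\subset\Hyp_{3,3}$ to be the union of the locus of non-allowable cubic threefolds and the locus of allowable $Y$ for which $\rho_Y$ fails to be surjective. This $Z$ is closed and $\PGL(5)$-invariant, so by Lemma \ref{legentrans}(i) it suffices to prove $\operatorname{codim}_{\Hyp_{3,3}}Z\ge6$; then no hyperplane section of a general $X$ meets $Z$, and combined with Corollary \ref{corallow} (all sections allowable, with $g(p)\neq0$ automatic) the lemma follows. The non-allowable part of $Z$ has codimension $\ge7$ by Proposition \ref{proallow}(3), so the real content is the allowable failure locus.

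For the latter I would stratify by the singularity configuration. By Proposition \ref{proallow}(1) the equisingular stratum carrying a fixed configuration of total Tjurina number $\tau_{tot}$ has codimension exactly $\tau_{tot}$, and over it $\rho_Y$ is a family of linear maps $\mathbb{C}^5\to\mathbb{C}^{\tau_{tot}}$. If $\tau_{tot}\ge6$ then $\rho_Y$ can never be surjective, but such strata already have codimension $\ge6$. If $\tau_{tot}\le5$, the failure locus is the rank-drop locus of this family of $5\times\tau_{tot}$ matrices, whose expected codimension is $6-\tau_{tot}$, so the total expected codimension is $\tau_{tot}+(6-\tau_{tot})=6$. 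The main obstacle is to show that the actual codimension is at least the expected one, i.e.\ that $\rho_Y$ is sufficiently generic along each stratum. Concretely, for a single $A_k$ point $\rho_Y$ is the evaluation of linear forms on the $(k-1)$-jet of the branch arc of the singularity, and surjectivity is equivalent to that arc being linearly non-degenerate up to order $k-1$ in $\mathbb{P}^4$ (which needs $k\le5$); I would verify, using that deforming $Y$ within its stratum moves the singular points and their branch jets freely — a consequence of the simultaneous versality of Proposition \ref{proallow}(1) — that imposing degeneracy costs the expected number of conditions, and that the constraints at distinct singular points are independent. Establishing this genericity of the restriction/jet map over the strata is the technical heart of the argument; granting it, $\operatorname{codim} Z\ge6$ and the proof concludes.
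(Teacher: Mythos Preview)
Your first reduction is correct and rather elegant: the observation that $g=\partial_{x_5}F|_{x_5=0}$ is a unit in each $T^1_{Y_p}$ (by smoothness of $X$) does reduce the question to surjectivity of the intrinsic map $\rho_Y:m\mapsto[m]$, a $\PGL(5)$-invariant condition on $Y$ alone. The paper does not make this reduction.

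However, your step 2 is a genuine gap, and you say so yourself (``granting it''). The issue is not merely a routine verification. Your description of $\rho_Y$ for a single $A_k$ point as evaluation of linear forms on the $(k-1)$-jet of a distinguished arc is essentially correct, but the claim that ``deforming $Y$ within its stratum moves \ldots\ their branch jets freely --- a consequence of the simultaneous versality of Proposition~\ref{proallow}(1)'' does not follow: versality of the ambient family only tells you the equisingular stratum is smooth of the expected codimension, not that the osculating data of the arc varies generically enough to make the determinantal failure locus have expected codimension. Moreover, your outline is tailored to the $A_k$ case; for $D_4,D_5$ (which do occur as hyperplane-section singularities of a general cubic fourfold) there is no single branch arc, the structure of $T^1$ is different, and a separate analysis is required. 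Carrying this out for all ADE configurations with $\tau_{\mathrm{tot}}\le 5$ and for configurations with several singular points is a real case-by-case computation that you have not done.

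The paper sidesteps the entire codimension estimate. Rather than bounding the failure locus in $\Hyp_{3,3}$, it works on the total space: for each fixed singularity type $z$ with $N=\tau_{\mathrm{tot}}\le 6$, the equisingular locus $\Hyp_{3,3,z}$ is smooth of codimension $N$ by Proposition~\ref{proallow}(1), hence so is $r^{-1}(\Hyp_{3,3,z})\subset\Hyp_{4,3}^{00}$. The fiber of the map $\Hyp_{4,3}^{00}/\PGL(6,5)\to\Hyp_{4,3}^{00}/\PGL(6)$ over $[X]$ is identified with $\mathbb{P}(H^0(\mathcal{O}_X(1)))$, and Sard's theorem gives that for general $X$ this $\mathbb{P}^5$ meets $r^{-1}(\Hyp_{3,3,z})$ transversally, i.e.\ in a smooth codimension-$N$ locus. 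That transversality at a point $[Y]$ is precisely the surjectivity of $H^0(Y,\mathcal{O}_Y(1))\to\prod_pT^1_{Y_p}$. Since only finitely many ADE configurations occur, one intersects the finitely many resulting open conditions on $X$. Thus the paper obtains the surjectivity directly from genericity of $X$, without ever needing your codimension-$6$ bound on the bad $Y$'s.
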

Combined with Proposition \ref{proallow}, (2), Lemma \ref{letransXgen}  gives:
\begin{cor} \label{corotransdef} In the situation of Lemma \ref{letransXgen},  denote by
$\mathcal{F}_{good}$ the universal family of good lines in hyperplane sections of $X$:
$$\mathcal{F}_{good}=\left\{([l],t)\in G(2,6)\times B\mid \, l \textrm{ is a good line of } Y_t\right\},$$
Then  if $X$ is general, $Y_0\subset X$ is any hyperplane section and  $l\subset Y_0$ is a general good line,
the natural map $T_{\mathcal{F}_{good},([l],0)}\rightarrow \oplus_{p\in{\rm Sing}\,C_{l,Y_0}} T^1_{C_{l,p}}$ is surjective.
Furthermore, for a   local analytic or \'etale section
$B\subset \mathcal{F}_{good}$ of the second projection defined near $0$, the natural map $T_{B,([l],0)}\rightarrow \oplus_{p\in{\rm Sing}\,C_{l,Y_0}} T^1_{C_{l,p}}$ is surjective.
\end{cor}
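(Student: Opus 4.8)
The plan is to analyze the natural Kodaira--Spencer map $\kappa\colon T_{\mathcal{F}_{good},([l],0)}\to \oplus_{p\in\Sing C_{l,Y_0}}T^1_{C_{l,p}}$ attached to the family of plane quintics $C_{l',Y_t}$ parametrized by $\mathcal{F}_{good}$, and to reduce its surjectivity to Lemma \ref{letransXgen}. Writing $\rho\colon\mathcal{F}_{good}\to B$ for the second projection and identifying the base tangent space with $H^0(Y_0,\mathcal{O}_{Y_0}(1))$ via $N_{Y_0/X}=\mathcal{O}_{Y_0}(1)$, I would first show that $\kappa$ factors through the differential $d\rho\colon T_{\mathcal{F}_{good},([l],0)}\to H^0(Y_0,\mathcal{O}_{Y_0}(1))$, and then identify the induced map $\bar\kappa$ with the surjective map of Lemma \ref{letransXgen} under the canonical identifications $T^1_{C_{l,p}}\cong T^1_{Y_0,p}$ supplied by Proposition \ref{proallow}(2). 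Granting this, surjectivity of $\kappa$ is immediate: taking $l$ to be a very good line (Corollary \ref{coroverygood}) the map $\rho$ is smooth at $([l],0)$, so $d\rho$ is surjective, and $\bar\kappa$ is surjective by Lemma \ref{letransXgen} composed with the isomorphism $\prod_pT^1_{Y_0,p}\cong\oplus_pT^1_{C_{l,p}}$.

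The crux, and the step I expect to be the main obstacle, is the factorization through $d\rho$, i.e.\ the vanishing of $\kappa$ on the vertical space $\ker(d\rho)=T_{F(Y_0),[l]}$. First I would observe that the restriction of the family to the fiber $F(Y_0)$ over $0$ is, by the correspondence of Proposition \ref{proallow}(2), a family $\{C_{l',Y_0}\}_{[l']}$ of plane quintics whose singular points are in type-preserving bijection with the \emph{fixed} singularities of $Y_0$; in particular, along $F(Y_0)$ the analytic type of each singularity of $C_{l',Y_0}$ is locally constant in $l'$. Since these singularities are of ADE type by Proposition \ref{proallow}(0), the equisingular (equivalently, analytic-type-constant) stratum inside the base $T^1_{C_{l,p}}$ of the miniversal deformation is reduced to the origin. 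Hence a family of constant analytic type has locally constant classifying map to this base, so its Kodaira--Spencer map vanishes; applied at each singular point this yields $\kappa|_{\ker(d\rho)}=0$ and therefore a well-defined factorization $\kappa=\bar\kappa\circ d\rho$.

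It then remains to identify $\bar\kappa\colon H^0(Y_0,\mathcal{O}_{Y_0}(1))\to\oplus_pT^1_{C_{l,p}}$ with the map of Lemma \ref{letransXgen}. Given $v\in H^0(Y_0,\mathcal{O}_{Y_0}(1))$, any lift of $v$ to $T_{\mathcal{F}_{good},([l],0)}$ deforms $Y_0$ by moving the hyperplane while carrying $l$ along; by the compatibility between the global-to-local deformations of $Y$ and of $C_l$ recorded in Proposition \ref{proallow}(2) and established in \cite[\S3.3]{casaetal2}, the resulting first-order deformation of the singularity $\bar p$ of $C_l$ equals, under $T^1_{C_{l,p}}\cong T^1_{Y_0,p}$, the first-order deformation of the corresponding singularity $p$ of $Y_0$ induced by $v$. (This is well defined precisely because the previous paragraph shows the answer is independent of the chosen lift.) Thus $\bar\kappa$ is the composite $H^0(Y_0,\mathcal{O}_{Y_0}(1))\to\prod_pT^1_{Y_0,p}\cong\oplus_pT^1_{C_{l,p}}$, which is surjective by Lemma \ref{letransXgen}; this proves the first assertion.

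Finally, for a local analytic or \'etale section $B\subset\mathcal{F}_{good}$ of $\rho$ through $([l],0)$, the section property gives that $d\rho$ restricts to an isomorphism $T_{B,([l],0)}\xrightarrow{\sim}H^0(Y_0,\mathcal{O}_{Y_0}(1))$. Since $\kappa=\bar\kappa\circ d\rho$, the restriction of $\kappa$ to $T_{B,([l],0)}$ equals $\bar\kappa\circ(d\rho|_{T_{B,([l],0)}})$, a composite of an isomorphism with the surjection $\bar\kappa$, hence is surjective. This gives the final statement.
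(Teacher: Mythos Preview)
Your argument is correct and is essentially the approach the paper intends: the corollary is stated as an immediate consequence of Lemma \ref{letransXgen} combined with Proposition \ref{proallow}(2), and you have carefully spelled out the two points the paper leaves implicit, namely that the Kodaira--Spencer map factors through $d\rho$ (because varying $l$ in a fixed $Y_0$ preserves the ADE types, whose equianalytic stratum in the miniversal base is the origin) and that the induced map $\bar\kappa$ is, via the suspension isomorphisms $T^1_{C_{l,p}}\cong T^1_{Y_0,p}$, exactly the map of Lemma \ref{letransXgen}. The use of a very good line to guarantee smoothness of $\rho$ at $([l],0)$ is legitimate since the statement only concerns a general good line and very good lines form a dense open subset by Corollary \ref{coroverygood}.
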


\begin{proof}[Proof of Lemma \ref{letransXgen}] Using Lemma \ref{legentrans} and Proposition \ref{proallow}, we see that there exists a (non-empty) Zariski open subset
$\Hyp_{4,3}^{00}\subset \Hyp_{4,3}^{0}$ of the space of cubic fourfolds such that: (i) $X$ has trivial automorphism, and (ii) any hyperplane section $Y$ of $X$ has finite stabilizer and satisfies property (1) of Proposition \ref{proallow} (i.e. the space of cubic threefolds $\Hyp_{3,3}$ gives a simultaneous versal deformation of the singularities of $Y$). We want to obtain the stronger statement that the hyperplane sections of $X$ (giving a $\bP^5$ non-linearly embedded in $\Hyp_{3,3}$) give a versal deformation of the singularities of $Y$.

Let $k$ be a number and
 $z=(z_1,\ldots,z_k)$ be the data of $k$ analytic isomorphism classes of germs of allowable hypersurfaces singularities. Let
 $N:=\sum_i{\rm dim}\,T^1_{z_i}(=\tau_{tot})$. Let
 $\Hyp_{3,3,z}\subset \Hyp_{3,3}$ be the set of cubic threefolds admitting exactly $k$ singular points with local germs $z_i$. Note that, by Lemma \ref{letransXgen}, we can assume $N\le 6$, i.e. a cubic threefold $Y$ with higher $N$ will not occur as a hyperplane section of a cubic fourfold $[X]\in \Hyp_{4,3}^{0,0}$.
 Using property (1), we conclude that $\Hyp_{3,3,z}$ is smooth locally closed of codimension $N$ in
 $\Hyp_{3,3}$. It follows that its inverse image $r^{-1}(\Hyp_{3,3,z})$ is smooth of codimension
 $N$ in $\Hyp_{4,3}^{00}$. The group $\PGL(6,5)$ of
 automorphisms of $\mathbb{P}^5$ preserving $\mathbb{P}^4$
 acts now on $\Hyp_{4,3}^{00}$ preserving $r^{-1}(\Hyp_{3,3,z})$ and
 using the definition of
  $\Hyp_{4,3}^{00}$, we find that the fiber $L_X$ over a general point  $[X]\in \Hyp_{4,3}^{00}/\PGL(6)$ of the quotient map
  $$\Hyp_{4,3}^{00}/\PGL(6,5)\rightarrow \Hyp_{4,3}^{00}/\PGL(6)$$
  is smooth, isomorphic to $\mathbb{P}(H^0(\mathcal{O}_X(1))$. (This statement is in fact not completely correct due to
  the presence of hyperplane sections of $X$ which have finite automorphisms, but it is true at the infinitesimal
  level.)
    Sard's theorem then
  tells us that for general $X$, the locus
  $L_X\cap r^{-1}(\Hyp_{3,3,z})$ is smooth of codimension
 $N$ in $L_X$, which exactly means that for the given type
 $z$, and for any $Y\subset X$ having $z$ as singularities, the map
  $H^0(Y,\mathcal{O}_Y(1))\rightarrow \prod_{p\in {\rm Sing}\,Y}T^1_{{Y_p}}$ is surjective.
  The conclusion then follows from the fact that there are finitely many analytic isomorphism classes of allowable singularities (by Proposition \ref{proallow}, (0), all are ADE with $\tau_{tot}\le 6$).
 \end{proof}
\subsection{Transversality results in the Pfaffian case}\label{sectranpf}
Recall that a Pfaffian cubic  hypersurface is a linear section of the Pfaffian cubic hypersurface in
$\mathbb{P}^{14}=\mathbb{P}(\bigwedge^2W_6)$ defined by the vanishing of  $\omega^3$ in $\bigwedge^6W_6$.
Pfaffian cubic fourfolds are parametrized  by  a hypersurface $\mathcal{P}$ in the moduli space of all smooth cubic fourfolds
(see \cite{bedo}). Restricting to cubic fourfolds without automorphisms, this hypersurface is smooth away from the locus where the cubic has two different Pfaffian structures. In general, it is a divisor with normal crossings, with one branch for each Pfaffian
structure. This follows from the fact that the period map for cubic fourfolds is \'etale.
As we want to apply the results of Section \ref{sectprym} also to the case
of a general Pfaffian cubic fourfold, we have to prove that a general Pfaffian cubic fourfold
satisfies the needed assumptions, namely
Lemmas \ref{lepftrans} and \ref{lepfgood}, that will be obtained as easy consequences of the following
lemma.

\begin{lemma} \label{letechnique18jan} Let $X$ be a general Pfaffian cubic fourfold. Then
for any hyperplane section $Y\subset X$ with equation
$f_Y\in H^0(X,\mathcal{O}_X(1))$, the subspace
$f_Y H^0(X,\mathcal{O}_X(2))\subset H^0(X,\mathcal{O}_X(3))$ is not contained in the tangent space to the Pfaffian hypersurface at the point $[X]$.
\end{lemma}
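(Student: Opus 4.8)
The plan is to identify $T_{[X]}\mathcal{P}$ explicitly from the Pfaffian structure and then reduce the statement to the perfectness of a multiplication pairing in an Artinian Gorenstein ring. Write $V=\bigwedge^2 W$ with $\dim W=6$, let $P\in\mathrm{Sym}^3V^\vee$ be the Pfaffian cubic form, and realize $X=\{P=0\}\cap\mathbb{P}(L)$ for a $6$-dimensional $L\subset V$, so that $f_X=P|_L$. First I would differentiate along the family of Pfaffian cubics: a tangent vector $\phi\in\Hom(L,V/L)$ to $\Grass(6,V)$ at $L$ deforms $f_X$, to first order, into $\dot f=\sum_j \ell_j\,(\partial_j P)|_L$ with the $\ell_j$ linear. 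Writing $Q_1,\dots,Q_{15}$ for the restrictions to $L$ of all fifteen partials $\partial_k P$, and $I=(Q_1,\dots,Q_{15})\subset \mathbb{C}[x]:=\mathbb{C}[x_0,\dots,x_5]$, this gives $I_3/\langle f_X\rangle\subseteq T_{[X]}\mathcal{P}$ inside $H^0(X,\mathcal{O}_X(3))=\mathbb{C}[x]_3/\langle f_X\rangle$ (the six Jacobian partials among the $Q_k$ produce the $\PGL$-directions). Note that the locus of cubics whose restriction to the hyperplane $H_Y$ equals $Y=X\cap H_Y$ has tangent space exactly $f_Y\,H^0(X,\mathcal{O}_X(2))$, the cubics divisible by $f_Y$; thus the Lemma is precisely the transversality of $\mathcal{P}$ to the restriction map $r$ at $[X]$.

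The key algebraic input is that these quadrics are intrinsic: the partial derivatives of the $6\times 6$ Pfaffian are exactly the fifteen $4\times4$ sub-Pfaffians, i.e. the Plücker quadrics defining $\Grass(2,W)\subset\mathbb{P}(V)$. Hence $I$ is the restriction of the homogeneous ideal of $\Grass(2,W)$ to the general linear space $\mathbb{P}(L)\cong\mathbb{P}^5$: if $A$ denotes the homogeneous coordinate ring of $\Grass(2,W)$, then $S:=\mathbb{C}[x]/I$ is the quotient of $A$ by the nine linear forms cutting out $\mathbb{P}(L)$.

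Now I would use that $\Grass(2,W)$ is arithmetically Gorenstein: $A$ is Cohen–Macaulay of dimension $9$, and for general $L$ the nine linear forms are a system of parameters, hence a regular sequence. Therefore $S$ is Artinian Gorenstein, with $h$-vector equal to that of $\Grass(2,W)$, namely the symmetric sequence $(1,6,6,1)$ (of sum $\deg\Grass(2,W)=14$). In particular $S_3\cong\mathbb{C}$, so $\dim I_3/\langle f_X\rangle=54=\dim T_{[X]}\mathcal{P}$ and the inclusion above is an equality; moreover the multiplication $S_1\times S_2\to S_3\cong\mathbb{C}$ is a perfect pairing. Since $f_Y\,H^0(X,\mathcal{O}_X(2))\subseteq T_{[X]}\mathcal{P}=I_3/\langle f_X\rangle$ is equivalent to $f_Y\cdot S_2=0$ in $S_3$, perfectness of the pairing on the $S_1$-side forces $f_Y=0$. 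This yields the Lemma for every hyperplane section $Y$.

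The main obstacle is the identification $T_{[X]}\mathcal{P}=I_3/\langle f_X\rangle$, i.e. that the Pfaffian deformations fill out the whole tangent space rather than a proper subspace. Here the generality of $X$ is essential: one needs $[X]$ to be a smooth point of the reduced hypersurface $\mathcal{P}$ — which holds away from the locus of cubics carrying two Pfaffian structures, since the period map is étale — so that $\dim T_{[X]}\mathcal{P}=54$, matching the Gorenstein computation $\dim I_3/\langle f_X\rangle=54$ and upgrading the inclusion $I_3/\langle f_X\rangle\subseteq T_{[X]}\mathcal{P}$ to an equality. One should also record that for general $L$ the nine linear forms form a regular sequence on $A$ and the fifteen $Q_k$ remain linearly independent, both being open conditions satisfied by a general Pfaffian cubic.
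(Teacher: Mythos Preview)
Your proof is correct and takes a genuinely different route from the paper's.

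The paper argues Hodge-theoretically: it identifies $T_{[X]}\mathcal{P}$ as the kernel of cup-product with the Pfaffian class $\tilde\sigma\in R^3_f$ via Griffiths residues, reformulates the statement as $y\tilde\sigma\neq 0$ in $R^4_f$ for every nonzero linear form $y$, and then verifies this by degenerating inside $\overline{\mathcal{C}_{14}}$ to a cubic containing two disjoint planes, where the class $\tilde\sigma$ becomes $h+\tilde p_1+\tilde p_2$ and the computation can be carried out explicitly on the Fermat cubic. Your approach instead stays on the Pfaffian side: you identify $T_{[X]}\mathcal{P}$ directly as $I_3/\langle f_X\rangle$, where $I$ is the restriction to $\mathbb{P}(L)$ of the Pl\"ucker ideal of $G(2,W)\subset\mathbb{P}(\bigwedge^2W)$ (using that the partials of the $6\times 6$ Pfaffian are the sub-Pfaffians), and then invoke the arithmetic Gorenstein property of $G(2,W)$ to get an Artinian Gorenstein quotient $S$ with $h$-vector $(1,6,6,1)$, so that the perfect pairing $S_1\times S_2\to S_3$ forces $f_Y=0$. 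This is more structural: it avoids any degeneration or explicit computation, and in fact shows that the only genericity needed is that $[X]$ be a smooth point of the reduced hypersurface $\mathcal{P}$ (your condition $\mathbb{P}(L)\cap G(2,W)=\varnothing$ is automatic for smooth $X$, since a rank-$2$ point of $L$ would be singular on $X$). The paper's approach, on the other hand, ties the statement to the period map and the Noether--Lefschetz picture that pervades the rest of the argument, and is self-contained in that it does not appeal to the Cohen--Macaulay/Gorenstein theory of Grassmannian coordinate rings.
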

The Pfaffian locus $\mathcal{P}$ is an open set in the hypersurface
    $\mathcal{C}_{14}$ in the space
of all cubic fourfolds parametrizing  special cubics with discriminant $14$
(see \cite{hassett}). The subspace $f_Y H^0(X,\mathcal{O}_X(2))\subset H^0(X,\mathcal{O}_X(3))$ is the
space of first order deformations of $X$ containing $Y$.

\begin{proof}[Proof of Lemma \ref{letechnique18jan}] The Pfaffian cubic fourfolds are characterized by the fact that they contain quintic del Pezzo
surfaces: if $X$ contains a quintic del Pezzo surface $\Sigma$, the Pfaffian rank $2$ vector bundle $\mathcal{E}$ with $c_2=2$ and $c_1=0$ on $X$ is  deduced from $\Sigma $ by the Serre construction. Conversely, if $X$ is Pfaffian with Pfaffian rank $2$ vector bundle $\mathcal{E}$, there is a five dimensional
family of quintic del Pezzo surfaces $\Sigma$ in $X$, obtained as zero-sets of sections of $\mathcal{E}$ (see \cite[Proposition 9.2]{beau}).
Let $\Sigma\subset X$ be such a pair, and let $\sigma:=[\Sigma]\in H^2(X,\Omega_X^2)$ be the cohomology class of $\Sigma$. The numerical condition characterizing the Pfaffian class $\sigma$ is
$\sigma^2=13$ and $\sigma\cdot h^2=5$, where we use the intersection pairing
on $H^4(X,\mathbb{Z})$ and $h=c_1(\mathcal{O}_X(1))$.
The cup-product with $\sigma$ induces a composite morphism
\begin{eqnarray}\label{eqvhs}
\sigma\cup: H^0(X,\mathcal{O}_X(3))\stackrel{\rho}{\rightarrow} H^1(X,T_X)\rightarrow  H^{1,3}(X),
\end{eqnarray}
where the first map $\rho$ is the Kodaira-Spencer map,
and the general theory of variations of Hodge structures tells us that
the tangent space to the Pfaffian locus $\mathcal{P}$ at
$[X]$ identifies to ${\rm Ker}\,\sigma\cup$. Note that
$\rho$  identifies to
the quotient map
$$H^0(X,\mathcal{O}_X(3))\rightarrow R^3_f:=H^0(X,\mathcal{O}_X(3))/J_f^3,$$
where $f$ is the defining equation for $X$ and $J_f^3$ is the  degree $3$ piece of the  Jacobian ideal of $f$.
Griffiths' residue theory (see \cite[II,6.2]{voisinbook}) provides  isomorphisms
$$H^{2,2}(X)_{prim}\cong R^3_f,\,\,H^{1,3}(X)\cong R^6_f$$
such that the second map in (\ref{eqvhs}) identifies to multiplication
by $\tilde{\sigma}:R^3_f\rightarrow R^6_f$, where $\tilde{\sigma}\in R^3_f$ is the representative of
$\sigma$, or rather of its projection in $H^{2,2}(X)_{prim}=H^{2,2}(X)/\langle h^2\rangle$.
Lemma \ref{letechnique18jan} can thus be rephrased as follows:
For a general Pfaffian cubic fourfold
with equation $f$,
the Pfaffian class $\tilde{\sigma}\in R^3_f$ is not annihilated by
$yR^2_f$, for any $0\not=y\in H^0(X,\mathcal{O}_X(1))$.
Note that by Macaulay's theorem \cite[II,6.2.2]{voisinbook}, to say that
$y\tilde{\sigma}R^2_f=0$ in $R^6_f$ is equivalent to saying that
$y\tilde{\sigma}=0$ in $R^4_f$.
So what we have to prove is the following:

\begin{claim*} For a general Pfaffian cubic fourfold
with equation $f$ and (primitive)
 Pfaffian class $\tilde{\sigma}\in R^3_f$, and for any
$0\not=y\in H^0(X,\mathcal{O}_X(1))$, $y\tilde{\sigma}\not=0$ in $R^4_f$.
\end{claim*}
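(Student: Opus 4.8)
The plan is to reduce the Claim to the injectivity of a single multiplication map, to observe that injectivity is an open condition, and then to establish it at one explicit Pfaffian cubic. First I would restrict attention to the dense open subset $\mathcal{P}^\circ\subset\mathcal{P}$ of smooth Pfaffian cubics with trivial automorphism group and a unique Pfaffian structure; this is nonempty because the period map for cubic fourfolds is \'etale and by the genericity already used in Lemma \ref{letransXgen}. Over $\mathcal{P}^\circ$ the primitive Pfaffian class $\tilde\sigma\in R_f^3=H^{2,2}(X)_{prim}$ is the Jacobian-ring image of a monodromy-invariant Hodge class, hence varies algebraically with $[X]$; likewise the multiplication map $m_{\tilde\sigma}\colon R_f^1\to R_f^4,\ y\mapsto y\tilde\sigma$ varies algebraically, since the Jacobian rings $R_f$ form an algebraic family over $\Hyp_{4,3}^0$. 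The Claim is exactly the assertion that $m_{\tilde\sigma}$ is injective. Since $\dim R_f^1=6$ and $\dim R_f^4=15$, non-injectivity is the vanishing of the $6\times 6$ minors of the matrix of $m_{\tilde\sigma}$, a Zariski-closed condition; as $\mathcal{P}$ is irreducible it therefore suffices to exhibit a single Pfaffian cubic $X_0$ for which $m_{\tilde\sigma}$ is injective.

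To produce $X_0$ I would take a concrete smooth Pfaffian cubic, i.e. restrict a generic $6\times 6$ skew-symmetric matrix of linear forms on $\mathbb{P}^{14}=\mathbb{P}(\bigwedge^2 W_6)$ to a generic $\mathbb{P}^5$, obtaining a defining equation $f_0$ together with the associated quintic del Pezzo surface $\Sigma_0$ (the zero locus of a section of the Pfaffian bundle $\mathcal{E}$). One then determines a cubic polynomial $g_0$ representing the projection of $[\Sigma_0]$ to $H^{2,2}(X_0)_{prim}=R_{f_0}^3$ via the Griffiths residue identification, and checks directly that $y\,g_0\notin J_{f_0}$ for every nonzero linear form $y$, i.e. that the $15\times 6$ multiplication matrix has full rank $6$. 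By the reduction of the first paragraph this single verification, which is a finite linear-algebra computation once $g_0$ is known, establishes the Claim for all general Pfaffian cubics.

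The main obstacle is the determination of the representative $g_0\in R_{f_0}^3$ of the fundamental class of $\Sigma_0$: because a quintic del Pezzo is not a complete intersection in $\mathbb{P}^5$, its Griffiths representative is not given by any elementary formula, and one must extract it from the Serre construction for $\mathcal{E}$ (or compute it symbolically). It is worth recording why injectivity is the expected outcome. For any smooth cubic $X$ for which multiplication by a general linear form $m_y\colon R_f^2\to R_f^3$ is injective---equivalently, by Gorenstein (Macaulay) duality, $m_y\colon R_f^3\to R_f^4$ is surjective---the kernel $\ker(m_y\colon R_f^3\to R_f^4)$ is $5$-dimensional, so the incidence set $\{([y],\tau)\in\mathbb{P}(R_f^1)\times R_f^3:\ y\tau=0\}$ has dimension at most $5+5=10$, and its image, the locus of ``bad'' classes $\tau$, has codimension at least $10$ in $R_f^3\cong\mathbb{C}^{20}$. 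Thus a general class is annihilated by no nonzero linear form; the only genuine content is to check that the very specific class $\tilde\sigma$ is not accidentally trapped in this thin locus as $X$ ranges over $\mathcal{P}$, and this is precisely what the explicit example rules out.
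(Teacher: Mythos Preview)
Your overall strategy---reduce to injectivity of $m_{\tilde\sigma}\colon R_f^1\to R_f^4$, note that this is Zariski open in the family, and verify it at a single point---is exactly the paper's strategy. The gap is that you do not carry out the verification: you propose to take a generic smooth Pfaffian cubic, determine the Griffiths representative $g_0\in R_{f_0}^3$ of the del Pezzo class, and check injectivity directly, but you yourself identify finding $g_0$ as ``the main obstacle'' and leave it unresolved. As stated, the argument is therefore not a proof but a program.

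The paper circumvents precisely this obstacle by choosing the verification point more cleverly. Instead of staying inside the open Pfaffian locus $\mathcal{P}$, one works on the Noether--Lefschetz divisor $\mathcal{C}_{14}\supset\mathcal{P}$, over which the Hodge class $\sigma$ (characterized numerically by $\sigma^2=13$, $\sigma\cdot h^2=5$) still varies algebraically, so the openness argument applies over the closure. Cubics containing two disjoint planes $P_1,P_2$ lie in $\overline{\mathcal{C}_{14}}$, with $\sigma=h^2+p_1+p_2$ realizing the Pfaffian class. One then specializes to the Fermat cubic $f=\sum x_i^3$, where the Jacobian ring is completely explicit ($x_i^2=0$, square-free monomials give a basis), the planes are given by simple linear equations, and the primitive plane classes $\tilde p_i$ are determined up to scalar by the three linear forms that annihilate them. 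A short argument then shows $S^1\tilde p_1\cap S^1\tilde p_2=0$ in $R_f^4$, so $y(\tilde p_1+\tilde p_2)=0$ forces $y\tilde p_1=y\tilde p_2=0$, hence $y=0$. The point is that by allowing a degeneration to a non-Pfaffian cubic in the closure, the intractable computation of the del Pezzo Griffiths representative is replaced by an elementary one with plane classes on the Fermat cubic.
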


 In order to prove the claim, we use  the fact\footnote{We are grateful to the referee for pointing out this fact, which simplified our original argument.} (see  \cite{hassett}) that cubic fourfolds containing two non-intersecting planes
 $P_1,\,P_2$
 are parametrized by points in the closure of the divisor $\mathcal{C}_{14}$.
 In fact, if $p_i$ is the cohomology class of $P_i$, one has
 $$p_i^2=3,\,p_1\cdot p_2=0,\,h^2\cdot p_i=1$$
 and thus $\sigma=h^2+p_1+p_2$ satisfies the numerical conditions
 $\sigma^2=13,\,\sigma\cdot h^2=5$.
 It thus suffices  to  prove that for a   general cubic fourfold containing two nonintersecting   planes
 $P_1,\,P_2$  and for any $0\not=y\in S^1:=H^0(X,\mathcal{O}_X(1))$,  the class
 $h+p_1+p_2\in H^{2,2}(X)_{prim}=R^3_f$ satisfies
 $y(h+p_1+p_2)\not=0$ in $R^4_f$.
 This computation can be made explicitly on the Fermat cubic $X_f$
 with equation $f=\sum_{i=0}^5x_i^3$, where such configurations of planes are easy to exhibit: we can take
 $P_1$ to be defined by $x_0=\zeta x_1,\,x_2=\zeta x_3,\, x_4=\zeta x_5$, with $\zeta^3=-1$,
 and $P_2$ to be defined by $x_0=\zeta' x_1,\,x_2=\zeta' x_3, x_4=\zeta' x_5$, with $(\zeta')^3=-1$, and $\zeta'\not=\zeta$.
 The computations in the Jacobian ring $R_f$ are easy to perform. In this ring, $x_i^2=0$, hence in every degree
 $\leq 6$,  we get as free generators the monomials $\prod_{i\in I}x_i$ with no repeated indices.
 The primitive class $\tilde{p}_1\in R^3_f$ defined as the projection of $p_1$ is annihilated
 by multiplication by  $x_0-\zeta x_1,\,x_2-\zeta x_3,\, x_4-\zeta x_5$ because
 these are hyperplane sections vanishing on $P_1$, and similarly
 the primitive class $\tilde{p}_2$ of $p_2$ is  annihilated
 by multiplication by  $x_0-\zeta' x_1,\,x_2-\zeta' x_3,\, x_4-\zeta' x_5$.
 It follows  that $S^1\cdot \tilde{p}_1\subset R^4_f$ is  orthogonal with respect to Macaulay duality
 (see \cite[II, 6.2.2]{voisinbook}) to
 the subspace $S^1\cdot \langle x_0-\zeta x_1,\,x_2-\zeta x_3,\, x_4-\zeta x_5\rangle\subset R^2_f$
 and similarly for $p_2$. But  then the two spaces
 $S^1\cdot \tilde{p}_1$ and $S^1\cdot \tilde{p}_2$ have trivial intersection, as otherwise the spaces
 $S^1\cdot \langle x_0-\zeta x_1,\,x_2-\zeta x_3,\, x_4-\zeta x_5\rangle$ and
 $S^1\cdot \langle x_0-\zeta' x_1,\,x_2-\zeta' x_3,\, x_4-\zeta' x_5\rangle$ would not generate
 $R^2_f$. Thus if $y\in S^1$ satisfies
 $y(\tilde{p}_1+\tilde{p}_2)=0$ in $R^4_f$, one has
 $$y\tilde{p}_1=0,\,y\tilde{p}_2=0\,\,{\rm in}
 \,\,R^4_f.$$
 This  easily implies that $y=0$.
\end{proof}
We have the following applications.
\begin{lemma}\label{lepftrans}  Let $X$ be a general Pfaffian cubic fourfold. Then for any
hyperplane section $Y$ of $X$, the natural map
$H^0(Y,\mathcal{O}_Y(1))\rightarrow \oplus_{p\in{\rm Sing}\,Y} T^1_{{Y_p}}$
is surjective.
\end{lemma}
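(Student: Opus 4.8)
The plan is to run the proof of Lemma \ref{letransXgen} essentially verbatim, replacing the open set $\Hyp_{4,3}^{00}\subset \Hyp_{4,3}^0$ by a suitable nonempty Zariski open subset $\mathcal{P}^{00}$ of the Pfaffian divisor $\mathcal{P}$. Recall that in that proof the ambient cubic fourfold was used through exactly two mechanisms: the exclusion of non-allowable hyperplane sections, and a transversality/Sard argument producing, for a general $X$, a hyperplane-section family meeting each singularity stratum transversally. The exclusion goes through unchanged: the non-allowable locus in $\Hyp_{3,3}$ has codimension $\geq 7$ by Proposition \ref{proallow}(3) and is $\PGL(6)$-invariant, so Lemma \ref{legentrans}(ii) applied with $\mathcal{M}'=\mathcal{P}$ (itself a $\PGL(6)$-invariant hypersurface) shows that a general Pfaffian $X$ has only allowable hyperplane sections. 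The genuinely new point is the transversality: the restriction map $r:\Hyp_{4,3}^0\to\Hyp_{3,3}$ (to a fixed $\mathbb{P}^4\subset\mathbb{P}^5$) is a submersion, hence automatically transverse to every stratum, but its restriction $r|_{\mathcal{P}}$ to the hypersurface $\mathcal{P}$ need not be. Establishing this is the heart of the matter, and it is exactly what Lemma \ref{letechnique18jan} supplies.

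First I would reduce the transversality of $r|_{\mathcal{P}}$ to Lemma \ref{letechnique18jan}. Fix a hyperplane $H=\mathbb{P}^4$ with linear form $f_H$, let $Y=X\cap H$ be of allowable type $z$ with $N:=\tau_{tot}(Y)=\dim\bigoplus_{p}T^1_{Y_p}\leq 6$, and let $\Hyp_{3,3,z}$ be the smooth, locally closed, codimension $N$ equisingular stratum. By Proposition \ref{proallow}(1) the normal space to $\Hyp_{3,3,z}$ at $[Y]$ is $\bigoplus_{p\in\Sing Y}T^1_{Y_p}$, and the natural map $H^0(\mathbb{P}^4,\mathcal{O}(3))\to \bigoplus_p T^1_{Y_p}$ is surjective with kernel $T_{[Y]}\Hyp_{3,3,z}$. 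Hence $r|_{\mathcal{P}}$ is transverse to $\Hyp_{3,3,z}$ at $[X]$ iff the composite $T_{[X]}\mathcal{P}\xrightarrow{\,\mathrm{res}\,}H^0(\mathbb{P}^4,\mathcal{O}(3))\to \bigoplus_p T^1_{Y_p}$ is onto, for which it suffices that $\mathrm{res}(T_{[X]}\mathcal{P})=H^0(\mathbb{P}^4,\mathcal{O}(3))$. Lifting $T_{[X]}\mathcal{P}$ to the hyperplane $\widetilde T=\rho^{-1}(\ker(\tilde\sigma\cup))\subset H^0(\mathbb{P}^5,\mathcal{O}(3))$ and using that $\mathrm{res}$ is surjective with kernel $f_H H^0(\mathbb{P}^5,\mathcal{O}(2))$, the equality $\mathrm{res}(\widetilde T)=H^0(\mathbb{P}^4,\mathcal{O}(3))$ is equivalent, since $\widetilde T$ is a hyperplane, to $f_H H^0(\mathbb{P}^5,\mathcal{O}(2))\not\subset \widetilde T$. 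Projecting by $\rho$ to $R^3_f$ (and noting $J^3_f=\ker\rho\subset\widetilde T$), this is precisely the assertion $f_Y H^0(X,\mathcal{O}_X(2))\not\subset T_{[X]}\mathcal{P}$ of Lemma \ref{letechnique18jan}. Thus, for a general Pfaffian $X$ and every hyperplane $H$, the map $r|_{\mathcal{P}}$ is transverse to each allowable stratum $\Hyp_{3,3,z}$.

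With this transversality in hand I would finish as in Lemma \ref{letransXgen}. Choose $\mathcal{P}^{00}\subset\mathcal{P}$ nonempty open consisting of Pfaffian $X$ with trivial automorphisms, all of whose hyperplane sections are allowable with finite stabilizer and satisfy Proposition \ref{proallow}(1), and for which Lemma \ref{letechnique18jan} holds for every hyperplane section. For each of the finitely many allowable types $z$, the transversality just proved shows that $r^{-1}(\Hyp_{3,3,z})\cap\mathcal{P}^{00}$ is smooth of codimension $N$ in $\mathcal{P}^{00}$; it is preserved by the subgroup $\PGL(6,5)\subset\PGL(6)$ fixing $H$, because $\mathcal{P}$ is $\PGL(6)$-invariant and $\Hyp_{3,3,z}$ is $\PGL(5)$-invariant. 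Applying Sard's theorem to the projection $\mathcal{P}^{00}/\PGL(6,5)\to \mathcal{P}^{00}/\PGL(6)$, whose general fiber is $\mathbb{P}(H^0(X,\mathcal{O}_X(1)))$, I conclude that for general $X\in\mathcal{P}$ the fiber $L_X$ meets the descended stratum transversally, which is exactly the surjectivity of $H^0(Y,\mathcal{O}_Y(1))\to \bigoplus_{p\in\Sing Y}T^1_{Y_p}$ for every hyperplane section $Y$ of type $z$. Ranging over the finitely many $z$ gives the lemma. The main obstacle is the transversality of $r|_{\mathcal{P}}$ in the middle step; once Lemma \ref{letechnique18jan} reduces it to the non-degeneracy $y\tilde\sigma\neq 0$ in $R^4_f$, everything else is a formal repetition of the general case.
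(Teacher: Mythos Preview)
Your proposal is correct and follows essentially the same route as the paper's proof. Both arguments hinge on Lemma \ref{letechnique18jan} to show that the restricted map $r_{pf}=r|_{\mathcal{P}^{00}}$ is a submersion (equivalently, that the fiber of $r$ is nowhere tangent to $\mathcal{P}$), after which the Sard/equisingular-stratum argument of Lemma \ref{letransXgen} carries over verbatim. The only cosmetic difference is that you invoke Lemma \ref{legentrans}(ii) directly (with Proposition \ref{proallow}(3)) to exclude non-allowable hyperplane sections, whereas the paper forward-references Lemma \ref{lepfgood} for the same purpose; and you spell out in detail the linear-algebra equivalence between ``$r_{pf}$ smooth at $[X]$'' and ``$f_Y H^0(X,\mathcal{O}_X(2))\not\subset T_{[X]}\mathcal{P}$'', which the paper states in one sentence.
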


\begin{proof} Let $\mathcal{P}^{00}$ be the Zariski open subset of  the Pfaffian hypersurface
which is defined as the intersection of $\mathcal{P}$ with the Zariski open set
$\Hyp_{4,3}^{00}$. Note that $\mathcal{P}^{00}$ is non-empty by Lemma \ref{legentrans}(ii), using the fact
that   the set of $[Y]\in \mathbb{P}(H^0(\mathcal{O}_{\mathbb{P}^4}(3)))$ admitting a nontrivial vector field has codimension
$\geq 7$ (see Proposition \ref{proallow}, (4)). We now consider the natural map
$r_{pf}:\mathcal{P}^{00}\rightarrow \mathcal{H}yp_{3,3}$ defined as the restriction
to $\mathcal{P}^{00}\subset \Hyp_{4,3}^{00}$ of $r:\Hyp_{4,3}^{00}\rightarrow \Hyp_{3,3}$.
The fiber of $r_{pf}$ over $[Y]\in \Hyp_{3,3}$ consists in those Pfaffian cubic fourfolds which intersect
$\mathbb{P}^4$ along $Y$.
Let $X$ be a general Pfaffian cubic fourfold. Then
$r_{pf}$ is smooth at any $[X']$ parametrizing a cubic isomorphic to $X$. Indeed, the map
$r$ is smooth, and $r_{pf}$ is the restriction of $r$ to $\mathcal{P}^{00}$. Thus, if
$r_{pf}$ was not smooth at a point $[X']$
with $[Y]=r_{pf}([X'])$, then the fiber of $r$ would be  tangent to $\mathcal{P}^{00}$ at $[X']$ which exactly means
that
$f_Y H^0(X',\mathcal{O}_{X'}(2))\subset H^0(X',\mathcal{O}_{X'}(3))$ is  contained in the tangent space to the Pfaffian hypersurface at the point $[X']$. As $X$ is general, Lemma \ref{lepftrans} tells us that this does not happen at any
$X'$ isomorphic to $X$.
The end of the proof is now identical to  the proof of Lemma \ref{letransXgen}. Indeed, by Lemma
\ref{lepfgood} below and Proposition \ref{proallow},  property (1) of Proposition
\ref{proallow} is  satisfied by any hyperplane section of a
general Pfaffian
cubic, replacing $r^{-1}(\Hyp_{3,3,z})$ by its Pfaffian analogue $r_{pf}^{-1}(\Hyp_{3,3,z})$ which we know to be smooth.
\end{proof}
Recall from Definition
 \ref{defiverygood} that a very good line in a cubic threefold containing no plane  is a line
 which is good and such that the curve $\widetilde C_l$ of lines in $Y$ meeting $l$ is irreducible.
\begin{lemma}\label{lepfgood}  Let $X$ be a general Pfaffian cubic fourfold. Then  any
hyperplane section $Y$ of $X$ is allowable and admits  a very good line.
\end{lemma}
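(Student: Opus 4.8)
The plan is to mirror the proof of Lemma \ref{lepftrans}, treating the two assertions separately. That $Y$ is allowable is the easy half: by Proposition \ref{proallow}(3) the locus $Z_{na}\subset\Hyp_{3,3}$ of cubic threefolds with non-allowable singularities is $\PGL(5)$-invariant of codimension $\ge 7$, so Lemma \ref{legentrans}(ii), applied with $\mathcal{M}'=\mathcal{P}$ the ($\PGL$-invariant) Pfaffian hypersurface, immediately yields that no hyperplane section of a general Pfaffian $X$ lies in $Z_{na}$. The substantial half is the existence of a very good line, and here I would transfer the conclusion of Corollary \ref{corallow} from general cubic fourfolds to general Pfaffian cubic fourfolds by the same transversality mechanism used in Lemma \ref{lepftrans}.

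Concretely, let $W\subset\Hyp_{3,3}$ be the $\PGL(5)$-invariant locus of allowable cubic threefolds admitting no very good line (Definition \ref{defiverygood}); since being good and having $\widetilde C_l$ irreducible are open conditions on the relative Fano variety, $W$ is a genuine locally closed subvariety. The first step is to establish that ${\rm codim}\,W\ge 6$ in $\Hyp_{3,3}$. For this I would revisit Section \ref{secgoodline}: there the existence of a very good line in a hyperplane section was obtained by ruling out, for a general line $l$, that $l$ fails to be good or that $\widetilde C_l$ degenerates, and each failure was traced back to a degenerate configuration inside the threefold itself ($F(Y)$ reducible, a cubic surface singular along a line, a non-reduced curve $C_y$ of lines through a point, or a reducible discriminant quintic $C_l$). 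Each of these is an intrinsic $\PGL(5)$-invariant condition on $Y$, and the content of that analysis is that the corresponding loci in $\Hyp_{3,3}$ have codimension $\ge 6$. Granting this, the second step is the transfer: because the restriction map $r_{pf}\colon\mathcal{P}^{00}\to\Hyp_{3,3}$ is smooth at every point lying over a hyperplane section of a general Pfaffian $X$ --- which is exactly the output of Lemma \ref{letechnique18jan}, as in Lemma \ref{lepftrans} --- the preimage $r_{pf}^{-1}(W)$ again has codimension $\ge 6$, and Sard's theorem on the quotient $\mathcal{P}^{00}/\PGL(6,5)\to\mathcal{P}^{00}/\PGL(6)$, whose fibres are the $5$-dimensional linear systems $\mathbb{P}(H^0(\mathcal{O}_X(1)))$ of hyperplane sections, shows that for general $[X]\in\mathcal{P}$ this linear system meets $r_{pf}^{-1}(W)$ in codimension $\ge 6>5$, hence not at all. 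Thus every hyperplane section of a general Pfaffian $X$ has a very good line, and together with the first half this is the assertion of the lemma.

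The hard part, and the place where the Pfaffian hypothesis (rather than merely ``$X$ in a hypersurface of $\Hyp_{4,3}$'') is genuinely used, is twofold. First, the bound ${\rm codim}\,W\ge 6$ cannot simply be read off from Corollary \ref{corallow}: the Hodge-theoretic inputs of Section \ref{secgoodline} --- in particular the divisibility by three of surface degrees coming from ${\rm Hdg}^4(X,\bZ)=\bZ h^2$, used in Lemma \ref{leClirred} and the surrounding results --- are no longer available, since a Pfaffian $X$ carries the extra del Pezzo class $\sigma$ with $\sigma^2=13$ and $\sigma\cdot h^2=5$. For instance, the ruled cubic surface obtained by projecting a Veronese, excluded for general $X$ by a Chern-class computation giving self-intersection $5\ne 3$, has numerical class $\sigma-2h^2$ of square $5$ and is therefore \emph{not} excluded on numerical grounds alone; one must instead argue intrinsically in $\Hyp_{3,3}$ that such degenerate threefolds form loci of codimension $\ge 6$, independently of the ambient fourfold. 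Second, since $W$ has codimension only $\ge 6$ rather than $\ge 7$, the naive Lemma \ref{legentrans}(ii) does not suffice and one must invoke the improved transversality valid for the Pfaffian hypersurface; this improvement is precisely what Lemma \ref{letechnique18jan} provides, by guaranteeing that $r_{pf}$ is smooth and hence that restricting to $\mathcal{P}$ costs no codimension.
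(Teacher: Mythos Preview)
Your allowability half is fine. The gap is in the very-good-line half: you never establish the bound ${\rm codim}\,W\ge 6$, and as you yourself diagnose in the last paragraph, the arguments of Section \ref{secgoodline} do not yield it. Those arguments repeatedly exploit properties of the ambient very general fourfold rather than intrinsic conditions on the threefold --- the irreducibility of $F(Y)$ in Lemma \ref{leFYirred}(ii) is proved via Lagrangian Hodge classes on $F(X)$, Lemma \ref{leredCy} uses ${\rm Hdg}^4(X,\mathbb{Z})=\mathbb{Z}h^2$, and the exclusion of degenerate ruled surfaces in Lemma \ref{leClirred} is again a class computation on $X$. Extracting from this a codimension-$6$ statement in $\Hyp_{3,3}$ would require reproving these results intrinsically, which you do not attempt. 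Nor can the bound be read off indirectly from Corollary \ref{corallow}: properness of the bad fourfold locus only gives ${\rm codim}\,W\ge 6$ if one also knows that a general bad fourfold has only finitely many bad hyperplane sections, which is not established.

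The paper's proof avoids this codimension question entirely. It argues by contradiction on the bad \emph{fourfold} locus $\Hyp_{4,3}^{00,bad}$, which is known to be proper by Corollary \ref{corallow}. Every irreducible component $Z'$ of $\Hyp_{4,3}^{00,bad}$ has the form $\PGL(6)\cdot r^{-1}(Z)$ for some component $Z$ of the bad threefold locus, and therefore contains entire fibres of $r$: for any $[X]\in Z'$ there is a hyperplane section $Y$ (one lying in $Z$) with $r^{-1}([Y])\subset Z'$, forcing $f_YH^0(X,\mathcal{O}_X(2))\subset T_{[X]}Z'$. Now if every Pfaffian fourfold were bad, then $\mathcal{P}^{00}$, being an irreducible hypersurface inside the proper closed set $\Hyp_{4,3}^{00,bad}$, would itself be one of these components $Z'$, and the inclusion $f_YH^0(X,\mathcal{O}_X(2))\subset T_{[X]}\mathcal{P}$ would contradict Lemma \ref{letechnique18jan} directly. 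So Lemma \ref{letechnique18jan} enters not as the smoothness of $r_{pf}$ feeding a Sard argument (your route), but as a tangent-space obstruction ruling out that $\mathcal{P}^{00}$ is swept out by fibres of $r$; no estimate on the threefold side is needed at all.
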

\begin{proof} We know by Propositions
\ref{propgoodline} and \ref{propirred} and Corollary \ref{corallow} that if $X$ is a general cubic fourfold, then any hyperplane section of $X$ is allowable and contains  a very good line. Let us say that $Y$ is bad if it does not admit a very good line
 or has non-allowable singularities
 and $X$ is bad if it  has a hyperplane
section $Y$ which is bad.
The locus of bad cubic fourfolds is a proper closed algebraic subset $\Hyp_{4,3}^{00,bad}$
of $\Hyp_{4,3}^{00}$ and its irreducible components are constructed as follows: for each
irreducible component $Z\subset  \Hyp_{3,3}$ of the   locus of bad cubic threefolds, $r^{-1}(Z)\subset \Hyp_{4,3}$ is the locus of bad cubic fourfolds such that the cubic threefold
$X\cap \mathbb{P}^4$ is parametrized  by a point of  $Z$. Thus
$\PGL(6)\cdot r^{-1}(Z)$ is the set of cubic fourfolds $X$ such that some hyperplane section of
$X$ is isomorphic to a cubic threefold parametrized  by a point of $Z$. It is thus clear that we get any irreducible component of $\Hyp_{4,3}^{00,bad}$
as $\PGL(6)\cdot r^{-1}(Z)$, with $Z$ as above.
We conclude from this that any irreducible component $Z'$ of $\Hyp_{4,3}^{00,bad}$ has the property
that, for any $[X]\in Z'$, there exists a hyperplane section $Y\subset X$ such that
all cubic fourfolds containing $Y$ as  a hyperplane section are parametrized  by points of   $Z'$.
In particular, if $f_Y\in H^0(X,\mathcal{O}_X(1))$ is the equation of $Y$ in $X$,
$f_YH^0(X,\mathcal{O}_X(2))$ must be contained in the Zariski tangent space of $Z'$.
If all Pfaffian cubic fourfolds were bad, then the Pfaffian hypersurface
$\mathcal{P}^{00}$ would be an irreducible component $Z'$
of $\Hyp_{4,3}^{00,bad}$ and we would get a contradiction
with Lemma \ref{letechnique18jan}.
\end{proof}

 \section{Relative compactified Prym varieties \label{sectprym}}
As previously mentioned, our main tool for compactifying the intermediate Jacobian fibration $\cJ_U\rightarrow U$ is the Prym construction that identifies the intermediate Jacobian $J(Y)$ with a Prym variety $\Prym(\widetilde C/C)$ (where the pair $(\widetilde C, C)$ is obtained from $Y$ via the projection from a general line). The Prym construction works well in a relative setting over the smooth locus $U\subset (\bP^5)^\vee$ (and more precisely over the open set
 $\calF^0/U$ of very good lines in the fibers), reducing (at least locally) the problem of understanding degenerations of intermediate Jacobians to that of understanding degenerations of Prym varieties. This is of course a well studied problem: Beauville \cite{bprym} gave a compactification of the moduli of pairs $(\widetilde C, C)$, and many people studied degenerations of Pryms as abelian varieties (e.g. \cite{FS}, \cite{ABH}, \cite{casaprym}). Here we need to understand a specific compactification, and its local structure, over a given base $B$. A few instances of this have already been studied in  \cite{mark_tik}, \cite{ASF}, and \cite{thesisg}. Below, we define and prove a number of results for the relative compactified Prym of families of \'etale double covers of irreducible curves, which we then apply to our context. Specifically, the results of Sections \ref{secgoodline} and \ref{sectrans18janvier} say  that for a general cubic fourfold $X$, we can replace (locally on $B$) the family $\cY/B$ of hyperplane sections by a family of double covers $(\wtC,\mc C)$ such that each fiber $(\widetilde C_t, C_t)$ is an \'etale double cover with both curves irreducible (see  Corollary \ref{coroverygood}). Furthermore, the singularities of $\widetilde C_t$ and $C_t$ are planar, and we can assume (see Corollary \ref{corotransdef}) that the family $\mc C$ gives a simultaneous versal deformation of the singularities of any fiber $C_t$.

\begin{notation}
From now on in this section, $B$ will stand for an arbitrary base, not necessarily $(\bP^5)^\vee$ as elsewhere in the paper.
\end{notation}

We proceed as follows: as in \cite{ASF} we can define, for any family $\wtC_B \to \mc C_B$ of \'etale double covers of irreducible locally planar curves, parametrized by a base $B$, a relative compactified Prym variety  $\Prym{(\wtC_B \slash \mc C_B)} \to B$ whose fibers over the locus parametrizing smooth curves are usual Prym varieties. The relative Prym variety is defined as (one component) of the fixed locus of an involution on the relative compactified Jacobian $\J{\wtC_B}$ of the family $\wtC_B \to B$. From this definition, it follows immediately that if $\J{\wtC_B}$ is smooth then so is $\Prym{(\wtC_B \slash \mc C_B)} $. Unfortunately,  in general  $\J{\wtC_B}$ is not smooth. However, one can sometimes think of $\wtC_B \to B$ as the restriction of a larger family $\wtC_{\wt B} \to {\wt B}$, $B \subset \wt B$, with the property that $a)$ there exist two compatible involutions on $\wtC_{\wt B}$ and on $\wt B$, such that the first is an extension of the given involution on $\wtC_{B}$ and second has the property that the fixed locus on $\wt B$ is equal to $B$; $b)$ the relative compactified Jacobian $\J{\wtC_{\wt B}}$ is smooth.  Under these assumptions the relative Prym variety $\Prym{(\wtC_B \slash \mc C_B)} $ is smooth. An instance of this already appeared in \cite{ASF}. As discussed below, the versality statements valid in our setup allow us to conclude that the relative compactified Prym is indeed smooth in our situation.

The results in this section build on an important result for compactified relative Jacobians, namely the Fantechi--G\"ottsche--van Straten \cite{FGvS} smoothness criterion.

\subsection{Relative Compactified Pryms (the \'etale case)}
Let $f: \wt C \to C$ be an \'etale double cover of smooth projective curves, and let $\iota: \wt C \to  \wt C$ be the corresponding involution on $\wt C$. We denote by $g$ be the genus of $C$, and by $h$ the genus of $\wt C$, so that $h=2g-1$. Recall that the Prym variety of $\wt C$ over $C$, which we will denote by $\operatorname{Prym}(\wt C \slash C)$, is the identity component of the fixed locus of the involution
\[
\tau:=-\iota^*: \Pic^0(\wt C ) \to \Pic^0(\wt C).
\]
The Prym variety $\operatorname{Prym}(\wt C \slash C)$ is a principally polarized abelian variety \cite{Mumford} of dimension $g-1$.
Equivalently \cite{Mumford}, the Prym variety can be defined as the identity component of the Norm map
\[
\Nm: \Pic^0(\mc C) \to \Pic^0(C), \,\quad \mc O_{\wt C}(\sum p_i) \mapsto \mc O_{ C}(\sum f(p_i)),
\]
or as the image of
\[
(1-\iota^*): \Pic^0(\wt C ) \to \Pic^0(\wt C).
\]
Now suppose that $f: \wt C \to C$ is an \'etale double cover of singular, but irreducible curves, and let $\wt n:  \wt D \to \wt C$ and $n: D \to C$ be the normalizations of the two curves.
The involution $\iota$ on $\wt C$ lifts to a compatible involution
\be \label{involution normalization}
\varepsilon: \wt D \to \wt D
\ee
so that the natural morphism $\wt D \to D$ is an \'etale double cover with associated involution $\varepsilon$. Let $\{x_1, \dots, x_k\}$ be the singular points of $C$ and let
\[
\{p_1, \dots, p_k, q_1, \dots, q_k\}
\]
be the singular points of $\wt C$, with $f^{-1}(x_i)=\{p_i, q_i\}$. The identity component of the Picard group, or generalized Jacobian, of $\wt C$ fits into the natural short exact sequence of groups
\be \label{definition of A}
 1 \to A \times A \longrightarrow \Pic^0(\wt C) \stackrel{\wt n^*}{\longrightarrow} \Pic^0(\wt D) \to 1,
\ee
where $A:=H^0( \wt C, \oplus_{i=1}^k ( n_* \mc O_{\wt D}^\times \slash \mc O_{\wt C}^\times )_{p_i})\cong H^0( \wt C, \oplus_{i=1}^k ( n_* \mc O_{\wt D}^\times \slash \mc O_{\wt C}^\times )_{q_i})$ is a commutative affine group. The involution $-\iota^*$ still acts on $\Pic^0(\wt C)$ and we can define, in analogy with the generalized Jacobian, the generalized Prym variety of $\wt C$ over $C$ to be the identity component of the fixed locus of $-\iota^*$
\[
\operatorname{Prym}(\wt C \slash C):=\Fix( -\iota^* )_{\id}.
\]
The involution $-\iota^*$ is compatible, via $n^*$, with $-\varepsilon^*$ on $\Pic^0(\wt D)$ and it acts on $A \times A$ via $(a, b) \mapsto (b^{-1}, a^{-1})$ (note that we use multiplicative notation for these groups, even though $A$ can be a product of both additive and multiplicative groups). We therefore get a short exact sequence
\[
1 \to A  \longrightarrow \operatorname{Prym}(\wt C \slash C) \stackrel{\wt n^*}{\longrightarrow} \operatorname{Prym}(\wt D \slash D) \to 1,
\]
where the inclusion $A \hookrightarrow A \times A=\ker \wt n^*$ is given by $a \mapsto (a, a^{-1})$. Set $\delta:=\dim A$, so that $g(D)=g-\delta$, and $g(\wt D)=h-2\delta$. Since $\dim \operatorname{Prym}(\wt D \slash D) = g -\delta -1$, we see that
\[
\dim \operatorname{Prym}(\wt C \slash C)=g-1,
\]
as in the smooth case. For example, if $C$ is a nodal irreducible curve, then $\operatorname{Prym}(\wt C \slash C)$ is a semi--abelian variety.

The strategy to compactify the generalized Prym variety is to extend the involution to the compactified Jacobian of $\wt C$. By definition, the degree $d$ compactified Jacobian of an integral projective curve $\Gamma$ is the moduli space of rank $1$ torsion free sheaves on $\Gamma$ of degree $d$. If $\Gamma$ has planar singularities, i.e. if locally around every singular point, $\Gamma$ is isomorphic to a plane curve, then every component is irreducible  \cite{Rego}  of dimension equal to the arithmetic genus of $\Gamma$. The degree zero component $\ov \Jac(\Gamma)$ contains $\Pic^0(\Gamma)$ as an open dense subset. For the proof of Propositions \ref{base change} and \ref{smoothness properties} we will need further properties of the compactified Jacobian, and in particular, we will need a description of the complement $\ov \Jac(\Gamma) \setminus \Pic^0(\Gamma)$ (cf. \cite{Rego} and \cite{Cook};  see Proposition \ref{cook rego} below), and a smoothness criterion due to Fantechi--G\"ottsche--van Straten \cite{FGvS} (recalled below in Proposition \ref{smoothness fgvs}). If $\Gamma$ has locally planar singularities, then any torsion free coherent sheaf $F$ on $\Gamma$ is reflexive,  that is
\[
(F^\vee){}^\vee =F, \quad \text{where} \quad F^ \vee:=\Shom_{\mc O _\Gamma}(F, \mc O_\Gamma).
\]
Moreover, if
\[
j: \Gamma \subset Z,
\]
is an embedding of $\Gamma$ in a smooth projective variety $Z$ of dimension $d$, then using  \cite[Prop. 1.1.10]{Huy-Lehn} we can see that
\be \label{which ext vanish}
\Shext^{c}_Z(F, \mc \omega_Z)=0 \quad \text{ for all} \,\, c \neq d-1
\ee

The following lemma is well--known, and is crucial to define the involution as a regular morphism on the
family of compactified Jacobians.
\begin{lemma} \label{ext dual} The only non--zero $\mathcal{E}xt$ sheaf satisfies
\[
\Shext^{d-1}_Z(F, \mc O_Z) =F^\vee\otimes \det N_{\Gamma|Z}
\]
 (notice that since $\Gamma$ has locally planar singularities, the embedding in $Z$ is l.c.i.).
\end{lemma}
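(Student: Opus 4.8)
The plan is to reduce everything to Grothendieck--Serre duality for the l.c.i.\ embedding $j\colon \Gamma \hookrightarrow Z$, whose codimension is $c=d-1$. Since $\Gamma$ has locally planar singularities it is a local complete intersection in $Z$ (so $N_{\Gamma|Z}$ is locally free of rank $c$) and, being locally a plane curve, it is Gorenstein; hence its dualizing sheaf $\omega_\Gamma$ is a line bundle. Moreover a torsion-free coherent sheaf on a reduced one-dimensional Cohen--Macaulay scheme is maximal Cohen--Macaulay, so $F$ is a maximal Cohen--Macaulay $\mathcal{O}_\Gamma$-module; this is the structural fact behind the vanishing already recorded in (\ref{which ext vanish}).

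First I would recall the adjunction formula $\omega_\Gamma = j^*\omega_Z \otimes \det N_{\Gamma|Z}$, valid because $j$ is l.c.i.\ of codimension $c$. Next, applying Grothendieck duality for the closed embedding $j$ to the dualizing sheaf, and using $j^!(\omega_Z[d]) = \omega_\Gamma[1]$ (recall $\dim \Gamma = 1$), one gets $R\Shom_Z(j_*F,\omega_Z) = j_* R\Shom_\Gamma(F,\omega_\Gamma)[-c]$. Taking cohomology sheaves and combining with the vanishing (\ref{which ext vanish}) collapses this into the single identity $\Shext^{d-1}_Z(F,\omega_Z) = j_*\Shom_\Gamma(F,\omega_\Gamma)$, all other $\Shext^i$ being zero since $d-1-c=0$. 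Locally this is nothing but the change-of-rings computation: writing $\Gamma$ as the zero locus of a regular sequence $f_1,\dots,f_c$, the Koszul complex gives $R\Shom_{\mathcal{O}_Z}(\mathcal{O}_\Gamma,\omega_Z)=\omega_\Gamma[-c]$, and restriction of scalars for the $\mathcal{O}_\Gamma$-module $F$ yields $R\Shom_{\mathcal{O}_Z}(F,\omega_Z)=R\Shom_{\mathcal{O}_\Gamma}(F,\omega_\Gamma)[-c]$.

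Since $\omega_\Gamma$ is a line bundle, $\Shom_\Gamma(F,\omega_\Gamma)=F^\vee\otimes\omega_\Gamma$, with $F^\vee=\Shom_{\mathcal{O}_\Gamma}(F,\mathcal{O}_\Gamma)$ as in the text. Substituting the adjunction formula gives $\Shext^{d-1}_Z(F,\omega_Z)=F^\vee\otimes j^*\omega_Z\otimes\det N_{\Gamma|Z}$. Finally, because $\omega_Z$ is locally free on $Z$, tensoring commutes with $\Shext$, so $\Shext^{d-1}_Z(F,\mathcal{O}_Z)=\Shext^{d-1}_Z(F,\omega_Z)\otimes\omega_Z^{-1}$; the factor $j^*\omega_Z\otimes\omega_Z^{-1}=\mathcal{O}_\Gamma$ cancels, leaving $\Shext^{d-1}_Z(F,\mathcal{O}_Z)=F^\vee\otimes\det N_{\Gamma|Z}$, as claimed.

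The only genuine point requiring care is the identification of the surviving top $\Shext$ with $\Shom_\Gamma(F,\omega_\Gamma)$: the vanishing in the other degrees is exactly (\ref{which ext vanish}), but pinning down the nonzero term needs either Grothendieck duality for $j$ or the explicit Koszul change-of-rings argument above, together with the Gorenstein property that renders $\omega_\Gamma$ invertible and thereby splits $\Shom_\Gamma(F,\omega_\Gamma)$ as $F^\vee\otimes\omega_\Gamma$. Everything else is formal manipulation with adjunction and the local freeness of $\omega_Z$.
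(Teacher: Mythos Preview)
Your argument is correct and is essentially the same approach as the paper's, just spelled out in full. The paper's proof is a one-line citation to Hartshorne, Chapter III, Lemma 7.4 and Theorem 7.11 (the Koszul computation of $\Shext^{c}$ for an l.c.i.\ closed subscheme together with the adjunction formula $\omega_\Gamma=\omega_Z|_\Gamma\otimes\det N_{\Gamma|Z}$), with the single remark that the proof of Lemma 7.4 goes through unchanged for the ext-sheaves; your Grothendieck-duality / local Koszul argument is precisely the content of those references, made explicit.
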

\begin{proof}
This is simply \cite[III, Lemma 7.4 and Theorem 7.11]{hartshorne}. The only thing to remark is that the proof of Lemma 7.4 of loc. cit. goes through unchanged for the ext--sheaves.
\end{proof}

\begin{lemma} \label{involution}
Assume that $C$, and therefore $\wt C$, has planar singularities. Then the assignment
\be
\begin{aligned}
\tau: \ov\Jac(\wt C) & \longrightarrow  \ov\Jac(\wt C)\\
F & \longmapsto \iota^* F^ \vee
\end{aligned}
\ee
defines a regular involution which extends $-\iota^* $ over the complement of $ \Pic^0(\wt C)$.
\end{lemma}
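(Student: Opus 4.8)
The plan is to verify four things in turn: that $\tau$ makes sense pointwise (it sends a rank-one torsion-free sheaf of degree $0$ to another such sheaf), that it squares to the identity, that it restricts to $-\iota^*$ on $\Pic^0(\wt C)$, and---the substantive point---that it is a morphism of schemes rather than merely a set-theoretic bijection.

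For the pointwise statement, I would use that since $\wt C$ has planar singularities every rank-one torsion-free sheaf $F$ is reflexive, so $F^\vee=\Shom_{\mc O_{\wt C}}(F,\mc O_{\wt C})$ is again rank-one torsion-free with $(F^\vee)^\vee=F$. To control degrees, I would invoke Serre duality on the integral Gorenstein curve $\wt C$: as $F$ is Cohen--Macaulay, $\Shext^{>0}_{\wt C}(F,\omega_{\wt C})=0$ and $\Shom_{\wt C}(F,\omega_{\wt C})=F^\vee\otimes\omega_{\wt C}$, whence $\chi(F)=-\chi(F^\vee\otimes\omega_{\wt C})=-\chi(F^\vee)-\deg\omega_{\wt C}$. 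Combined with $\deg F=\chi(F)+p_a(\wt C)-1$ and $\deg\omega_{\wt C}=2p_a(\wt C)-2$, this gives $\deg F^\vee=-\deg F$. Hence $F^\vee$, and therefore $\iota^*F^\vee$ (pullback along the automorphism $\iota$ preserving degree), lies in $\ov\Jac(\wt C)$ whenever $F$ does. The involution property is then formal: $\tau^2(F)=\iota^*\big((\iota^*F^\vee)^\vee\big)=(\iota^*)^2(F^\vee)^\vee=(F^\vee)^\vee=F$, using that dualizing commutes with pullback along an isomorphism, that $\iota^2=\id$, and reflexivity. Finally, on the open dense locus $\Pic^0(\wt C)$ a sheaf is a line bundle $L$ with $L^\vee=L^{-1}$, so $\tau(L)=\iota^*L^{-1}=-\iota^*(L)$ in the group law; thus $\tau$ restricts to $-\iota^*$ there and is a regular extension of it across the complement.

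The heart of the argument is regularity. The obstruction is that $F\mapsto F^\vee=\Shom_{\mc O_{\wt C}}(F,\mc O_{\wt C})$ does not commute with base change on the singular curve, so I cannot apply it naively to a flat family. The device, made available by the preceding Lemma \ref{ext dual}, is to transport the duality to a smooth ambient space: fixing an embedding $j:\wt C\hookrightarrow Z$ into a smooth projective variety of dimension $d$ (an l.c.i.\ embedding, since the singularities are planar), one has $\Shext^{d-1}_Z(F,\mc O_Z)=F^\vee\otimes\det N_{\wt C|Z}$, while \emph{all} the other sheaves $\Shext^{c}_Z(F,\mc O_Z)$ vanish by \eqref{which ext vanish}. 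I would then run this in families: for any base $S$ and any $S$-flat family $\mc F$ on $\wt C\times S$ of rank-one torsion-free degree-$0$ sheaves, form the relative sheaf $\Shext^{d-1}_{Z\times S/S}(\mc F,\mc O_{Z\times S})$. Because the top relative $\mathcal{E}xt$ is the only nonzero one, it commutes with arbitrary base change and is $S$-flat, with fibre $(\mc F_s)^\vee\otimes\det N_{\wt C|Z}$ over each $s\in S$. Twisting by the fixed line bundle $(\det N_{\wt C|Z})^{-1}$ and pulling back by $\iota\times\id_S$ produces an $S$-flat family whose fibre over $s$ is exactly $\tau(\mc F_s)$. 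Being compatible with base change, this construction defines a natural transformation of the moduli functor and hence a morphism $\tau:\ov\Jac(\wt C)\to\ov\Jac(\wt C)$ inducing the pointwise assignment above.

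I expect the main obstacle to be precisely this base-change and flatness step for relative $\mathcal{E}xt$: one must justify that the vanishing of all but the top $\mathcal{E}xt$ sheaf (a consequence of the planar, hence l.c.i., singularities via \eqref{which ext vanish}) forces $\Shext^{d-1}$ to commute with base change and to be $S$-flat. Granting this, the twist by $(\det N_{\wt C|Z})^{-1}$ and the pullback along $\iota$ are harmless regular operations, the four pointwise verifications of the first two paragraphs identify the resulting endomorphism with $F\mapsto\iota^*F^\vee$ and show it is an involution restricting to $-\iota^*$ on $\Pic^0(\wt C)$, and the proof is complete.
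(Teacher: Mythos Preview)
Your proposal is correct and follows essentially the same approach as the paper: embed $\wt C$ in a smooth ambient $Z$, replace the intrinsic dual $F^\vee$ by $\Shext^{d-1}_Z(F,\mc O_Z)\otimes(\det N_{\wt C|Z})^{-1}$ via Lemma~\ref{ext dual}, and use the vanishing of all other $\Shext^c$ from \eqref{which ext vanish} to get base change for the relative $\Shext^{d-1}$ in families, which then defines the morphism. The paper differs only in presentation---it invokes the universal sheaf on $\ov\Jac(\wt C)\times\wt C$ (via \cite[Thm~3.4]{AKII}) rather than your functor-of-points formulation, and cites \cite[Thm~1.10]{AK} explicitly for the base-change step you sketch---but the substance is identical, and your additional pointwise checks (degree, involutivity, restriction to $-\iota^*$) are welcome elaborations the paper leaves implicit.
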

\begin{proof}
By \cite[Thm 3.4]{AKII}, there is a universal sheaf $\mc F$ on $\ov\Jac(\wt C)\times \wt C$. Let $\wt C \subset Z$ be an embedding of $\wt C$ in a smooth projective variety $Z$ of dimension $d \ge 2$. We may view $\mc F$ as a sheaf on $\ov\Jac(\wt C)\times Z$, i.e., as a family of pure codimension $d-1$ sheaves on $Z$ parametrized by $\ov \Jac(\wt C)$. Let $p_Z:\Jac(\wt C)\times Z \to Z$ be the second projection.
We claim that the sheaf $\Shext^{d-1}_{\ov\Jac(\wt C)\times Z}( \mc F, p_Z^* \mc O_Z)$ satisfies base change, i.e. that
\[
\Shext^{d-1}_{\ov\Jac(\wt C)\times Z}( \mc F,p_Z^* \mc O_Z)_{|\{t\} \times  Z}=\Shext^{d-1}( \mc F_t, \mc O_Z) =\mc F_t ^ \vee \otimes \det N_{C|Z}.
\]
Indeed, by \cite[Thm 1.10]{AK} it is enough to verify that $\Shext^{c}(\mc F_t, \mc O_Z)=0$, for $c=d-2$ and $c=d$, which follows directly from  (\ref{which ext vanish}).


Using Lemma \ref{ext dual} we see that the sheaf $\Shext^{d-1}_{\ov\Jac(\wt C)\times Z}( \iota^* \mc F,p_Z^* \mc O_Z)\otimes p_Z^*   \det N_{C|Z}^ \vee$  provides a family of rank one torsion free sheaves on $C$ and it  determines a morphism $\ov\Jac(\wt C) \to \ov\Jac(\wt C)$  which sends a sheaf $F$ to $\iota^* F^ \vee$. Since the sheaves are reflexive, this morphism is an involution.
 \end{proof}

\begin{definition}
The compactified Prym variety  $\ov{\operatorname{Prym}}(\wt C \slash C)$ of an \'etale double cover $\wt C \to C$ of integral curves with planar singularities is the irreducible component containing the identity of the fixed locus $\Fix(\tau) \subset \ov\Jac(\wt C)$.
\end{definition}

\begin{remark} \label{Pt irreducible}
Notice that $ \operatorname{Prym}(\wt C \slash C) \subset \ov{\operatorname{Prym}}(\wt C \slash C)$ is a dense open subset, so that, in particular, $$\dim \ov{\operatorname{Prym}}(\wt C \slash C)=g-1.$$ For example, if $C$ is  irreducible  with one node, then $\ov{\operatorname{Prym}}(\wt C \slash C)$ is a rank one degeneration of an abelian variety. For an explicit description of Prym varieties of other singular curves, see \cite{thesisg}.
\end{remark}

\begin{remark}
One could also drop the assumption of integrality, and define the relative compactified Prym variety  for \'etale double covers of arbitrary curves with locally planar singularities. In this situation, however, a choice has to be made, namely that of a polarization on the family of double covers and the relative compactified Prym depends on this choice (for the case of family of curves lying on smooth projective surfaces see \cite{ASF} and \cite{thesisg}). Since the curves we will deal with in our situation turn out to be integral by Proposition \ref{propirred}, we will restrict ourselves to the case of integral curves.
\end{remark}

Let now  $B$ be an irreducible base scheme and let
\begin{equation} \label{family of covers}
\xymatrix{
\wt \calC_B \ar[rr]^f_{2:1} \ar[dr] & & \calC_B  \ar[dl] \\
& B &
}
\end{equation}
be a family, parametrized by $B$, of \'etale double covers of reduced and irreducible curves with planar singularities. Let us denote by $g$ the genus of the curves in the family $\mc C_B \to B$, and by $h=2g-1$ the genus of their double covers $\wt \calC_B \to B$. We let
\[
\iota: \wt \calC_B  \to \wt \calC_B,
\]
be the involution associated to the covering, and define a $2$--torsion line bundle $\eta_B$ on $ \calC_B$ by setting
\[
f_* \mc O_{\wt \calC_B}=\mc O_{ \calC_B} \oplus \eta_B.
\]
For any $b \in B$, we denote by
\[
f_b: \wt \calC_b  \to  \calC_b, \quad \iota_b: \wt \calC_b  \to \wt \calC_b, \quad \eta_b \in \Pic^0(\mc C_b)
\]
the respective restrictions of $f$, $\iota$ and  $\eta$ to the fiber over $b$.
Consider the degree $0$ relative compactified Jacobians
\[
\xi:  \J{\wt \calC_B}  \to B, \quad \quad \pi: \J{\calC_B} \to B.
\]
The fiber of $\xi$ (respectively $\pi$) over a point $b \in B$ is the compactified Jacobian of the curve $\wt \calC_b$ (respectively $\calC_b$) and is a reduced and irreducible l.c.i. of dimension $h$ (respectively $g$). The smooth locus of $\xi$ (respectively $\pi$)  is the relative generalized Jacobian $\Pic^0_{\wt \calC_B}$ (resp. $\Pic^0_{\mc C_B}$) which parametrizes line bundles. Since they are group schemes, both admit a zero section, and the morphisms $-\iota^*$ and $1-\iota^*$ are well defined group homomorphisms of $\Pic^0_{\wt \calC_B}$. Also, $\Pic^0_{\mc C_B}$ has a section determined by $\eta_B$.

\begin{lemma}\label{lemmataumor}
The ``$-1$'' morphism on the group scheme $\Pic^0_{\wt \calC_B}$ extends to a regular morphism on the whole $ \J{\wt \calC_B}$. In particular, there is a regular involution
\[
\begin{aligned}
\tau: \ov\Jac(\wt \calC_B) & \longrightarrow  \ov\Jac(\wt \calC_B)\\
F & \longmapsto \iota^* F^ \vee
\end{aligned}
\]
\end{lemma}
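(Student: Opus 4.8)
The plan is to run the fibrewise construction of Lemma \ref{involution} in a relative setting over $B$. The cleanest route is to first show that relative dualization $F\mapsto F^\vee$ extends the ``$-1$'' morphism of the group scheme $\Pic^0_{\wt\calC_B}$ to a regular endomorphism of the whole relative compactified Jacobian $\J{\wt\calC_B}$: on a line bundle $L$ one has $L^\vee=L^{-1}$, so $F\mapsto F^\vee$ does restrict to $-1$ on $\Pic^0_{\wt\calC_B}$. Once this is done, the asserted involution is the composite
$\tau=\iota^*\circ(\,\cdot\,)^\vee$, where $\iota^*$ is pullback along the relative automorphism $\iota\colon\wt\calC_B\to\wt\calC_B$; since $\iota$ is an isomorphism over $B$, the induced map $\iota^*\colon\J{\wt\calC_B}\to\J{\wt\calC_B}$ is a regular $B$-isomorphism, and $\tau$ is then regular as a composite of regular maps.

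To build $F\mapsto F^\vee$ I would first invoke the relative form of \cite[Thm.~3.4]{AKII} to produce, possibly after an \'etale localization, a universal sheaf $\mc F$ on $\J{\wt\calC_B}\times_B\wt\calC_B$. The construction below is unchanged if $\mc F$ is replaced by $\mc F\otimes p^*N$ for a line bundle $N$ pulled back from $\J{\wt\calC_B}$, since the resulting families of sheaves on $\wt\calC_B$ then differ only by a twist by a line bundle from the base, which does not affect the induced moduli map; hence the morphism descends and we may assume $\mc F$ exists. Next I would embed the family $\wt\calC_B\hookrightarrow Z$ in a smooth projective morphism $Z\to B$ of relative dimension $d\ge 2$ (e.g. $Z=\bP^N\times B$ via a relatively very ample bundle). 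Because every fibre $\wt\calC_b$ has planar singularities, the argument recalled in Lemma \ref{ext dual} shows each $\wt\calC_b\hookrightarrow Z_b$ is l.c.i., so the relative embedding is l.c.i.

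Viewing $\mc F$ as a sheaf on $\J{\wt\calC_B}\times_B Z$, flat over $\J{\wt\calC_B}$ with fibres pure of codimension $d-1$, I would form the relative Ext-sheaf $\Shext^{d-1}_{\J{\wt\calC_B}\times_B Z}(\mc F, p_Z^*\mc O_Z)$. By \eqref{which ext vanish} the fibrewise sheaves $\Shext^{c}(\mc F_t,\mc O_{Z_b})$ vanish for $c=d-2$ and $c=d$, so the relative Ext base-change theorem \cite[Thm.~1.10]{AK} applies: the Ext-sheaf is flat over $\J{\wt\calC_B}$, commutes with base change, and by Lemma \ref{ext dual} its restriction to a fibre is $\mc F_t^\vee\otimes\det N_{\wt\calC_b|Z_b}$. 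Viewing it as a sheaf on $\J{\wt\calC_B}\times_B\wt\calC_B$ (on which it is supported) and twisting by the pullback of $\det N_{\wt\calC_B|Z}^\vee$ from $\wt\calC_B$, I obtain a $B$-flat family of rank-one torsion-free sheaves on $\wt\calC_B$ with member $\mc F_t^\vee$ over $t$; by the universal property of $\J{\wt\calC_B}$ this family determines the regular $B$-morphism $F\mapsto F^\vee$, which restricts to $-1$ on $\Pic^0_{\wt\calC_B}$.

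Finally, setting $\tau=\iota^*\circ(\,\cdot\,)^\vee$ gives $\tau(F)=\iota^*F^\vee$, and reflexivity of torsion-free sheaves on locally planar curves ($F^{\vee\vee}=F$, recalled above) together with $\iota^2=\id$ yields $\tau^2(F)=\iota^*(\iota^*F^\vee)^\vee=\iota^{*2}F^{\vee\vee}=F$, so $\tau$ is a regular involution. I expect the only real obstacle to be the bookkeeping around base change: one must check that the relative Ext-sheaf genuinely commutes with restriction to fibres—so that on each fibre the recipe recovers exactly the $\tau$ of Lemma \ref{involution}—and that the output is independent of the auxiliary universal sheaf and of the relative embedding. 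Both points are resolved by the vanishing \eqref{which ext vanish} and the base-change statement \cite[Thm.~1.10]{AK}, exactly as in the absolute case treated in Lemma \ref{involution}.
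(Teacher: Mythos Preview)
Your proposal is correct and follows essentially the same approach as the paper: both use \cite[Thm.~3.4]{AKII} to obtain a universal sheaf (the paper does so Zariski-locally on $B$ via a section of the smooth locus, you via an \'etale localization), embed $\wt\calC_B$ in $Z\times B$ with $Z$ smooth projective, form the relative $\Shext^{d-1}$ with base change controlled by \cite[Thm.~1.10]{AK} and the vanishing \eqref{which ext vanish}, and glue using independence of the auxiliary choices. Your decomposition $\tau=\iota^*\circ(\,\cdot\,)^\vee$ and explicit verification of $\tau^2=\id$ via reflexivity are cosmetic variations on the paper's direct application of the Ext construction to $\iota^*\mc F$.
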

\begin{proof} Again, this follows from \cite[Thm 3.4]{AKII}. Locally on $B$ we can find a  section of the smooth locus of $\wt \calC_B \to B$, hence by {\it loc. cit.} there is a universal sheaf on $ \ov\Jac(\wt \calC_B) \times \wt \calC_B$. Since the morphism $\wt \calC_B \to B$ is projective, we can find (up to restricting $B$) a smooth projective $Z$ of dimension $d$ such that $\wt \calC_B \to B$ can be factored by an embedding $\wt \calC_B \subset  Z\times B$ followed by the second projection to $B$. We can hence argue as in Lemma \ref{involution} and define an involution by considering the family of torsion free sheaves
\[
\Shext^{d-1}_{Z \times B}( \iota^* \mc F,p_Z^* \mc O_Z)\otimes \det N_{\wt \calC_B |Z\times B}^ \vee
\]
Since the assignment is independent of the choice of a section and of $Z$, these local morphisms glue together to a global involution on  $\ov\Jac(\wt \calC_B)$.
\end{proof}


We can now define the relative compactified Prym.

\begin{definition} \label{definition prym}
Let $f: \wt \calC_B \to \calC_B$ be as in (\ref{family of covers}) an \'etale double cover between two families of reduced and irreducible curves with planar singularities.
The relative compactified Prym variety of $\wt \calC_B $ over $ \calC_B$, denoted $\ovPrym{\wt\calC_B \slash \calC_B}$ is defined to be the irreducible component of the fixed locus $\Fix(\tau) \subset \ov\Jac(\wt C_B)$ that contains the zero section:
\[
\ovPrym{\wt\calC_B \slash \calC_B}:=\Fix(\tau)_0 \subset \ov\Jac(\wt C_B)
\]
\end{definition}

\begin{lemma}
The fixed locus $\Fix(-\iota^*) \subset \Pic^0_{\wt \calC_B}$ has four isomorphic connected components.
\end{lemma}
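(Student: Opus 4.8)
The plan is to identify $\Fix(-\iota^*)$ with the preimage of a finite subgroup under the relative norm map, count the components fiberwise, and reduce the singular fibers to the smooth case via normalization.

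First I would note that $-\iota^*$ is a homomorphism of the group scheme $\Pic^0_{\wt\calC_B}$ (being the composite of $\iota^*$ with inversion), so that $\Fix(-\iota^*)=\{L:\iota^*L=L^{-1}\}=\ker(1+\iota^*)$ is a closed subgroup scheme, where $1+\iota^*$ denotes $L\mapsto L\otimes\iota^*L$. The key relation is $f^*\Nm(L)=L\otimes\iota^*L$, valid for the relative norm $\Nm:\Pic^0_{\wt\calC_B}\to\Pic^0_{\calC_B}$; combined with the fact that for the \'etale double cover defined by $\eta_B$ one has $\ker f^*=\{\mc O_{\calC_B},\eta_B\}$, this gives
$$\Fix(-\iota^*)=\ker(1+\iota^*)=\Nm^{-1}\big(\ker f^*\big)=\Nm^{-1}(0)\sqcup\Nm^{-1}(\eta_B).$$
Since $\eta_B$ is a global section disjoint from the zero section, these two pieces are open and closed in $\Fix(-\iota^*)$ over all of $B$, and each is a torsor under $\ker\Nm$, once I check that the relative norm is surjective (fiberwise this follows from the dimension count $\dim\Pic^0(\wt\calC_b)-\dim\Prym=\dim\Pic^0(\calC_b)$ together with connectedness of $\Pic^0(\calC_b)$).

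Next I would show that $\ker\Nm\to B$ has exactly two connected components, its identity component being $\Prym(\wt\calC_B/\calC_B)$. For fibers over smooth curves this is Mumford's theorem \cite{Mumford}. For singular fibers I would use the normalization sequence \eqref{definition of A}: applying the snake lemma to the map of extensions relating $\Pic^0(\wt C)\to\Pic^0(\wt D)$ and $\Pic^0(C)\to\Pic^0(D)$, the norm restricts on affine parts to the surjection $A\times A\to A$, $(a,b)\mapsto ab$, whose kernel is the connected group $A$ (the antidiagonal), and on abelian parts to the norm $\Nm_D$ of the \'etale cover $\wt D\to D$ of smooth curves. This yields an exact sequence $0\to A\to\ker\Nm\to\ker\Nm_D\to 0$ with $A$ connected, so $\ker\Nm$ has the same number of components ($=2$) as $\ker\Nm_D$. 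Combining with the previous step gives four components fiberwise, each a coset of $\Prym$.

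The main obstacle is the global (as opposed to fiberwise) statement that these four cosets remain connected and are not permuted by monodromy over $B$. I would handle this by observing that the two $\mathbb Z/2$--invariants distinguishing the components are locally constant and globally defined: the first, the value of $\Nm$ in $\{0,\eta_B\}$, is cut out by the section $\eta_B$; the second, the component of $\ker\Nm$, is given by the deformation-invariant parity of $h^0$ in Mumford's theory, which is locally constant in families and hence constant on the irreducible base $B$. Thus $\Fix(-\iota^*)/\Prym(\wt\calC_B/\calC_B)$ is the constant group scheme of order four over $B$, so $\Fix(-\iota^*)\to B$ has exactly four connected components, each a coset of $\Prym(\wt\calC_B/\calC_B)$ inside the group scheme $\Fix(-\iota^*)$, and translation by coset representatives identifies them, proving they are mutually isomorphic.
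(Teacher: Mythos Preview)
Your argument follows essentially the same route as the paper's: identify $\Fix(-\iota^*)=\Nm^{-1}\{\mc O,\eta_B\}$ via $f^*\circ\Nm=1+\iota^*$, and reduce the component count of $\ker\Nm$ on singular fibers to Mumford's theorem for the normalizations, using the exact sequence \eqref{definition of A} together with the observation that the norm on the affine parts is the multiplication $A\times A\to A$ with connected kernel. The one point to watch is your invocation of the parity of $h^0$: Mumford's parity criterion distinguishes the two components of $\Nm^{-1}(\omega_C)$ in degree $2g-2$, and transporting it to $\ker\Nm$ in degree $0$ requires a global choice of section of $\Nm^{-1}(\omega_{\calC_B/B})$, which you have not produced; but this is unnecessary, since (as in the paper, and as your own final sentence already indicates) the identity component of the smooth group scheme $\ker\Nm_B$ over the irreducible base is itself one global connected component, so $\ker\Nm_B/\Prym$ is the constant group $\mathbb Z/2$ and the remaining component is simply its nontrivial coset.
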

\begin{proof}
In \cite{Mumford}, it is proved that for any $b \in B$ such that $\calC_b$ is smooth, $\Nm^{-1}(\mc O_{ \calC_b}) \subset \Pic^0_{\wt \calC_b}$ has two connected components. Hence, so has $\Nm^{-1}(\eta_b)$. Moreover,
\[
f^*(\Nm(\mc O_{\wt C_b}(\sum c_i)))=\mc O_{\wt C_b}(\sum c_i+\iota c_i),
\]
and $\ker(f^*)=\langle \mc O_{\wt C_b}, \eta_b\rangle$, so that  $\Fix(-\iota_b^*)=\Nm^{-1}(\eta_b) \cup \Nm^{-1}(\mc O_{ \calC_b})$ and hence the fixed locus of $-\iota^*_b$ in $\Pic^0(\wt \calC_b)$ has four connected components. Over $B$, since $f$ is \'etale and  $f_* \mc O_{\wt \calC_B}=\mc O_{\calC_B} \oplus \eta_B$ is locally free, we can consider the relative norm map \cite[\S 6.5]{EGAII} $\Nm_B: \Pic^0_{\wt \calC_B} \to \Pic^0_{\calC_B}$ and hence we may consider the inverse images of the zero section of $\Pic^0_{\calC_B} \to B$ and of the section determined by $\eta_B$. By  \cite[Prop. 6.5.8]{EGAII} on each fiber the norm map is compatible with the norm map associated to the double cover $\wt D_b \to D_b$ between the normalizations of the two curves (cf. (\ref{involution normalization})). Hence, it restricts to a norm map $m: A \times A \to A$ (notation as in (\ref{definition of A})), which is nothing but the multiplication map $(a,b) \to ab$. Hence, on every fiber the kernel of the norm map is an extension of the kernel of the norm map of an \'etale double cover of smooth curves (namely, the normalizations) by $\ker m \cong A$. It follows that the inverse image under $\Nm_B$  of the zero section has two connected components: the one containing the zero section of $\Pic^0_{\wt \calC_B}$ and the remaining one. Hence, also the inverse image of the section determined by $\eta_B$ has two connected components and the lemma is proved.
\end{proof}

Let
\be \label{components}
\mc P_1, \dots, \mc P_4
\ee
be the four connected components of $\Fix(-\iota^*) \subset \Pic^0_{\wt \calC_B}$, with $\mc P_1$ the component
\[
\Prym{(\wt \calC_B \slash \calC_B)}:=\Fix(-\iota^*)_0= \im(1-\iota^*) \subset \Pic^0_{\wt \calC_B}
\]
containing the zero section. We will call this component, which is a group scheme of dimension $g-1$ over $B$,  the relative generalized Prym variety of $\wt \calC_B$ over $\calC_B$. It is dense in the relative compactified Prym, so that its closure $\ov {\mc P_1}$ satisfies
\[
\ov {\mc P_1}=\ovPrym{\wt \calC_B \slash \calC_B}= \ov{\im(1-\iota^*)}.
\]

By restricting $\pi$ to the relative Prym we get a morphism
\[
\nu_B: \ovPrym{\wt \calC_B \slash \calC_B} \to B,
\]
whose fiber over a point $b\in B$ corresponding to a double cover between smooth curves $\wt \calC_b \to \calC_b$ is isomorphic to the usual Prym variety $\Prym(\wt \calC_b \slash \calC_b)$. Notice that
\[
\dim  \ovPrym{\wt \calC_B \slash \calC_B}= \dim B + g-1.
\]

We now need to show that this definition is well posed, in the sense that it is compatible with base change. For example, we would like to verify that the fiber over an arbitrary point $b \in B$ of the relative compactified Prym variety is the compactified Prym variety of $\wt \calC_b$ over $\mc C_b$.

\begin{prop} \begin{enumerate} \label{base change}
\item For every $\, b\in B$ we have $\ovPrym{\wt \calC_B \slash \calC_B}_{|b} =\ovPrym{\wt \calC_{b} \slash \calC_{b}}.
$
\item The base change property holds for arbitrary base change, i.e., for any morphism $B' \to B$, setting $\wt \calC_{B'}=\wt \calC_B \times_B B'$ and $\calC_{B'}=\calC_B \times_B B'$, we have
\[
\ovPrym{\wt \calC_B \slash \calC_B}\times_B B' =\ovPrym{\wt \calC_{B'} \slash \calC_{B'}}.
\]
\item The morphism $\nu_B: \ovPrym{\wt \calC_B \slash \calC_B} \to B$ is equidimensional.
\end{enumerate}
\end{prop}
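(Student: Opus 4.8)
Throughout write $\xi\colon \J{\wt\calC_B}\to B$ for the relative compactified Jacobian and $\tau$ for the involution of Lemma \ref{lemmataumor}. My plan is to establish part (3) first, deduce part (1) from it, and then read off part (2). The starting point is that everything in sight is compatible with base change: the Altman--Kleiman compactified Jacobian commutes with arbitrary base change, i.e.\ $\J{\wt\calC_B}\times_B B'=\J{\wt\calC_{B'}}$ for every $B'\to B$, and $\tau$ was built in Lemmas \ref{involution} and \ref{lemmataumor} from an $\Shext$-sheaf that satisfies base change by \cite[Thm. 1.10]{AK}, the neighbouring $\Shext$-sheaves vanishing on every fibre by \eqref{which ext vanish}. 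Hence the closed subscheme $\Fix(\tau)\subset\J{\wt\calC_B}$ satisfies $\Fix(\tau)\times_B B'=\Fix(\tau_{B'})$, and in particular $\Fix(\tau)_{|b}=\Fix(\tau_b)$ for every $b\in B$. Thus all three assertions reduce to determining which irreducible components of $\Fix(\tau_b)$ are swept out by the relative generalized Prym $\mc P_1=\im(1-\iota^*)$.

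I first record the easy half of the dimension estimate. Since $B$ is irreducible and $\mc P_1\subset\Pic^0_{\wt\calC_B}$ is a smooth group scheme over $B$ with connected fibres $\Prym(\wt\calC_b/\calC_b)$ of dimension $g-1$, the scheme $\mc P_1$ is irreducible, hence so is its closure $\ovPrym{\wt\calC_B\slash\calC_B}=\ov{\mc P_1}$, of dimension $\dim B+g-1$. As $\mc P_1$ surjects onto $B$ (it contains the zero section), $\nu_B$ is dominant, so by the fibre-dimension theorem every irreducible component of every fibre $\nu_B^{-1}(b)$ has dimension $\ge\dim\ov{\mc P_1}-\dim B=g-1$. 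Moreover $\mc P_1$ is open and closed in $\Fix(-\iota^*)\cap\Pic^0_{\wt\calC_B}$, whence $\ov{\mc P_1}\cap\Pic^0_{\wt\calC_b}=\mc P_{1,b}=\Prym(\wt\calC_b/\calC_b)$, whose closure is $\ovPrym{\wt\calC_b\slash\calC_b}$ by Definition \ref{definition prym}; in particular $\ovPrym{\wt\calC_b\slash\calC_b}\subseteq\nu_B^{-1}(b)$.

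The main point, and the step I expect to require the most care, is the matching upper bound $\dim\Fix(\tau_b)\le g-1$, with top-dimensional part contained in the union of the four closures $\ov{\mc P_{i,b}}$. For this I would stratify $\J{\wt\calC_b}$ according to the non-locally-free locus of the sheaf, using the Rego--Cook description (Proposition \ref{cook rego}). If $F$ is $\tau_b$-fixed then $F$ and $\iota^*F^\vee$ have the same non-locally-free locus; since duality preserves this locus for rank-one reflexive sheaves on a planar curve and $\iota$ permutes the pairs $\{p_i,q_i\}$ over $\Sing(\calC_b)$, the non-locally-free locus of a fixed sheaf is $\iota$-invariant. Hence $\Fix(\tau_b)$ meets only the strata indexed by $\iota$-invariant $S\subset\Sing(\wt\calC_b)$; such a stratum is governed by the partial normalization $\wt\calC_{b,S}\to\calC_{b,S}$, again an \'etale double cover of integral planar curves, on which $\tau_b$ induces the analogous Prym involution, so its fixed locus has dimension at most $g(\calC_{b,S})-1=g-\delta_S-1<g-1$ whenever $S\neq\varnothing$ (as $\delta_S\ge1$). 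Over the open stratum $S=\varnothing$ the fixed locus of $-\iota^*$ on $\Pic^0_{\wt\calC_b}$ is exactly the four components $\mc P_{1,b},\dots,\mc P_{4,b}$ of dimension $g-1$. Verifying that the induced involution on each boundary stratum is genuinely of Prym type, so that this count applies verbatim, is the delicate part.

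Granting this bound, parts (1)--(3) follow quickly. Combining the two estimates, every component of $\nu_B^{-1}(b)=\ovPrym{\wt\calC_B\slash\calC_B}_{|b}$ has dimension exactly $g-1$, which is the equidimensionality of part (3); and each such component is one of the top-dimensional components of $\Fix(\tau_b)$, hence one of the $\ov{\mc P_{i,b}}$. Since $\ov{\mc P_1}\cap\Pic^0_{\wt\calC_b}=\mc P_{1,b}$, no component of the fibre can be $\ov{\mc P_{j,b}}$ for $j\neq1$, nor can it lie in the lower-dimensional boundary; therefore $\nu_B^{-1}(b)=\ov{\mc P_{1,b}}=\ovPrym{\wt\calC_b\slash\calC_b}$, which is part (1). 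Finally, part (2) follows by the same bookkeeping: the relative generalized Prym commutes with base change, so $\mc P_1\times_B B'$ is the relative generalized Prym of $\wt\calC_{B'}\slash\calC_{B'}$, and by part (1) the two closed subschemes $\ovPrym{\wt\calC_B\slash\calC_B}\times_B B'$ and $\ovPrym{\wt\calC_{B'}\slash\calC_{B'}}$ of $\Fix(\tau_{B'})$ have the same fibres over every point of $B'$ and both contain this dense relative generalized Prym; being equidimensional over $B'$ by part (3), they coincide.
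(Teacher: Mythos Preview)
Your overall architecture is reasonable and the reductions you make (compactified Jacobian and $\tau$ commute with base change; the lower bound $\dim\nu_B^{-1}(b)\ge g-1$; the identification $\ov{\mc P_1}\cap\Pic^0_{\wt\calC_b}=\mc P_{1,b}$) are all correct. But the heart of your argument --- the upper bound $\dim\Fix(\tau_b)\le g-1$ with top-dimensional part exactly $\bigcup_i\ov{\mc P_{i,b}}$ --- is only sketched, and you yourself flag it as ``the delicate part''. The difficulty is real: the strata of $\J{\wt\calC_b}$ are indexed by local \emph{types}, not merely by the non-locally-free locus $S$, so for non-nodal singularities there are many strata with the same $S$ and the ``partial normalization $\wt\calC_{b,S}$'' you invoke is not canonical. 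One would have to check, for each $\tau$-stable local type, that $\iota$ descends to a fixed-point-free involution on the associated partial normalization $\Gamma'=\Spec\Shend(F)$, identify the $\tau$-fixed locus in the corresponding $\Pic^0(\Gamma')$-torsor with a Prym-type fixed locus, and then bound its dimension. This is doable but is substantially more than what you have written; as it stands the proof has a gap.

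The paper avoids this dimension count entirely by a different mechanism. It first proves (1) directly and then reads off (2) and (3). The two key moves are: (a) Lemma~\ref{G}, which shows that for \emph{any} $\tau$-fixed $F$ one can manufacture a sheaf $G=(1-\iota^*)G'\in\ov{\mc P_1}$ of the \emph{same local type} as $F$ (by choosing $G'$ locally free at the $q_i$'s and of type $F_{p_i}$ at the $p_i$'s), so that $F$ and $G$ differ by a $\tau$-invariant line bundle $L$; and (b) Corollary~\ref{disjoint components}, which proves that the closures $\ov{\mc P_i}$ are pairwise disjoint by passing to the equivariant semi-universal family $\wt S$ of $\wt C$, where $\J{\wt\calC_{\wt S}}$ is smooth (Fantechi--G\"ottsche--van~Straten), hence $\Fix(\tau)$ is smooth and its components cannot meet. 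Once these are in place, $F\in\ov{\mc P_1}$ and $G=F\otimes L\in\ov{\mc P_1}$ force $L\in\mc P_1$, whence $F=(1-\iota^*_b)(G'\otimes L''^{\vee})$ lies in $\ovPrym{\wt\calC_b/\calC_b}$. This bypasses any stratum-by-stratum bookkeeping; the smoothness over the versal base does the separating for you. Your deduction of (2) from (1) and (3) is essentially the same as the paper's, but note that the paper derives (2) and (3) \emph{from} (1) rather than the other way around.
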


We claim that it is sufficient to prove $(1)$, since it implies the other two points.  Indeed, to show that $(1) \Rightarrow (2)$ we may assume without loss of generality that $B'$ is irreducible. Hence $\ovPrym{\wt \calC_{B'} \slash \calC_{B'}}$ is  irreducible. Since it is contained in $\ovPrym{\wt \calC_B \slash \calC_B}\times_B B'$, we only need to show that the later is also irreducible of the same dimension. By $(1)$ and Remark \ref{Pt irreducible}, the fiber $\ovPrym{\wt \calC_B \slash \calC_B}_{|b}$ over any $b \in B$  is irreducible of dimension $g-1$. Hence, the morphism $\ovPrym{\wt \calC_B \slash \calC_B}\times_B B' \to B'$ is equidimensional and $ \ovPrym{\wt \calC_B \slash \calC_B}\times_B B' $ is irreducible of dimension $\dim B'+g-1$. The claim is proved.\\

To prove $(1)$ we first need a few lemmas, as well as the first statement of Proposition \ref{smoothness properties} below.
Before stating and proving these lemmas, we need to recall a few properties of compactified Jacobians (cf. \cite{Rego}, \cite{Cook}).

Let $\Gamma$ be an integral projective curve, with normalization $n: \ov \Gamma \to \Gamma$. Given  a rank $1$ torsion free sheaf $F$ on $\Gamma$, there exists \cite{Cook} a partial normalization
\[
n': \Gamma' \to \Gamma,
\]
with the property that $\Shend(F)\cong n'_* \mc O_{\Gamma'}$ and that the rank $1$ torsion free sheaf $F':={n'}^*F \slash Tors$ on $\Gamma$, satisfies
\[
F=n'_* F'.
\]
For later use, we highlight that, given $F$, we can define the curve $\Gamma'$ by setting
\[
\Gamma':=\Spec_{\mc O_{\Gamma}}\Shend(F)
\]

We define a \emph{local type} \cite{Cook} of rank one torsion free sheaf to be a collection $\{M_p\}_{p \in \Sing (\Gamma)}$ of isomorphism classes of  rank one torsion free $\mc O_{\Gamma,p}$--modules, where $p$ runs in the set of singular point of $ \Gamma$.

Two rank one torsion free sheaves $F$ and $G$ on $\Gamma$ are said to be of the same local type if for any $p \in \Gamma$ the localizations $F_p$ and $G_p$ are isomorphism $\mc O_{\Gamma,p}$--modules.

\begin{prop}[{\cite{Cook}, \cite{Rego}}]\label{cook rego}
The relative compactified Jacobian $\ov \Jac(\Gamma)$ is stratified based on the local type: for every local type $\{M_p\}_{p \in \Sing (\Gamma)}$ there exists a rank one torsion free sheaf $F$ of degree zero with $F_p \cong M_p$ for every $p$. Furthermore, $\Pic(\Gamma)$ acts transitively on the set of rank one torsion free sheaves of a fixed type $\{M_p\}$, with stabilizer $\ker[\Pic^0(\Gamma) \to \Pic^0(\Gamma')]$, with $\Gamma':=\Spec_{\mc O_{\Gamma}}\Shend(F)$ as above. If $\Gamma$ has planar singularities, then  $\ov \Jac(\Gamma)$ contains the generalized Jacobian as a dense open subset.
\end{prop}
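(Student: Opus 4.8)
The plan is to reduce everything to the local structure theory of rank one torsion free modules over a planar (equivalently, Gorenstein) curve singularity, and then to combine this with Rego's irreducibility theorem \cite{Rego} quoted above. The single input that does all the work is the following local result, which is the heart of \cite{Cook}: over a planar singularity $(\Gamma,p)$, every rank one torsion free $\mc O_{\Gamma,p}$-module $M$ is invertible over its own endomorphism ring $\mc O'_p:=\Shend(M)$, the latter being an intermediate ring $\mc O_{\Gamma,p}\subseteq \mc O'_p\subseteq \wt{\mc O}_{\Gamma,p}$ inside the normalization. Globalizing, this says exactly that every rank one torsion free sheaf $F$ on $\Gamma$ has the form $F=n'_*L$ for the partial normalization $n':\Gamma':=\Spec_{\mc O_\Gamma}\Shend(F)\to \Gamma$ and a line bundle $L$ on $\Gamma'$. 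I expect this to be the real obstacle: the planarity hypothesis is precisely what guarantees invertibility over the endomorphism ring, and it cannot be dropped (for non-Gorenstein singularities there exist rank one modules that are not invertible over any partial normalization, and $\ov\Jac(\Gamma)$ may even be reducible).

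Granting the local theorem, the existence of a degree zero sheaf of prescribed local type $\{M_p\}$ is a gluing argument. First I would form $\mc O'_p=\Shend(M_p)$ at each singular point and glue these to a single partial normalization $n':\Gamma'\to\Gamma$; choosing any line bundle $L$ on $\Gamma'$ and setting $F:=n'_*L$ produces a rank one torsion free sheaf with $F_p\cong M_p$ for every $p$ by construction. Twisting $L$ by a divisor supported on the smooth locus of $\Gamma'$ then adjusts the degree so that $\deg F=0$.

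For the transitivity statement, note that two sheaves $F,G$ of the same local type determine the same partial normalization $\Gamma'$, so $F=n'_*L_F$ and $G=n'_*L_G$ with $L_F,L_G$ line bundles on $\Gamma'$. The projection formula gives $F\otimes M=n'_*(L_F\otimes n'^*M)$ for $M\in\Pic(\Gamma)$, and since $n'$ is finite and birational the pullback $n'^*\colon \Pic^0(\Gamma)\to\Pic^0(\Gamma')$ is surjective (its kernel being the affine local contribution in the generalized Jacobian sequence of $\Gamma'$). Hence one can solve $L_G\cong L_F\otimes n'^*M$ within the appropriate degree and conclude $G\cong F\otimes M$, proving transitivity. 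The stabilizer of $F$ consists of those $M$ with $F\otimes M\cong F$, i.e. with $n'^*M\cong\mc O_{\Gamma'}$, which is exactly $\ker[\Pic^0(\Gamma)\to\Pic^0(\Gamma')]$ as asserted.

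Finally, density of the generalized Jacobian follows by a dimension count against Rego's theorem. The open stratum corresponds to the trivial local type $M_p=\mc O_{\Gamma,p}$ and is $\Pic^0(\Gamma)$, of dimension $p_a(\Gamma)=g$. Every other stratum is, by the transitivity just established, fibered over $\Pic^0(\Gamma')$ for a nontrivial partial normalization, hence has dimension $p_a(\Gamma')=g-\delta'<g$. Since $\ov\Jac(\Gamma)$ is irreducible of dimension $g$ for $\Gamma$ with planar singularities \cite{Rego}, the stratum $\Pic^0(\Gamma)$ is the unique top-dimensional piece and is therefore dense.
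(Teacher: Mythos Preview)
The paper does not supply its own proof of this proposition: it is stated as a result of \cite{Cook} and \cite{Rego} and then immediately used. Your outline is a correct reconstruction of the argument, and it isolates exactly the right ingredients: Cook's local theorem that a rank one torsion free module over a Gorenstein curve singularity is invertible over its own endomorphism ring, the resulting global description $F=n'_*L$ with $L\in\Pic(\Gamma')$, and Rego's irreducibility theorem for the dimension count. The gluing construction for existence, the projection-formula argument for transitivity (with the stabilizer identification), and the stratum-dimension argument for density are all sound as written.

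One small remark: your existence argument produces $F=n'_*L$ and then twists $L$ on $\Gamma'$ to adjust the degree; it is slightly cleaner (and matches the transitivity statement you prove next) to twist $F$ by a line bundle on $\Gamma$ supported on the smooth locus, since the smooth loci of $\Gamma$ and $\Gamma'$ coincide this amounts to the same thing, but it keeps the action of $\Pic(\Gamma)$ in the foreground throughout. Also, in the transitivity step you invoke surjectivity of $n'^*\colon\Pic^0(\Gamma)\to\Pic^0(\Gamma')$; this is indeed immediate from the short exact sequence comparing the two generalized Jacobians, but since the proposition allows $\Pic(\Gamma)$ rather than $\Pic^0(\Gamma)$ to act, you could alternatively bypass this by noting that $n'^*\colon\Pic(\Gamma)\to\Pic(\Gamma')$ is surjective for any finite birational map of integral curves.
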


Let us now return to our situation. Our aim is to understand the fixed locus of $\tau$ in  $\ov \Jac(\wt \calC_B) $.
We start by viewing the group homomorphism $(1-\iota^*):  \Pic^0_{\wt \calC_B} \to \Pic^0_{\wt \calC_B} $, $ L \mapsto L \otimes \iota^*L^\vee$ as a rational map
\be \label{rational iota}
(1-\iota^*): \ov \Jac(\wt \calC_B) \dashrightarrow \ov \Jac(\wt \calC_B).
\ee
Let us focus on a neighborhood of the fiber $ \ov \Jac(\wt \calC_{b_0})$ over a point $b_0 \in B$.  Let $\{x_1, \dots, x_k\}$ be the singular points of $\mc C_{b_0}$ and let $\{p_1, \dots, p_k, q_1, \dots, q_k\}$ be the singular points of $\wt \calC_{b_0}$, with $f^{-1}(x_i)=\{p_i, q_i\}$.
For every subset $I\subset \{1, \dots, k\}$ , we can consider the open subset
\[
V_I \subset  \ov \Jac(\wt \calC_B)
\]
of sheaves that are locally free in a neighborhood of $\{p_i\}_{i \in I}$ and of $\{q_j\}_{j \notin I}$, so that
\[
V=\cup_{I } V_I,
\]
is the open set of sheaves that for every $i$ are locally free at least at one of the two points $p_i$ or $q_i$.
\begin{lemma}
The rational map (\ref{rational iota}) is defined in an open neighborhood of  $\ov \Jac(\wt \calC_B)$ containing $V$.
\end{lemma}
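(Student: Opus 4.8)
\emph{Approach.} The plan is to produce on the open set $V$ an honest flat family of rank one torsion free degree zero sheaves on $\wt\calC_B$ which restricts to $L\mapsto L\otimes\iota^*L^\vee$ on $\Pic^0_{\wt\calC_B}$; by the moduli property of the relative compactified Jacobian such a family induces a morphism $V\to\ov\Jac(\wt\calC_B)$ agreeing with the rational map $(1-\iota^*)$ on the dense open $\Pic^0_{\wt\calC_B}\cap V$, hence extending it (and since $V$ is open, this is the sought open neighborhood). As the assertion is local near the fiber over $b_0$, I may shrink $B$ and assume by \cite[Thm. 3.4]{AKII} that a universal sheaf $\mc F$ exists on $\ov\Jac(\wt\calC_B)\times_B\wt\calC_B$. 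By Lemma \ref{lemmataumor} the involution $\tau$ is a \emph{regular} morphism, so $\mc G:=(\tau\times\id)^*\mc F$ is again a $B$-flat family of rank one torsion free sheaves, with fiber $\mc G_{|[F]}=\tau(F)=\iota^*F^\vee$. I then form
\[
\mc H:=\mc F\otimes\mc G\quad\text{on}\quad \ov\Jac(\wt\calC_B)\times_B\wt\calC_B .
\]
Since tensor product commutes with restriction, the fiber of $\mc H$ over $[F]$ is $F\otimes\iota^*F^\vee$, which equals $(1-\iota^*)(F)=L\otimes\iota^*L^\vee$ when $F=L$ is a line bundle.

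\emph{Torsion-freeness over $V$.} The key point is a local computation at the singular points. Recall that $\iota$ interchanges $p_i$ and $q_i$, so for any sheaf one has $(\iota^*F^\vee)_{p_i}\cong(F^\vee)_{q_i}$ and $(\iota^*F^\vee)_{q_i}\cong(F^\vee)_{p_i}$ as modules over the respective local rings. Since $\wt C$ has planar, hence Gorenstein, singularities and $F$ is reflexive, $F$ is locally free at a point if and only if $F^\vee$ is; consequently $\iota^*F^\vee$ is locally free at $p_i$ (resp. $q_i$) if and only if $F$ is locally free at $q_i$ (resp. $p_i$). Now fix $[F]\in V$, say $[F]\in V_I$. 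For $i\in I$ the sheaf $F$ is locally free at $p_i$, so the factor $F$ is locally free at $p_i$ while the factor $\iota^*F^\vee$ is locally free at $q_i$; for $j\notin I$ the symmetric statement holds. Thus at every singular point of $\wt C$ at least one of the two tensor factors is locally free, and since tensoring a rank one torsion free module with a free rank one module again yields a rank one torsion free module, the fiber $\mc H_{|[F]}=F\otimes\iota^*F^\vee$ is rank one torsion free on all of $\wt C$ for every $[F]\in V$.

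\emph{Flatness and conclusion.} It remains to check that $\mc H|_V$ is flat over $V$; granting this, the previous paragraph together with the irreducibility of $V$ and the fact that $\mc H$ restricts to degree zero line bundles on the dense open $\Pic^0_{\wt\calC_B}\cap V$ shows that $\mc H|_V$ is a flat family of rank one torsion free degree zero sheaves, i.e. exactly the data of a classifying morphism $V\to\ov\Jac(\wt\calC_B)$. Flatness is verified locally on $\wt\calC_B\times_B V$: at a point over a smooth point of a fiber both factors are invertible, while at a point over $p_i$ or $q_i$ the computation above exhibits one of the two $V$-flat factors $\mc F$, $\mc G$ as \emph{fiberwise} locally free there, hence locally free in a neighborhood by flatness over $V$, so that $\mc H$ is locally isomorphic to the remaining ($V$-flat) factor and is therefore $V$-flat near that point. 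The resulting morphism agrees with $(1-\iota^*)$ on $\Pic^0_{\wt\calC_B}\cap V$ and is the desired extension. I expect the genuine obstacle to be precisely this last upgrade from the fiberwise computation to the flatness of the tensor product $\mc H$ over $V$ — controlling the torsion of $F\otimes\iota^*F^\vee$ uniformly in the family — the bookkeeping of which singular point carries a locally free factor being governed by the interaction of the swap $p_i\leftrightarrow q_i$ induced by $\iota$ with the fact that Gorenstein duality preserves the locus of local freeness.
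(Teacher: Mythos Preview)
Your proof is correct and follows essentially the same approach as the paper: the core computation is identical, namely that at each singular point $p_i$ or $q_i$ of $\wt C_{b_0}$, at least one of the two factors $F$, $\iota^*F^\vee$ is locally free (because $\iota$ swaps $p_i\leftrightarrow q_i$ and the definition of $V_I$ guarantees local freeness at one of each pair), so their tensor product is torsion free. The paper records only this pointwise check and takes the passage to a morphism for granted; you spell out the universal-family construction and the flatness of $\mc H$ over $V$, which is the honest justification the paper leaves implicit.
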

\begin{proof}
It is enough to show that if $F \in V_I$ then $F \otimes \iota^* F^\vee $ is torsion free. We only need to check this condition at the singular points $\{p_1, \dots, p_k, q_1, \dots, q_k\}$ and, by symmetry, it is enough to check at $p_i$, for every $i$. We have $(F \otimes \iota^* F^\vee)_{p_i}=F_{p_i} \otimes F_{q_i}^\vee$. Since by construction at least one between $F_{p_i} $ and $F_{q_i}^\vee $ is locally free, while the other is torsion free,  their tensor product is torsion free.
\end{proof}

\begin{lemma} \label{G}
Given  $F \in \Fix(\tau) \subset \ov \Jac(\wt \calC_{B})$, supported on $\wt \calC_{b_0}$, there exists a $G' \in V \subset   \ov \Jac(\wt \calC_B)$ such that the rank $1$,  $\tau$--invariant, torsion free sheaf
\[
G:=(1-\iota^*) G' \in   \ovPrym{\wt \calC_B \slash \calC_B}
\]
is of the same local type as $F$.
\end{lemma}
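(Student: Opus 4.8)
The plan is to reduce the whole statement to a computation of local types at the pairs of points $\{p_i,q_i\}$ lying over the singular points $x_i$ of $\calC_{b_0}$, and then to invoke Proposition \ref{cook rego} to realize the prescribed local type by an actual global sheaf. Throughout, recall that $\iota$ interchanges $p_i$ and $q_i$ and induces an isomorphism of local rings $\iota^\#\colon \mc O_{\wt\calC_{b_0},p_i}\xrightarrow{\sim}\mc O_{\wt\calC_{b_0},q_i}$; consequently, for any rank one torsion free sheaf $H$ on $\wt\calC_{b_0}$ the stalk $(\iota^*H)_{p_i}$ is identified, through $\iota^\#$, with $H_{q_i}$. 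Applying this to $H=(G')^\vee$, the stalk of $G:=(1-\iota^*)G'=G'\otimes \iota^*(G')^\vee$ at $p_i$ is
\[
G_{p_i}\cong G'_{p_i}\otimes \iota^*\!\big((G'_{q_i})^\vee\big),
\]
and symmetrically at $q_i$. In particular, if $G'$ is locally free at $p_i$ then $G_{p_i}\cong \iota^*\!\big((G'_{q_i})^\vee\big)$, while if $G'$ is locally free at $q_i$ then $G_{p_i}\cong G'_{p_i}$.

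I next record the meaning of the hypothesis $F\in\Fix(\tau)$, i.e. $\iota^*F^\vee\cong F$. Taking stalks and using the same identification gives, for every $i$,
\[
F_{p_i}\cong \iota^*\!\big((F_{q_i})^\vee\big),\qquad F_{q_i}\cong \iota^*\!\big((F_{p_i})^\vee\big).
\]
This is exactly the relation needed to match the previous formula. Accordingly I take $I=\{1,\dots,k\}$ and prescribe the local type of $G'$ to be the free module $\mc O_{\wt\calC_{b_0}}$ at each $p_i$ and the module $F_{q_i}$ at each $q_i$. By Proposition \ref{cook rego} there is a degree zero rank one torsion free sheaf $G'$ on $\wt\calC_{b_0}$ with exactly this local type, and by construction $G'\in V_I\subset V$. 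Then $G=(1-\iota^*)G'$ is torsion free by the preceding lemma, and the first displayed formula yields $G_{p_i}\cong \iota^*\!\big((F_{q_i})^\vee\big)\cong F_{p_i}$ for every $i$.

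It remains to see that $G$ lands in $\ovPrym{\wt\calC_B\slash\calC_B}=\ov{\im(1-\iota^*)}$ and not merely in $\Fix(\tau)$. Since $\wt\calC_{b_0}$ is integral with planar singularities, its compactified Jacobian $\ov\Jac(\wt\calC_{b_0})$ is irreducible by Rego's theorem \cite{Rego}, so the dense open subset $V$ is irreducible as well, with the line bundles $\Pic^0_{\wt\calC_{b_0}}$ dense in it. On this dense set the regular morphism $(1-\iota^*)$ takes values in the generalized Prym, hence in $\ov{\im(1-\iota^*)}=\ovPrym{\wt\calC_B\slash\calC_B}$; by continuity so does $G$, whence $G\in \ovPrym{\wt\calC_B\slash\calC_B}\subset \Fix(\tau)$. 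Being $\tau$--invariant, $G$ satisfies the companion relation $G_{q_i}\cong \iota^*\!\big((G_{p_i})^\vee\big)\cong \iota^*\!\big((F_{p_i})^\vee\big)\cong F_{q_i}$, so $G$ agrees with $F$ at every singular point $p_i,q_i$ of $\wt\calC_{b_0}$, i.e. $G$ has the same local type as $F$. The only delicate point is the bookkeeping of the first two paragraphs: one must track simultaneously the pullback $\iota^*$ (which swaps $p_i\leftrightarrow q_i$) and the duality $(-)^\vee$, and verify that the $\tau$--fixed relations for $F$ are precisely those that make the chosen $G'$ reproduce the local type of $F$; granting this, the existence of a global sheaf with the prescribed local type is furnished by Proposition \ref{cook rego}.
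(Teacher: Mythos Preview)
Your proof is correct and follows essentially the same approach as the paper: prescribe a local type for $G'$ that is free at one point of each pair $\{p_i,q_i\}$ and equal to the corresponding stalk of $F$ (or its dual via $\iota$) at the other, invoke Proposition \ref{cook rego} to realize it globally, and then check that $G=(1-\iota^*)G'$ has the same local type as $F$. The only cosmetic difference is that the paper makes $G'$ free at the $q_i$'s and of type $F_{p_i}$ at the $p_i$'s, whereas you make the symmetric choice; you also spell out the continuity argument for $G\in\ovPrym{\wt\calC_B\slash\calC_B}$ a bit more explicitly than the paper does.
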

\begin{proof} Set $\Gamma:=\wt \calC_{b_0}$.
Since $\tau (F)=F$, we have
\[
F_{p_i}=(\iota^*F^\vee)_{p_i}=F^\vee_{q_i}
\]
so that the local type of $F$ is determined by the localizations $F_{p_1}, \dots,F_{p_k}$ at only half of the singular points. Consider the local type $\{F_{p_1}, \dots, F_{p_k}, \mc O_{q_1}, \dots, \mc O_{q_k}\}$, where $\mc O_{q_i}:= \mc O_{\Gamma, q_i}$, and let $G'$ be a sheaf in $ \ov \Jac(\wt \calC_{b_0})$  with this local type, which exists  by Proposition \ref{cook rego}. With this notation it is clear that
\[
G:=G' \otimes \iota^* {G'}^\vee
\]
is of the same local type as $F$. Since $ G \in \ov{\im(1-\iota^*)}=\ovPrym{\wt \calC_B \slash \calC_B}$, the lemma is proved.
\end{proof}

Set
\[
\Gamma':=\Spec_{\mc O_{\Gamma}}\Shend(F),
\]
and let $n': \Gamma' \to \Gamma$ be the natural partial normalization morphism.
Since $G$ and $F$ are of the same local type, by Proposition \ref{cook rego} we know that there exists an $L \in \Pic^0_{\wt \calC_{t_0}}$, well defined up to an element of  $\ker[{n'}^*: \Pic^0(\Gamma) \to \Pic^0(\Gamma')]$, such that
\[
G=F \otimes L.
\]
\begin{lemma} \label{L tau invariant}
Up to changing $L$ by an element of $\ker[\Pic^0(\Gamma) \to \Pic^0(\Gamma')]$, we can assume that $\tau(L)=L$.
\end{lemma}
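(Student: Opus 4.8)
The plan is to convert the $\tau$-invariance of both $F$ and $G=F\otimes L$ into a single membership condition inside the affine group $K:=\ker[\,{n'}^*\colon \Pic^0(\Gamma)\to \Pic^0(\Gamma')\,]$ (with $\Gamma=\wt\calC_{b_0}$), and then to solve a linear equation there. Writing the group law on $\Pic^0$ interchangeably in additive and tensor notation, set $N:=1+\iota^*$, so $NL=L\otimes\iota^*L$. First I would record the elementary computation. Since $G=(1-\iota^*)G'$ lies in $\ovPrym{\wt\calC_B\slash\calC_B}$ it is fixed by $\tau$, and so is $F$ by hypothesis; using $\tau(M)=\iota^*M^\vee$ together with $(F\otimes L)^\vee=F^\vee\otimes L^{-1}$ one gets
\[
G=\tau(G)=\tau(F)\otimes \iota^*L^{-1}=F\otimes \iota^*L^{-1},
\]
so that $F\otimes L=F\otimes \iota^*L^{-1}$, i.e. $F\otimes(L\otimes\iota^*L)\cong F$. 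By Proposition \ref{cook rego} the stabilizer of $F$ under the $\Pic^0(\Gamma)$-action is precisely $K$, whence
\[
c:=NL=L\otimes\iota^*L\in K .
\]
Moreover $c$ is $\iota^*$-invariant, since $\iota^*c=\iota^*L\otimes L=c$ by $\iota^{*2}=\id$.

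The heart of the argument is then to show that the $\iota^*$-fixed subgroup of $K$ coincides with the image of $N\colon K\to K$, so that $c^{-1}\in \im(N|_K)$. Here I would use that the non-locally-free locus of $F$ is $\iota$-stable and contains no $\iota$-fixed point: $\tau$-invariance gives $F_{p_i}\cong F^\vee_{q_i}$, so $F$ fails to be locally free at $p_i$ exactly when it does at $q_i=\iota(p_i)$, while $\iota$ is fixed-point-free because $f$ is \'etale (so $p_i\neq q_i$). Since $\iota^*\Shend(F)=\Shend(\iota^*F)=\Shend(F^\vee)=\Shend(F)$, the involution lifts to $\Gamma'=\Spec_{\mc O_\Gamma}\Shend(F)$ and hence acts on $K$. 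Writing $K=\prod_{i\in S}(K_{p_i}\times K_{q_i})$ as the product of its local contributions at the non-free points (which occur in $\iota$-pairs), the involution $\iota^*$ simply interchanges the two factors of each pair. For such an induced $\bZ/2$-module the norm $N$ surjects onto the fixed subgroup, both being the diagonal $\{(s,s)\}$. Thus $c\in K^{\iota^*}=\im(N|_K)$, and there is $k\in K$ with $Nk=c^{-1}$.

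Finally, set $L':=L\otimes k$. Then $NL'=c\cdot c^{-1}=\mc O_\Gamma$, i.e. $\iota^*L'\cong L'^{-1}$, which is exactly the equation $\tau(L')=L'$; and since $k\in K$ stabilizes $F$ we still have $G=F\otimes L'$, so $L'$ may replace $L$. This proves the lemma. I expect the main obstacle to be the structural claim of the second paragraph: establishing that $\iota$ genuinely swaps the local factors of $K$ with no fixed component (equivalently, that $K$ is $\iota^*$-induced), which is precisely what forces $N$ to be surjective onto $K^{\iota^*}$; the \'etaleness of $f$ and the $\tau$-invariance of $F$ enter exactly at this point.
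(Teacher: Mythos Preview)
Your proof is correct and follows essentially the same route as the paper's. Both arguments compute that $M:=L\otimes\iota^*L$ (your $c$) lies in $K$ and is $\iota^*$-invariant, then produce an element of $K$ whose ``norm'' $1+\iota^*$ equals $M$ (or $M^{-1}$), and correct $L$ accordingly. The only difference is in justifying that every $\iota^*$-fixed element of $K$ lies in $\im(1+\iota^*)\vert_K$: the paper cites an external lemma (\cite[Lem.~2.8]{Sacca}), whereas you give a direct argument by decomposing $K$ into its local factors $K_{p_i}\times K_{q_i}$, which $\iota^*$ swaps since the double cover is \'etale and $\tau$-invariance of $F$ forces the non--locally-free locus to be $\iota$-stable. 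Your version is self-contained; the paper's is shorter but relies on the reference.
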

\begin{proof}
Since $\tau(F)=F$ and $\tau(G)=G$, we have that $M:=L \otimes \tau(L)^\vee$ lies in $\ker[ \Pic^0(\Gamma) \to \Pic^0(\Gamma')]$ and satisfies $M^\vee \cong \tau(M)$. This last equality implies that $M=\iota^*(M)$, and it is not hard to see (cf. for example \cite[Lem. 2.8]{Sacca}) that this implies the existence of an $M' \in \ker[ \Pic^0(\Gamma) \to \Pic^0(\Gamma')]$ such that $M=M' \otimes \iota^* M'$. From
\[
L=\iota^* L^ \vee \otimes M=\iota^* L^ \vee \otimes M' \otimes \iota^* M',
\]
we deduce that
\[
L':= L \otimes {M'}^ \vee
\]
satisfies $\iota^* {L'}^ \vee=L'$ and since $M' \in \ker[ \Pic^0(\Gamma) \to \Pic^0(\Gamma')]$ we still have
\[
G=F \otimes L'.
\]
\end{proof}

Let us now go back to the components $\mc P_1, \dots, \mc P_4$, defined in (\ref{components}).
The morphism
$$ (L, M)  \longmapsto L \otimes M,\,\,
\mc P_1 \otimes \mc P_i  \longrightarrow \mc P_i
$$

extends to a morphism
\be \label{tensor Pi}(G, M)  \longmapsto G \otimes M,\,\,
\ov{\mc P}_1 \otimes \mc P_i   \longrightarrow \ov{\mc P}_i
,
\ee
where for every $i$, $\ov{\mc P}_i$ denotes the closure of ${\mc P}_i$ and hence is an irreducible component of $\Fix(\tau)$. Recall that we defined $\mc P_1$ so that $\ov{\mc P}_1=\ovPrym{\wt \calC_B \slash \calC_B}$.

The last element we need is the following Lemma
\begin{lemma} \label{closures of Pi}
For $i \neq j$ the closures $\ov{\mc P}_i $ and $\ov{\mc P}_j$ do not intersect.
\end{lemma}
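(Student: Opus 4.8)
The plan is to construct a well-defined, locally constant function $\phi\colon \Fix(\tau)\to\{1,2,3,4\}$ whose fibers are exactly the four closures $\ov{\mc P}_i$; once this is done, disjointness is immediate. The starting point is the combination of Lemma \ref{G} and Lemma \ref{L tau invariant}: given any $F\in\Fix(\tau)$ supported on a fiber $\Gamma=\wt\calC_{b_0}$, one finds $G\in\ov{\mc P}_1$ of the same local type as $F$ together with a line bundle $L'\in\Fix(-\iota^*)\subset\Pic^0_{\wt\calC_B}$ such that $F=G\otimes L'$. Since on the fiber over $b_0$ one has $\Fix(-\iota^*)=\mc P_1\sqcup\cdots\sqcup\mc P_4$, the line bundle $L'$ lies in a determined component $\mc P_m$, and I would set $\phi(F):=m$. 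By the tensor morphism \eqref{tensor Pi} one then has $F=G\otimes L'\in\ov{\mc P}_1\otimes\mc P_m\subset\ov{\mc P}_m$, so that $\phi(F)=m$ automatically places $F$ in $\ov{\mc P}_m$.

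The first thing to verify is that $\phi$ does not depend on the auxiliary choices. Two admissible sheaves $G,G_2\in\ov{\mc P}_1$ of the same local type as $F$ differ by a line bundle $K\in\ker[{n'}^*\colon\Pic^0(\Gamma)\to\Pic^0(\Gamma')]$, where $\Gamma':=\Spec_{\mc O_\Gamma}\Shend(F)$; the relations $\tau(G)=G$ and $\tau(G_2)=G_2$ force $\tau(K)=K$, so $K\in\ker[{n'}^*]\cap\Fix(-\iota^*)$. For fixed $G$ the remaining ambiguity in $L'$ lies in the same group. Hence the well-definedness of $\phi$ reduces to the single key point that $\ker[{n'}^*]\cap\Fix(-\iota^*)$ is connected, and therefore contained in the identity component $\mc P_1$.

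This connectedness is where I expect the real content to lie, and it is exactly where the \'etale hypothesis and the shape of the involution intervene. Because $F$ is $\tau$-invariant, $F_{p_i}\cong F^\vee_{q_i}$, so $F$ fails to be locally free at $p_i$ precisely when it does at $q_i$; consequently the partial normalization $\Gamma'$ is $\iota$-invariant and normalizes each pair $\{p_i,q_i\}$ symmetrically. Thus $\ker[{n'}^*]$ is a product of factors $A_{p_i}\times A_{q_i}$ over the normalized pairs, and — exactly as in the local computation following \eqref{definition of A} — the involution $-\iota^*$ acts on each factor by $(a,b)\mapsto(b^{-1},a^{-1})$, since $\iota$ interchanges $p_i$ and $q_i$. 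The fixed locus of this involution is the antidiagonal $\{(a,a^{-1})\}\cong A_{p_i}$, which is connected; taking the product over all normalized pairs gives the connectedness of $\ker[{n'}^*]\cap\Fix(-\iota^*)$, as required.

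Finally I would record the converse inclusion to finish. Fixing a line-bundle representative $\lambda_m\in\mc P_m$, tensoring by $\lambda_m$ is an automorphism of $\ov\Jac(\wt\calC_B)$ commuting with $\tau$ (as $\tau(\lambda_m)=\lambda_m$), carrying $\mc P_1$ isomorphically onto $\mc P_m$ and hence $\ov{\mc P}_1$ onto $\ov{\mc P}_m$. In particular any $x\in\ov{\mc P}_m$ has the form $G_0\otimes\lambda_m$ with $G_0\in\ov{\mc P}_1$, and since $\lambda_m$ is a line bundle $G_0$ has the same local type as $x$; therefore $\phi(x)=m$. Together with the previous paragraph this gives $x\in\ov{\mc P}_m\iff\phi(x)=m$, so every point of $\Fix(\tau)$ lies in exactly one $\ov{\mc P}_i$ and the closures are pairwise disjoint. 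The only genuinely delicate step is the connectedness of the fixed part of the affine kernel; everything else is bookkeeping with local types and the tensor action of the relative generalized Prym.
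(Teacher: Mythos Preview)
Your approach is different from the paper's and contains a genuine error in the well-definedness step. You claim that two sheaves $G,G_2\in\ov{\mc P}_1$ of the same local type as $F$ \emph{differ by a line bundle $K\in\ker[{n'}^*]$}. This is false: by Proposition~\ref{cook rego}, $\Pic^0(\Gamma)$ acts transitively on the set of rank one torsion free sheaves of a fixed local type with \emph{stabilizer} $\ker[{n'}^*]$, so the difference $K$ is an element of $\Pic^0(\Gamma)$ well-defined only \emph{modulo} $\ker[{n'}^*]$ --- equivalently a well-defined element of $\Pic^0(\Gamma')$ --- and there is no reason at all for it to lie in the kernel. Consequently your deduction $\tau(K)=K$ also fails (you only get $\tau(K)\equiv K$ modulo $\ker[{n'}^*]$), and the connectedness of $\ker[{n'}^*]\cap\Fix(-\iota^*)$, while correct, no longer suffices.

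One can try to repair this by restricting to the specific $G$'s produced in Lemma~\ref{G}, i.e.\ $G=(1-\iota^*)G'$ with $G'$ of the prescribed local type; then two such $G$'s do differ by an element of $(1-\iota^*)\Pic^0(\Gamma)=\mc P_1$, and combined with your connectedness argument this makes $\phi$ well-defined for \emph{those} choices. But then your converse step breaks: writing $x\in\ov{\mc P}_m$ as $G_0\otimes\lambda_m$ with $G_0\in\ov{\mc P}_1$, you need $G_0$ to be a legitimate input for $\phi$, i.e.\ of the special form above, and there is no reason it should be. Attempting to compare a general $G_0\in\ov{\mc P}_1$ with a special one brings you straight back to the unproved disjointness.

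The paper's proof avoids all of this by a completely different mechanism: one passes to a $\iota$-equivariant semi-universal family $\wt{\mc C}_{\wt S}\to\wt S$ (Rim), over which $\ov\Jac(\wt{\mc C}_{\wt S})$ is smooth by Fantechi--G\"ottsche--van Straten; the fixed locus of an involution on a smooth variety is smooth, hence its irreducible components are pairwise disjoint, and the $\ov{\mc P}_i$ are among them. Disjointness then persists after any base change $B\to\wt S$. This is shorter and sidesteps the delicate bookkeeping with local types, at the cost of invoking the FGvS smoothness criterion --- which the paper needs anyway for Theorem~\ref{smoothness properties}.
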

\begin{proof}
This is Corollary \ref{disjoint components} below, which is based only on the smoothness of the relative compactified Jacobian over the versal family of an integral, locally planar curve.
\end{proof}

\begin{cor} \label{irreducible components}
Every irreducible component of $\Fix(\tau) \subset \ov \Jac(\wt \calC_B)$ is of the form $\ov{\mc P}_i$ for some $i=1, \dots, 4$. In particular, in Definition \ref{definition prym} we can replace ``irreducible'' component with ``connected'' component.
\end{cor}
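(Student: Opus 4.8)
The plan is to prove the set-theoretic equality $\Fix(\tau)=\bigcup_{i=1}^{4}\ov{\mc P}_i$ and then to read off from it both conclusions of the corollary. The inclusion $\bigcup_i\ov{\mc P}_i\subset\Fix(\tau)$ is immediate, since each $\mc P_i$ consists of $\tau$-fixed line bundles and $\Fix(\tau)$ is closed; so the whole content is the reverse inclusion, and this is exactly what the sequence of lemmas \ref{G}, \ref{L tau invariant} together with the tensor morphism \eqref{tensor Pi} was designed to supply.

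For the reverse inclusion I would take an arbitrary $F\in\Fix(\tau)\subset\ov\Jac(\wt\calC_B)$, supported on a fiber $\wt\calC_{b_0}$, and assemble the preceding results. By Lemma \ref{G} there is a sheaf $G=(1-\iota^*)G'\in\ovPrym{\wt\calC_B\slash\calC_B}=\ov{\mc P}_1$ of the same local type as $F$. Since $F$ and $G$ have the same local type, Proposition \ref{cook rego} gives an $L\in\Pic^0(\wt\calC_{b_0})$ with $G=F\otimes L$, and Lemma \ref{L tau invariant} lets me choose $L$ with $\tau(L)=L$, i.e. $L\in\Fix(-\iota^*)$. As $\Fix(-\iota^*)=\ker(\mathrm{id}+\iota^*)$ is a subgroup of $\Pic^0(\wt\calC_{b_0})$, it is stable under inversion, so $L^\vee\in\Fix(-\iota^*)$ as well, say $L^\vee\in\mc P_j$ for some $j\in\{1,\dots,4\}$. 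Rewriting $G=F\otimes L$ as $F=G\otimes L^\vee$ and applying the morphism \eqref{tensor Pi}, namely $\ov{\mc P}_1\otimes\mc P_j\to\ov{\mc P}_j$, to the pair $(G,L^\vee)$ yields $F\in\ov{\mc P}_j$. This establishes
\[
\Fix(\tau)=\ov{\mc P}_1\cup\ov{\mc P}_2\cup\ov{\mc P}_3\cup\ov{\mc P}_4 .
\]
Note that here it is irrelevant which component $L$ itself lies in: I only need that $L^\vee$ lands in \emph{some} $\mc P_j$, which it does because $\Fix(-\iota^*)$ is a group.

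To conclude, I would observe that each $\ov{\mc P}_i$ is irreducible: $\mc P_1=\Prym(\wt\calC_B\slash\calC_B)$ is a connected group scheme of relative dimension $g-1$ over the irreducible base $B$ with irreducible fibers, hence irreducible, and each $\mc P_i$ is a torsor under $\mc P_1$, so it and its closure are irreducible too. By Lemma \ref{closures of Pi} the four closures are pairwise disjoint, so $\ov{\mc P}_1,\dots,\ov{\mc P}_4$ are precisely the irreducible components of $\Fix(\tau)$; being disjoint closed irreducible sets whose union is $\Fix(\tau)$, they are simultaneously its connected components, which justifies replacing ``irreducible'' by ``connected'' in Definition \ref{definition prym}. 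I do not expect a genuine obstacle in this corollary itself: the two substantive inputs are the local-type reduction packaged in Lemmas \ref{G} and \ref{L tau invariant} (which trades an arbitrary $\tau$-fixed sheaf for a $\tau$-fixed line bundle times an element of the relative Prym) and the disjointness Lemma \ref{closures of Pi} (ultimately Corollary \ref{disjoint components}, resting on smoothness of the relative compactified Jacobian over a versal family); granting these, the statement is a formal consequence, the only care needed being to track correctly the four components and the group-theoretic stability of $\Fix(-\iota^*)$ under inversion.
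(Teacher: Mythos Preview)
Your proof is correct and follows essentially the same approach as the paper: take $F\in\Fix(\tau)$, use Lemmas \ref{G} and \ref{L tau invariant} to write $F=G\otimes L^\vee$ with $G\in\ov{\mc P}_1$ and $L$ a $\tau$-invariant line bundle, and then apply the morphism \eqref{tensor Pi}. You are in fact slightly more careful than the paper in distinguishing $L$ from $L^\vee$ and in spelling out why the $\ov{\mc P}_i$ are both the irreducible and the connected components.
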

\begin{proof}
Consider an $F \in \Fix(\tau)$ and let $G$ and $L$ be as in Lemmas \ref{G} and \ref{L tau invariant} respectively. Since $\tau(L)=L$ and the fixed locus of $\tau$ on $\Pic^0_{\wt \calC_B}$ is equal to $ \coprod  \mc P_i$, $L \in \mc P_i$ for some $i$. Since $G \in \ov{\mc P}_1$, it follows by (\ref{tensor Pi}) that  $F \in\ov{\mc P}_i$.
\end{proof}

We finally get to the proof of $(1)$ of Proposition \ref{base change}:

\begin{cor} For any $b_0\in B$, one has $\ovPrym{\wt \calC_B \slash \calC_B}_{|b_0}=\ovPrym{\wt \calC_{b_0} \slash \calC_{b_0}}$.
\end{cor}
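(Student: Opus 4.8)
The plan is to prove the two inclusions separately; the whole content lies in the reverse one, since taking the fibre of a set does not commute with taking its closure. Throughout write $\Gamma:=\wt \calC_{b_0}$ for the central fibre, and recall that by construction $\mc P_1=\Prym(\wt \calC_B\slash \calC_B)=\im(1-\iota^*)$ restricts over $b_0$ to the generalized Prym $\Prym(\Gamma\slash \calC_{b_0})=\im(1-\iota^*)$ of $\Gamma$, because the homomorphism $1-\iota^*$ of relative group schemes commutes with restriction to fibres. For the easy inclusion, since $\Prym(\Gamma\slash \calC_{b_0})=(\mc P_1)_{|b_0}\subseteq (\ov{\mc P}_1)_{|b_0}$ and the right-hand side is closed (it is the fibre over a point of the closed set $\ov{\mc P}_1=\ovPrym{\wt \calC_B\slash \calC_B}$), taking closures inside $\ov\Jac(\Gamma)$ and invoking Remark \ref{Pt irreducible} gives $\ovPrym{\Gamma\slash \calC_{b_0}}=\overline{\Prym(\Gamma\slash \calC_{b_0})}\subseteq \ovPrym{\wt \calC_B\slash \calC_B}_{|b_0}$.

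The hard inclusion, and the heart of the matter, is $\ovPrym{\wt \calC_B\slash \calC_B}_{|b_0}\subseteq \ovPrym{\Gamma\slash \calC_{b_0}}$, which I would prove pointwise, reusing the mechanism behind Corollary \ref{irreducible components}. Given $F\in (\ov{\mc P}_1)_{|b_0}\subseteq \Fix(\tau)$, supported on $\Gamma$, Lemma \ref{G} produces $G'\in V\cap \ov\Jac(\Gamma)$ such that $G:=(1-\iota^*)G'\in \ov{\mc P}_1$ has the same local type as $F$; then Proposition \ref{cook rego} together with Lemma \ref{L tau invariant} writes $G=F\otimes L$ with $L\in \Pic^0(\Gamma)$ and $\tau(L)=L$, so that $L\in \bigsqcup_i (\mc P_i)_{|b_0}$, say $L\in \mc P_i$. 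Now $F=G\otimes L^{-1}$ lies in $\ov{\mc P}_i$ by (\ref{tensor Pi}) (exactly as in the proof of Corollary \ref{irreducible components}); since also $F\in \ov{\mc P}_1$ and the components are pairwise disjoint by Lemma \ref{closures of Pi}, I conclude $i=1$, whence $L\in (\mc P_1)_{|b_0}=\Prym(\Gamma\slash \calC_{b_0})$.

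It then remains to place $G$ itself inside $\ovPrym{\Gamma\slash \calC_{b_0}}$. Here I would argue that, since line bundles are dense in $\ov\Jac(\Gamma)$ by Proposition \ref{cook rego} and the rational map $1-\iota^*$ of (\ref{rational iota}) is regular on the open set $V$, the sheaf $G=(1-\iota^*)G'$ is a limit of elements of the form $(1-\iota^*)(\text{line bundle})\in \Prym(\Gamma\slash \calC_{b_0})$, hence lies in the closure $\ovPrym{\Gamma\slash \calC_{b_0}}$. As $\ovPrym{\Gamma\slash \calC_{b_0}}$ is stable under tensoring by the group $\Prym(\Gamma\slash \calC_{b_0})$ and $L^{-1}$ belongs to this group, I obtain $F=G\otimes L^{-1}\in \ovPrym{\Gamma\slash \calC_{b_0}}$, completing the reverse inclusion and hence the equality.

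The only delicate point is to guarantee that no spurious component of the special fibre of the closure $\ov{\mc P}_1$ escapes the compactified Prym of $\Gamma$: a priori $(\ov{\mc P}_1)_{|b_0}$ could be strictly larger or reducible. This is precisely what the local-type analysis of Lemmas \ref{G} and \ref{L tau invariant} controls, once combined with the disjointness of the $\ov{\mc P}_i$ from Lemma \ref{closures of Pi} (which itself rests, via the first part of Proposition \ref{smoothness properties}, on the smoothness of the relative compactified Jacobian over a versal family). With $(1)$ established, the remaining assertions $(2)$ and $(3)$ of Proposition \ref{base change} follow from the reduction already indicated after its statement.
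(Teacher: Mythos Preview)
Your proof is correct and follows essentially the same route as the paper's: both reduce the hard inclusion to Lemmas \ref{G} and \ref{L tau invariant} plus the disjointness of the $\ov{\mc P}_i$ (Lemma \ref{closures of Pi}) to force $L\in\mc P_1$. The only organizational difference is that the paper, once $L=(1-\iota^*)L''$, combines everything into the single expression $F=(1-\iota^*)(G'\otimes (L'')^{\vee})$ and then appeals to $F\in\overline{\im(1-\iota^*_{b_0})}$, whereas you instead first place $G$ in $\ovPrym{\Gamma\slash\calC_{b_0}}$ via the density of line bundles in $V$ and then tensor by $L^{-1}$; your version has the virtue of making explicit the continuity/density step that the paper leaves implicit.
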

\begin{proof} We only need to prove that   $\ovPrym{\wt \calC_B \slash \calC_B}_{|b_0} \subset \ovPrym{\wt \calC_{b_0} \slash \calC_{b_0}}$ since the reverse inclusion is clear.
Consider an $F \in \ovPrym{\wt \calC_B \slash \calC_B}_{|b_0}$. As in Lemmas \ref{G} and \ref{L tau invariant}, we can find a $\tau$--invariant $L$ and a $G\in \ov{\mc P}_1$, with $G=(1-\iota^*) G'$ such that $G=F \otimes L$.  By Lemma \ref{closures of Pi}, and the fact that $F \in \ov  {\mc P}_1$, we necessarily have $L\in \mc P_1$. Since $\mc P_1=(1-\iota^*) \Pic^0_{\wt \calC_B}$, we can find $L'' \in \Pic^0_{\wt \calC_{t_0}}$ such that $L=(1-\iota_{t_0}^*) L''$. By construction, $G=(1-\iota^*) G'$ and hence
\[
F=(1-\iota_{t_0}^*)( F'\otimes {L''}^ \vee),
\]
from which we see that
\[
F \in \ov{\im(1-\iota_{t_0}^*)}=\ovPrym{\wt \calC_{t_0} \slash \calC_{t_0}}.
\]
\end{proof}

\subsection{Smoothness results for the relative compactified Prym}

The next step is to study the local structure of the relative compactified Prym variety. This will allows us to formulate a criterion that has to be satisfied by a family $\wt \calC_B \to \calC_B$ of \'etale double covers of irreducible, locally planar curves in order for the relative compactified Prym to be smooth. Since this criterion will be deduced by an analogue criterion for the smoothness of the relative compactified Jacobian, we start by reviewing rapidly, following closely \cite{FGvS}, the results we need on this topic.

Let $D$ be a reduced projective curve, with planar singularities. We denote by $\Def(D)$  the deformation functor of the curve $D$ and, for any $p \in D$, we let $\Def(D_p)$ be the deformation functor of the local ring $\mc O_{D,p}$. For more precise definitions, see \cite[\S 2.4.1]{sernesi}. Letting $ \Sing (D) \subset D$ denote the singular locus of $D$, set
\[
\Def^{loc}(D):= \prod_{p \in \Sing (D) } \Def(D_p)
\]
and consider the natural transformation of functors
\[
 \Phi: \Def(D) \to \Def^{loc}(D)
\]
which to a deformation of the global curve assigns the induced deformation of local rings at the singular points. Since $D$ is reduced,  $\Def(D)$ and $\Def(D_p)$ are unobstructed \cite[Example 2.4.9]{sernesi} and hence they admit smooth semi--universal deformations spaces, i.e., there exists a smooth affine scheme $S$, a point $ s \in S$, and a transformation of functors
\[
\Psi: (S,s) \to \Def(D),
\]
(here, we denote by $(S,s)$ the deformation functor induced by the germ of the complex space) that is smooth and an isomorphism at the level of tangent spaces, and analogously for the $\Def(D_p)$. We say that the semi--universal space is centered at $s \in S$. The tangent spaces to these deformation functors fit into the local to global exact sequence
\[
0 \to H^1(T_D) \longrightarrow \underbrace{\Ext^1(\Omega^1_D, \mc O_D)}_{T\Def(D)} \longrightarrow \bigoplus_{p \in \Sing(D)}  \underbrace{H^0(\Shext^1_{\mc O_{D,p}}(\Omega^1_{D,p}, \mc O_{D,p}))}_{T \Def(D_p)} \to 0,
\]
where $T_D:=\Shom(\Omega^1_D, \mc O_D)$, and where $H^1(T_D)$ is the tangent space to the subfunctor $\Def(D)'$ of the deformations of $D$ that are locally trivial.
Let $\mc D \to S$ be the semi--universal family for $D$, centered at $s$, and let $\ov \Jac(\mc D) \to S$ be the relative compactified Jacobian. For any sheaf $F \in \ov \Jac(\mc D)$ we can consider the deformation functor of the pair $\Def(F, D)$ and, for any $p \in \Sing(D)$, also of the pair $\Def(F_p, D_p)$. By \cite[Prop. A3]{FGvS}, the $ \Def(F_p, D_p)$ are smooth functors. As above, there are natural transformations $\Psi': (\ov \Jac(\mc D), F)  \to \Def(F,D)$ and $\Phi': \Def(F,D) \to  \prod \Def(F_p, D_p)$.
There is a commutative diagram of functors
\be \label{fundamental diagram}
\xymatrix{
(\ov \Jac(\mc D), F) \ar[r]^{\Phi' \circ \Psi'  \phantom{mmmm}}  \ar[d] &  \prod_{p \in \Sing(D)} \Def(F_p, D_p)  \ar[d] \\
(S, s) \ar[r]_{\Phi \circ \Psi  \phantom{mmmmm}} &   \prod_{p \in \Sing(D)} \Def(D_p)
}
\ee
This diagram is not necessarily Cartesian, but the horizontal maps are smooth maps of functors by what was said above and by \cite[Prop. A1]{FGvS}. In particular, $\ov \Jac(\mc D)$ is smooth along $\ov \Jac(D)=\ov \Jac(\mc D_s)$.
Now let $\mc D_B \to B$ be a family of integral locally planar curves, with $\mc D_0=D$ for some $0 \in B$ and with $B$ smooth. There is a morphism $B \to S$, mapping $0 $ to $ s$, inducing a diagram
\[
\xymatrix{
\ov \Jac(\mc D_B) \ar[r] \ar[d] & \ov \Jac(\mc D) \ar[d] \\
B \ar[r] & S
}
\]
Since the diagram is Cartesian and we are assuming that $B$ is smooth, $\ov \Jac(\mc D_B)$ is smooth at a point $F \in \ov \Jac(\mc D_0)$ if and only if the image of the tangent space $T_0 B$ in $T_sS$ is transversal to the image of $T_F \ov \Jac(\mc D)$ in $T_s S$. Hence, in order to be able to check whether $\ov \Jac(\mc D_B)$ is smooth at a point $F$, we need to understand the image of $T_F \ov \Jac(\mc D)$ in $T_s S$. This is done by analyzing, in the following way, what happens in diagram (\ref{fundamental diagram}) at the level of tangent spaces

Set $P:= \mathbb C [[x,y]]$. Since $D$ has locally planar singularities, for any $p_i \in \Sing(D)$ there exists an $f_i \in P$ such that the completion of the local ring $\mc O_{D,p_i}$ is isomorphic to $R_i=P \slash f_i$. With this notation, we have
\[
T^1_{D_{p_i}}(=T \Def(D, p_i))=P\slash (f_i, \partial_x f_i, \partial_y f_i)
\]
(note that $T^1_{D_{p_i}}$ is a vector space of dimension $\tau(f_i)$, see Remark \ref{deftjurina}). For any $i$, let $\ov R_i \supset R_i$ be the normalization of $R_i$. We denote by $I_i \subset R_i$ the conductor ideal, i.e., $I_i:=\Hom(\ov R_i, R_i)$, and we let
\be \label{Vi}
V(D_{p_i}) \subset T^1_{D_{p_i}}
\ee
be the image in the Jacobian ring $P\slash (f_i, \partial_x f_i, \partial_y f_i)$ of the conductor ideal. It is a codimension $\delta_i$ subspace, with $\delta_i:= \dim \ov R_i \slash R_i=\dim R_i \slash I_i$.  Let
\be \label{V}
V(D) \subset T_s S
\ee
be the inverse image of $\prod V(D_{p_i})$ under the tangent map $T_sS  \to  \prod T^1_{D_{p_i}}$. It is known that $V(D)$ is the support of the tangent cone to the deformations of $D$ that keep the geometric genus constant, and has codimension in $T_s S$ equal to the cogenus $\delta= \sum \delta_i$, i.e. the difference between the arithmetic and the geometric genera of $D$. Let $M$ be a rank one torsion free $R_i$--module, viewed as $P$-module. Recall that $M$ admits a length one free resolution
\[
0 \to P^n \stackrel{\varphi}{\longrightarrow} P^n \to M \to 0
\]
and that the $j$--th Fitting ideal $\mc F_{j}(M) \subset R_i$  of $M$ is the ideal of $R_i$ generated by the $(n-j)$--minors of the matrix $\varphi$. It is independent of the choice of the resolution. For example, the $0$--th Fitting ideal is the ideal generated by the local equation $f_i$ of the curve at $p_i$, i.e., $(\det \varphi)= (f_i) \subset R_i$. As for the first Fitting ideal, by \cite[Prop. C2]{FGvS} it is the image in $R_i$ of the evaluation map
$
M \times \Hom(M, R_i) \to R_i$. For later use, we highlight the following remark :

\begin{remark} \label{fitting of dual}
$\mc F_{1}(M) =\mc F_{1}(M^\vee)  \subset R_i$.
\end{remark}

We can now formulate the following key consequence of \cite{FGvS}.

\begin{prop} \cite{FGvS} \label{smoothness fgvs} \begin{enumerate}
\item[a)] For any $F \in \ov \Jac(D)$, the image of the tangent space $T_F\ov \Jac(\mc D)$ in $T_s S$ contains the space $V(D)$ defined in (\ref{V}).\\
\item[b)] There exists an $F$ in  $\ov \Jac(D)$ such that the image is exactly $V(D)$.\\
\item[c)] $\ov \Jac(\mc D_B)$ is smooth along $\ov \Jac(D)$ if and only if the image of $T_0 B$ in $T_s S$ is transversal to $V(D)$.
\end{enumerate}
\end{prop}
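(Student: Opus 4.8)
The plan is to derive all three statements from a single tangent‑space analysis of the commutative diagram \eqref{fundamental diagram}, using the local computation of Fitting ideals from \cite{FGvS} as the only substantive input. First I would pass to tangent spaces in \eqref{fundamental diagram} at $F$ (over $s$). Since both horizontal natural transformations are smooth, this produces a commutative square
\[
\xymatrix{
T_F \ov\Jac(\mc D) \ar[r]^{t\phantom{mmm}} \ar[d]_{\ell} & \prod_i T\Def(F_{p_i},D_{p_i}) \ar[d]^{r}\\
T_s S \ar[r]^{b\phantom{mmm}} & \prod_i T^1_{D_{p_i}}
}
\]
in which $t$ is surjective (smoothness of $\Phi'\circ\Psi'$) and $b$ is the global‑to‑local surjection of the local‑to‑global sequence, with $\ker b=H^1(T_D)$. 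The first key step is to establish $\im(\ell)=b^{-1}(\im(r))$: the inclusion $\subseteq$ is immediate from commutativity, and for $\supseteq$ one lifts along $t$ and uses that the locally trivial directions $\ker b=H^1(T_D)$ lie in $\im(\ell)$, because they preserve the local analytic structure of $F$ and hence the sheaf extends along them. This reduces the whole proposition to computing, for each $p_i$, the image of the vertical map $T\Def(F_{p_i},D_{p_i})\to T^1_{D_{p_i}}$.

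The second, genuinely local step is where I would invoke \cite{FGvS} directly: by \cite[Prop. C2]{FGvS} the image of $T\Def(F_{p_i},D_{p_i})$ in the Tjurina algebra $T^1_{D_{p_i}}=P/(f_i,\partial_x f_i,\partial_y f_i)$ is exactly the image of the first Fitting ideal $\mc F_1(F_{p_i})\subset R_i$, i.e. of the evaluation map $F_{p_i}\otimes \Hom(F_{p_i},R_i)\to R_i$. I would then use the two extremal properties of $\mc F_1$: for every rank one torsion free module $M$ one has $\mc F_1(M)\supseteq I_i$ (the conductor), and $\mc F_1(\ov R_i)=I_i$ is the minimal value, attained by the normalization module $\ov R_i=n_*\mc O_{\ov\Gamma}$. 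Recalling that $V(D_{p_i})$ is by definition the image of $I_i$ in $T^1_{D_{p_i}}$, and $V(D)=b^{-1}\!\big(\prod_i V(D_{p_i})\big)$, this is all the local data needed.

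With these two inputs the three assertions fall out by bookkeeping. For (a): since $\mc F_1(F_{p_i})\supseteq I_i$ for all $i$, we get $\im(r)\supseteq\prod_i V(D_{p_i})$, hence $\im(\ell)=b^{-1}(\im(r))\supseteq V(D)$. For (b): by Proposition \ref{cook rego} there exists a degree zero rank one torsion free sheaf $F$ realizing any prescribed local type, so I would take $F$ with $F_{p_i}\cong\ov R_i$ for every $i$; then $\mc F_1(F_{p_i})=I_i$, $\im(r)=\prod_i V(D_{p_i})$, and $\im(\ell)=V(D)$ exactly. For (c): the family arises from the Cartesian square $\ov\Jac(\mc D_B)=\ov\Jac(\mc D)\times_S B$ with $\ov\Jac(\mc D)$ and $B$ both smooth over the smooth $S$, so the fiber product is smooth at a point $(F,0)$ iff the two images in $T_sS$ are transversal, i.e. $\im(\ell)+\im(T_0B\to T_sS)=T_sS$. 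Requiring smoothness along the whole fiber $\ov\Jac(D)$ forces this for the $F$ of (b) whose image is the smallest possible, namely $V(D)$; conversely, by (a) every $\im(\ell)$ contains $V(D)$, so transversality of $\im(T_0B)$ with $V(D)$ is sufficient. Hence $\ov\Jac(\mc D_B)$ is smooth along $\ov\Jac(D)$ precisely when $\im(T_0B\to T_sS)$ is transversal to $V(D)$.

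The part I expect to require the most care is the identity $\im(\ell)=b^{-1}(\im(r))$, specifically the claim that the locally trivial directions $H^1(T_D)=\ker b$ are contained in $\im(\ell)$; everything downstream is formal once this and the local Fitting‑ideal identification are in place. The latter identification, together with the extremal inequalities $I_i=\mc F_1(\ov R_i)\subseteq \mc F_1(M)$, is the true mathematical content and is exactly what \cite{FGvS} provides, so the work here is essentially to thread these facts through the diagram rather than to reprove them.
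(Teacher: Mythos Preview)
Your proposal is correct and follows essentially the same route as the paper: pass to tangent spaces in diagram \eqref{fundamental diagram}, invoke the Fitting-ideal identification and the containment $I_i\subseteq\mc F_1(F_{p_i})$ from \cite{FGvS} for (a), realize the local type $\{\ov R_i\}$ via Proposition~\ref{cook rego} for (b), and use the Cartesian square together with smoothness of $B$ for (c). The step you single out as delicate, $\ker b\subset\im(\ell)$, is not isolated in the paper's terser proof (it is absorbed into the smoothness of the horizontal maps, in particular \cite[Prop.~A.1]{FGvS}), but otherwise the arguments coincide.
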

\begin{proof}
By \cite[Proposition C1]{FGvS} the image of the tangent space $T\Def(F_{p_i}, D_{p_i})$ in $T\Def(p_i)$ equals  the image
\be \label{image of fitting}
W(F_{p_i}) \subset R_i\slash (\partial_x f_i, \partial_y f_i)=T\Def(p_i)
\ee
of the first Fitting ideal $\mc F_{1}(F_i) \subset R_i$. By \cite[Corollary C3]{FGvS}, $\mc F_{1}(F_i)  \supset I_i$ and hence the first statement follows from the definition of $V(D)$ and the fact that the tangent map $T_F\ov \Jac(\mc D) \to \prod T\Def(F_{p_i}, D_{p_i})$ is surjective. The third statement follows from the observation made in  Remark C4 of \cite{FGvS}  that $\mathcal F_1(\ov R_i)=I_i$, and from the fact that by Proposition \ref{cook rego}  above there exists an $F \in \ov \Jac(D)$ with local type $\{\ov R_i\}$. Statement $c)$ is clear, once we recall that we are assuming that $B$ is smooth and that $\ov \Jac(D_B)=\ov \Jac(D)\times_S B$.
\end{proof}

Let us now get back to our situation and consider
\be \label{f and i}
f: \wt C \to C, \quad \text{ and }\,\, \iota: \wt C \to \wt C,
\ee
an \'etale double cover of reduced and irreducible curves with planar singularities and the corresponding involution on $\wt C$.
We denote by $\Def(\wt C, C)$ the deformation functor of the map $\wt C \to C$ \cite[Def.3.4.1]{sernesi} whose tangent space can be identified by \cite{ran} with $T^1_C=\Ext^1(\Omega^1_C, \mc O_C)$ (that is to $T\Def(C))$, viewed as the $\iota$--invariant part of $T^1_{\wt C}=\Ext^1(\Omega^1_{\wt C}, \mc O_{\wt C})$. A semi--universal family for this functor can be described as follows. Let $\mc C \to S$ be a semi--universal family for $C$, centered at a point $s \in S$. As in \cite{casaetal2}, we can consider the finite group scheme $\Pic^0(\mc C)[2] \subset \Pic^0(\mc C)$ over $S$ which  parametrizes $2$--torsion line bundles on the curves in $\mc C \to S$. Since we are in characteristic zero, the morphism $\Pic^0(\mc C)[2] \to S$ is \'etale and therefore the natural transformation
\[
(\Pic^0(\mc C)[2], \eta) \to \Def(\wt C, C),
\]
which one can easily check to be smooth, is an isomorphism at the level of tangent spaces. Hence, $\Pic^0(\mc C)[2]$ is the base of a semi--universal family for $\Def(\wt C, C)$.
Since $\Pic^0(\mc C)[2] \to S$ is \'etale, and an isomorphism on tangent spaces, we can replace $S$ by $\Pic^0(\mc C)[2]$ so that $(S, s)$ is a semi--universal space for both $\Def(C)$ and $\Def(\wt C, C)$. In particular, we have a family
\[
f: \wt \calC_S \to \calC_S:=\calC, \quad \iota: \wt \calC_S \to \wt \calC_S
\]
of \'etale double covers of integral curves with planar singularities.

We can finally state and prove the smoothness criterion for the relative Prym variety.

\begin{theorem} \label{smoothness properties} Let the notation be as above.
\begin{enumerate}
\item[(1)] The relative compactified Prym variety $\ov\Prym{(\wt \calC_S \slash \calC_S)}$ over the semi--universal deformation space is smooth.
\item[(2)] For any smooth base $B$ and any family of double covers as in (\ref{family of covers}), the relative compactified Prym variety $\ov \Prym{(\wt \calC_B \slash \calC_B)}$ is smooth along $\ov\Prym{(\wt \calC_b \slash \calC_b)}=\nu_B^{-1}(b)$ if and only if the image of the tangent map $T_b B \to T\Def(\calC_b)$ of the classifying morphism is transversal to the space $V(\calC_b) \subset T\Def(\calC_b)$ defined in (\ref{V});
\item[(3)] $\ov\Prym{(\wt \calC_B \slash \calC_B)}$ is smooth along $\ov\Prym{(\wt \calC_b \slash \calC_b)}=\nu^{-1}(b)$ if and only if $\J{\mc C_B}$ is smooth along $\J{\mc C_b}=\pi^{-1}(b)$;
\end{enumerate}
\end{theorem}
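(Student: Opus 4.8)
The plan is to obtain (3) directly from statement (2), already established, together with the Fantechi--G\"ottsche--van Straten smoothness criterion for relative compactified Jacobians recorded in Proposition \ref{smoothness fgvs}. The observation driving the argument is that the smoothness of the relative Prym along the fiber over $b$ and the smoothness of the relative Jacobian $\J{\mc C_B}$ of the \emph{base} curves along the corresponding fiber are governed by one and the same transversality condition inside the tangent space to the semi--universal deformation of $\calC_b$.

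First I would apply Proposition \ref{smoothness fgvs}(c) to the family $\mc C_B \to B$ of base curves. Write $C:=\calC_b$ and let $\mc C \to S$ be the semi--universal deformation of $C$, centered at $s\in S$. Since $B$ is smooth and $\J{\mc C_B}=\J{\mc C}\times_S B$ by base change, the criterion asserts that $\J{\mc C_B}$ is smooth along $\J{\mc C_b}=\pi^{-1}(b)$ if and only if the image of $T_bB$ under the tangent map $T_bB\to T_sS=T\Def(C)$ of the classifying morphism $B\to S$ is transversal to the subspace $V(C)=V(\calC_b)\subset T_sS$ defined in (\ref{V}).

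Next I would compare this with the criterion of part (2). As recalled in the discussion preceding the theorem, after replacing $S$ by the \'etale $2$--torsion cover $\Pic^0(\mc C)[2]$, the same pointed space $(S,s)$ serves simultaneously as a semi--universal deformation space for $\Def(C)$ and for $\Def(\wt C,C)$; under this identification $T\Def(\calC_b)=T_sS$, and the classifying morphism of the family of double covers $\wt\calC_B\to\calC_B$ is exactly the classifying morphism $B\to S$ of the base curves. Therefore the transversality condition appearing in part (2)---that the image of $T_bB\to T\Def(\calC_b)$ be transversal to $V(\calC_b)$---is literally the condition produced by Proposition \ref{smoothness fgvs}(c) for $\J{\mc C_B}$. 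Chaining the two equivalences yields (3).

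The step requiring the most care is the bookkeeping in the previous paragraph: one must verify that the ambient tangent space, the classifying tangent map, and the distinguished subspace $V$ really do coincide on the two sides. This rests on the identification of the semi--universal spaces for $\Def(C)$ and $\Def(\wt C,C)$ via the \'etale cover $\Pic^0(\mc C)[2]\to S$, together with the fact that $V(\calC_b)$ is intrinsically attached to the planar singularities of $C$---it is the preimage of $\prod_i V(\calC_{b,p_i})$ cut out by the conductor ideals at the points of $\Sing(C)$---so that it is the same subspace whether regarded inside $T\Def(C)$ or inside $T\Def(\wt C,C)$. Once these identifications are in place no further geometric input is needed, and (3) is immediate.
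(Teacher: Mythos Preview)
Your argument for (3) is correct and is exactly the paper's approach: the paper dispatches (3) in one line by invoking (2) together with Proposition~\ref{smoothness fgvs}(c), and your bookkeeping paragraph just makes explicit the identification of $T\Def(\calC_b)$ with $T_sS$ (via the \'etale cover $\Pic^0(\mc C)[2]\to S$) that the paper leaves implicit. Note that your proposal only treats (3), taking (1) and (2) as given; if you are responsible for the full theorem, the substantive work lies in (1) and especially (2), which in the paper require Rim's equivariant semi--universal family and a direct analysis of the $\iota$--invariant part of the Fitting-ideal image.
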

\begin{proof}
Consider the curve $\wt C$ and the involution $\iota: \wt C \to \wt C$. A result of Rim (Corollary in \cite{Rim}) ensures that we may consider a $\iota$--equivariant semi--universal family $\wt {\mc C}_{\wt S} \to \wt S$ for $\wt C$. By definition, this is a semi--universal family for $\Def(\wt C)$ that has the additional property of admitting compatible actions of $\iota$ on $\wt {\mc C} $ and on $\wt S$. Let us then consider such a family. We set $T=:\Fix(\iota) \subset \wt S$ and we denote by $\wt \calC_T \to T$ the restriction of the semi--universal family to $T$. Then
\[
\calC_T:= \wt \calC_T  \slash \iota \to T
\]
is a family of integral curves with locally planar singularities. If $\wt S$ is centered at $s \in  \wt S$, then $\calC_s=C$, and the tangent space of $T$ at $s$ is the $\iota$--invariant subspace $(T_s \wt S)^\iota= T^1_C=T\Def(\wt C, C)$. This shows that $T$ is a semi--universal space for $\Def(\wt C, C)$. Hence, to prove $(1)$ it is enough to prove that $\ov\Prym{(\wt \calC_T \slash \calC_T)}$ is smooth. But this is clear, since $\ov\Prym{(\wt \calC_T \slash \calC_T)}$ is  by definition just the component of the fixed locus of $\tau$ that contains the zero section of $\ov \Jac(\wt \calC_{\wt S})$. Since $\ov \Jac(\wt \calC_{\wt S})$ is smooth, so is every component of fixed locus of an involution acting on it.

For item $(2)$, we can reason as in the proof of Proposition \ref{smoothness fgvs}, provided we understand for, any $F \in \ov\Prym{(\wt \calC_b \slash \calC_b)}$,  the image  in $T\Def(\calC_b , \calC_b)$ of the tangent space of the relative compactified Prym variety over a semi--universal deformation space $T$ for $\Def(\wt \calC_b , \calC_b)$.  Indeed, since by Proposition \ref{base change} $\ov\Prym{(\wt \calC_B \slash \calC_B)}=\ov\Prym{(\wt \calC_T \slash \calC_T)} \times_T B$, it is sufficient to prove that for any $F \in \ov\Prym{(\wt \calC_b \slash \calC_b)}$, the image of $T_F\ov\Prym{(\wt \calC_T \slash \calC_T)}$ in $T\Def(\wt \calC_b , \calC_b)=T^1_{\calC_b}$ contains the support $V(\calC_b)$ of the tangent cone to the equigeneric locus. To see this, we argue as follows. Set $C=\calC_b$ and $\wt C=\calC_b $ and let $\{p_1, \dots, p_k, q_1, \dots, q_k\}$ be the singular points of $\wt C$, with $f^{-1}(x_i)=\{p_i, q_i\}$.
Consider an $F \in \ov\Prym{(\wt C \slash C)}$ and let $\wt S$ and $T$ be as above.
The tangent map
\[
\Xi: T_F\ov \Jac(\wt \calC_{\wt S}) \longrightarrow \prod_{i=1}^k T^1_{\wt C_{p_i}} \times T^1_{\wt C_{q_i}},
\]
is equivariant with respect to the two involutions $\tau$, which acts on $T_F\ov \Jac(\wt \calC_{\wt S})$ with fixed locus $T_F\ov\Prym{(\wt \calC_T \slash \calC_T)}$, and $\iota$, which acts on $\prod_{i=1}^k T^1_{\wt C_{p_i}} \times T^1_{\wt C_{q_i}}$ by interchanging $T^1_{\wt C_{p_i}}$ with $ T^1_{\wt C_{q_i}}$ (which are isomorphic since $\iota(p_i)=q_i$). By item (a) in Proposition \ref{smoothness fgvs}, we know that the image of $\Xi$ is
\[
\prod_i W(F_{p_i}) \times W(F_{q_i}) .
\]
Here as in (\ref{image of fitting}) $W(F_{p_i})$ denotes the image of the first Fitting ideal of $F_{p_i}$ in $T^1_{\wt C_{p_i}}$. Since $\tau(F)=F$, $F_{q_i}=F_{p_i}^\vee$ and hence, by Remark \ref{fitting of dual}, $W(F_{p_i}) \cong W(F_{q_i})$. It follows that the image of $T_F\ov\Prym{(\wt \calC_T \slash \calC_T)}$ in $\prod_{i}T\Def(\wt C, p_i) \times T\Def(\wt C, q_i)$, which is nothing but the $\iota$--invariant subspace of $\im \Xi$, is equal to the product of diagonals $ \prod \Delta_{W(F_{p_i})}$. Under the identification $T^1_C=(T_s \wt S)^ \iota$, the subspace $V(C) \subset T^1_C$ corresponds to the preimage in $T_s \wt S$ of the product $\prod \Delta_{V(\wt C, p_i)} \subset \prod T^1_{\wt C_{p_i}} \times T^1_{\wt C_{q_i}}$. Since we know that $W(F_{p_i}) \supset V(\wt C, p_i)$, it follows that $ \prod \Delta_{W(F_{p_i})} \supset \prod \Delta_{V(\wt C, p_i)}$ and hence that the image of $T_F\ov\Prym{(\wt \calC_T \slash \calC_T)}$ in $T^1_C$ contains $V(C)$.

The only thing we are left to prove is that there exists an $F \in \ov\Prym{(\wt C \slash C)}$ such that this image is exactly $V(C)$. This is done, like in Lemma \ref{G}, by considering a sheaf $F' \in \ov \Jac(\wt C)$ of local type $\{ \ov {\mc O}_{\wt C,p_1}, \dots, \ov {\mc O}_{\wt C,p_k}, \mc O_{\wt C,p_1}, \dots,  \mc O_{\wt C,p_k}\}$, where $\ov {\mc O}_{\wt C,p_i}$ is the normalization of $ \mc O_{\wt C,p_i}$, and setting
\[
F=F' \otimes \iota^* {F'}^\vee.
\]
As for statement $(3)$, it follows from $(2)$ and statement $(c)$ of Proposition \ref{smoothness fgvs}.
\end{proof}

\begin{cor}[of the proof of $(1)$ of Proposition \ref{smoothness properties}] \label{disjoint components}
For any base $T$, the closure $\ov {\mc P}_i$ of the connected components of $\Fix(-\iota^*) \subset \Pic^0_{\wt \calC_B}$ do not intersect.
\end{cor}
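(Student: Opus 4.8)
The plan is to deduce the disjointness from the smoothness of the relative compactified Jacobian over the $\iota$--equivariant semi--universal deformation space, which is precisely what the proof of Proposition~\ref{smoothness properties}(1) produces, and then to transport the conclusion to an arbitrary base by base change.

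First I would reduce to the case where the base is the $\iota$--equivariant semi--universal deformation space. Recall from the proof of Proposition~\ref{smoothness properties}(1) that, applying Rim's theorem, one has an $\iota$--equivariant semi--universal family $\wt{\mc C}_{\wt S}\to \wt S$ for $\wt C$, with $\wt S$ smooth and irreducible and with smooth generic fibre (isolated planar curve singularities being smoothable); writing $T=\Fix(\iota)\subset \wt S$ and $\calC_T=\wt\calC_T/\iota$, the space $T$ is semi--universal for $\Def(\wt C, C)$, and $\ov\Jac(\wt\calC_{\wt S})$ is smooth by the Fantechi--G\"ottsche--van Straten criterion (Proposition~\ref{smoothness fgvs}). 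Since the formation of the relative compactified Jacobian, of $\Pic^0_{\wt\calC}$, of the norm map and of the $2$--torsion section $\eta$ all commute with base change, for any family as in (\ref{family of covers}) over an irreducible base $B$ there is, locally on $B$, a classifying morphism $\phi\colon B\to \wt S$ with $\wt\calC_B\cong \phi^*\wt{\mc C}_{\wt S}$. It induces $\Fix(\tau)_B=\Fix(\tau)_{\wt S}\times_{\wt S}B$ and, on the open parts, $\Fix(-\iota^*)_B=\coprod_{j}\bigl(\mc P_j^{\wt S}\times_{\wt S}B\bigr)$. As the disjointness of two closed subsets is local on the base, it suffices to treat the case $B=\wt S$ and then to record how the four components over $B$ sit inside these pullbacks.

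Now comes the key point. Because $\ov\Jac(\wt\calC_{\wt S})$ is smooth and $\tau$ is a regular involution on it (Lemma~\ref{lemmataumor}), the fixed locus $\Fix(\tau)\subset \ov\Jac(\wt\calC_{\wt S})$ is itself smooth; in particular it is normal, so its connected components coincide with its irreducible components and are pairwise disjoint. Over the dense open locus of $\wt S$ parametrizing smooth curves, Mumford's description \cite{Mumford} shows that $\Fix(-\iota^*)\subset \Pic^0_{\wt\calC_{\wt S}}$ has exactly the four distinct connected pieces $\mc P_1^{\wt S},\dots,\mc P_4^{\wt S}$. Since each $\mc P_i^{\wt S}$ is connected (its fibres, being generalized Pryms, are connected, and $\wt S$ is irreducible) and $\Pic^0_{\wt\calC_{\wt S}}$ is fibrewise dense in $\ov\Jac(\wt\calC_{\wt S})$, the closure $\ov{\mc P}_i^{\wt S}$ is an irreducible, hence connected, component of the smooth variety $\Fix(\tau)_{\wt S}$. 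As the four pieces are distinct, the $\ov{\mc P}_i^{\wt S}$ are four distinct connected components of $\Fix(\tau)_{\wt S}$ and so are pairwise disjoint.

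Finally I would pull back. Denoting by $\tilde\phi\colon \Fix(\tau)_B\to \Fix(\tau)_{\wt S}$ the induced map, each connected component $\mc P_i^B$ of $\Fix(-\iota^*)_B$ lies in a single summand $\mc P_{j(i)}^{\wt S}\times_{\wt S}B=\tilde\phi^{-1}(\mc P_{j(i)}^{\wt S})$, whence $\ov{\mc P}_i^B\subset \tilde\phi^{-1}(\ov{\mc P}_{j(i)}^{\wt S})$; as the preimages of the disjoint closed sets $\ov{\mc P}_i^{\wt S}$ are disjoint, one gets $\ov{\mc P}_i^B\cap \ov{\mc P}_{i'}^B=\emptyset$ as soon as $j(i)\neq j(i')$. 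The main obstacle I anticipate is bookkeeping rather than geometry: one must verify that $j$ is injective, i.e. that the four components over $B$ correspond under $\phi$ to the four distinct components over $\wt S$. This follows because each component is labelled by two base--change--compatible discrete invariants, namely the norm class in $\{\mc O_{\calC_B},\eta_B\}$ and, within each fibre of $\Nm_B$, the identity--component index in $\bZ/2$; the compatibility of $\Nm_B$ and of $\eta_B$ with base change (as used in the four--component Lemma) guarantees that these labels match those over $\wt S$, so the inclusions above land in preimages of \emph{distinct} $\ov{\mc P}_j^{\wt S}$, completing the argument.
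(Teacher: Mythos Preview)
Your proof is correct and follows the same strategy as the paper's: over the $\iota$--equivariant semi--universal space $\wt S$ the smoothness of $\ov\Jac(\wt\calC_{\wt S})$ forces $\Fix(\tau)$ to be smooth, so its irreducible (=connected) components, among which sit the $\ov{\mc P}_i$, are pairwise disjoint; one then transports this to an arbitrary base by the classifying map. The paper's own proof says exactly this, but compresses the descent step to a single ``a fortiori,'' whereas you spell out why distinct $\mc P_i^B$ land in distinct $\ov{\mc P}_j^{\wt S}$. Your labeling argument via the norm value and the $\bZ/2$ parity is fine; an alternative (and slightly quicker) way to get the injectivity of $j$ is to note that each pullback $\mc P_j^{\wt S}\times_{\wt S}B_0$ is itself connected (smooth surjective onto connected $B_0$ with connected generalized--Prym fibres), so locally on $B$ the four connected components of $\Fix(-\iota^*)_B$ are exactly these four pullbacks, forcing $j$ to be a bijection.
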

\begin{proof}
Since $\Fix(\tau) \subset \ov \Jac(\wt \calC_{\wt S})$ is smooth, its irreducible components are smooth and disjoint. In particular, the closure in  $\ov \Jac(\wt \calC_{\wt S})$ of the components of $\Fix(-\iota^*) \subset \Pic^0_{\wt \calC}$, which are irreducible components of $\Fix(\tau)$ are smooth and disjoint. If this is true over the semi--universal family, it is \emph{a fortiori} true that the closures of the  $\mc P_i$ are disjoint over an arbitrary base.
\end{proof}

\begin{cor}
If $\ov \Prym{(\wt \calC_B \slash \calC_B)}$ and $B$ are smooth, then $\nu_B: \ov \Prym{(\wt \calC_B \slash \calC_B)} \to B$ is flat.
\end{cor}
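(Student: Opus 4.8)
The plan is to deduce flatness from the \emph{miracle flatness} criterion: a finite-type morphism $f\colon X\to Y$ over a field with $Y$ regular and $X$ Cohen--Macaulay is flat as soon as all of its nonempty fibers have the same dimension $\dim X-\dim Y$. Equivalently, in local terms, a local homomorphism $\mc O_{Y,y}\to\mc O_{X,x}$ whose source is regular, whose target is Cohen--Macaulay, and for which $\dim\mc O_{X,x}=\dim\mc O_{Y,y}+\dim\mc O_{X_y,x}$, is automatically flat (see Matsumura, \emph{Commutative Ring Theory}, Thm.~23.1, or EGA~IV, 6.1.5).

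First I would record that the three hypotheses hold here. By assumption $\ovPrym{\wt\calC_B \slash \calC_B}$ is smooth, hence Cohen--Macaulay, and $B$ is smooth, hence regular; so $\nu_B$ is a morphism from a Cohen--Macaulay variety to a regular one.

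Next I would verify the equidimensionality. The total space $\ovPrym{\wt\calC_B \slash \calC_B}$ is the closure of the connected relative group scheme $\mc P_1=\Prym(\wt\calC_B \slash \calC_B)$ over the irreducible base $B$, so it is irreducible of dimension $\dim B+g-1$. On the other hand, by Proposition~\ref{base change}(1) the fiber $\nu_B^{-1}(b)$ equals $\ovPrym{\wt\calC_b \slash \calC_b}$, which by Remark~\ref{Pt irreducible} is irreducible of dimension exactly $g-1$ for every $b\in B$; this is precisely the equidimensionality of Proposition~\ref{base change}(3). Hence at every point $x\in\ovPrym{\wt\calC_B \slash \calC_B}$, setting $b=\nu_B(x)$, the relative-dimension identity $\dim_x\nu_B^{-1}(b)=g-1=(\dim B+g-1)-\dim B$ holds.

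Applying the criterion pointwise then yields that $\nu_B$ is flat. There is no genuine obstacle: the only thing that must be ensured is that \emph{no} fiber has excess or deficient dimension, and this is exactly what the equidimensionality in Proposition~\ref{base change}(3) provides (resting in turn on the irreducibility, of dimension $g-1$, of each compactified Prym fiber). Everything else is the standard commutative-algebra input.
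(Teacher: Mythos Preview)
Your proof is correct and is precisely the intended argument: the paper states this corollary without proof, as it follows immediately from the equidimensionality of $\nu_B$ established in Proposition~\ref{base change}(3) together with the smoothness hypotheses via miracle flatness.
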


\section{Descent -- from the relative Prym to the relative intermediate Jacobian} \label{descent section}
In the previous section, we  developed a method for associating to any family of double covers of irreducible locally planar curves a relative compactified Prym variety. We now apply these results to the double cover of curves that come up in our situation and get a relative compactified Prym $\overline \calP$. The transversality arguments of Section \ref{sectrans18janvier} guarantee the smoothness of $\overline \calP$. Unfortunately, this flat family $\overline \calP$ (of relative dimension $5$) lives over the relative Fano variety $\calF$ (or more precisely an open subset of it) and not over the base $B=(\bP^5)^\vee$ as would be needed in order to compactify the intermediate Jacobian fibration $\mathcal J_U$.  It is therefore necessary to descend $\overline \calP$ to a family $\ocJ$ over $B$ that will give the desired compactification of $\cJ_U\rightarrow U$. This descent argument is the content of this section.

Let $X$ be a general cubic fourfold (or a general Pfaffian cubic). Let $p: \calF \to B$ be the relative Fano surface, let $\calF^0/B$ the non empty open subset of very good lines, in particular
not passing through the singular points of the considered
hyperplane section. Then $\calF^0 \to B$ is smooth and  by Proposition \ref{propirred},  it is  surjective. Let
\[
\xymatrix{
\wtC_{\calF^0} \ar[rr] \ar[dr] & & \calC_{\calF^0}\ar[dl] \\
& \calF^0 &
}
\]
be the associated family of plane quintic curves with their \'etale double covers. By Proposition \ref{propirred}, the curves in the two families are reduced and irreducible. We may therefore apply the results of Section \ref{sectprym} and construct the relative compactified Prym variety
\[
 \nu: \ov P_{\calF^0} := \ov \Prym{(\wtC_{\calF^0} \slash\calC_{\calF^0})} \to \calF^0.
\]

\begin{prop} \label{relative prym smooth}
The relative compactified Prym
$ \ov P_{\calF^0} $ is smooth and the morphism $ \nu:  \ov P_{\calF^0} \to {\calF^0}$ is flat of relative dimension $5$.
\end{prop}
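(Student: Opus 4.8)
The plan is to deduce the smoothness of $\ov P_{\calF^0}$ from the criterion of Theorem \ref{smoothness properties}(2) applied at every point of $\calF^0$, and then to obtain flatness of relative dimension $5$ from smoothness together with a dimension count. First note that $\calF^0\to B$ is smooth and $B=(\bP^5)^\vee$ is smooth, so $\calF^0$ is smooth; moreover by Proposition \ref{propirred} the curves in the families $\wtC_{\calF^0}$ and $\calC_{\calF^0}$ are reduced, irreducible with planar singularities, so the entire machinery of Section \ref{sectprym}, and in particular Theorem \ref{smoothness properties}, is available. Fix $b=([l],t)\in\calF^0$, write $C=\calC_b=C_{l,Y_t}$ for the discriminant quintic, and let $q\colon T\Def(C)=T^1_C\to\bigoplus_{p\in\Sing C}T^1_{C_p}$ be the local-to-global map. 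By the definition in (\ref{V}) one has $V(C)=q^{-1}\bigl(\prod_p V(C_p)\bigr)$, hence $V(C)\supset\ker q$, and Theorem \ref{smoothness properties}(2) says that $\ov P_{\calF^0}$ is smooth along $\nu^{-1}(b)$ if and only if $\mathrm{Im}\bigl(T_b\calF^0\to T^1_C\bigr)$ is transversal to $V(C)$.

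The key step, which I expect to be the main obstacle, is that this transversality holds at \emph{every} $b\in\calF^0$, not merely at a general one. I would establish the stronger fact that the composite $T_b\calF^0\xrightarrow{} T^1_C\xrightarrow{q}\bigoplus_p T^1_{C_p}$ is surjective. This is precisely the content of Corollary \ref{corotransdef}: choosing a local analytic section of the smooth morphism $\calF^0\to B$ through $b$, the induced tangent map $T_tB\to\bigoplus_p T^1_{C_p}$ is already surjective, and it factors through $T_b\calF^0$. Crucially, this surjectivity is valid for \emph{any} very good line and not only a general one, because the versality input, Lemma \ref{letransXgen}, asserts surjectivity of $H^0(Y_t,\mathcal{O}_{Y_t}(1))\to\prod_{p\in\Sing Y_t}T^1_{Y_{t,p}}$ for any hyperplane section $Y_t$, while the identification $T^1_{Y_{t,p}}\cong T^1_{C_p}$ of Proposition \ref{proallow}(2) holds for every good line. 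Granting this surjectivity, transversality is immediate: for $v\in T^1_C$ pick $a\in\mathrm{Im}(T_b\calF^0)$ with $q(a)=q(v)$, so that $v-a\in\ker q\subset V(C)$ and $v=a+(v-a)\in\mathrm{Im}(T_b\calF^0)+V(C)$. Therefore $\ov P_{\calF^0}$ is smooth along each fiber $\nu^{-1}(b)$, and since $\nu$ is surjective the fibers cover $\ov P_{\calF^0}$, whence $\ov P_{\calF^0}$ is smooth. (Alternatively one may invoke Theorem \ref{smoothness properties}(3), reducing smoothness of $\ov P_{\calF^0}$ to that of $\J{\calC_{\calF^0}}$, which is governed by the same Fantechi--G\"ottsche--van Straten transversality.)

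Finally, for flatness and the computation of the relative dimension, recall from Remark \ref{Pt irreducible} that every fiber $\nu^{-1}(b)=\ovPrym{\wt\calC_b\slash\calC_b}$ is irreducible of dimension $g-1$, where $g$ is the arithmetic genus of the plane quintic $C$; since $g=\binom{4}{2}=6$, this dimension is $5$, and $\nu$ is equidimensional of relative dimension $5$ (this is also Proposition \ref{base change}(3)). As $\ov P_{\calF^0}$ is smooth, hence Cohen--Macaulay, the base $\calF^0$ is smooth, hence regular, and all fibers of $\nu$ have the same dimension $5=\dim\ov P_{\calF^0}-\dim\calF^0$, the miracle flatness criterion applies and shows that $\nu$ is flat of relative dimension $5$. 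Thus, once the pointwise transversality over all of $\calF^0$ is settled via the line-independence of Lemma \ref{letransXgen}, both smoothness of $\ov P_{\calF^0}$ and flatness of $\nu$ follow formally.
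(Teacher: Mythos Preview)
Your argument is correct and follows the paper's approach: the paper's one-line proof simply cites Theorem \ref{smoothness properties}(2) and Corollary \ref{corotransdef}, and you have spelled out the implicit transversality deduction (surjectivity onto $\bigoplus_p T^1_{C_p}$ forces transversality to $V(C)$ since $V(C)\supset\ker q$) together with the miracle-flatness step. Your observation that the versality input, Lemma \ref{letransXgen} combined with Proposition \ref{proallow}(2), applies to \emph{every} good line and hence at every point of $\calF^0$, is a valid sharpening of the ``general good line'' phrasing in Corollary \ref{corotransdef} that the paper itself does not make explicit.
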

\begin{proof}
This follows immediately from (2) of Proposition \ref{smoothness properties} and from Corollary \ref{corotransdef}.
\end{proof}

As usual, we let $U$ (respectively $U_1$) be the open subset of $B$ parametrizing hyperplane sections that are smooth (respectively  that have a single ordinary node). We set $\calF^0_U=\calF^0\times_U B$ and let $\wtC_{\calF^0_U}$ be the restriction of the family of curves. We use the analogous notation for $U_1$. For any $t \in B$, and any $\ell \in \calF^0_t=F( \cY_t)$, the curve $\wtC_{(\ell,t)}$ is the curve of lines in $\cY_t$ meeting the line $ \ell \subset \cY_t$. We let $\calL_U \subset  \wtC_{\calF^0_U} \times_U \cY_U$ be the corresponding universal family of lines of the smooth hyperplane sections. For any $x \in \wtC_{(\ell,t)}$ we let $\calL_{x}$ be the corresponding  line in $\cY_t$. There is a relative Abel--Jacobi map
\[
\Phi_{\calL_U}: \wtC_{\calF^0_U} \to \mathcal{J}_U, \quad \wtC_{(\ell,t)} \ni x \mapsto \Phi_{\cY_t}(\calL_{x} -\ell) \in \mathcal{J}_t=J(\mathcal{Y}_t)
\]
inducing a morphism
\[
\Psi:\Jac(\wtC_{\calF^0_U}) \to  \mathcal{J}_{\calF^0_U}=\mathcal{J}_U\times_{U}\calF^0_{U}.
\]
Since for every $x \in \wtC_{(\ell,t)}$, the rational equivalence class of the cycle $\calL_{x}+\iota \calL_{x}$ in $\cY_t$ is constant, the morphism $\Psi$ factors via $(1-\iota) \Jac(\wtC_{\calF^0_U})=\Prym{(\wtC_{\calF^0_U} \slash \calC_{\calF^0_U} )}$, thus inducing a morphism from the generalized relative Prym variety
\be \label{descent iso}
 \Prym{(\wtC_{\calF^0_U} \slash \calC_{\calF^0_U} )} \to \mathcal{J}_{\calF^0_U},
\ee
which is an isomorphism by a result of Mumford \cite{tjurin}. In particular, over the smooth locus $U$, the relative Prym variety is the pull-back to $\mathcal{F}^0_U$ of  the intermediate Jacobian fibration. The following lemma shows that it is the case also over the locus $U_1$ parametrizing one--nodal hyperplane sections.

\begin{lemma} \label{leisoenhaut}
The isomorphism (\ref{descent iso}) extends to an isomorphism 
\begin{eqnarray}\label{eqisodu31oct} \Prym{(\wtC_{\calF^0_{U_1}}\slash \calC_{\calF^0_{U_1}})} \to  \mathcal{J}_{\calF^0_{U_1}}=\mathcal{J}_{U_1}\times_{U_1}\calF^0_{U_1}.
\end{eqnarray}
In particular $\mathcal{J}_{U_1}$ is smooth.
\end{lemma}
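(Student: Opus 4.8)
The plan is to establish the isomorphism \eqref{eqisodu31oct} by showing that both sides are smooth families over $\calF^0_{U_1}$ and that the map \eqref{descent iso}, already an isomorphism over $\calF^0_U$, extends to a morphism which is fiberwise an isomorphism over the nodal locus. First I would invoke the smoothness results of the previous section: by Corollary \ref{corotransdef}, for $X$ general the family $\calC_{\calF^0_{U_1}}$ gives a simultaneous versal deformation of the singularities of each fiber $C_{(\ell,t)}$, so by Proposition \ref{smoothness properties}(2) the relative compactified Prym $\ovPrym{\wtC_{\calF^0_{U_1}}/\calC_{\calF^0_{U_1}}}$ is smooth. Restricting to the generalized (non-compactified) Prym $\Prym{(\wtC_{\calF^0_{U_1}}/\calC_{\calF^0_{U_1}})}$, which is the smooth locus of the group scheme, this is a smooth quasi-abelian scheme over the smooth base $\calF^0_{U_1}$.

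Next I would construct the extension of the relative Abel--Jacobi map over the nodal locus. For $t\in U_1\setminus U$ the hyperplane section $\cY_t$ has a single node, and for a very good line $\ell$ the associated cover $\wtC_{(\ell,t)}/C_{(\ell,t)}$ is \'etale with both curves irreducible, the singularity of $C_{(\ell,t)}$ being a single node in $1$-to-$1$ correspondence with the node of $\cY_t$ (Corollary \ref{corallow}). The relative Abel--Jacobi map $\Phi_{\calL_{U_1}}$, sending $x\in\wtC_{(\ell,t)}$ to the class of $\calL_x-\ell$, is still defined because lines in $\cY_t$ avoiding the node move in families, and its image lands in the generalized intermediate Jacobian $J(\cY_t)_{reg}=\cJ_{U_1,t}^\circ$, which is the quasi-abelian variety described in Section \ref{sectintjacbun}. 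As before the cycle $\calL_x+\iota\calL_x$ is rationally constant, so the induced map factors through $\Prym{(\wtC_{\calF^0_{U_1}}/\calC_{\calF^0_{U_1}})}$, giving a morphism of smooth group schemes over $\calF^0_{U_1}$.

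The main obstacle is to verify that this extended morphism is an isomorphism on the nodal fibers, not merely over $U$. I would argue fiberwise: over $t\in U_1\setminus U$ both the generalized Prym $\Prym(\wtC_{(\ell,t)}/C_{(\ell,t)})$ and the generalized intermediate Jacobian $J(\cY_t)_{reg}$ are extensions of the principally polarized abelian variety $J(\widetilde{\cY_t})$ (resp. the Prym of the normalized cover) by $\Gm$, where $\widetilde{\cY_t}$ is the resolution of the node. The extension classes match because Mumford's Prym description is compatible with the conic-bundle degeneration, so the map between the two $\Gm$-extensions is an isomorphism of semi-abelian varieties. An efficient way to see this is to note that \eqref{descent iso} is an isomorphism over the dense open $\calF^0_U$, both sides are smooth and separated over $\calF^0_{U_1}$ with equidimensional fibers (Proposition \ref{base change}(3)), and a morphism of smooth group schemes which is an isomorphism over a dense open and a bijection on the boundary fibers is an isomorphism; one then checks bijectivity on the $\Gm$-part using the explicit identification of the cotangent space $H^1(\widetilde{\cY_t},\Omega^2_{\widetilde{\cY_t}}(\log E_Y))$ recorded before Proposition \ref{proextnondeg}.

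Finally, the smoothness of $\cJ_{U_1}$ follows formally: since the map $\calF^0_{U_1}\to U_1$ is smooth and surjective (it is the smooth locus of very good lines, surjective by Proposition \ref{propirred}), and $\cJ_{\calF^0_{U_1}}=\cJ_{U_1}\times_{U_1}\calF^0_{U_1}$ is identified via \eqref{eqisodu31oct} with the smooth variety $\Prym{(\wtC_{\calF^0_{U_1}}/\calC_{\calF^0_{U_1}})}$, smoothness descends from the total space of a smooth surjection to its base. Concretely, the projection $\cJ_{\calF^0_{U_1}}\to\cJ_{U_1}$ is smooth with irreducible fibers (the fibers of $\calF^0_{U_1}\to U_1$), so $\cJ_{U_1}$ inherits smoothness.
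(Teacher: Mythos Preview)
Your approach is workable in principle but takes a considerably longer route than the paper, and one step is not adequately justified. The paper's proof bypasses the explicit extension of the Abel--Jacobi map and the fiberwise verification altogether: it observes that a family of semi-abelian varieties over $\calF^0_{U_1}$ with unipotent local monodromy along the boundary is completely determined by the data over the open part $\calF^0_U$, namely the integral local system and the Hodge bundle. Concretely, both $\Prym{(\wtC_{\calF^0_{U_1}}/\calC_{\calF^0_{U_1}})}^{\circ}$ and $\mathcal{J}^{\circ}_{\calF^0_{U_1}}$ are given as $\overline{\mathcal{H}}^{0,1}/j_*R^1\pi_*\mathbb{Z}$, where $\overline{\mathcal{H}}^{0,1}$ is the canonical (Deligne) extension of the Hodge bundle and $j$ is the open inclusion. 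Since \eqref{descent iso} identifies the local systems and Hodge bundles over $\calF^0_U$, the canonical extensions agree automatically, and the Mumford compactifications of the resulting semi-abelian families are canonical as well. No fiberwise computation is needed.

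By contrast, your argument hinges on the assertion that ``the extension classes match because Mumford's Prym description is compatible with the conic-bundle degeneration,'' and the alternative you offer---checking bijectivity on the $\Gm$-part via the cotangent space identification---does not actually address the extension class, which lives in $J(\widetilde{\cY_t})$ rather than in the tangent space. The extension class on the intermediate-Jacobian side is the Abel--Jacobi image of the vanishing cycle, while on the Prym side it is the class of the node difference $p_i-q_i$; showing these agree is exactly the content you would need, and it is not immediate from what you wrote. There is also a mild circularity: you invoke smoothness of ``both sides'' before the isomorphism is established, but smoothness of $\mathcal{J}_{\calF^0_{U_1}}$ is precisely what the lemma is meant to deliver (unless you separately quote the smoothness of the one-nodal Mumford model, which you do not). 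The paper's Hodge-theoretic argument sidesteps all of this.
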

\begin{proof}
The extensions of the families $\Prym{(\wtC_{\calF^0_U} \slash \calC_{\calF^0_U} )}$ and $\mathcal{J}_{\calF^0_U}$ to families $\Prym{(\wtC_{\calF^0_U} \slash \calC_{\calF^0_U} )}^{\circ}$, resp. $\mathcal{J}_{\calF^0_U}^{\circ}$ of semi--abelian varieties over the boundary $ \calF^0_{U_1} \setminus  \calF^0_U$, are determined by the monodromy of the local systems over $ \calF^0_U$: indeed, if $\mathcal H^{0,1}_{\operatorname{Prym}}$ and $\mathcal H^{0,1}_{\mathcal{J}}$ are the Hodge bundles of the families $ \nu: \Prym{(\wtC_{\calF^0_U} \slash \calC_{\calF^0_U} )} \to {\calF^0_U}$ and $\rho: \mathcal{J}_{\calF^0_{U}} \to\calF^0_{U}$, respectively, then the two families of semi--abelian varieties (or rather their sheaves of local sections) $\Prym{(\wtC_{\calF^0_{U_1}}\slash \calC_{\calF^0_{U_1}})}^{\circ}$ and $\mathcal{J}_{\calF^0_{U_1}}^{\circ}$ are given, respectively, by
\[
\ov{\mathcal{H}}^{0,1}_{\operatorname{Prym}} \slash j_* R^1 \nu_* \mathbb{Z}, \quad \text{and} \quad  \ov{\mathcal{H}}^{0,1}_{\mathcal{J}} \slash j_* R^1 \rho_* \mathbb{Z}.
\]
Here $\ov{\mathcal{H}}^{0,1}_{\operatorname{Prym}}$ and $\ov{\mathcal{H}}^{0,1}_{\mathcal{J}}$ are the canonical extensions of the Hodge bundle across $ \calF^0_{U_1} \setminus  \calF^0_{U}$, which is smooth, and $j:  \calF^0_U \to  \calF^0_{U_1}$ is the inclusion. Since the two families are isomorphic over $ \calF^0_U$ by (\ref{descent iso}), so are the corresponding local systems and Hodge bundles, and hence so are the canonical extensions. 
We thus get an isomorphism
 $$\Prym{(\wtC_{\calF^0_{U_1}}\slash \calC_{\calF^0_{U_1}})}^{\circ} \cong \mathcal{J}_{\calF^0_{U_1}}^{\circ}=\mathcal{J}_{U_1}^{\circ}\times_{U_1}\calF^0_{U_1}.$$
 The fact that this isomorphism extends to the Mumford compactifications
 where along the boundary, the $\mathbb{C}^*$-bundles are replaced with the corresponding
 $\mathbb{P}^1$-bundle with the sections $0$ and $\infty$ glued via
 a translation follows from the fact that the Mumford compactification is canonical. We thus get the desired isomorphism (\ref{eqisodu31oct}).
\end{proof}

 From now on, we will use the following notation (justified by Lemma \ref{leisoenhaut}):
For any morphism $f: M\rightarrow \calF^0$ with induced morphism
$f'=p\circ f: M\rightarrow B$, we will denote by $\mathcal{J}_M$ the pull-back
$\ov P_{\calF^0}\times_{\calF^0}   M$ and $\pi_M: \mathcal{J}_M\rightarrow M$ the second projection. Over $M_1:={f'}^{-1}(U_1)$, one has  ${\mathcal{J}_M}_{\mid M_1}={\mathcal{J}_{U_1}}\times_{U_1}M_1$ by Lemma
\ref{leisoenhaut}.
The aim of this section is to show a  result extending in some sense
Lemma \ref{leisoenhaut} over
the whole of $B$, that is, to construct  a projective compactification $\overline{\mathcal{J}}$
of $\mathcal{J}_{U_1}$ that is flat over $B$, whose pull-back to $\calF^0$ will be isomorphic to $\ov P_{\calF^0}=\mathcal{J}_{\calF^0}$. Then $\overline{\mathcal{J}}$ will be clearly smooth.
This is a descent problem which will use the following Proposition \ref{prothetaample}.
The morphism $\pi_{U_1}:\mathcal{J}_{U_1}\rightarrow U_1$ is projective. In fact, there is a canonical Theta divisor
$\Theta_1\subset \mathcal{J}_{U_1}$ defined as the Zariski closure in $\mathcal{J}_{U_1}$ of the
 canonically defined divisor $\Theta\subset \mathcal{J}$ (see Lemma \ref{leiinvtheta} for more detail). Using Lemma \ref{leisoenhaut}, we get by pull-back
a divisor $\widetilde{\Theta_1}$ on $\mathcal{J}_{U_1}\times_{U_1}\calF^0_{U_1}=\mathcal{J}_{\calF^0_{U_1}}$, and as
the morphism  $ \pi_{\calF^0}:  \mathcal{J}_{\calF^0} \to {\calF^0}$ is flat, $
\mathcal{J}_{\calF^0_{U_1}}\subset
\mathcal{J}_{\calF^0}$ has a complement of codimension $\geq 2$, and thus
$\widetilde{\Theta_1}$ extends uniquely to a divisor $\overline{\widetilde{\Theta_1}}$ on
$\mathcal{J}_{\calF^0}$.

\begin{prop}\label{prothetaample} The divisor $\overline{\widetilde{\Theta_1}}$ on
$\mathcal{J}_{\calF^0}$ is $\pi_{\calF^0}$-ample.
\end{prop}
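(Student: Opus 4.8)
The plan is to reduce the assertion to fibrewise ampleness and then to recognize the restriction of $\overline{\widetilde{\Theta_1}}$ to each fibre as a principal polarization of a (possibly degenerate) Prym.

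First I would record that $\pi_{\calF^0}\colon \mathcal{J}_{\calF^0}=\ov P_{\calF^0}\to \calF^0$ is proper: by construction $\ov P_{\calF^0}$ is a closed subscheme of the relative compactified Jacobian $\J{\wtC_{\calF^0}}$, which is projective over $\calF^0$. For a proper morphism of finite presentation, the set of points of the base over which a fixed line bundle restricts to an ample bundle on the fibre is open, and over this open set the bundle is relatively ample. Hence it suffices to prove that $L:=\mathcal{O}(\overline{\widetilde{\Theta_1}})$ restricts to an ample line bundle on every fibre $\ov P_{(\ell,t)}=\ovPrym{\wtC_{(\ell,t)}/\calC_{(\ell,t)}}$; by Proposition \ref{base change} and Remark \ref{Pt irreducible} these fibres are integral projective $5$-folds.

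Next I would produce a relatively ample reference polarization. Since $\J{\wtC_{\calF^0}}\to\calF^0$ is projective, it carries a canonical relatively ample theta (determinant) line bundle, whose restriction to $\ov P_{\calF^0}$ gives a relatively ample line bundle $\mathcal{A}$ on $\mathcal{J}_{\calF^0}$. Over $\calF^0_{U_1}$ the fibres of $\pi_{\calF^0}$ are the Prym variety $\Prym(\wtC_{(\ell,t)}/\calC_{(\ell,t)})$ (for smooth sections) or a rank-one degeneration of it (for one-nodal sections); on these $L$ restricts to the canonical principal polarization $\Theta$, while $\mathcal{A}$ restricts to a fixed positive multiple $m\Theta$ (Mumford's relation that the restriction of the Jacobian theta to the Prym is twice the principal polarization gives $m=2$). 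Thus $L^{\otimes m}\otimes \mathcal{A}^{-1}$ is fibrewise trivial over $\calF^0_{U_1}$; since the zero section exists and the family is an abelian (resp. semi-abelian) scheme there, the seesaw principle yields $L^{\otimes m}\otimes \mathcal{A}^{-1}\cong \pi_{\calF^0}^*N$ for some line bundle $N$ on $\calF^0_{U_1}$.

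Finally I would globalize by codimension-two extension. Because $U_1\subset B$ has complement of codimension $2$ and $\calF^0\to B$ is smooth, the complement of $\calF^0_{U_1}$ in $\calF^0$ has codimension $2$; as $\calF^0$ is smooth, $N$ extends to a line bundle $N'$ on $\calF^0$. Likewise $\mathcal{J}_{\calF^0_{U_1}}\subset \mathcal{J}_{\calF^0}$ has codimension-$2$ complement and $\mathcal{J}_{\calF^0}$ is smooth, so the isomorphism $L^{\otimes m}\cong \mathcal{A}\otimes \pi_{\calF^0}^*N'$ valid over $\calF^0_{U_1}$ extends uniquely over all of $\mathcal{J}_{\calF^0}$. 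Since $\pi_{\calF^0}^*N'$ is relatively trivial and $\mathcal{A}$ is relatively ample, $L^{\otimes m}$, and hence $L=\mathcal{O}(\overline{\widetilde{\Theta_1}})$, is $\pi_{\calF^0}$-ample. I expect the main obstacle to be exactly this comparison of $L$ and $\mathcal{A}$ — equivalently, identifying the fibrewise restriction of $\overline{\widetilde{\Theta_1}}$ on the singular compactified Prym fibres with the canonical theta and establishing its ampleness there; this is where the theory of theta divisors on compactified Jacobians and the stable semi-abelic (Alexeev) description of the Prym degeneration of \cite{casaetal2} enter.
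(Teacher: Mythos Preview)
Your overall architecture---pick a relatively ample reference bundle $\mathcal{A}$, compare it to $L=\mathcal{O}(\overline{\widetilde{\Theta_1}})$ over $\calF^0_{U_1}$, then extend the relation across the codimension-$2$ boundary using smoothness of $\calF^0$ and $\mathcal{J}_{\calF^0}$---is exactly the strategy the paper uses. The codimension-$2$ extension step in your third paragraph is correct and matches the paper verbatim.

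The gap is in your second paragraph, at the seesaw step. Mumford's relation says that the restriction of the Jacobian theta to the Prym equals $2\Xi$ \emph{in the N\'eron--Severi group}, not as a line bundle. So $L^{\otimes 2}\otimes\mathcal{A}^{-1}$ is only fibrewise in $\Pic^0$, not fibrewise trivial, and seesaw does not apply: what you get is a section of the dual abelian scheme over $\calF^0_{U_1}$, which has no a priori reason to vanish. Your closing remark locates the difficulty at the singular fibres, but in fact the codimension-$2$ extension handles those automatically once the relation holds over $\calF^0_{U_1}$; the real issue is already over $U$.

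The paper closes this gap via the $(-1)$-involution. It first symmetrizes the reference bundle, $\mathcal{L}'=\mathcal{L}\otimes(-1)^*\mathcal{L}$, so that both line bundles being compared are $(-1)$-invariant. Then Lemma~\ref{lepic} shows, by a monodromy argument (the image of $\pi_1(U)$ in $\mathrm{Sp}(H^1(J(Y_t),\mathbb{Z}))$ has finite index, forcing $\mathrm{NS}(J(Y_t))_{\mathbb{Q}}=\mathbb{Q}\Theta_{Y_t}$), that the $(-1)$-invariant part of the relative Picard group over $U_1$ is generated modulo torsion by $\Theta_1$. This is what replaces your seesaw: it gives $\mathcal{L}'_{|\mathcal{J}_{\calF^0_{U_1}}}=f^*\mathcal{O}(d\Theta_1)\otimes\pi^*\mathcal{N}_1$ for some integer $d$, after passing to a multiple. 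Your argument can be repaired along the same lines: once you make $\mathcal{A}$ $(-1)$-invariant, the fibrewise class of $L^{\otimes 2}\otimes\mathcal{A}^{-1}$ is a $(-1)$-invariant element of $\Pic^0$, hence $2$-torsion, and a further power becomes fibrewise trivial. But you would still need to check $(-1)$-invariance of your determinant bundle $\mathcal{A}$, which you have not done; the paper's route through Lemma~\ref{lepic} is cleaner because it works for \emph{any} symmetrized relatively ample bundle.
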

The proof will use several lemmas.  We first recall  the following:
\begin{lemma}\label{leiinvtheta} For any smooth cubic threefold $Y$, there is a canonically defined
Theta divisor $\Theta_Y\subset J(Y)$ which is invariant under the involution $-1$.
\end{lemma}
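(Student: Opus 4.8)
The plan is to use the principal polarization on $J(Y)$ together with the fact, due to Beauville, that its theta divisor has a \emph{unique} singular point, in order to pin down a canonical translate which will then automatically be symmetric.

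First I would recall from Clemens--Griffiths that for a smooth cubic threefold $Y$ the intermediate Jacobian $J(Y)=J^3(Y)=H^{2,1}(Y)^*/H_3(Y,\mathbb{Z})$ is a principally polarized abelian variety of dimension $5$, with a well-defined origin $0$. The principal polarization determines the class $[\Theta]$ in the N\'eron--Severi group $\mathrm{NS}(J(Y))$, but since $h^0(J(Y),\mathcal{O}(\Theta))=1$ the effective divisor $\Theta$ itself is only defined up to translation: the effective members of the algebraic equivalence class form a single orbit $\{\,\Theta-a : a\in J(Y)\,\}$ under translation. The task is thus to fix this translation canonically.

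Next I would invoke Beauville's theorem that $\Theta$ has a unique singular point, say $p_0$. For any $a$, the translate $\Theta-a$ has singular locus $\Sing(\Theta)-a=\{p_0-a\}$, so the map $a\mapsto p_0-a$ is injective and exactly one translate has its singular point at the origin $0$. I define $\Theta_Y$ to be that translate, namely $\Theta_Y:=\Theta-p_0$. Since $0\in J(Y)$ is intrinsic and $\Sing(\Theta)$ is canonical, $\Theta_Y$ is canonically attached to $Y$, with no choices.

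It then remains to verify the $-1$-invariance, and this is purely formal. The involution $(-1)$ acts on $H^1(J(Y))$ by $-1$, hence on $H^2\cong\bigwedge^2 H^1$ by $(-1)^2=\mathrm{id}$, and so acts trivially on $\mathrm{NS}(J(Y))$. Therefore $(-1)^*\Theta_Y$ is again an effective divisor representing the principal polarization, and its singular locus is $(-1)\big(\Sing(\Theta_Y)\big)=(-1)(\{0\})=\{0\}$. Thus $(-1)^*\Theta_Y$ is an effective theta divisor singular at the origin, and by the uniqueness established above it must coincide with $\Theta_Y$; that is, $(-1)^*\Theta_Y=\Theta_Y$, as desired. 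The only nontrivial ingredient here is Beauville's uniqueness of the singular point of $\Theta$ (which is where the geometry of the cubic enters, the projectivized tangent cone at $p_0$ recovering $Y$); granting that, the canonical normalization and its symmetry are immediate. I note for later use that this canonical $\Theta_Y$ is exactly the fiberwise theta divisor whose Zariski closure over $U$ furnishes the relative divisor $\Theta\subset\mathcal{J}$, but only the invariance under $-1$ is asserted at this point.
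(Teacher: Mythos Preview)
Your proof is correct but takes a genuinely different route from the paper's. You pin down the canonical translate by invoking Beauville's theorem that the theta divisor of $J(Y)$ has a unique singular point, and then deduce $(-1)$-invariance formally from the uniqueness. The paper instead \emph{constructs} $\Theta_Y$ explicitly as the image of the Abel--Jacobi map on the family $\mathcal{H}$ of rational cubic curves, $s\mapsto\Phi_Y(\mathcal{C}_s-h^2)$, and proves $(-1)$-invariance via a sublemma showing that a general rational cubic $C\subset Y$ is rationally equivalent to $h^2+L-L'$ for two lines $L,L'$; this identifies $\Theta_Y$ (up to a factor) with the image of the difference map $F(Y)\times F(Y)\to J(Y)$, $(l_1,l_2)\mapsto l_1-l_2$, which is visibly symmetric under swapping $l_1,l_2$.

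Your argument is shorter and more conceptual once Beauville's result is granted (and the two normalizations agree, since the singular point of the Clemens--Griffiths theta divisor is precisely the origin). The paper's approach, however, buys something essential for what follows: the explicit description of $\Theta_Y$ via rational cubic curves is exactly what is used later in the proof of Theorem~\ref{theoconst}(iii) to build the birational $\mathbb{P}^1$-bundle $\mathbb{P}\to F_3(X)$ over the Lehn--Lehn--Sorger--van Straten eightfold dominating the relative theta divisor, and thereby to conclude that $\overline{\mathcal{J}}$ is irreducible hyper-K\"ahler. Your abstract normalization would not supply that link without reproving the identification with the Abel--Jacobi image of cubics.
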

\begin{proof} The divisor $\Theta_Y$ is
defined as follows:  Consider the family $\mathcal{C}\rightarrow \mathcal{H}$
of rational cubic curves in $Y$. We have the Abel-Jacobi map
$$\Phi_{\mathcal{C}}:\mathcal{H}\rightarrow J(Y),\,\,s\mapsto \Phi_{Y}(\mathcal{C}_{s}-h^2),$$
where $h=c_1(\mathcal{O}_{Y}(1))\in {\rm CH}^2(Y)$, so that $\mathcal{C}_{s}-h^2$ is homologous to $0$
in $Y$.  The fact that the image of this map  is a Theta divisor in $J(Y)$ is proved
in  \cite[Section 13]{CG} (see also \cite{harrisrothstarr}). The fact that it is a $(-1)$-invariant divisor in $J(Y)$
follows from  the following observation
(\cite{harrisrothstarr}):
\begin{sublemma} Let $C\subset Y$ be a general cubic rational normal curve in a smooth cubic threefold. Then, there exists two lines $L,L'\subset Y$ such that $C$ is rationally equivalent in $Y$ to $c_1(\mathcal{O}_Y(1))^2+L-L'$. Conversely, for two general lines $L,L'$ on $Y$, $c_1(\mathcal{O}_Y(1))^2+L-L'$  is rationally equivalent in $Y$ to a smooth rational cubic curve $C$.
\end{sublemma}
\begin{proof} 
The curve $C$
generates a $\mathbb{P}^3$ which intersects $Y$ in a  cubic surface
$S\subset Y$ which is smooth because $C$ is generic. The linear system
$|\mathcal{O}_S(C)|$ is a $\mathbb{P}^2$ which provides a birational map
$\phi:S\rightarrow \mathbb{P}^2$, contracting $6$ lines $L_i$ in $S$
to points $p_i$.  The curve $C$  belongs to the linear system
$|\phi^*\mathcal{O}_{\mathbb{P}^2}(1)|$. Choose a  line $L$ contracted to a point $p$, and consider the proper transform
$L'$ in $S$ of a conic passing through all points $p_i$ except $p$. Then $L'$ is a line in $S$
which belongs to the linear
system $|\phi^*\mathcal{O}_{\mathbb{P}^2}(2)(-\sum_{p_i\not=p}L_i)|$ and thus
$L'-L$ is rationally equivalent in $S$ to  $\phi^*\mathcal{O}_{\mathbb{P}^2}(2)(-\sum_{i}L_i)$.
But $K_S=-h_{\mid S}=\phi^*\mathcal{O}_{\mathbb{P}^2}(-3)(\sum_{i}L_i)$ (with $h=c_1(\mathcal{O}_Y(1))$), hence
we get
$$-h_{\mid S}+\phi^*\mathcal{O}_{\mathbb{P}^2}(1)=L-L'$$
in ${\rm CH}^1(S)$, and thus $C=h^2+L-L'$ in ${\rm CH}^2(Y)$. Conversely, if $L,\,L'$ are two lines
in $Y$, the $\mathbb{P}^3=P_3$ generated by $L$ and $L'$ intersects $Y$ along a smooth cubic surface,
and for a given point $x\in L$, the plane $\langle L',x\rangle\subset P_3$ intersects $Y$ along the union of
$L'$ and a conic $C'$ meeting $L$ at $x$. The curve $L\cup C'$ is a reducible rational cubic curve $C$ in $S$ (which deforms
to a smooth rational cubic curve).
\end{proof}
It follows that the divisor ${\rm Im}\,\Phi_{\mathcal{C}}$ (or rather $6$ times this divisor) is also equal to the image
in $J(Y)$ of the difference map
$F(Y)\times F(Y)\mapsto J(Y)$,\,$(l_1,l_2)\mapsto l_1-l_2$. Thus it is invariant
under the involution $(-1)$ of $J(Y_t)$.
\end{proof}

Coming back to our family $\pi_U:\mathcal{J}_U\rightarrow U$, on which the $(-1)$-involution acts over
$U$, we constructed  from Lemma \ref{leiinvtheta}  a canonical Theta divisor $\Theta\subset \mathcal{J}_U$ which is $-1$-invariant, with Zariski closure  $\Theta_1\subset \mathcal{J}_{U_1}$.
We now have :
\begin{lemma} \label{lepic} For any
dominant morphism $f':M_1\rightarrow U_1$ with $M_1$ irreducible,   with canonical lift
$f:\mathcal{J}_{M_1}\rightarrow \mathcal{J}_{U_1}$, the $(-1)$-invariant part of ${\rm Pic}\,\mathcal{J}_{M_1}/M_1$, where
$\mathcal{J}_{M_1}:=\mathcal{J}\times_{U_1}M_1$,  is
generated modulo torsion  by $f^*\Theta_1$.
\end{lemma}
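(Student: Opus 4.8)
We may assume $M_1$ smooth (only the relative Picard group modulo torsion is at stake, so we may replace $M_1$ by a resolution). First, $f^*\Theta_1$ does lie in the $(-1)$-invariant part and is non-torsion: by Lemma \ref{leiinvtheta} the divisor $\Theta$, hence its closure $\Theta_1$ and the pullback $f^*\Theta_1$, is $(-1)$-invariant, and $f^*\Theta_1$ restricts on each fiber to the principal polarization class, so it is non-torsion in $\operatorname{Pic}(\mathcal{J}_{M_1}/M_1)$. The content of the lemma is therefore that any $(-1)$-invariant class $L\in\operatorname{Pic}(\mathcal{J}_{M_1}/M_1)$ equals $n\,f^*\Theta_1$ modulo torsion for some $n\in\mathbb{Z}$. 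The plan is to first restrict to the geometric generic fiber $A$, a principally polarized abelian fivefold, and recall that $(-1)^*$ acts as $+1$ on $\operatorname{NS}(A)$ (since it is $-1$ on $H^1$ and hence $+1$ on $H^2=\wedge^2H^1$) and as $-1$ on $\Pic^0(A)$. Consequently a $(-1)$-invariant $L$ has $\Pic^0$-component of order dividing $2$, while its $\operatorname{NS}$-component is unconstrained; so $[L|_A]\in\operatorname{NS}(A)$ up to $2$-torsion.

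The main point is then to show $\operatorname{NS}(A)=\mathbb{Z}\Theta$, where $A$ is the geometric generic intermediate Jacobian \emph{of the family over $U_1$}. This reduces the whole statement to $U_1$, independently of $M_1$: the geometric generic fiber of $\mathcal{J}_{M_1}\to M_1$ is obtained from that of $\mathcal{J}_{U_1}\to U_1$ by base change along $\overline{k(U_1)}\subset\overline{k(M_1)}$, and the Néron--Severi group of an abelian variety over an algebraically closed field does not change under further extension of algebraically closed fields. To prove $\operatorname{NS}(A)=\mathbb{Z}\Theta$ I would use monodromy: since $X^\vee$ is irreducible and $H^3(X,\mathbb{Q})=0$, the cohomology $H^3(Y_t,\mathbb{Q})$ is entirely vanishing, the vanishing cycles form a single monodromy orbit, and the monodromy of $\mathcal{Y}_U\to U$ acts irreducibly on the $10$-dimensional $H^3(Y_t,\mathbb{Q})$ with Zariski-dense image in $\operatorname{Sp}_{10}$. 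As $\wedge^2$ of the standard representation of $\operatorname{Sp}_{10}$ has one-dimensional invariants spanned by the symplectic form, we get $H^2(A,\mathbb{Q})^{\mathrm{mon}}=\mathbb{Q}\Theta$; any class in $\operatorname{NS}(A)\otimes\mathbb{Q}$ is monodromy invariant, so $\operatorname{NS}(A)\otimes\mathbb{Q}=\mathbb{Q}\Theta$, and since the principal polarization $\Theta$ is primitive, $\operatorname{NS}(A)=\mathbb{Z}\Theta$ modulo torsion. This monodromy/Picard-number-one input is the crux of the argument.

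Granting this, write $N:=L-n\,f^*\Theta_1$ with $n$ chosen so that $[N|_A]=0$ in $\operatorname{NS}(A)$; then $N$ restricts on each fiber over $M:=(f')^{-1}(U)$ to an element of $\Pic^0$, which is $(-1)$-invariant, hence $2$-torsion. Over $M$ the fibration $\mathcal{J}_M\to M$ is an abelian scheme with zero section, so after rigidification $N$ defines a section of the dual abelian scheme $\widehat{\mathcal{J}}_M\to M$ that is $2$-torsion at the generic point. By rigidity of torsion sections in characteristic zero this section is $2$-torsion everywhere, so $N^{\otimes 2}$ is a pullback from $M$; thus $N\equiv 0$ in $\operatorname{Pic}(\mathcal{J}_M/M)$ modulo torsion, i.e. $L\equiv n\,f^*\Theta_1$ modulo torsion over $M$.

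Finally I would upgrade this from $M$ to $M_1$. Using $\operatorname{Pic}(M_1)\twoheadrightarrow\operatorname{Pic}(M)$ and the localization sequence
\[
\bigoplus_i \mathbb{Z}\,D_i \longrightarrow \operatorname{Pic}(\mathcal{J}_{M_1}) \longrightarrow \operatorname{Pic}(\mathcal{J}_M) \longrightarrow 0,
\]
where the $D_i$ are the components of $\pi_{M_1}^{-1}(M_1\setminus M)$, the class $N$ differs from a pullback from $M_1$ by a combination of the $D_i$. Since $\pi_{M_1}$ is flat and its fibers over $U_1\setminus U$ are integral (the compactified Jacobians of the one-nodal sections, irreducible and reduced), each $D_i$ equals $\pi_{M_1}^*E_i$ for a boundary divisor $E_i$ on $M_1$, hence is a pullback and vanishes in $\operatorname{Pic}(\mathcal{J}_{M_1}/M_1)$. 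Therefore $N\equiv0$, i.e. $L\equiv n\,f^*\Theta_1$ modulo torsion in $\operatorname{Pic}(\mathcal{J}_{M_1}/M_1)$, completing the proof. The anticipated main obstacle is the second paragraph, namely establishing that the generic intermediate Jacobian of the hyperplane-section family has Picard number one; the passage through $A$, the rigidity argument, and the boundary bookkeeping are comparatively routine.
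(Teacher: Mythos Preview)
Your proposal is correct and shares the same crux as the paper's proof: the monodromy of the family $\mathcal{Y}_U\to U$ acts on $H^3(Y_t,\mathbb{Q})\cong H^1(J(Y_t),\mathbb{Q})$ through the full symplectic group (Picard--Lefschetz), so the only monodromy-invariant classes in $\bigwedge^2 H^1$ are multiples of $\Theta$, forcing $\operatorname{NS}(J(Y_t))_\mathbb{Q}=\mathbb{Q}\Theta$ for very general $t$.

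The difference is in the reduction to this fiberwise computation. The paper argues in one line that the relative Picard group $\operatorname{Pic}(\mathcal{J}_{M_1}/M_1)$ injects into $\operatorname{Pic}(J(Y_t))$ for very general $t$ because all fibers of $\pi_{M_1}$ (including those over $U_1\setminus U$) are reduced and irreducible; then the $(-1)$-invariant part modulo torsion is identified with $\operatorname{NS}(J(Y_t))$, and one is done. Your route---splitting into $\operatorname{NS}$ and $\operatorname{Pic}^0$ components, invoking rigidity of torsion sections of the dual abelian scheme, and then doing localization bookkeeping to pass from $M$ to $M_1$---is more elaborate but reaches the same conclusion. The paper's shortcut is precisely that the integrality of \emph{all} fibers over $M_1$ (not just over $M$) makes both your rigidity step and your boundary analysis unnecessary: the injection into a very general fiber holds directly over $M_1$. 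Also, the paper handles the passage from $U$ to $M$ via the finite-index image of $\pi_1(M_U)\to\pi_1(U)$ rather than via your base-change-of-algebraically-closed-fields argument for $\operatorname{NS}$; both are valid.
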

\begin{proof} The relative Picard group ${\rm Pic}\,\mathcal{J}_{M_1}/M_1:={\rm Pic}\,\mathcal{J}_{M_1}/\pi_{M_1}^*{\rm Pic}\,M_1$
  injects in
the Picard group of the  fiber $J(Y_t)$, where $t\in M$ is very general, because the fibers of
the map $\pi_M$ (which are also the fibers of the map $\pi_{U_1}:\mathcal{J}_{U_1}\rightarrow U_1$) are all reduced and
irreducible. In particular, it injects into ${\rm Pic}\,\mathcal{J}_{M_U}/M_U$, where
$M_U:={f'}^{-1}(U)\subset M_1$.
We   are thus reduced to proving that the $(-1)$-invariant part of ${\rm Pic}\,J(Y_t)$ is modulo  torsion
generated by $\Theta_{Y_t}$ for $t$ very general in $U_1$.
However, modulo torsion, the $(-1)$-invariant part of ${\rm Pic}\,J(Y_t)$ is isomorphic to the N\'eron-Severi group of
$J(Y_t)$. Finally, recall that we have a canonical isomorphism
$H^3(Y_t,\mathbb{Q})\cong H^1(J(Y_t),\mathbb{Q})$ which provides more generally an isomorphism of
local systems over $U$:
$$R^3u_*\mathbb{Q}\cong R^1\pi_{U*}\mathbb{Q}.$$
For the local system on the left, the corresponding monodromy group
$${\rm Im}\,(\rho:\pi_1(U,t)\rightarrow {\rm Aut}\,H^3(Y_t,\mathbb{Z})))$$
is the full symplectic group of the intersection pairing by Picard-Lefschetz theory
\cite{monodromy}. Hence the same is true
for the local system on the right. As $M_1$ is irreducible and
the morphism $f':M_1\rightarrow U_1$ is dominating,
the image of the morphism $$f'_{U*}:\pi_1(M_U,m_t)\rightarrow \pi_1(U,t),$$ where  $f'_U:M_U\rightarrow U$
is the restriction of $f'$ and $f'_U(m_t)=t$, is a subgroup of finite index in $\pi_1(U,t)$.
Thus the monodromy group of the family $\mathcal{J}_{M_U}\rightarrow M_U$ acts via a subgroup of finite index
of $Sp(H^1(\mathcal{J}_{m_t},\mathbb{Z}))$ on the cohomology of $\mathcal{J}_{m_t}$.
On the other hand, it is  a general fact that the  monodromy acts on
$H^2(J(Y_{m_t}),\mathbb{Q})=\bigwedge^2H^1(J(Y_{m_t}),\mathbb{Q})$ by preserving (for very general $t$) the N\'eron-Severi group of $J(Y_t)$ and
with finite orbits on ${\rm NS}\,(J(Y_t))_\mathbb{Q}$. The only elements of $\bigwedge^2H^1(J(Y_{m_t}),\mathbb{Z})$
which have finite orbit under $Sp(H^1(\mathcal{J}_{m_t},\mathbb{Z}))$ are the multiples of the class $\Theta_t$
and
we conclude that ${\rm NS}\,(J(Y_t))_\mathbb{Q}=\mathbb{Q}\Theta_{Y_t}$.
\end{proof}
\begin{proof}[Proof of Proposition \ref{prothetaample}] The morphism  $ \pi_{\calF^0}:  \mathcal{J}_{\calF^0} \to {\calF^0}$ is projective.
Next we recall from Lemma \ref{lemmataumor} that the involution $(-1)$ acting on $\mathcal{J}_{U_1}$, hence on
$\mathcal{J}_{\calF^0_{U_1}}$ by Lemma \ref{leisoenhaut},
 extends to an involution acting on $\mathcal{J}_{\calF^0}$ over $\calF^0$: it
can be defined  as the involution $\mathcal{F}\mapsto \mathcal{F}^\vee$ acting on reflexive sheaves on $\wt C_{l,Y}$,  where for any point $([l],[Y])\in \calF^0$, the  curve
$\wt C_{l,Y}$ is the incidence curve of the line $ l\in F(Y)$ (cf. Lemma \ref{lemmataumor}). Notice that since the relative compactified Prym is defined as a component of the fixed locus of $-\iota^*$, where $\iota: \wt C_{l,Y} \to \wt C_{l,Y}$ is the usual natural involution, we have $(-1)=\iota^*$ on $\mathcal{J}_{\calF^0_{U_1}}$.
It follows that starting from any $\pi_{\calF^0}$-relatively ample line bundle
$\mathcal{L}$ on $\mathcal{J}_{\calF^0}$, we can construct a $\pi_{\calF^0}$-relatively ample line bundle
$\mathcal{L}'$ which is both $\pi_{\calF^0}$-ample and $(-1)$-invariant, namely
$$\mathcal{L}'=\mathcal{L}\otimes (-1)^*\mathcal{L}.$$
 We now apply Lemma \ref{lepic} to the natural morphism
 $f'=p_{\mid \calF^0_{U_1}}: \calF^0_{U_1}\rightarrow U_1$
 and the $(-1)$-invariant line bundle
$\mathcal{L}'_{\mid \mathcal{J}_{\calF^0_{U_1}}}$. It says that up to replacing $\mathcal{L}'$ by a multiple, we
have \begin{eqnarray}
\label{eqpourL'}{\mathcal{L}'}_{\mid \mathcal{J}_{\calF^0_{U_1}}}=f^*\mathcal{O}(d\Theta)\otimes \pi_{\calF^0_{U_1}}^*\mathcal{N}_1,
\end{eqnarray} for some integer $d$, where $\mathcal{N}_1$ is  a line bundle on $\calF^0_{U_1}$.
We observe now that ${\rm Pic}\,\calF^0_{U_1}={\rm Pic}\,\calF^0$ because $\calF^0$ is smooth and ${\rm codim}\,(\calF^0\setminus \calF^0_{U_1}\subset \calF^0)\geq2$, and for the same reason,
${\rm Pic}\,\mathcal{J}_{\calF^0_{U_1}}={\rm Pic}\,\mathcal{J}_{\calF^0}$ because $\pi_{\calF^0}$ is flat, hence also  ${\rm codim}\,(\mathcal{J}_{\calF^0}\setminus \mathcal{J}_{\calF^0_{U_1}}\subset \mathcal{J}_{\calF^0})\geq2$, and $\mathcal{J}_{\calF^0}$ is smooth.
Thus the line bundle $\mathcal{N}_1$ extends to a unique line bundle
$\mathcal{N}$ on $\calF^0$ and (\ref{eqpourL'}) is true as well over $\mathcal{J}_{\calF^0}$, proving that
\begin{eqnarray}\label{eqpourlprime} {\mathcal{L}'}=
\mathcal{O}(d\overline{\widetilde{\Theta_1}})\otimes \pi_{\calF^0}^*\mathcal{N}.
\end{eqnarray}
As ${\mathcal{L}'}$ is $\pi_{\calF^0}$ ample, so is $\overline{\widetilde{\Theta_1}}$ by (\ref{eqpourlprime}).
\end{proof}
Using the above results and Proposition \ref{relative prym smooth}, we now prove the following result (see main Theorem  in the introduction) :
\begin{theorem}\label{theoconst} Let $j_1:U_1\rightarrow B$ be the inclusion. Then

(i) For any sufficiently large integer $d>0$, the sheaf of algebras
$$\mathcal{E}^*:=R^0j_{1*}(\oplus_{k\geq0} R^0\pi_{U_1*}\mathcal{O}_{\mathcal{J}_{U_1}}(kd\Theta_1))$$
is a sheaf of algebras of  finite type, and each summand $R^0j_{1*}( R^0\pi_{U_1*}\mathcal{O}_{\mathcal{J}_{U_1}}(kd\Theta_1))$ is a locally free coherent sheaf on $B$.

(ii) The variety $\overline{\mathcal{J}}:={\rm Proj}\,(\mathcal{E}^*)\rightarrow B$ is a smooth projective  compactification of $\mathcal{J}_{U_1}$.

(iii)  The variety $\overline{\mathcal{J}}$ is irreducible
hyper-K\"ahler.
\end{theorem}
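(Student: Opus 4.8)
The plan is to promote the symplectic form produced in Section \ref{sectintjacbun} to the full irreducible holomorphic symplectic property, by combining the Beauville--Bogomolov decomposition theorem with the big monodromy of the fibration and with the eightfold of \cite{llss}. First I would record that $\overline{\mathcal{J}}$ is holomorphically symplectic: by (i) and (ii) the morphism $\overline{\mathcal{J}}\to B$ is flat of relative dimension $5$ and restricts to $\pi_{U_1}:\mathcal{J}_{U_1}\to U_1$ over $U_1$, and since $B\setminus U_1=(X^\vee)^{sing}$ has codimension $2$ in $B=(\mathbb{P}^5)^\vee$, the boundary $\overline{\mathcal{J}}\setminus \mathcal{J}_{U_1}$ has codimension $\geq 2$ in the smooth variety $\overline{\mathcal{J}}$. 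Proposition \ref{proextnondeg} and Corollary \ref{coroextend} then yield a nowhere degenerate holomorphic $2$-form $\sigma$ on $\overline{\mathcal{J}}$ extending $\sigma_{U_1}$, so $\overline{\mathcal{J}}$ is a compact holomorphically symplectic manifold. By the Beauville--Bogomolov theorem \cite{beauville} it then remains only to prove that $\overline{\mathcal{J}}$ is simply connected and that $H^0(\overline{\mathcal{J}},\Omega^2_{\overline{\mathcal{J}}})=\mathbb{C}\sigma$.

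For both of these I would exploit the Lagrangian fibration $\pi:\overline{\mathcal{J}}\to B$ together with the monodromy of the family. Since the boundary has codimension $\geq 2$, restriction identifies holomorphic forms on $\overline{\mathcal{J}}$ with holomorphic forms on $\mathcal{J}_{U_1}$, which may be analysed over the smooth locus $\mathcal{J}_U\to U$. The cohomology class of a global holomorphic $p$-form restricts on each fiber $J(Y_t)$ to a monodromy invariant class; by Picard--Lefschetz theory the monodromy of $R^1\pi_{U*}\mathbb{Q}\cong R^3u_*\mathbb{Q}$ is the full symplectic group (the input already used in the proof of Lemma \ref{lepic}), so the only invariants in $H^*(J(Y_t))$ are generated by the polarization class, which is of type $(1,1)$. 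Hence every global holomorphic $1$-- and $2$--form restricts to zero on the fibers of $\pi$. For a $1$--form this means it annihilates the vertical directions, so it is pulled back from $B$ and therefore vanishes, as $H^0(\mathbb{P}^5,\Omega^1)=0$; thus $b_1(\overline{\mathcal{J}})=0$. The same vanishing of fiberwise restrictions, combined with the fact that the Picard--Lefschetz transvections attached to the nodal degenerations span $H_1$ of a general fiber, feeds into the standard argument for total spaces of abelian fibrations with full monodromy over a simply connected base ($B=\mathbb{P}^5$) to give $\pi_1(\overline{\mathcal{J}})=0$.

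It remains to prove $h^{2,0}(\overline{\mathcal{J}})=1$, and this is the step where \cite{llss} is essential. By \cite{llss} (see also \cite{addle}) the intermediate Jacobian fibration contains a divisor $\mathcal{D}$ that is birational to a $\mathbb{P}^1$--bundle $P\to Z$ over the hyper-K\"ahler eightfold $Z$ of \cite{llss}, which is of $K3^{[4]}$ type and therefore has $h^{2,0}(Z)=1$. As $P\to Z$ is a $\mathbb{P}^1$--bundle, $H^{2,0}(P)=H^{2,0}(Z)=\mathbb{C}$, and I would show that the restriction map $H^{2,0}(\overline{\mathcal{J}})\to H^{2,0}(P)$ is injective: a holomorphic $2$--form vanishing on $\mathcal{D}$ vanishes on all fibers of $\pi$ by the previous paragraph, and these two facts force it to vanish identically, using that $\mathcal{D}$ dominates $B$ and the non-degeneracy of $\sigma$ to control the horizontal directions (as encoded in Theorem \ref{theogen2form}(ii)). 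This bounds $h^{2,0}(\overline{\mathcal{J}})\leq 1$, with equality provided by $\sigma$. With $\pi_1(\overline{\mathcal{J}})=0$ and $H^0(\overline{\mathcal{J}},\Omega^2)=\mathbb{C}\sigma$, the Beauville--Bogomolov decomposition reduces to a single irreducible holomorphic symplectic factor, proving (iii).

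I expect the genuinely delicate point to be the injectivity of $H^{2,0}(\overline{\mathcal{J}})\to H^{2,0}(P)$. The monodromy argument cleanly kills the fiberwise part of any holomorphic $2$--form, but converting ``vanishes on the LLSV divisor and on every fiber'' into ``vanishes identically'' requires the precise way in which $\mathcal{D}$ meets the fibers of $\pi$ and the interplay with the Lagrangian structure; it is exactly here that the geometry of \cite{llss} enters, rather than soft Hodge theory alone.
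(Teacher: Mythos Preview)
Your approach to (iii) diverges from the paper's at the crucial step, and the divergence creates a genuine gap.

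The paper does \emph{not} attempt to prove $\pi_1(\overline{\mathcal{J}})=0$ directly. Instead, it proves a single statement (Lemma \ref{leholo}): for \emph{every} finite \'etale cover $\widetilde{\overline{\mathcal{J}}}$ of $\overline{\mathcal{J}}$, one has $H^0(\widetilde{\overline{\mathcal{J}}},\Omega^2)=\mathbb{C}\sigma$. The mechanism is exactly the simple connectedness of the LLSV eightfold $F_3(X)$ (a deformation of $S^{[4]}$): since the $\mathbb{P}^1$--bundle $\mathbb{P}\to F_3(X)$ is simply connected, the rational map $\mathbb{P}\dashrightarrow \overline{\mathcal{J}}$ lifts to $\mathbb{P}\dashrightarrow \widetilde{\overline{\mathcal{J}}}$ for any finite \'etale cover, and then the same restriction argument bounds $h^{2,0}$ on the cover. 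Beauville--Bogomolov, together with the fact that a symplectic automorphism of an irreducible hyper-K\"ahler manifold has fixed points, then forces the decomposition to be a single IHS factor with trivial covering group. You use $F_3(X)$ only to bound $h^{2,0}(\overline{\mathcal{J}})$ itself, and so you miss precisely the leverage its simple connectedness provides.

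Your substitute for this---the direct monodromy argument for $\pi_1(\overline{\mathcal{J}})=0$---is the gap. There is no ``standard argument'' that full $\mathrm{Sp}$ monodromy on $H_1$ of the smooth fiber plus $\pi_1(B)=0$ yields $\pi_1$ of the total space trivial; such statements require control over the singular fibers (e.g.\ that they are irreducible with known local fundamental group), and here the fibers over $B\setminus U_1$ are compactified Pryms whose topology you have not analysed. Even the weaker claim $b_1=0$ does not, by itself, rule out a finite \'etale cover on which extra holomorphic $2$--forms appear. Your own hedging (``feeds into the standard argument'') signals that this step is not pinned down. A secondary point: your injectivity argument for $H^{2,0}(\overline{\mathcal{J}})\to H^{2,0}(P)$ is vaguer than necessary. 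The paper's argument is cleaner: the theta divisor $\Theta_t\subset J(Y_t)$ is ample, so the restriction $H^0(J(Y_t),\Omega^i)\to H^0(\Theta_t^0,\Omega^i)$ is injective for $i\le 2$, and this fiberwise statement is what kills a $2$--form vanishing on $\Theta$.
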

\begin{proof} (i) and (ii)  It suffices to prove the existence of $d$ locally in the Zariski topology.
For any $b\in B$, there exists by Corollary \ref{coroverygood} a very good line $l_b\in Y_b$.
As the family $\calF^0\rightarrow B$ of very good lines in the fibers of $u$ is smooth
over $B$, we may assume up to an \'etale base change
$f':M\rightarrow B$, $m\rightarrow b$, that  there is a section
$f:M\rightarrow \mathcal{F}_M,\,m\mapsto l_m$. Furthermore, the conclusion of Corollary \ref{corotransdef} holds, so that the corresponding family of plane quintic curves $C_{l_m}$ induces a versal deformations of
${\rm Sing}(C_{l_m})$.
We can then apply the results of Section \ref{sectprym} and especially  Theorem
\ref{smoothness properties}
which  provides
a smooth projective flat compactification
$\pi_M:\mathcal{J}_M\rightarrow M$ of $\mathcal{J}_{M_1}:=\mathcal{J}_{U_1}\times_{U_1}M_{1}$, where $M_{1}={f'}^{-1}(U_1)\subset M$.
If $\mathcal{L}$ is a $\pi_M$-relatively ample line bundle on $\mathcal{J}_M$, for some
$l_0$ large enough, $R^i\pi_{M*}\mathcal{L}^{\otimes n}=0$ for $n\geq l_0$, $i>0$, and thus we conclude
by flatness of $\pi_M$ that:

(a)
$R^0\pi_{M*}\mathcal{L}^{\otimes n}$ is locally free for $n\geq d_0$.

(b) $\oplus_k R^0\pi_{M*}\mathcal{L}^{\otimes kd_0}$ is a sheaf of finitely generated
$\mathcal{O}_M$-algebras.

(c) The smooth variety $\mathcal{J}_M$ is isomorphic over $M$ to $ {\rm Proj}\,(\oplus_k R^0\pi_{M*}\mathcal{L}^{\otimes kd_0})$.

Next,  let
$j_{M_1}: M_1\rightarrow M$ be the inclusion map. As $M\setminus M_1$ has codimension $\geq 2$,
and $\pi_M$ is flat, $\mathcal{J}_{M}\setminus \mathcal{J}_{M_1} $ also has codimension $\geq2$. As
$M$ and $\mathcal{J}_{M}$ are  smooth, we conclude that
\begin{eqnarray}\label{eqpresquefin22sep} R^0j_{M_1*}(R^0\pi_{M_1*}({\mathcal{L}}_{\mid \mathcal{J}_{M_1}}^{\otimes k}))=R^0\pi_{M*}({\mathcal{L}}^{\otimes k}).
\end{eqnarray}
We now assume that $\mathcal{L}$ is $-1$-invariant, so $\mathcal{L}=\mathcal{L}'$ satisfies
(\ref{eqpourlprime}). Up to shrinking $M$,
the line bundle $\mathcal{N}$ appearing in (\ref{eqpourlprime}) is trivial on $M$,
so that ${\mathcal{L}}_{\mid \mathcal{J}_{M_1}}=f_1^*\mathcal{O}(d'\Theta_1)$ for some integer
$d'$, where $f_1:\mathcal{J}_{M_1}\rightarrow \mathcal{J}_{U_1}$ is the natural map over
$f'_{\mid M_1}/M_1\rightarrow U_1$.
  Thus (a), (b), (c) above and (\ref{eqpresquefin22sep}) prove (i) and (ii) after pull-back to $M$.
In other words, we proved that (i) and (ii) are true \'etale locally on $B$, that
 is, after  \'{e}tale base changes
$f':M\rightarrow B$ of small Zariski open sets of $B$ covering $B$. This clearly implies
(i) and (ii), for example because an \'etale base change is a local isomorphism in the analytic topology and (i) and (ii) are local statements in the analytic topology.

(iii)  We know by Proposition \ref{proextnondeg} that $\mathcal{J}_{U_1}$ has a nondegenerate holomorphic $2$-form which by
Theorem \ref{theogen2form} (iii) extends to a nondegenerate holomorphic $2$-form on $\overline{\mathcal{J}}$, since
${\rm codim} (\overline{\mathcal{J}}\setminus \mathcal{J}_{U_1}\subset \overline{\mathcal{J}})\geq2$.
What remains to be done is to prove that $\overline{\mathcal{J}}$ is irreducible hyper-K\"ahler. We have the following
lemma:
\begin{lemma}\label{leholo} The holomorphic $2$-forms on any finite \'etale cover $\widetilde{\overline{\mathcal{J}}}$ of $\overline{\mathcal{J}}$ are
multiples of the form $\sigma$ coming from $X$.
\end{lemma}
\begin{proof} The variety $\overline{\mathcal{J}}$ contains the Zariski closure of the  Theta divisor $\Theta \subset \mathcal{J}$
which is birational to a $\mathbb{P}^1$-bundle over
the Lehn-Lehn-Sorger-van Straten variety $F_3(X)$. Indeed, recall
from \cite{llss} that $F_3(X)$ parametrizes birationally nets $|D|$ of rational cubic curves on cubic surfaces
$S\subset X$. Consider the $\mathbb{P}^1$-bundle $\mathbb{P}\rightarrow F_3(X)$ with fiber over $(S,D)$ the
$\mathbb{P}^1$ of hyperplanes in $\mathbb{P}^5$ containing $S$.
Then $\mathbb{P}$ admits a morphism to $(\mathbb{P}^5)^*$ whose fiber over $H_Y$ parametrizes
the nets of cubic rational curves in $Y$. We already mentioned that via the
map
$$|D|\mapsto \Phi_X(D-h^2)\in J(Y),$$
the set of such nets dominates (in fact, is birational to) the Theta divisor of $J(Y)$. This construction
in family over $U$ provides the rational map
$\mathbb{P}\dashrightarrow\Theta\subset \mathcal{J}\subset Z$.

The proof of Lemma \ref{leholo} is now immediate:
The variety $F_3(X)$ is simply connected (it is a deformation of $S^{[4]}$ for some
K3 surface $S$, see \cite{addle}), hence $\mathbb{P}$ is simply connected, so the rational map
$\mathbb{P}\dashrightarrow \overline{\mathcal{J}}$ constructed above lifts to
a rational map
$\mathbb{P}\dashrightarrow \widetilde{\overline{\mathcal{J}}}$ for any finite \'etale cover $\widetilde{\overline{\mathcal{J}}}$ of $\overline{\mathcal{J}}$.
As $F_3(X)$ is  an irreducible hyper-K\"ahler manifold and $\mathbb{P}\rightarrow F_3(X)$ is
a $\mathbb{P}^1$-bundle, the holomorphic $2$-forms on $\mathbb{P}$ are all multiples of the restriction of
$\sigma_{\overline{\mathcal{J}}}$. It thus only suffices to show that if $\alpha$ is a holomorphic $2$-form
on $\widetilde{\overline{\mathcal{J}}}$ which vanishes on the image $\Theta$ of $\mathbb{P}$, $\alpha=0$. That follows however
immediately from the fact that,
fiberwise, as $\Theta_t\subset \widetilde{\mathcal{J}_t}$ is an ample divisor, the restriction map
$$H^0(\widetilde{\mathcal{J}_t},\Omega_{\widetilde{\mathcal{J}_t}}^i)\rightarrow H^0(\Theta_t^0,\Omega_{\Theta^0_t}^i)$$
is injective for $t\in U$, and $0\leq i\leq 2$, where $\Theta_t^0$ is the smooth locus of $\Theta_t$.
\end{proof}
By the Beauville-Bogomolov decomposition theorem
\cite{beauville}, Lemma \ref{leholo} implies that
$\overline{\mathcal{J}}$ is irreducible hyper-K\"ahler.
\end{proof}

\section{Construction of a birational map in the Pfaffian case}\label{sectpfaffian}
We have established in Section \ref{sectrans18janvier} that a general Pfaffian cubic fourfold satisfies the same vesality statements as a general cubic, and thus  the relative intermediate Jacobian associated to a general Pfaffian cubic has a smooth projective hyper--K\"ahler compactification $\ov {\mc{J}}$. The purpose of this section is to establish that this compactification $\ov{\mc{J}}$ is birational to an OG10 hyper--K\"ahler manifold, and thus by  Huybrechts' result \cite{huybrechts} ({\it two birationally equivalent HK manifolds are deformation equivalent}) is deformation equivalent to OG10. This completes the proof of the main theorem (stated in the introduction).

What is used about Pfaffian cubic fourfolds $X$ is that they have associated (cf. \cite{bedo}) degree $14$ $K3$ surfaces $\Sigma$. We will prove below that birationally the relative intermediate Jacobian fibration $\ov{\mc{J}}$ associated to a Pfaffian cubic $X$ can be interpreted as a moduli space of sheaves on $\Sigma$, linking in with the original construction of OG10 of O'Grady \cite{og10}.

\begin{remark}
Of course, the same approach can be applied to other classes of cubic fourfolds for which there is an associated $K3$ surface (in the sense of Hassett \cite{hassett}). In particular, as mentioned in the introduction, the relative intermediate Jacobian fibration can be seen to be related to the OG10 construction in the case of nodal cubic fourfolds (cf. Hwang--Nagai \cite{hwangnagai}) or in the case of degenerations to the chordal cubic fourfolds (cf. O'Grady--Rapagnetta). In fact, in those cases the geometry of relating the relative intermediate Jacobian fibration to sheaves on the associated $K3$ is easier than in the Pfaffian case. However, the problem is that our construction will not lead to a smooth compactification $\ov{\mc{J}}$ (and thus Huybrechts' result does not suffices). In other words, it is essential for our proof of deformation equivalence to OG10 that  the general Pfaffian cubic satisfies both: it has an associated $K3$ and  it behaves similarly to a general cubic from the perspective of good lines (see Section \ref{sectrans18janvier}, esp. \ref{corallow} and \ref{lepfgood}).
\end{remark}

\subsection{The Pfaffian case - Statement of the main result and Strategy}
Let $X$ be a Pfaffian cubic fourfold. This means that there is a
$6$-dimensional vector space $V_6$ and a $6$-dimensional vector
space $W_6\subset \bigwedge^2V_6^*$ such that $X\subset
\mathbb{P}(W_6)$ is the Pfaffian cubic hypersurface defined by the
Pfaffian equation $\omega^3=0$. Following \cite{bedo}, let
$\Sigma\subset G(2,V_6)$ be the  surface  defined as follows:
$$\Sigma=\{[l]\in G(2,V_6),\,\omega_{\mid V_l}=0\,\forall \omega\in W_6\},$$
 where we denote by $V_l\subset V_6$ the  rank $2$ vector subspace corresponding to $[l]$ ($l$ will denote the corresponding projective line in $\mathbb{P}(V_6)$).
Being defined (for general $X$ as above) as the complete
intersection of $6$ linear Pl\"ucker forms on $G(2,6)$, $\Sigma$ is
a smooth degree $14$ K3 surface.

Our goal in this section  is to prove the following result:
\begin{theorem} \label{theobirat}The intermediate Jacobian fibration $\mathcal{J}$ of
$X$ is birational to the O'Grady moduli space
$\mathcal{M}_{2,0,4}(\Sigma)$ parametrizing rank $2$ semistable
sheaves on $\Sigma$ with $c_1=0,\,c_2=4$.
\end{theorem}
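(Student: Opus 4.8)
The plan is to realize both $\mathcal{J}$ and $\mathcal{M}_{2,0,4}(\Sigma)$ as families fibered over (dense open subsets of) the same base $(\mathbb{P}^5)^\vee$, and to produce a fiberwise isomorphism that globalizes to a birational map. The key geometric input is the Beauville--Donagi \cite{bedo} correspondence, which for a Pfaffian cubic $X\subset \mathbb{P}(W_6)$ with associated degree $14$ $K3$ surface $\Sigma\subset G(2,V_6)$, identifies the Fano variety of lines $F(X)$ with the Hilbert scheme $\Sigma^{[2]}$. First I would examine the generic fiber: for a smooth hyperplane section $Y=X\cap H$ (with $[H]\in (\mathbb{P}^5)^\vee$ general), I want to identify $J(Y)$ with a moduli space of sheaves on $\Sigma$ with the prescribed Mukai vector. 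This is where I would invoke the work of Iliev--Markushevich \cite{ilievmarku}, Markushevich--Tikhomirov \cite{markutikho}, and Kuznetsov \cite{kuznetsov}: these papers establish precisely that the intermediate Jacobian of a cubic threefold arising as a hyperplane section of a Pfaffian cubic fourfold is isomorphic (as a principally polarized abelian variety, via an Abel--Jacobi type construction) to a Prym-theoretic or sheaf-theoretic object living on $\Sigma$.

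The heart of the argument is to interpret the relative Jacobian fibration $\mathcal{J}_U\to U$ as a \emph{relative} moduli space. Each hyperplane $H$ cuts $\Sigma$ (or rather cuts $G(2,V_6)$ in a way that restricts to $\Sigma$), producing a curve or a linear-system datum on $\Sigma$, and the family of these data as $[H]$ varies over $U$ should sweep out a linear system on $\Sigma$ whose associated relative compactified Jacobian (in the Beauville--Mukai sense) is exactly the O'Grady moduli space $\mathcal{M}_{2,0,4}(\Sigma)$. Concretely, I would: (1) use the Pfaffian geometry to associate to a general line $l\subset Y$ the pair $(\widetilde{C}_l, C_l)$ and reinterpret the Prym variety $\mathrm{Prym}(\widetilde{C}_l/C_l)\cong J(Y)$ in terms of a sheaf on $\Sigma$ supported on a curve in the relevant linear class; (2) check that the Mukai vector of this sheaf is $(2,0,4)$ by a Riemann--Roch / Chern class computation tracking rank, $c_1$, and $c_2$ through the correspondence; and (3) verify that the polarization and the discriminant match so that semistability on the two sides corresponds. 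Steps (1)--(2) are essentially bookkeeping once the correspondence is set up, and can be read off from the cited references applied fiberwise.

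The genuinely delicate step, and the one I expect to be the main obstacle, is promoting the \emph{fiberwise} isomorphism over $U$ to a single \emph{birational} map between the total spaces. The issue is matching the two fibration structures: the O'Grady moduli space $\mathcal{M}_{2,0,4}(\Sigma)$ carries a Lagrangian fibration (via the support map sending a sheaf to its Fitting support, a curve in a fixed linear system on $\Sigma$) and I must identify the base of that fibration birationally with $(\mathbb{P}^5)^\vee$, compatibly with the identification of fibers. The potential mismatch is a twist: the intermediate Jacobian fibration $\mathcal{J}$ parametrizes $1$-cycles in a fixed degree, while the moduli space of sheaves naturally carries a determinant/twist that may differ by the ``twisted versus untwisted'' distinction flagged in the introduction. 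I would resolve this by using the auxiliary choice of a line (as in the descent of Section \ref{descent section}), since once a line is fixed the twisted and untwisted families are identified, allowing the sheaf-theoretic Abel--Jacobi map to be normalized correctly. Because birationality only requires agreement over a dense open set, I can work over the locus of smooth sections and ignore the boundary; the map is then defined on $\mathcal{J}_U$ and lands isomorphically onto a dense open subset of $\mathcal{M}_{2,0,4}(\Sigma)$, giving the asserted birational equivalence.

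Finally, I would note that this birational statement is all that is needed for the deformation-equivalence conclusion: combined with the smoothness of $\ov{\mathcal{J}}$ for general Pfaffian $X$ (established via the versality results of Section \ref{sectrans18janvier}, specifically Lemmas \ref{lepftrans} and \ref{lepfgood}), Huybrechts' theorem \cite{huybrechts} that birational hyper-K\"ahler manifolds are deformation equivalent upgrades Theorem \ref{theobirat} to the statement that $\ov{\mathcal{J}}$ is deformation equivalent to $OG10$, which is the remaining half of the Main Theorem.
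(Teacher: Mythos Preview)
Your proposal contains a genuine gap. The moduli space $\mathcal{M}_{2,0,4}(\Sigma)$ parametrizes rank $2$ sheaves on $\Sigma$ with trivial determinant; these are sheaves with full two-dimensional support on $\Sigma$, not rank $1$ torsion sheaves supported on curves. Consequently there is no ``Fitting support'' map to a linear system on $\Sigma$, and $\mathcal{M}_{2,0,4}(\Sigma)$ is not a Beauville--Mukai relative Jacobian in the sense you invoke. Your Step (3), matching the base of a support fibration with $(\mathbb{P}^5)^\vee$, therefore has no meaning, and the programme of identifying the two sides as Lagrangian fibrations over the same base breaks down from the start. Relatedly, the references \cite{ilievmarku}, \cite{markutikho}, \cite{kuznetsov} do not supply a direct isomorphism between $J(Y_t)$ and a moduli space of sheaves \emph{on $\Sigma$}; what \cite{ilievmarku}, \cite{markutikho} give is a birational identification of $J(Y_t)$ with a moduli space of rank $2$ bundles \emph{on $Y_t$}, and \cite{kuznetsov} relates the geometry of $Y_3$ and $\bV_{14}$ via the common quartic $Q$. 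The passage from bundles on $Y_t$ to sheaves on $\Sigma$ is precisely the content that has to be constructed.

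The paper's route is substantially more intricate than a fiberwise identification. One starts from a general bundle $E$ on $Y_t$ (representing a general point of $J(Y_t)$ via \cite{ilievmarku}), chooses a section $\sigma\in\Hom(E,\mathcal{E}_{3,t})$ to obtain a degeneracy $K3$ surface $S\subset Y_t$ and a genus~$5$ curve $C\in|\mathcal{O}_S(C)|$, maps $C$ into the quartic $Q_t\subset\mathbb{P}(V_6)$ resolved by both $\mathbb{P}(\mathcal{E}_{3,t})$ and $\mathbb{P}(\mathcal{E}_{14,t})$, passes to a \emph{residual} $(2,2,2,1)$-complete-intersection curve $C'$ in $Q_t$, lifts $C'$ to $\mathbb{P}(\mathcal{E}_{14,t})$, and only then reconstructs a rank~$2$ bundle $E'$ on $\bV_{14,t}$ whose restriction to $\Sigma\subset\bV_{14,t}$ is the desired stable sheaf. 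All of these choices force one to work on an auxiliary variety $W$ dominating $\mathcal{J}_U$ with rationally connected fibers, equipped with a rational map $f:W\dashrightarrow\mathcal{M}_{2,0,4}(\Sigma)$. That $f$ is dominant is not a bookkeeping matter: it is proved by a Mumford--style Chow-theoretic argument showing $f^*\sigma_{\mathcal{M}_{2,0,4}}\neq 0$ and comparing it with the pulled-back $2$-form from $\mathcal{J}_U$. That $f$ factors through $\mathcal{J}_U$ and is birational then follows from the non-uniruledness of $\mathcal{J}_U$ (so $\mathcal{J}_U$ is the MRC base of $W$) together with a separate argument that the general fiber of $f$ is rationally connected, which in turn requires showing that the bundle $E'$ on $\bV_{14,t}$ is uniquely recovered from $E'_{\mid\Sigma}$. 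None of these steps is visible in your outline; in particular, the ``residual curve through $Q$'' construction and the MRC/$2$-form factorization argument are the missing ideas.
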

\begin{cor} \label{cordefoeq} The compactified Jacobian fibration $\overline{\mathcal{J}}$ of Theorem
\ref{theoconst} is a deformation of O'Grady's $10$-dimensional variety.
\end{cor}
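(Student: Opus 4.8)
The plan is to deduce the corollary from Theorem \ref{theobirat} by invoking Huybrechts' deformation-invariance theorem \cite{huybrechts}, once I have arranged that $\ocJ$ is \emph{smooth} in the Pfaffian case and that the birational model can be taken to be a smooth hyper-K\"ahler manifold of OG10 type. First I would observe that for a general Pfaffian cubic fourfold $X$ the entire construction of Theorem \ref{theoconst} applies verbatim. The only places where the generality of $X$ enters are the simultaneous versality of the associated quintic curves (Lemma \ref{letransXgen}, Corollary \ref{corotransdef}) and the existence of very good lines (Corollary \ref{corallow}); both have been re-established for a general Pfaffian cubic in Section \ref{sectranpf} (Lemmas \ref{lepftrans} and \ref{lepfgood}). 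Granting these, Proposition \ref{relative prym smooth} produces a smooth, equidimensional relative compactified Prym $\ov P_{\calF^0}\to\calF^0$, and the descent argument of Section \ref{descent section} (Proposition \ref{prothetaample} together with Theorem \ref{theoconst}) yields a smooth projective $\ocJ$, flat over $B$, carrying the nondegenerate holomorphic $2$-form and irreducible hyper-K\"ahler by Lemma \ref{leholo}. In particular $\ocJ$ is a $10$-dimensional compact hyper-K\"ahler manifold birational to $\cJ$, since both contain $\cJ_U$ as a dense open subset.

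Next, by Theorem \ref{theobirat}, $\cJ$ (and hence $\ocJ$) is birational to the O'Grady moduli space $\mathcal{M}_{2,0,4}(\Sigma)$ of semistable rank $2$ sheaves on the associated degree $14$ K3 surface $\Sigma$. The point to record is that the Mukai vector of this moduli problem is $v=(2,0,-2)$: from $(r,c_1,c_2)=(2,0,4)$ one gets $\operatorname{ch}_2=\tfrac12(c_1^2-2c_2)=-4$, hence $v=(r,c_1,\operatorname{ch}_2+r)=(2,0,-2)=2v_0$ with $v_0=(1,0,-1)$ primitive and $v_0^2=2$. This is exactly O'Grady's configuration \cite{og10}: $\mathcal{M}_{2,0,4}(\Sigma)$ is the singular moduli space (singular along the strictly semistable locus) admitting a canonical symplectic resolution $\widetilde{\mathcal{M}}_{2,0,4}(\Sigma)$ which is a smooth projective hyper-K\"ahler manifold of OG10 type. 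As a symplectic resolution is in particular a birational morphism, $\ocJ$ is birational to the \emph{smooth} manifold $\widetilde{\mathcal{M}}_{2,0,4}(\Sigma)$.

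Since $\ocJ$ and $\widetilde{\mathcal{M}}_{2,0,4}(\Sigma)$ are now both smooth projective hyper-K\"ahler manifolds and are birational, Huybrechts' theorem \cite{huybrechts} gives that they are deformation equivalent; thus $\ocJ$ is deformation equivalent to OG10 for a general Pfaffian cubic. To reach an arbitrary general cubic fourfold I would then use deformation invariance of the construction itself: the fibrations $\ocJ$ form a smooth family over the irreducible parameter space of general cubic fourfolds (in the sense of Theorem \ref{theoconst}), and by Section \ref{sectranpf} this space contains the general Pfaffian cubics as a nonempty (divisorial) sublocus. Because the members of a smooth family of compact K\"ahler manifolds are all deformation equivalent, the value computed on the Pfaffian locus propagates to the general cubic, which yields the corollary.

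The main obstacle, and the reason the Pfaffian versality statements of Section \ref{sectranpf} are indispensable, is precisely the mismatch between the target of Theorem \ref{theobirat} and the hypotheses of \cite{huybrechts}: the birational map lands in the \emph{singular} moduli space $\mathcal{M}_{2,0,4}(\Sigma)$, whereas Huybrechts' theorem applies only to smooth hyper-K\"ahler manifolds. The crux is therefore to know that $\ocJ$ is itself smooth, so that it may be compared with the smooth O'Grady desingularization $\widetilde{\mathcal{M}}_{2,0,4}(\Sigma)$ rather than with the singular moduli space; everything else in the argument is formal.
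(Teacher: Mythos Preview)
Your proof is correct and follows essentially the same route as the paper: establish smoothness of $\ocJ$ for a general Pfaffian cubic via Section \ref{sectranpf}, invoke Theorem \ref{theobirat} to get a birational map to the O'Grady moduli space, and apply Huybrechts' theorem to conclude deformation equivalence, then propagate through the family to the general cubic. You spell out explicitly the Mukai-vector computation and the passage through O'Grady's symplectic resolution $\widetilde{\mathcal{M}}_{2,0,4}(\Sigma)$, which the paper leaves implicit, but the logical structure is identical.
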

\begin{proof} Indeed, Theorem \ref{theoconst} is valid when $X$ is a general Pfaffian
cubic fourfold, thanks to the results of Section \ref{sectranpf} which guarantee that the assumptions
needed to make the local construction of Section \ref{sectprym} work are satisfied in the general Pfaffian case. Hence we can rephrase Theorem
\ref{theobirat} saying that  our family $(\overline{\mathcal{J}}_{X})_{[X]\in W\subset \mathcal{H}yp_{4,3}}$ of hyper-K\"ahler compactified Jacobian fibrations
$\overline{\mathcal{J}}_{X}$ parametrized  by an open set $W$ of the  space of all cubic fourfolds
 has a smooth member
which is birational to O'Grady's $10$-dimensional variety $OG10$, which is also a smooth hyper-K\"ahler
manifold. We then apply \cite{huybrechts} to conclude that the varieties
$\overline{\mathcal{J}}_X$ are deformation equivalent to $OG10$.
\end{proof}
We will  heavily use the results of \cite{ilievmarku} (based on
\cite{markutikho}) and their generalization in \cite{kuznetsov}. The
proof of Theorem \ref{theobirat} will be completed in subsection
\ref{secconstbirat}. As the intermediate Jacobian fibration is a
fibration over
 $\mathbb{P}(W_6^*)$ into intermediate Jacobians of cubic $3$-folds
 $Y_3=H\cap X$ which come equiped with a Pfaffian representation induced from the one of $X$,
 we will devote sections \ref{subseccubic28dec} and \ref{subsecV1428dec}
  to the Pfaffian $3$-fold geometry, which is the subject of the papers
 \cite{ilievmarku}, \cite{kuznetsov}.
The cubic threefold $Y_3$ is represented as the Pfaffian cubic in
$\mathbb{P}(W_5)$ for some $5$-dimensional vector space $W_5\subset
\bigwedge^2V_6^*$. Associated to this data, we get the index $1$,
degree $14$ Fano $3$-fold $\bV_{14}\subset G(2,V_6)$ defined as
\begin{eqnarray}\label{eqV14} \bV_{14}=\{[l]\in G(2,V_6),\,\omega_{\mid V_l}=0\,\forall \omega\in W_5\}.
\end{eqnarray}
Notice that by construction these Fano $3$--fold all contain the K3 surface $\Sigma$ defined above. The strategy of the proof is the following: First one notices that the relative intermediate Jacobian $\mc J_U$ is birational to a relative moduli space of vector bundles $\mc M$ on the hyperplane sections of $X$ (see beginning of Section \ref{subseccubic28dec}). Then, to a vector bundle on a cubic threefold $Y_3$, $E_{Y_3} \in \mc M$, one would like to associate a vector bundle $E_{\bV_{14}}$ on the corresponding degree $14$ Fano $3$-fold $\bV_{14}$ in order to then restrict it to the K3 surface $\Sigma$. However, to define this assignment $E_{Y_3} \mapsto E_{\bV_{14}}$ we need some more data than $E_{Y_3}$ alone, so we end up first getting a map from a variety dominating $\mc J_U$ (map defined in (\ref{rational map!})), and then showing that this map factors through $\mc J_U$ (Proposition \ref{propfactor}).

The bundles in question on $Y_3$ and on $\bV_{14}$ will be obtained via elementary transformations from  two natural rank two vector bundles on $Y_3$ and on $\bV_{14}$, which we now define.
The variety $\bV_{14}$ comes equipped with the tautological rank $2$
dual vector bundle, that we will denote by $\mathcal{E}_{14}$. The
Pfaffian cubic $Y_3$ comes equipped with the natural rank $2$ vector
bundle $\mathcal{E}_3$ with fiber $({\rm Ker}\,\omega)^*$ over a
$2$-form $\omega\in\mathbb{P}(\bigwedge^2V_6^*)$ of rank $4$. As
$V_6^*=H^0(Y_3,\mathcal{E}_3)=H^0(\bV_{14},\mathcal{E}_{14})$, we have
natural maps
$$\phi:\mathbb{P}(\mathcal{E}_{14})\rightarrow
\mathbb{P}(V_6),\,\psi:\mathbb{P}(\mathcal{E}_{3})\rightarrow
\mathbb{P}(V_6)$$ which are easily seen to have the same image $Q\subset \mathbb{P}(V_6)$.
According to \cite{kuznetsov}, $Q$ is a quartic hypersurface which
is singular along a  curve and $\phi$, $\psi$ are two small
resolutions of $Q$. In particular, $\mathbb{P}(\mathcal{E}_{14})$
and $\mathbb{P}(\mathcal{E}_{3})$ are birational, and even related
by a flop $\theta=\psi^{-1}\circ \phi$.

We will need the following lemma: Denote by $\Delta_{14}$, resp.
$\Delta_{3}$, the general fiber of the projective bundle
$\pi_{\bV_{14}}:\mathbb{P}(\mathcal{E}_{14})\rightarrow \bV_{14}$, resp.
$\pi_{Y_3}:\mathbb{P}(\mathcal{E}_{3})\rightarrow Y_{3}$. Via the birational
map $\theta$, these curves (which do not meet the indeterminacy
locus of $\theta$ or $\theta^{-1}$) can be seen  as curves either in
$\mathbb{P}(\mathcal{E}_{14})$ or in $\mathbb{P}(\mathcal{E}_{3})$,
and we will denote by ${\rm deg}_{\bV_{14}}(\cdot)$, resp. ${\rm
deg}_{Y_3}(\cdot)$ the degree of their projection in $\bV_{14}$, resp.
$Y_3$ with respect to the canonical polarizations. We will also denote by ${\rm deg}_Q(\cdot)$ the degree of
their projections in $Q\subset\mathbb{P}^5$ via $\phi$, resp. $\psi$.
\begin{lemma}\label{ledegVYQ} One has ${\rm deg}_Q\Delta_3={\rm
deg}_Q\Delta_{14}=1$ and \begin{eqnarray}\label{eqdeg28dec}{\rm
deg}_{\bV_{14}}\Delta_{14}=0,\,{\rm deg}_{Y_3}\Delta_{14}=4,\\
\nonumber {\rm deg}_{Y_{3}}\Delta_{3}=0,\,{\rm
deg}_{\bV_{14}}\Delta_{3}=4.
\end{eqnarray}
\end{lemma}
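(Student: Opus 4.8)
The plan is to compute all six intersection numbers by understanding the geometry of the two projective bundles $\mathbb{P}(\mathcal{E}_{14})$ and $\mathbb{P}(\mathcal{E}_3)$ and the maps $\phi, \psi$ to $Q \subset \mathbb{P}(V_6) = \mathbb{P}^5$. The key observation is that both $\phi$ and $\psi$ are small resolutions of the \emph{same} quartic $Q$, so on the open locus where $\theta = \psi^{-1} \circ \phi$ is an isomorphism, a curve $\Delta$ has a well-defined image in $\mathbb{P}^5$ and $\deg_Q(\Delta)$ is just the degree of that image as a curve in $\mathbb{P}^5$. The fibers $\Delta_{14}$ and $\Delta_3$ are the fibers of the $\mathbb{P}^1$-bundle structures, so they map to lines in $\mathbb{P}(V_6)$: indeed, the fiber of $\pi_{\bV_{14}}$ over $[l] \in \bV_{14}$ is $\mathbb{P}((\mathcal{E}_{14})_{[l]})$, and under $\phi$ this maps to the projective line $l \subset \mathbb{P}(V_6)$ (since $\mathcal{E}_{14}$ is the tautological rank $2$ dual bundle, so its fiber is identified with $V_l^*$ and projectivizes to $l$). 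The same holds for $\Delta_3$ via $\psi$, whose fiber over $\omega \in Y_3$ is $\mathbb{P}(\ker\omega)$, again a line in $\mathbb{P}(V_6)$. This immediately gives $\deg_Q \Delta_3 = \deg_Q \Delta_{14} = 1$.

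Next I would compute the ``trivial'' vanishing degrees $\deg_{\bV_{14}}\Delta_{14} = 0$ and $\deg_{Y_3}\Delta_3 = 0$. These are immediate from the fact that $\Delta_{14}$ is a fiber of the bundle map $\pi_{\bV_{14}}: \mathbb{P}(\mathcal{E}_{14}) \to \bV_{14}$, hence is contracted to a point by $\pi_{\bV_{14}}$; its image in $\bV_{14}$ is a point, so its degree with respect to any polarization on $\bV_{14}$ is zero. Symmetrically $\deg_{Y_3}\Delta_3 = 0$ because $\Delta_3$ is a fiber of $\pi_{Y_3}$.

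The remaining two numbers, $\deg_{Y_3}\Delta_{14} = 4$ and $\deg_{\bV_{14}}\Delta_3 = 4$, are the substantive content and the main obstacle. Here $\Delta_{14}$ must be viewed, via the flop $\theta$, as a curve in $\mathbb{P}(\mathcal{E}_3)$, and I must compute the degree of its image under $\pi_{Y_3}$ in $Y_3$ with respect to the canonical (Pl\"ucker, i.e. $\mathcal{O}_{Y_3}(1)$) polarization. The natural approach is to express the relevant relatively ample divisor classes: let $\xi_{14} = c_1(\mathcal{O}_{\mathbb{P}(\mathcal{E}_{14})}(1))$ and $\xi_3 = c_1(\mathcal{O}_{\mathbb{P}(\mathcal{E}_3)}(1))$ be the tautological classes, and let $H$ denote the pullback of the hyperplane class from $\mathbb{P}(V_6)$, which by construction equals $\xi_{14}$ on $\mathbb{P}(\mathcal{E}_{14})$ and $\xi_3$ on $\mathbb{P}(\mathcal{E}_3)$ (both bundles have their $\mathcal{O}(1)$ giving the map to $\mathbb{P}(V_6)$, since $V_6^* = H^0(\mathcal{E}_{14}) = H^0(\mathcal{E}_3)$). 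On $\mathbb{P}(\mathcal{E}_3)$ one has the Grothendieck relation $\xi_3^2 = \pi_{Y_3}^*c_1(\mathcal{E}_3)\cdot \xi_3 - \pi_{Y_3}^*c_2(\mathcal{E}_3)$, and $\deg_{Y_3}\Delta_{14}$ is computed as $\pi_{Y_3}^*h_{Y_3} \cdot [\Delta_{14}]$ where $h_{Y_3} = c_1(\mathcal{O}_{Y_3}(1))$. The plan is to use that $[\Delta_{14}]$, transported by the flop, is homologous (away from the flopped loci, which are of codimension large enough not to affect the curve-divisor pairing since $\theta$ is an isomorphism in codimension one) to a curve class determined by the tautological structure, and then to intersect with $\pi_{Y_3}^*h_{Y_3}$. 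Concretely, since $H = \xi_3$ restricts to the tautological class and $\phi_*[\Delta_{14}]$ is a line in $\mathbb{P}^5$, one knows $\xi_3 \cdot [\Delta_{14}] = H \cdot [\Delta_{14}] = \deg_Q \Delta_{14} = 1$; combining this with the Chern class data of $\mathcal{E}_3$ (namely that $c_1(\mathcal{E}_3) = h_{Y_3}$ and $c_2(\mathcal{E}_3)$ is computed from the Pfaffian geometry, giving $\deg_{Y_3} c_2(\mathcal{E}_3) = 4$ via the identification with the surface-of-lines count) should pin down $\pi_{Y_3}^* h_{Y_3} \cdot [\Delta_{14}] = 4$. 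I expect the delicate point to be justifying that the flop does not alter these intersection numbers and correctly bookkeeping the Chern class contributions coming from $\mathcal{E}_3$ and $\mathcal{E}_{14}$; this is where a careful appeal to the precise results of \cite{kuznetsov} on the structure of the two small resolutions of $Q$, and on the numerical invariants of $\mathcal{E}_3$ and $\mathcal{E}_{14}$, will be essential. The symmetric computation gives $\deg_{\bV_{14}}\Delta_3 = 4$ by interchanging the roles of the two bundles.
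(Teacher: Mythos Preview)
Your treatment of the four ``easy'' numbers ($\deg_Q=1$ for both fibers, and the two vanishing degrees) is correct and coincides with the paper.

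For the two nontrivial numbers your Chern-class/Grothendieck-relation approach has a genuine gap. Knowing only $\xi_3\cdot[\Delta_{14}]=1$ does not determine $\pi_{Y_3}^*h_{Y_3}\cdot[\Delta_{14}]$: the curve-class group $A^3(\mathbb{P}(\mathcal{E}_3))$ has rank~$2$, and you have imposed a single linear condition on $[\Delta_{14}]$ where two are needed. The Grothendieck relation $\xi_3^2=\pi^*c_1\cdot\xi_3-\pi^*c_2$ is a codimension-$2$ identity, not an additional constraint on a curve class; invoking it together with the Chern numbers of $\mathcal{E}_3$ does not by itself produce the missing equation. (One could in principle extract further equations by comparing top self-intersections of divisors across the flop, but you do not set this up, and it is not automatic.) There is also a numerical slip: $\deg_{Y_3}c_2(\mathcal{E}_3)=5$, not $4$ (this value is used elsewhere in the paper, e.g.\ in the proof of Lemma~\ref{lecurveY3}), so any computation built on $4$ would go wrong.

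The paper bypasses all of this with direct geometric arguments. For $\deg_{\bV_{14}}\Delta_3$: the fiber $\Delta_3$ over a general $\omega\in Y_3$ maps under $\psi$ to the line $L_\omega=\mathbb{P}(\ker\omega)\subset\mathbb{P}(V_6)$, and its image in $\bV_{14}$ under $\theta$ is the locus $C_\omega$ of $[l]\in\bV_{14}$ with $l\cap L_\omega\neq\emptyset$. Since any such $l$ automatically satisfies $\omega_{|V_l}=0$, the curve $C_\omega$ is cut out, inside the Schubert variety of lines meeting $L_\omega$, by the remaining $4$-dimensional space $W_5/\langle\omega\rangle$ of Pl\"ucker equations; one checks this has degree~$4$. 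For $\deg_{Y_3}\Delta_{14}$: the fiber over $[l]\in\bV_{14}$ is the line $l\subset\mathbb{P}(V_6)$, and its image $C_l\subset Y_3$ consists of those $\omega\in\mathbb{P}(W_5)$ whose kernel meets $V_l$. Because all of $W_5$ vanishes on $V_l$, the evaluation map $V_6\otimes\mathcal{O}_l(-1)\to W_5^*\otimes\mathcal{O}_l$ factors through $(V_6/V_l)\otimes\mathcal{O}_l(-1)$, a map of bundles of ranks $4$ and $5$ on $\mathbb{P}^1$ whose cokernel has degree~$4$, giving $\deg_{Y_3}C_l=4$. These explicit descriptions supply exactly the geometric input your linear-algebra scheme is missing.
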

\begin{proof} Let $\omega\in Y_3$ be a general point. The fiber
$\Delta_3$ over $\omega$ is thus by definition the projective line
$L_\omega=\mathbb{P}({\rm Ker}\,\omega)$. Its image $C_\omega$ in $\bV_{14}$ is
the set of points $[l]$ in $\bV_{14}$ such that the corresponding projective line $l$ meets
$L_\omega$.
As $L_\omega=\mathbb{P}({\rm Ker}\,\omega)$, one has
$\omega_{\mid V_l}=0$ for any line $l$ meeting $L_\omega$. Thus
$C_\omega$ is the vanishing locus of the $4$-dimensional space
$W_5/\langle\omega\rangle$  of Pl\"ucker equations
 on the set of lines in $V_6$
meeting $L_\omega$. It is easily proved that this locus has degree
$4$ and this proves the last statement. Next let $[l]\in \bV_{14}$ be
a general point. Let $C_l$ be the image in $Y_3$ of the fiber
$\Delta_{14}$ over $[l]$. Then $C_l$ is the set of $\alpha\in W_5$
whose kernel intersects $V_l$ nontrivially, and the map $l\mapsto C_l$ associates to $x\in l$
the unique form $\omega\in \mathbb{P}(W_5)$ such that $x\in \mathbb{P}({\rm Ker}\,\omega)$. As all forms in $W_5$ vanish on $l$,
the natural morphism
$$V_6\otimes \mathcal{O}_l(-1)\rightarrow W_5^*\otimes
\mathcal{O}_l$$ of vector bundles over $l=\mathbb{P}^1$ factors
through $(V_6/V_l)\otimes \mathcal{O}_l(-1)$, and its cokernel has
thus degree $4$, which proves the second statement in
(\ref{ledegVYQ}). The other statements are immediate.
\end{proof}

\subsection{The cubic $3$-fold side.\label{subseccubic28dec}} We know by \cite{markutikho},
\cite{ilievmarku} that the moduli space of stable vector bundles of rank
$2$ on $Y_3$ with $c_1=0$ and ${\rm deg}_{Y_3}c_2=2$ is birationally isomorphic to
$J(Y_3)$ via the Abel-Jacobi map. Let $E$ be such a vector bundle.
\begin{lemma}\label{lecurveY3}  If $E$ is general,  then:

(i)   ${\rm dim}\,H^0(Y_3, E^*\otimes \mathcal{E}_3)=4$.

(ii) Furthermore, if $(Y_3,E)$ is general, for a general section $\sigma\in H^0(Y_3,
E^*\otimes \mathcal{E}_3)$, we have an exact sequence
\begin{eqnarray}\label{eaxctY3}
0\rightarrow E\stackrel{\sigma}{\rightarrow}
\mathcal{E}_3\rightarrow \mathcal{O}_S(C)\rightarrow 0,
\end{eqnarray}
where $S\in|\mathcal{O}_{Y_3}(2)|$ is a smooth K3 surface and
$C\subset S$ is a degree $9$, genus $5$ curve.
\end{lemma}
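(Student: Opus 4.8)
The statement concerns a general stable rank-$2$ bundle $E$ on the Pfaffian cubic threefold $Y_3$ with $c_1=0$ and $\deg_{Y_3}c_2=2$, together with the natural rank-$2$ bundle $\mathcal{E}_3$ (fiber $(\Ker\,\omega)^*$). I would begin with part (i), the computation of $\dim H^0(Y_3,E^*\otimes\mathcal{E}_3)=4$. The natural strategy is a Riemann--Roch / vanishing argument: since $E$ is stable with $c_1=0$ and $\mathcal{E}_3$ is the tautological Pfaffian bundle, I would compute $\chi(Y_3,E^*\otimes\mathcal{E}_3)$ via Hirzebruch--Riemann--Roch using the known Chern classes of $E$ ($c_1=0$, $c_2\cdot h=2$) and of $\mathcal{E}_3$ (which can be read off from the Pfaffian construction, with $H^0(Y_3,\mathcal{E}_3)=V_6^*$ of dimension $6$). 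Then I would kill the higher cohomology: by Serre duality $H^3$ is dual to $H^0$ of a twist that vanishes by stability/slope considerations, and I would argue $H^1=H^2=0$ for general $E$ using semicontinuity together with a computation at one explicitly constructed bundle (for instance a bundle arising as an elementary transformation, or via the Serre construction from a curve). The value $4$ should then drop out of $\chi$ once the vanishing is in place.

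For part (ii), the point is to interpret a general section $\sigma\in H^0(Y_3,E^*\otimes\mathcal{E}_3)=\Hom(E,\mathcal{E}_3)$ as an injective sheaf map $E\xrightarrow{\sigma}\mathcal{E}_3$ and to identify its cokernel. Since both $E$ and $\mathcal{E}_3$ are rank $2$, a general $\sigma$ is generically an isomorphism, so its degeneracy locus (where $\det\sigma$ vanishes) is a divisor $S\in|\det\mathcal{E}_3\otimes\det E^*|$. Because $c_1(E)=0$ and $\det\mathcal{E}_3=\mathcal{O}_{Y_3}(1)$ after normalization of the Pfaffian bundle (this is exactly the kind of normalization I would pin down from the construction), the divisor $\det\sigma=0$ lies in $|\mathcal{O}_{Y_3}(2)|$. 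I would then verify that for general data $(Y_3,E,\sigma)$ this $S$ is smooth, so that by adjunction $K_S=\mathcal{O}_{Y_3}(2-2)|_S=\mathcal{O}_S$, i.e.\ $S$ is a K3 surface; the genericity ensuring smoothness is a Bertini-type argument for the degeneracy locus, which is where the hypothesis that $(Y_3,E)$ is general is used. The cokernel of $\sigma$ is supported on $S$ and, being the cokernel of a generically-iso map of rank-$2$ bundles restricted to the degeneracy divisor, is a line bundle on $S$, which I would write as $\mathcal{O}_S(C)$ for an effective divisor $C$.

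It then remains to compute the numerical invariants of $C$. From the exact sequence
\begin{eqnarray*}
0\rightarrow E\stackrel{\sigma}{\rightarrow}\mathcal{E}_3\rightarrow \mathcal{O}_S(C)\rightarrow 0
\end{eqnarray*}
I would extract $c_1$ and $c_2$ of $\mathcal{O}_S(C)$ (as a sheaf on $Y_3$) in terms of those of $E$ and $\mathcal{E}_3$ via the Whitney formula: $c_2(\mathcal{E}_3)-c_2(E)$ together with $c_1$ data give $[S]\cdot[C]$ and hence $\deg C$, while the genus follows from the genus formula $2p_a(C)-2=C\cdot(C+K_S)=C^2$ on the K3 surface $S$. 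Plugging in the expected values ($\deg_{Y_3}c_2(E)=2$ and the known Chern numbers of $\mathcal{E}_3$) should yield $\deg C=9$ and $p_a(C)=5$, matching the claim. The main obstacle I anticipate is the genericity control in part (ii): proving that for general $(Y_3,E)$ and general $\sigma$ the degeneracy locus $S$ is actually smooth and $C$ is reduced and irreducible of the asserted invariants, rather than merely computing invariants formally. This requires exhibiting at least one bundle $E$ for which $\sigma$ degenerates transversally, which is most cleanly done by an explicit example (e.g.\ a bundle obtained from a line or conic on $Y_3$ by Serre duality / elementary transformation) and then invoking openness. The Chern class bookkeeping, by contrast, is routine once the normalization of $\mathcal{E}_3$ is fixed.
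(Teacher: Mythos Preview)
Your outline is essentially correct and tracks the paper's approach closely, but a few points of execution differ and are worth noting.

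For (i), rather than computing $\chi$ by Hirzebruch--Riemann--Roch, the paper specializes directly to the non-locally-free sheaf $E=\mathcal{I}_{l_1}\oplus\mathcal{I}_{l_2}$ for two general lines $l_1,l_2\subset Y_3$; one checks $h^0(Y_3,\mathcal{E}_3\otimes\mathcal{I}_{l_i})=2$ and $h^p=0$ for $p>0$, so $h^0(E^*\otimes\mathcal{E}_3)=4$ at this point and hence for general $E$ by semicontinuity. This is exactly the ``explicit example plus openness'' step you anticipate, but the particular degeneration is worth knowing since it reappears in (ii).

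For (ii), your structural argument is the same as the paper's: stability forces $\sigma$ to have generic rank $2$, the degeneracy surface lies in $|\mathcal{O}_{Y_3}(2)|$ (note a slip in your normalization: $\det\mathcal{E}_3=\mathcal{O}_{Y_3}(2)$, not $\mathcal{O}_{Y_3}(1)$, which is why $S$ is a quadric section), and smoothness is the delicate point. The paper does not use a Bertini argument; instead it again degenerates to $E=\mathcal{I}_{l_1}\oplus\mathcal{I}_{l_2}$ and works on the universal Pfaffian cubic $Y_{Pf}\subset\mathbb{P}^{14}$, where the determinant $\sigma\wedge\tau$ is identified explicitly as a rank-$6$ quadric whose intersection with $Y_{Pf}$ is smooth off a codimension-$4$ locus; a generic $\mathbb{P}^4$-section then gives smooth $S$. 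For the genus, the paper avoids computing $C^2$ on $S$: it uses $H^0(Y_3,E)=H^1(Y_3,E)=0$ and the exact sequence to get $h^0(S,\mathcal{O}_S(C))=h^0(Y_3,\mathcal{E}_3)=6$, whence $g(C)=5$. Your route via $C^2$ would work but requires an extra Chern-class computation.
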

\begin{proof} (i) Note that $E\cong E^*$, so (i) is equivalent to  ${\rm dim}\,H^0(Y_3, E\otimes \mathcal{E}_3)=4$. We now
specialize to the case where $E$ is not locally free, namely
$E=\mathcal{I}_{l_1}\oplus \mathcal{I}_{l_2}$, where $l_1,\,l_2$ are
two general lines in $Y_3$. Then $h^0(Y_3, \mathcal{E}_3\otimes
\mathcal{I}_{l_i})=2$ for $i=1,\,2$ and
$H^p(Y_3,\mathcal{E}_3\otimes \mathcal{I}_{l_i})=0$ for $i=1,\,2$
and $p>0$, which implies the result for general $E$ by standard
deformation arguments.

(ii) As $E$ is stable, for any $0\not=\sigma\in H^0(Y_3, E^*\otimes
\mathcal{E}_3)$, the generic rank of $\sigma:E\rightarrow
\mathcal{E}_3$ must be $2$, so that we have the exact sequence
(\ref{eaxctY3}). We claim that the surface $S$ defined by the
vanishing of the determinant of $\sigma$ is smooth so that, in
particular, the rank of $\sigma$ is $1$ along $S$ and ${\rm
Coker}\,\sigma$ is a line bundle on $S$.  We clearly have
$S\in|\mathcal{O}_{Y_3}(2)|$ so that assuming the claim, $S$ is a
smooth K3 surface. Finally the exact sequence (\ref{eaxctY3}) and
the fact that ${\rm deg}_{Y_3}c_2(E)=2$, ${\rm
deg}_{Y_3}c_2(\mathcal{E}_3)=5$ immediately imply that the degree of
$C$ is $9$. To conclude, we observe that
$H^0(Y_3,E)=0,\,H^1(Y_3,E)=0$ and the exact sequence (\ref{eaxctY3})
give $H^0(S,\mathcal{O}_S(C))=6$, that is, $g(C)=5$.

 We now prove the claim. As the smoothness of the degeneracy surface $S$ is
an open property (on the moduli space of $4$-uples $(Y_3,E_3, E,\sigma)$ with
${\rm dim}\,{\rm Hom}\,(E,E_3)=4$, which is known to be irreducible by (i) and
 \cite{markutikho},
\cite{ilievmarku}), it suffices to show that the conclusion holds for at least one $4$-uple satisfying the property that ${\rm dim}\,{\rm Hom}\,(E,E_3)=4$.
It turns out that the result is true generically even in the degenerate case where
$E=\mathcal{I}_{l_1}\oplus \mathcal{I}_{l_2}$. This is proved by working more generally on the
universal Pfaffian cubic $Y_{Pf}$ in $\mathbb{P}^{14}$, of which $Y_3$ is a generic linear section.
One easily checks that given two general lines $l_1,\,l_2$ contained in $Y_{Pf}$,
and two sections $\sigma$, resp. $\tau$ of $\mathcal{E}_{Pf}\otimes \mathcal{I}_{l_1}$, resp.
$\mathcal{E}_{Pf}\otimes \mathcal{I}_{l_2}$, the quadratic equation $q=\sigma\wedge\tau \in H^0(Y_{Pf},{\rm det}\,\mathcal{E}_{Pf})=H^0(Y_{Pf},\mathcal{O}_{Y_{Pf}}(2))$ is of the form $\omega\mapsto \omega^2_{\mid W_4}$ for some
$4$-dimensional subspace $W_4\subset V_6$, hence defines a rank $6$ quadric $Q$ on $\mathbb{P}^{14}$. Comparing the differentials of  the defining equations for
$Y_{Pf}$ and $Q$, one then concludes that
$Q\cap Y_{Pf}$ is  smooth away from the set of $\omega$'s whose kernel is contained in the
 codimension $2$ linear subspace $W_4\subset W_6$, which has codimension $4$ in
 $Y_{Pf}$. The generic linear section $Y_3\subset Y_{Pf}$ thus intersects $Q\cap Y_{Pf}$ along a smooth surface.
\end{proof}
Note that if we restrict (\ref{eaxctY3}) to $S$, and then  to $C$,
we conclude, using  $\mathcal{O}_S(C)_{\mid C}=K_C$, that there is a
surjective morphism
\begin{eqnarray}\label{eqsurmorY3}
\mathcal{E}_{3\mid C}\rightarrow K_C,
\end{eqnarray}
hence a section
$$\phi_\sigma: C\rightarrow \mathbb{P}(\mathcal{E}_{3\mid C})$$
such that
$\phi_{\sigma}^*\mathcal{O}_{\mathbb{P}(\mathcal{E}_3)}(1)=K_C$.
This morphism induces a surjection
$H^0(\mathbb{P}(\mathcal{E}_3),\mathcal{O}_{\mathbb{P}(\mathcal{E}_3)}(1))\rightarrow
H^0(C,K_C)$ since both maps $H^0(Y_3,\mathcal{E}_3)\rightarrow
H^0(S,\mathcal{O}_S(C))$ and $H^0(S,\mathcal{O}_S(C))\rightarrow
H^0(C,K_C)$ are surjective. As $H^0(Q,\mathcal{O}_Q(1))=H^0(\mathbb{P}(\mathcal{E}_3),\mathcal{O}_{\mathbb{P}(\mathcal{E}_3)}(1))$, we get as well  a surjective
map $H^0(Q,\mathcal{O}_Q(1))\rightarrow H^0(C,K_C)$. Thus the image
of $C$ in $Q\subset \mathbb{P}(V_6)$ via $\psi\circ \phi_\sigma$ is
a linearly normal canonical genus $5$ curve.

The following lemma shows conversely how to recover the bundle from the curve $C\subset  \mathbb{P}(\mathcal{E}_3)$:
\begin{lemma}\label{lecurveY3reconsE} Let $C\subset\mathbb{P}(\mathcal{E}_3)$ be a general genus $5$
curve such that the image $C'$ of $C$ in $Y_3$  has degree $9$, the morphism
$C\rightarrow C'$ is an isomorphism, and the image of $C$ in $Q\subset \mathbb{P}^5$ via
$\psi\circ \phi_\sigma$ is a linearly normal canonical curve.  Then there exists a
unique stable vector bundle $E$ on $Y_3$ with $c_1=0,\,{\rm
deg}_{Y_3}c_2=2$ and a unique $\sigma \in H^0(Y_3,E^*\otimes
\mathcal{E}_3)$ determining the line bundle $\mathcal{O}_S(C)$ as in Lemma \ref{lecurveY3}.
\end{lemma}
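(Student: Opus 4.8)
The plan is to invert, for a general $C$, the construction of Lemma \ref{lecurveY3}, and to phrase the whole argument in families so that existence and uniqueness come out together. From $C$ one reads off intrinsic data: under the identification $C\cong C'$, the tautological quotient $\pi_{Y_3}^*\mathcal{E}_3\to\mathcal{O}_{\mathbb{P}(\mathcal{E}_3)}(1)$ restricts to a surjection $\beta:\mathcal{E}_3|_{C'}\twoheadrightarrow K_C$ with $\mathcal{O}_{\mathbb{P}(\mathcal{E}_3)}(1)|_C=K_C$, and the linear normality hypothesis says that $V_6^*=H^0(Y_3,\mathcal{E}_3)\to H^0(C,K_C)$ is surjective with one–dimensional kernel $\langle v\rangle$; equivalently the canonical image $\psi\circ\phi_\sigma(C)\subset Q$ spans the hyperplane $\{v=0\}\subset\mathbb{P}(V_6)$, so that $v$ is recovered from $C$ alone. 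I would set up the (irreducible, by \cite{ilievmarku,markutikho}) parameter space $\mathcal{P}$ of triples $(Y_3,E,\sigma)$ as in Lemma \ref{lecurveY3} and the space $\mathcal{Q}$ of curves $C$ satisfying the stated genericity; Lemma \ref{lecurveY3} gives a morphism $\Phi:\mathcal{P}\to\mathcal{Q}$, $(E,\sigma)\mapsto\phi_\sigma(C)$, between varieties of the same dimension. The content of the present lemma is precisely that $\Phi$ is birational, so it suffices to reconstruct $(E,\sigma)$ from a general $C$ and to check that the reconstruction is forced at every step.

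The reconstruction must pass through the degeneracy surface $S$, since an elementary transformation of $\mathcal{E}_3$ along the \emph{curve} $C'$ would produce a sheaf with $c_1=2h\neq0$. First I would recover $S$: since $S\in|\mathcal{O}_{Y_3}(2)|$ contains $C'$, I claim that for general $C$ the curve $C'$ lies on a unique quadric section of $Y_3$, i.e. $h^0(\mathcal{I}_{C'/Y_3}(2))=1$, and that this $S$ is a smooth K3 surface (genericity propagating from the forward construction, where $S$ was cut out by $\det\sigma$). Setting $\mathcal{L}:=\mathcal{O}_S(C')$, adjunction on the K3 surface $S$ gives $\mathcal{L}|_{C'}=K_{C'}$, so that $\mathcal{L}$ is compatible with $\beta$. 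Using $\mathcal{E}_3^*=\mathcal{E}_3(-2)$ one computes
\[
\Hom_{Y_3}(\mathcal{E}_3,i_*\mathcal{L})=H^0\bigl(S,\mathcal{E}_3|_S\otimes\mathcal{L}(-2)\bigr),
\]
and I would show that this space contains a surjection restricting to $\beta$ on $C'$, unique up to scalar. I then define
\[
E:=\ker\bigl(\mathcal{E}_3\twoheadrightarrow i_*\mathcal{L}\bigr),\qquad \sigma:E\hookrightarrow\mathcal{E}_3,
\]
so that $\operatorname{coker}\sigma=i_*\mathcal{O}_S(C)$ is exactly the line bundle of Lemma \ref{lecurveY3}, and a Chern-class computation matching the numerics there gives $c_1(E)=c_1(\mathcal{E}_3)-[S]=2h-2h=0$ and $\deg_{Y_3}c_2(E)=2$.

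The main obstacle is to prove that $E$ is locally free and stable. Local freeness is equivalent to $\sigma$ degenerating in corank exactly $1$ along the smooth surface $S$, so that the cokernel is the line bundle $\mathcal{L}$ with no embedded points; I expect to deduce this from the smoothness of $S$ together with the genericity of $C$, exactly as the smoothness of $S$ was obtained in Lemma \ref{lecurveY3}(ii) via the transversality computation on the universal Pfaffian $Y_{Pf}$, leaning on \cite{kuznetsov} and the explicit Pfaffian geometry. Once local freeness is known, stability (with $c_1=0$) reduces to $H^0(Y_3,E(-1))=0$ and $H^0(Y_3,E)=0$, which follow from $E\hookrightarrow\mathcal{E}_3$ and the stability of $\mathcal{E}_3$. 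Finally, uniqueness of $(E,\sigma)$ follows because each step — the recovery of $v$, of $S$, of $\mathcal{L}$, and of the surjection — is forced; equivalently these arguments show that $\Phi$ is generically injective and dominant, hence birational, yielding simultaneously the existence and the uniqueness asserted. The delicate points, and the places where the argument is genuinely nontrivial rather than formal, are the uniqueness and smoothness of $S$ and the corank-one degeneration of $\sigma$.
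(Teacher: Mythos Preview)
Your overall strategy---recover $S$, lift the quotient map from $C'$ to $S$, and define $E$ as the kernel of $\mathcal{E}_3\twoheadrightarrow i_*\mathcal{O}_S(C')$---is exactly the paper's, and your instinct that the delicate point is the uniqueness of $S$ is correct. But you have not supplied that argument, and the paper's method is one you did not anticipate: it is an Abel--Jacobi argument. The paper proves a separate lemma (Lemma~\ref{leinfiJac}) to the effect that a family of curves in $Y_3$ each lying in a hyperplane section cannot dominate $J(Y_3)$ via the Abel--Jacobi map. Since the general $C$ lies in the component produced by Lemma~\ref{lecurveY3}, its Abel--Jacobi point is general in $J(Y_3)$; hence (a) $C'$ lies in no hyperplane section, and (b) if $C'$ lay on two distinct quadric sections $S,S'$, it would be residual in $S\cap S'$ to a degree~$3$ curve, forcing its Abel--Jacobi point (up to sign and a constant) to equal that of a degree~$3$ curve---again not general. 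This, rather than a direct cohomology computation on $\mathcal{I}_{C'/Y_3}(2)$, is how $h^0(\mathcal{I}_{C'/Y_3}(2))=1$ is actually obtained.

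You also misplace the remaining work. Once you have a surjection onto a line bundle supported on a \emph{smooth} surface $S$, local freeness of the kernel is automatic (the pushforward $i_*\mathcal{O}_S(C')$ has projective dimension $1$ over $\mathcal{O}_{Y_3}$), so the ``corank-one degeneration of $\sigma$'' you flag as the main obstacle is not one. What does need argument is (i) that $\beta$ lifts from $C'$ to $S$, which the paper gets from the surjectivity of $H^0(S,\mathcal{E}_3^*|_S(C'))\to H^0(C',\mathcal{E}_3^*|_{C'}(K_{C'}))$, using $H^1(S,\mathcal{E}_3^*|_S)=0$ from \cite{kuznetsov}; and (ii) that the lifted map $\mathcal{E}_3|_S\to\mathcal{O}_S(C')$ is surjective, which the paper deduces by noting that both $H^0(Y_3,\mathcal{E}_3)$ and $H^0(S,\mathcal{O}_S(C))$ are $6$-dimensional so the induced map on sections is generically an isomorphism, together with the generic global generation of $\mathcal{O}_S(C)$. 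Stability then follows at once from $H^0(Y_3,E)=0$. Your recovery of the hyperplane $\{v=0\}$ plays no role and can be dropped.
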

Here ``general'' means that there is a component of the Hilbert scheme of such curves
on which the conclusion holds generically. In fact, this component is the one containing
the curves $\phi_\sigma(C)$ appearing in Lemma \ref{lecurveY3} and its proof.
\begin{proof} The curve $C'\subset Y_3$ has degree $9$ and genus
$5$. Assuming the Abel-Jacobi class of $C'$ is general in $J(Y_3)$, then (1)
 $C'$ is not contained in a hyperplane section of $Y_3$ and (2)
$C'$ is contained in a unique surface $S\subset Y_3$, where
$S$ is a member of $|\mathcal{O}_{Y_3}(2)|$. Indeed, curves contained in a
hyperplane section of $Y_3$ cannot have a general Abel-Jacobi class,
by Lemma \ref{leinfiJac} below. This proves the first statement. We
have $h^0(Y_3,\mathcal{O}_{Y_3}(2))=15$ and
$h^0(C',\mathcal{O}_{Y_3}(2)_{\mid C'})=14$, thus $C'$ is contained
in at least one quadric section of $X$. If $C'$ is contained in two
surfaces $S,\,S'$ as above, then as $S$ and $S'$ have no common
component by the first statement, $C'$ is a component of the
complete intersection $S\cap S'$ which has degree $12$. Thus $C'$ is
 residual to a degree $3$ curve, and its Abel-Jacobi point is, up
to a sign and a constant, the Abel-Jacobi point of a degree $3$
curve which again by Lemma \ref{leinfiJac} cannot be general in
$J(Y_3)$. This proves the second statement. Note that, according
to Lemma \ref{lecurveY3} (ii), the surface $S$ is smooth for general $C$
in the considered
 component of the Hilbert scheme of $\mathbb{P}(\mathcal{E}_3)$.  We now observe that the
restriction map $H^0(S, \mathcal{E}^*_{3\mid S}(C'))\rightarrow
H^0(C', \mathcal{E}^*_{3\mid C'}(K_{C'}))$ is surjective. This indeed
follows from the fact that $H^1(S,\mathcal{E}^*_{3\mid S})=0$ (see
\cite{kuznetsov}). Let $\sigma'\in H^0(S, \mathcal{E}^*_{3\mid
S}(C'))$ be  a lift of the natural section $\sigma\in H^0(C',
\mathcal{E}^*_{3\mid C'}(K_{C'}))$ giving the embedding of $C'$ in
$\mathbb{P}(\mathcal{E}_3)$ with image $C$. We have
${\rm dim}\,H^0(Y_3,\mathcal{E}_3)=6={\rm dim}\,H^0(S,
\mathcal{O}_S(C))$ and thus the property that the map
$$\sigma':H^0(Y_3,\mathcal{E}_3)\rightarrow H^0(S,
\mathcal{O}_S(C))$$
is an isomorphism is an open property. Furthermore, the line bundle $\mathcal{O}_S(C)$ is generically globally generated, and we thus conclude that
for generic $C$, we get a surjective morphism
$$\sigma':\mathcal{E}_{3\mid
S}\rightarrow \mathcal{O}_S(C'),$$
hence as well a surjective morphism
$$\mathcal{E}_3\rightarrow \mathcal{O}_S(C')$$
of sheaves on $Y_3$. Its kernel provides the desired bundle $E$. That $E$ is stable follows from
$H^0(Y_3,E)=0$, which is a consequence of the surjectivity, hence injectivity,
of the map
 $H^0(Y_3, \mathcal{E}_3)\rightarrow
H^0(S, \mathcal{O}_S(C))$ which implies that
$H^0(Y_3,E)=0$.
\end{proof}
We used above the following lemma:
\begin{lemma}\label{leinfiJac} Let $M$ be a smooth variety and
$Z\subset M\times Y_3$ be a codimension $2$ subvariety. Assume that
for general $m\in M$, the curve $Z_m\subset Y_3$ is contained in a
hyperplane section of $Y_3$. Then the Abel-Jacobi map
$\Phi_Z:M\rightarrow J(Y_3)$ is not dominating.
\end{lemma}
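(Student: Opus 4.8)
The plan is to bound the rank of the differential of $\Phi_Z$, using that the cubic surfaces cut out on $Y_3$ by hyperplanes are rational. Recall that $J(Y_3)$ is a principally polarized abelian variety of dimension $h^{2,1}(Y_3)=5$, whereas the space $B=(\mathbb{P}^4)^\vee$ of hyperplanes in $\mathbb{P}^4$ has dimension $4$. For general $m\in M$ the hypothesis provides a hyperplane $H_m$ with $Z_m\subset S_m:=H_m\cap Y_3$, and we may assume (Bertini) that $S_m$ is a smooth cubic surface; in particular $S_m$ is rational, so $H^1(S_m,\mathcal{O}_{S_m})=H^2(S_m,\mathcal{O}_{S_m})=0$ and $\Pic^0(S_m)=0$. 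This is essentially the only input needed about $Y_3$ beyond $\dim J(Y_3)=5$.

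First I would record the infinitesimal principle: since the Abel--Jacobi map depends only on the rational equivalence class of a $1$-cycle, its differential at $m$ depends only on the induced normal section $\kappa(v)\in H^0(Z_m,N_{Z_m/Y_3})$ for $v\in T_mM$. If $v$ keeps $Z_m$ inside the fixed surface $S_m$ to first order, i.e. $\kappa(v)$ lies in the image of $H^0(Z_m,N_{Z_m/S_m})$, then the corresponding first--order deformation stays in $S_m$ and automatically preserves the cohomology class $[Z_m]\in H^2(S_m)$. Hence $d\Phi_Z(v)$ factors through the differential of the Abel--Jacobi map for divisors on the fixed surface $S_m$, which takes values in $T\Pic^0(S_m)=H^1(S_m,\mathcal{O}_{S_m})=0$. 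Therefore $d\Phi_Z$ annihilates every tangent vector that does not move the hyperplane, i.e. it factors through the derivative of the classifying map $\rho:M\dashrightarrow B$, $m\mapsto [H_m]$.

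Consequently $\operatorname{rank} d\Phi_Z\le \operatorname{rank} d\rho\le \dim B=4<5=\dim J(Y_3)$ at a general point, so the image of $\Phi_Z$ has dimension at most $4$ and $\Phi_Z$ cannot be dominant. As an alternative global check of the same conclusion, one argues that any connected algebraic family of effective curves on the rational surface $S_m$ lies in a single complete linear system (again because $\Pic^0(S_m)=0$), so all the $Z_{m'}$ with $H_{m'}=H_m$ are rationally equivalent on $S_m$; whence $\Phi_Z$ is constant along the fibres of $\rho$ and literally factors set--theoretically through the $4$--dimensional base $B$.

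The main obstacle is purely technical: one must make the infinitesimal Abel--Jacobi computation rigorous for cycles $Z_m$ that may be singular or reducible, and one must treat the degenerate case in which $Z_m$ spans a proper linear subspace of $\mathbb{P}^4$ and hence lies in a positive--dimensional family of hyperplanes, so that $\rho$ is not a well--defined morphism to $B$. In that degenerate situation, however, there are only more tangent directions keeping $Z_m$ inside some cubic surface $S_H$, so the rank of $d\Phi_Z$ can only drop further; thus the bound $\operatorname{rank} d\Phi_Z\le 4$ persists and the lemma follows in all cases.
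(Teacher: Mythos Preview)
Your argument is correct in spirit and reaches the right bound $\operatorname{rank} d\Phi_Z\le 4$, but it takes a genuinely different route from the paper's proof. You exploit the \emph{geometry of the target} of the inclusion $Z_m\hookrightarrow S_m$: since smooth cubic surfaces are rational, $\Pic^0(S_m)=0$, so along each fibre of $\rho:m\mapsto[H_m]$ the cycles $Z_m$ are all rationally equivalent and $\Phi_Z$ is constant; hence $d\Phi_Z$ factors through $d\rho$ and has rank at most $\dim(\mathbb{P}^4)^\vee=4$. The paper instead works on the \emph{cotangent side}: it uses the Griffiths identification $\Omega_{J(Y_3),0}\cong H^0(Y_3,\mathcal{O}_{Y_3}(1))$ and the standard fact that the transpose $d\Phi_Z^*$ factors through the restriction map $H^0(Y_3,\mathcal{O}_{Y_3}(1))\to H^0(Z_m,\mathcal{O}_{Z_m}(1))$. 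The hyperplane equation of $H_m$ lies in the kernel of this restriction, so $d\Phi_Z^*$ is not injective and $\Phi_Z$ is nowhere a submersion.

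The trade-off: the paper's argument is a two-line application of the infinitesimal Abel--Jacobi formula, works uniformly at every point regardless of whether $Z_m$ is smooth or spans $H_m$, and never needs to define the rational map $\rho$ or deal with its indeterminacy. Your approach is more elementary in that it avoids the Griffiths description of $T_0^*J(Y_3)$ and would apply verbatim to any threefold whose hyperplane sections have trivial $\Pic^0$; the price is the bookkeeping you flag at the end (well-definedness of $\rho$, singular $Z_m$), which you handle but which the paper sidesteps entirely.
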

\begin{proof} This immediately follows from the fact that the
transpose of the differential of the Abel-Jacobi map
$$d\Phi_Z^*: \Omega_{J(Y_3),0}\rightarrow \Omega_{M,m}$$
factors through the restriction map
$$H^0(Y_3,\mathcal{O}_{Y_3}(1))\rightarrow
H^0(Z_m,\mathcal{O}_{Z_m}(1)),$$ where one uses the natural
identification (see section \ref{seccasecub})
$$\Omega_{J(Y_3),0}\cong H^0(Y_3,\mathcal{O}_{Y_3}(1)).$$
Our assumptions thus say that $d\Phi_Z^*$ is not injective so
$\Phi_Z$ is nowhere a submersion on $M$.
\end{proof}

 The two lemmas \ref{lecurveY3} and \ref{lecurveY3reconsE} together  show that a component of the family
of genus $5$ curves in $\mathbb{P}(\mathcal{E}_3)$, of $Y_3$-degree
$9$ and $Q$-degree $8$ is birationally  a $\mathbb{P}^5$-bundle over a $\mathbb{P}^3$-bundle over a Zariski open set of
$J(Y_3)$, which is itself birational to a moduli space of rank $2$ vector bundles on $Y_3$.
\begin{remark}{We believe that the $\mathbb{P}^3$-bundle is not Zariski locally trivial, that is, is not the projectivization of a
vector bundle,  over any Zariski open set of $J(Y_3)$.}
\end{remark}

 \subsection{The $\bV_{14}$ side. \label{subsecV1428dec}} Recall that  $\bV_{14}\subset G(2,6)$ denote
  a smooth  $3$-dimensional linear section of
  $G(2,6)$. We are going to study  degree $13$, genus $5$ curves  $C\subset \bV_{14}$
  such that $h^0(\mathcal{E}_{14\mid C})=6$. Then
 by Riemann--Roch,
   there is a nonzero morphism
 \begin{eqnarray}\label{eqsigmaC142}\sigma_C: \mathcal{E}_{14\mid C}\rightarrow K_C.
 \end{eqnarray}
 When the morphism is surjective, it provides
  a section
 $\phi_\sigma:C\rightarrow \mathbb{P}(\mathcal{E}_{14})$ such that
 $\phi_\sigma^*(\mathcal{O}_{\mathbb{P}(\mathcal{E}_{14})}(1))=K_C$.
 Counting dimensions from the viewpoint of
 genus $5$ curves equipped with  a semi-stable rank $2$ vector bundle
 $\mathcal{E}$ of degree $13$
 with  $h^0(C,\mathcal{E})=6$, we see that  the general such triple
 $(C,\mathcal{E},\sigma_C)$ corresponds to a morphism $\sigma_C$ which is surjective, which we will assume
 from now on.

  We  have:
 \begin{lemma} (i) Such curves $C\subset \bV_{14}$ exist for a general  smooth $\bV_{14}\subset G(2,6)$.

 (ii)  Let $\mathcal{L}$ be the Pl\"ucker line
 bundle on $G(2,6)$. Then ${\rm dim}\, H^0(C,\mathcal{L}_{\mid C})=9$ and
 the restriction map $H^0(\bV_{14},\mathcal{L})\rightarrow H^0(C,\mathcal{L}_{\mid
 C})$ is surjective. Hence $C$ is contained in exactly one K3 surface $S\in
 |\mathcal{L}|$.

 (iii)  The surface  $S$ is smooth.
 \end{lemma}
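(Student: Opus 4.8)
To prove (iii), the plan is to deduce smoothness of $S$ from (i), (ii) and the description of $S$ as a degeneracy locus, using the same specialization technique employed on the cubic side in Lemma \ref{lecurveY3}(ii). By (ii) the surface $S$ is the unique hyperplane section $\bV_{14}\cap H$ (for the Pl\"ucker embedding) containing $C$, and smoothness of $S$ is a Zariski open condition on the relevant irreducible component of the Hilbert scheme of such curves $C$. Hence it suffices to prove it for the generic triple $(C,\mathcal E,\sigma_C)$. For such a triple the surjection $\sigma_C\colon \mathcal E_{14\mid C}\to K_C$ of \eqref{eqsigmaC142} lifts, exactly as on the cubic side, to a sheaf surjection $\mathcal E_{14}\to\mathcal O_S(C)$ on $\bV_{14}$ whose kernel $E_{14}$ is a rank $2$ sheaf with $c_1=0$. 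Then $S$ is precisely the locus where the map $E_{14}\to\mathcal E_{14}$ drops rank, so that $S\in|\det\mathcal E_{14}|=|\mathcal L|$ and the smoothness of $S$ becomes the smoothness of this degeneracy locus.

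The main step is then the specialization argument. Degenerating $E_{14}$ to a split sheaf $\mathcal I_{m_1}\oplus\mathcal I_{m_2}$ (the analogue of $E=\mathcal I_{l_1}\oplus\mathcal I_{l_2}$ in Lemma \ref{lecurveY3}(ii)), the defining section of $S$ becomes $\sigma_1\wedge\sigma_2$ with $\sigma_i\in H^0(\mathcal E_{14}\otimes\mathcal I_{m_i})$. The computation is best carried out on the universal model, namely $G(2,6)$ with its tautological bundle, of which $\bV_{14}$ is a general linear section (cf. \eqref{eqV14}). Comparing the differentials of the Pl\"ucker equations cutting out $G(2,6)$ with that of $\sigma_1\wedge\sigma_2$, one checks that the resulting hyperplane section of $G(2,6)$ is smooth away from a locus of codimension $\ge 4$; intersecting with the generic $3$-dimensional linear section $\bV_{14}$ then avoids this bad locus and yields a smooth $S$. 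This is the direct transcription to the Grassmannian side of the universal-Pfaffian computation used on $Y_3$.

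I expect the transversality computation to be the main obstacle: one must verify that on $G(2,6)$ the section $\sigma_1\wedge\sigma_2$ is generic enough that its zero scheme is smooth outside a locus of codimension $\ge 4$, so that the generic linear section $\bV_{14}$ misses the singular part. If this direct computation proves unwieldy, an attractive alternative is to transport smoothness through the flop $\theta=\psi^{-1}\circ\phi$ of Lemma \ref{ledegVYQ}. Indeed the curve $C$ and its surface $S$ have common images $\overline C\subset\overline S$ in the quartic $Q$, and $\phi$, $\psi$ realize $S\subset\bV_{14}$ and the surface $S_{Y_3}$ of Lemma \ref{lecurveY3}(ii) as two small resolutions of $\overline S$; provided $S$ meets neither indeterminacy locus of $\theta^{\pm1}$, it is isomorphic to the already-smooth $S_{Y_3}$, and smoothness follows at once. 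The delicate point in this second route is controlling the intersection of the surface $S$ with the flopping curves, where the genericity of $C$ must once more be invoked.
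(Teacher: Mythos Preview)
Your approach differs fundamentally from the paper's, and in its current form it has a genuine circularity. In the paper the three statements are proved \emph{simultaneously} by an explicit construction: one takes a nodal hyperplane section $S_0\subset\bV_{14}$ containing a line $\Delta$, passes to the blow-up $\widetilde{S}_0$ with Picard classes $\mathcal{L}_S,\Delta,e$, and observes that curves in $|\mathcal{L}_S-\Delta-e|$ have the right invariants. Deforming $S_0$ to a \emph{smooth} $S_t\subset\bV_{14}$ on which only $e+\Delta$ survives algebraically (and becomes non-effective) then gives smooth curves $C\in|\mathcal{L}_{S_t}(-e-\Delta)|$ for which the restriction $H^0(S_t,\mathcal{L}_{S_t})\to H^0(C,\mathcal{L}_{|C})$ is an isomorphism. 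So (iii) is built in: the smooth K3 is produced first, and the curve is taken on it.

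The circularity in your argument is the following. You want to realize $S$ as the degeneracy locus of a map $E_{14}\to\mathcal{E}_{14}$, and then specialize $E_{14}$. But in the paper's logic the bundle $E_{14}$ (your $E$) is constructed \emph{after} this lemma, and its construction uses that $S$ is a smooth K3: one needs $\chi(S,\mathcal{E}_{14|S}^*(C))=1$, Serre duality on $S$, and the Picard rank argument $\rho(S)=2$ to conclude that $\sigma$ is everywhere surjective. Without smoothness of $S$ you do not yet know that $E_{14}$ is locally free, nor that it sits in the exact sequence you want, so you cannot invoke it to prove smoothness of $S$. Your specialization target $\mathcal{I}_{m_1}\oplus\mathcal{I}_{m_2}$ is also left unspecified: on the $\bV_{14}$ side the bundle has $\deg_{\bV_{14}}c_2=4$, so the $m_i$ would have to be (say) conics, and the corresponding universal computation on $G(2,6)$ is not the one carried out on $Y_{Pf}$.

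Your alternative route through the flop also misfires. The flop $\theta=\psi^{-1}\circ\phi$ relates $\mathbb{P}(\mathcal{E}_{14})$ and $\mathbb{P}(\mathcal{E}_3)$, not $\bV_{14}$ and $Y_3$; the surfaces $S\subset\bV_{14}$ and the $S$ of Lemma~\ref{lecurveY3}(ii) live in the bases, and in the paper's construction they are \emph{different} K3 surfaces, linked only indirectly through the residual-curve procedure of \S\ref{secconstbirat}. They are not two small resolutions of a common $\overline S\subset Q$, so smoothness cannot be transported this way.
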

 \begin{proof}
   Note that $\bV_{14}$ contains a line $\Delta$. Let now $S_0\subset \bV_{14}$ be a $K3$ surface hyperplane section of $\bV_{14}$ containing $\Delta$ and having as only singularity a node $x_0$ which is not on $\Delta$.
   Let $\widetilde{S}_0$ be the desingularization of $S_0$ by blowing-up
   $x_0$. Then $\widetilde{S}_0$ contains in its   Picard lattice the subgroup generated by the
   classes $\mathcal{L}_S,\,\Delta,\,e$, where $e$ is the class of the exceptional curve.
   The intersection numbers are
   $$\mathcal{L}_S^2=14,\,\mathcal{L}_S\cdot\Delta=1,\,\mathcal{L}_S\cdot e=0,\,e^2=\Delta^2=-2,\,e\cdot\Delta=0.$$
   It thus follows that the curves $\widetilde{C}$ in  $|\mathcal{L}_S-\Delta-e|$ have genus $5$ and Pl\"ucker degree $13$.
   One easily checks that the general such curve $\widetilde{C}$ satisfies
  $ h^0(\widetilde{C},n^*\mathcal{E}_{14})=6$, where $n:\widetilde{C}\rightarrow S_0\subset G(2,6)$ is the natural map.
  We now deform  the surface $S_0$ to a smooth surface $
  S_t$ in $\bV_{14}$ on which the class
 $e+\Delta$ remains algebraic. Then the Picard lattice of the general such smoothification
 is generated by $\mathcal{L}_{S_t}$ and $e+\Delta$, and the class $e+\Delta$ is not effective anymore on
 $S_t$. Hence we also have $H^1(S_t,\mathcal{O}_{S_t}(e+\Delta))=0$.
   It thus follows that
 the curves $C$ in
 $|\mathcal{L}_{S_t}(-e-\Delta)|$ have the property that the restriction map
 $$H^0(S_t,\mathcal{L}_{S_t})\rightarrow H^0(C,\mathcal{L}_{\mid C})$$
 is an isomorphism.
 This proves the three statements except for smoothness of the general curves
 $C\subset S_t$ which follows from the fact that the line bundle $\mathcal{L}_{S_t}(-e-\Delta)$
 is  nef. (This was also true for the curves $\widetilde{C}$ on the surface $\widetilde{S}_0$ but as it was not embedded
 in $\bV_{14}$, the resulting curves $n(\widetilde{C})$ were nodal.)
 \end{proof}
 Assuming  the curve $C\subset \bV_{14}$ is general and thus satisfies the properties above, we now
 compute that $\chi(S,\mathcal{E}_{14\mid S}^*(C))=1$ and thus
 either $H^0(S,\mathcal{E}_{14\mid S}^*(C))\not=0$ or $H^0(S,\mathcal{E}_{14\mid
 S}(-C))\not=0$. As we have $H^0(S,\mathcal{L}(-C))=0$, and $\mathcal{L}={\rm det}\,\mathcal{E}_{14}$, the second
 case is excluded so that we have a nonzero morphism
 $\sigma:\mathcal{E}_{14}\rightarrow \mathcal{O}_S(C)$, extending the morphism $\sigma_C$
 of (\ref{eqsigmaC142}). We now compute
 $$c_2(\mathcal{E}_{14\mid S}^*(C))=c_2(\mathcal{E}_{14\mid S}^*)-{\rm deg}_{\bV_{14}}C+C^2=5-13+8=0.$$
 It thus follows that either $\sigma$ vanishes nowhere on $S$, or $\sigma$ vanishes along a curve
 in $S$ which does not meet $C$.  The second  case can only occur if $\rho(S)\geq3$
 while a dimension count shows that the family of surfaces
 $S$ appearing in this construction has dimension $8$, so that the  generically
 $S$ satisfies
  $\rho(S)=2$. Hence  $\sigma$ is everywhere surjective and  we thus
get a rank $2$ vector bundle $E$ on $\bV_{14}$ with trivial
determinant  fitting in the exact sequence
\begin{eqnarray}
\label{eqexa29dec}0\rightarrow E\rightarrow
\mathcal{E}_{14}\rightarrow \mathcal{O}_S(C)\rightarrow 0.
\end{eqnarray} One easily computes that  ${\rm deg}_{\bV_{14}}(c_2(E))=4$.
The following will be useful:
\begin{lemma} \label{lestable} The restriction of $E$ to a smooth hyperplane section
$\Sigma\subset \bV_{14}$ is a rank $2$ vector bundle on $\Sigma$ with
trivial determinant and $c_2=4$. If  $\rho(\Sigma)=1$,
$E_{\mid \Sigma}$ is stable.
\end{lemma}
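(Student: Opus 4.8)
The plan is to analyze Lemma \ref{lestable} in two stages. First I would verify the numerical claims about $E_{\mid \Sigma}$, and then establish stability under the hypothesis $\rho(\Sigma)=1$. For the numerical part, recall from the exact sequence \eqref{eqexa29dec} that $E$ is a rank $2$ bundle on $\bV_{14}$ with $\det E = \det \mathcal{E}_{14} \otimes \mathcal{O}_{\bV_{14}}(-C) = \mathcal{L}(-C)$ restricted appropriately; but we computed $\det E$ to be trivial (the map $\sigma$ is to $\mathcal{O}_S(C)$ with $C \in |\mathcal{L}_{S}(-e-\Delta)|$ and $\mathcal{L}=\det \mathcal{E}_{14}$, so the first Chern classes cancel). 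Restricting to a smooth hyperplane section $\Sigma$ (which, being the complete intersection defining the $K3$ as in Beauville--Donagi \cite{bedo}, is a degree $14$ $K3$ surface), $\det(E_{\mid \Sigma})$ stays trivial, and $c_2(E_{\mid \Sigma}) = c_2(E)\cdot [\Sigma] = \deg_{\bV_{14}} c_2(E) = 4$ by the computation already recorded just before the lemma. So the first sentence follows directly from functoriality of Chern classes under restriction.

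For the stability statement, the strategy is the standard one for rank $2$ bundles with $c_1=0$ on a surface with Picard rank one. Since $\rho(\Sigma)=1$, the Néron--Severi group is generated by the polarization $\mathcal{L}_{\mid \Sigma}$, so any line subbundle $\mathcal{O}_{\Sigma}(D) \hookrightarrow E_{\mid \Sigma}$ has $D$ a multiple of the ample generator. A subsheaf destabilizing $E_{\mid \Sigma}$ (which has slope $0$) would be a line bundle $\mathcal{O}_{\Sigma}(D)$ with $D \cdot \mathcal{L}_{\mid \Sigma} \geq 0$, hence with $D$ effective or trivial; I would rule out $D>0$ by a Chern class / Bogomolov-type inequality and rule out $D=0$ (the trivial subsheaf case) by showing $H^0(\Sigma, E_{\mid \Sigma})=0$. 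The latter vanishing is the crux: I would obtain it by restricting the defining sequence \eqref{eqexa29dec} to $\Sigma$, giving
\[
0 \rightarrow E_{\mid \Sigma} \rightarrow \mathcal{E}_{14 \mid \Sigma} \rightarrow \mathcal{O}_S(C)_{\mid \Sigma} \rightarrow 0,
\]
and analyzing the induced map on global sections $H^0(\Sigma, \mathcal{E}_{14\mid \Sigma}) \to H^0(\Sigma \cap S, \cdots)$, using that the tautological subbundle $\mathcal{E}_{14}$ on $G(2,6)$ has no global sections of negative twist and that $\Sigma$ meets $S$ in a configuration controlled by $\rho=2$ on $S$ versus $\rho=1$ on $\Sigma$.

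The main obstacle I anticipate is precisely the vanishing $H^0(\Sigma, E_{\mid \Sigma})=0$, since it requires controlling how the curve $C$ (and the auxiliary $K3$ surface $S\subset \bV_{14}$) interacts with the hyperplane section $\Sigma$, and in particular ensuring that no section of $\mathcal{E}_{14\mid \Sigma}$ maps to zero in $\mathcal{O}_S(C)_{\mid \Sigma}$. The Picard rank hypothesis $\rho(\Sigma)=1$ is what forces any would-be destabilizing divisor to be a nonnegative multiple of the polarization and simultaneously excludes the degenerate geometry that could produce unwanted sections; I would exploit it both to constrain subsheaves numerically and to guarantee genericity of $\Sigma$ relative to $S$ (so that $\Sigma \cap S$ is a smooth curve not contained in any special locus). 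Once $H^0(E_{\mid \Sigma})=0$ is in hand, stability is immediate: a rank $2$ bundle with $c_1=0$ on a polarized surface of Picard rank $1$ is stable as soon as it has no sections and no destabilizing line subbundle of nonnegative degree, and the only candidate of degree $0$ is $\mathcal{O}_{\Sigma}$ itself, excluded by the vanishing.
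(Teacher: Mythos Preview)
Your overall plan coincides with the paper's: the numerical statement is immediate, and under $\rho(\Sigma)=1$ with $c_1=0$ stability is equivalent to $H^0(\Sigma,E_{\mid\Sigma})=0$. (Your Bogomolov aside for $D>0$ is superfluous: if $\mathcal{O}_\Sigma(k\mathcal{L})\hookrightarrow E_{\mid\Sigma}$ with $k>0$, composing with any nonzero section of $\mathcal{O}_\Sigma(k\mathcal{L})$ already produces a nonzero section of $E_{\mid\Sigma}$, so everything collapses to the single vanishing.)

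Where you diverge is in how you obtain that vanishing. You restrict \eqref{eqexa29dec} to $\Sigma$ and try to show $H^0(\mathcal{E}_{14\mid\Sigma})\to H^0(\mathcal{O}_S(C)_{\mid S\cap\Sigma})$ is injective; this can in principle be made to work, but your description of the argument is vague, and you invoke $\rho(\Sigma)=1$ at this step where it plays no role. The paper instead stays on the threefold: from the twist sequence
\[
0\to E\otimes\mathcal{L}^{-1}\to E\to E_{\mid\Sigma}\to 0
\]
on $\bV_{14}$ it suffices to prove $H^0(\bV_{14},E)=0$ and $H^1(\bV_{14},E\otimes\mathcal{L}^{-1})=0$, and both come directly from \eqref{eqexa29dec} together with the vanishing of $H^0$ and $H^2$ of $\mathcal{O}_S(C-\mathcal{L})$ on the K3 surface $S$ (where $(C-\mathcal{L})^2=-4$ and neither $C-\mathcal{L}$ nor $\mathcal{L}-C$ is effective for general $S$). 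This avoids the geometry of $S\cap\Sigma$ altogether and shows, incidentally, that $H^0(E_{\mid\Sigma})=0$ for \emph{every} smooth hyperplane section $\Sigma$; the hypothesis $\rho(\Sigma)=1$ enters only at the very last step, to pass from that vanishing to stability.
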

\begin{proof} The first statement is obvious. The stability follows
 from the vanishing  $H^0(\Sigma, E_{\mid \Sigma})=0$
which is implied by  $H^0(\bV_{14},E)=0$ and  $H^1(\bV_{14},E\otimes \mathcal{L}^{-1})=0$, which are both implied by the exact sequence (\ref{eqexa29dec}) and the fact that $(C-c_1(\mathcal{L}))^2=-4$ on $\Sigma$, and
$H^0(\Sigma,\mathcal{L}^{-1}(C))=H^2(\Sigma,\mathcal{L}^{-1}(C))=0$, implying that  $H^2(\Sigma,\mathcal{L}^{-1}(C))=0$.
\end{proof}
 \begin{remark}\label{rema29dec} {\rm
Note that the vector bundle $E$ constructed above from the data of
the K3 surface $S$ and the line bundle $\mathcal{O}_S(C)$
satisfies ${\rm dim}\,{\rm Hom}(E,\mathcal{E}_{14})=4$ as easily
follows from (\ref{eqexa29dec}). It follows that the
$13$-dimensional family of genus $5$, degree $13$ curves on $\bV_{14}$
corresponds in fact to a $5$-dimensional family of vector bundles on
$\bV_{14}$.}
\end{remark}
\subsection{Construction of the rational map\label{secconstbirat}}
 We now make the following construction: Let $(Y_3,\mathcal{E}_3)$ be a general cubic threefold with Pfaffian structure, and let $E$ be a
general rank $2$ stable vector bundle  on $Y_3$ with $c_1(E)=0$ and ${\rm
deg}_{Y_3}c_2(E)=2$. By Lemma \ref{lecurveY3}, there is an associated
$8$-dimensional family of genus $5$ curves $C\subset
\mathbb{P}(\mathcal{E}_3)$ satisfying ${\rm deg}_{Y_3}(C)=9,\,{\rm
deg}_{Q}(C)=8$ such that the image $\psi(C)\subset Q$ is a linearly
normal canonical curve of genus $5$. We claim that the curve
$\psi(C)$ is the
complete intersection of $3$ quadrics in $\mathbb{P}^4$: For this,
we have to show that $C$ is not trigonal. However, from our
construction, we see that $C$ is contained in a general K3 surface
$S$ with Picard lattice generated by $h$ and the class $c$ of $C$,
with intersection lattice
$$h^2=6,\,h\cdot c=9,\,c^2=8.$$
the fact that $C$ is not trigonal then follows from \cite{greenlaz}.
As $\psi(C)\subset Q$ is the complete intersection of three quadrics
in a hyperplane section $H\cap  Q$ of $Q$, we can write
\begin{eqnarray}\label{eqqH}q_{\mid H}=s_1q_1+s_2q_2+s_3q_3,
\end{eqnarray} where $q$ is the defining equation for $Q$, and the $q_i$'s are
the defining equations for $\psi(C)$. Here the $s_i$'s are also
quadratic polynomials on $H$. It follows that $Q$ contains another
set of canonical curves of genus $5$, namely, viewing the expression
in the right hand side of  (\ref{eqqH}) as a quadric in the $6$
variables $q_i,\,s_i,\,i=1,\,2,\,3$, the plane defined by the
$q_i$'s determines one ruling of this quadric (these planes are
parametrized  by a $\mathbb{P}^3$) and the planes in the other
rulings will correspond to a second $\mathbb{P}^3$ of linearly
normal degree $8$ genus $5$ canonical curves in $Q\cap H$.
Concretely the curve $C_1\subset Q\cap H$ defined in $H$  by $q_1=q_2=s_3=0$ is such a
curve. The important point for us is that the original curve $C$ is
a general member of a linear system $|\mathcal{O}_S(C)|$ on a K3
surface $S\subset \mathbb{P}(\mathcal{E}_3)$, hence it does not meet
the surface $\Sigma_3\subset \mathbb{P}(\mathcal{E}_3)$  which is
contracted by $\psi$, which means that $\psi(C)$ does not meet the
singular curve of $Q$. The residual curve  $C_1$ constructed above
thus moves freely in $Q$ and also avoids the singular locus of $Q$
which is the indeterminacy locus of the rational map $\phi^{-1}$.
Thus it lifts to a curve $C'=\phi^{-1}(C_1)\subset
\mathbb{P}(\mathcal{E}_{14})$.
\begin{lemma}\label{ledegC1} The genus $5$ curve $C'$ satisfies ${\rm
deg}_{\bV_{14}}C'=13$.
\end{lemma}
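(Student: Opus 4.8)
The plan is to transport the computation to the cubic-threefold resolution $\mathbb{P}(\mathcal{E}_3)$ by means of the flop $\theta=\psi^{-1}\circ\phi$ and to feed in the intersection numbers of Lemma \ref{ledegVYQ}. Write $\xi_3=\mathcal{O}_{\mathbb{P}(\mathcal{E}_3)}(1)=\psi^*\mathcal{O}_Q(1)$ and $H_3=\pi_{Y_3}^*\mathcal{O}_{Y_3}(1)$, and similarly $\xi_{14},H_{14}$ on $\mathbb{P}(\mathcal{E}_{14})$. Since $\theta$ is an isomorphism away from its codimension $\ge2$ indeterminacy locus, it preserves the pairing between divisors and curves disjoint from that locus; testing against the curves $\Delta_3,\Delta_{14}$ of Lemma \ref{ledegVYQ} (which avoid the flopping locus) and using the numbers $\deg_{\bV_{14}}\Delta_3=4$, $\deg_{Y_3}\Delta_{14}=4$, $\deg_Q\Delta_3=\deg_Q\Delta_{14}=1$, $\deg_{\bV_{14}}\Delta_{14}=\deg_{Y_3}\Delta_3=0$, I would solve the resulting $2\times2$ linear system to obtain
\[
\theta_*H_{14}=4\xi_3-H_3 .
\]

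Next I would record that $C'=\phi^{-1}(C_1)$ has flop image $\bar C':=\theta(C')=\psi^{-1}(C_1)$, a curve disjoint from the flopping locus because $C_1$ avoids $\Sing Q$. Hence, using $\deg_Q C_1=8$,
\[
\deg_{\bV_{14}}C'=H_{14}\cdot C'=\theta_*H_{14}\cdot\bar C'=4(\xi_3\cdot\bar C')-(H_3\cdot\bar C')=32-\deg_{Y_3}\bar C',
\]
so the whole statement reduces to proving $\deg_{Y_3}\bar C'=19$. To get this I would use the degree-$4$ surface $T=V(q_1,q_2)\subset H\cong\mathbb{P}^4$ containing both $\psi(C)$ and $C_1$. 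As $q|_H=s_1q_1+s_2q_2+s_3q_3$ restricts on $T$ to $s_3q_3$, one has $T\cap Q=\psi(C)\cup C_1$. Both components avoid $\Sing Q$, so $T\cap\Sing Q\subset(T\cap Q)\cap\Sing Q=\emptyset$; therefore $\psi^{-1}(T)$ contains no component of the exceptional locus of $\psi$, and as a $1$-cycle on $\mathbb{P}(\mathcal{E}_3)$ (with $T$ cut out in $\mathbb{P}^5$ by one hyperplane and two quadrics)
\[
[C]+[\bar C']=[\psi^{-1}(T)]=\xi_3\cdot(2\xi_3)^2=4\xi_3^{\,3}.
\]
Intersecting with $H_3$ and evaluating in the Chow ring of the $\mathbb{P}^1$-bundle $\mathbb{P}(\mathcal{E}_3)$ via the Grothendieck relation $\xi_3^2=\pi_{Y_3}^*c_1(\mathcal{E}_3)\,\xi_3-\pi_{Y_3}^*c_2(\mathcal{E}_3)$, with $c_1(\mathcal{E}_3)=2h$ (read off from $0\to E\to\mathcal{E}_3\to\mathcal{O}_S(C)\to0$, where $c_1(E)=0$ and $S\in|2h|$), $\deg_{Y_3}c_2(\mathcal{E}_3)=5$ and $h^3=3$, yields $H_3\cdot4\xi_3^{\,3}=4\bigl(h\,c_1(\mathcal{E}_3)^2-h\,c_2(\mathcal{E}_3)\bigr)=4(12-5)=28$; the analogous computation with $\xi_3$ in place of $H_3$ gives $\xi_3\cdot4\xi_3^{\,3}=16=8+8$, a consistency check on the conventions. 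Since $\deg_{Y_3}C=9$ by construction, I conclude $\deg_{Y_3}\bar C'=28-9=19$, whence $\deg_{\bV_{14}}C'=32-19=13$.

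The one delicate point—and the main obstacle—is the small-resolution bookkeeping: one must guarantee that the pulled-back cycle $\xi_3\cdot(2\xi_3)^2$ is exactly $[C]+[\bar C']$ with no spurious term supported over $\Sing Q$ (each exceptional fibre of $\psi$ is a line in $Y_3$, contributing $+1$ to the $H_3$-degree, which would shift the answer). This is precisely what $T\cap\Sing Q=\emptyset$ excludes, and that vanishing follows formally from $T\cap Q=\psi(C)\cup C_1$ together with the fact, already built into the construction, that both $\psi(C)$ and $C_1$ avoid the singular locus of $Q$.
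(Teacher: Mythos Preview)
Your argument is correct and is essentially the same as the paper's: both hinge on the residual relation $[C]+[\text{lift of }C_1]=4h_Q^{3}$ together with the intersection numbers of Lemma \ref{ledegVYQ}, and differ only in which side of the flop the Segre-class computation is carried out. You work on the $Y_3$ side, computing $H_3\cdot 4\xi_3^{\,3}=28$ from $c_1(\mathcal{E}_3)=2h$ and $\deg c_2(\mathcal{E}_3)=5$ and then transporting via the divisor identity $\theta_*H_{14}=4\xi_3-H_3$, whereas the paper stays on the $\bV_{14}$ side, computes $\deg_{\bV_{14}}(4h_Q^{3})=36$ from $s_2(\mathcal{E}_{14})=9$, and expresses the curve classes directly in the basis $[\Delta_3],[\Delta_{14}]$; the two packagings are dual and yield the same number.
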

\begin{proof} The rank $2$ vector space
$A^3(\mathbb{P}(\mathcal{E}_{14}))={\rm
Hdg}^6(\mathbb{P}(\mathcal{E}_{14}))=A^3(\mathbb{P}(\mathcal{E}_{3}))$
of curve classes in either of these two varieties is generated by the
classes $[\Delta_3]$ and $[\Delta_{14}]$. We can thus  write  in
this space
 $$[C]=\alpha[\Delta_3]+\beta[\Delta_{14}].
 $$
 Next, as the curve $C_1=\phi(C')\subset Q$ is residual to $\psi(C)$
 in the complete intersection in $Q$ of a hyperplane $H$ and two
 quadrics, we get that $[C']=4h_Q^3-[C]$ in $A^3(\mathbb{P}(\mathcal{E}_{14}))$, where $h_Q$ is the
 pull-back to $\mathbb{P}(\mathcal{E}_{14})$ of
 $c_1(\mathcal{O}_Q(1))$. Note that we also have
 $h_Q=c_1(\mathcal{O}_{\mathbb{P}(\mathcal{E}_{14})}(1))$.
 It thus follows that
 \begin{eqnarray}\label{eqinter28dec}{\rm deg}_{\bV_{14}}C'=4{\rm deg}_{\bV_{14}}h_Q^3-{\rm
 deg}_{\bV_{14}}C.
 \end{eqnarray}
 As $h_Q=c_1(\mathcal{O}_{\mathbb{P}(\mathcal{E}_{14})}(1))$, the
 standard theory of Chern classes (see \cite{fulton}) says that
 $\pi_{\bV_{14}*}(h_Q^3)=s_2(\mathcal{E}_{14})=c_1^2(\mathcal{E}_{14})-c_2(\mathcal{E}_{14})$
 and thus
 ${\rm deg}_{\bV_{14}}(h_Q^3)=14-5=9$.
 Next we have ${\rm deg}_{Y_3}(C)=9,\,{\rm deg}_Q(C)=8$, which by
 Lemma \ref{ledegVYQ} gives
 $$ \alpha+\beta=8,\,\,4\beta=9.$$
 We thus deduce that $4\alpha=23$. This finally gives using
 (\ref{eqinter28dec}):
 $${\rm
deg}_{\bV_{14}}C'=36-4\alpha=36-23=13.$$
\end{proof}
By a dimension count (or by the reversibility of the construction), we observe that for generic $C$,  the genus $5$, $\bV_{14}$-degree $13$, $Q$-degree $8$
curve $C'$ is generic in
$\mathbb{P}(\mathcal{E}_{14}))$ and we can thus apply the construction of Section \ref{subsecV1428dec}
to get from $C'$ a stable rank $2$ vector bundle $E'$ on $\bV_{14}$ with
trivial determinant and ${\rm deg}_{\bV_{14}}c_2(E')=4$.

\begin{proof}[Proof of Theorem \ref{theobirat}] Let $X$ be a general
Pfaffian cubic fourfold and $u:\mathcal{Y}_U\rightarrow U$ be the
universal family of smooth hyperplane sections of $X$. The general fiber of $u$ is thus a
general cubic threefold with Pfaffian structure. For each
point $t\in U$, there is a canonical morphism from the moduli space
$\mathcal{M}_{2,0,2,t}$ of rank $2$ vector bundles on
${Y_t}$ with trivial determinant and ${\rm
deg}_{{Y_t}}c_2=2$ to the intermediate Jacobian
$J({Y_t})$ which maps $E$ to
$\Phi_{{Y_t}}(c_2(E)-c_2(\mathcal{E}_{3,t}(-1)))$. Here
$\mathcal{E}_{3,t}$ denotes the restriction to ${Y_t}$ of
the Pfaffian vector bundle $\mathcal{E}$ on $X$. This morphism is
birational by \cite{ilievmarku}. This way we conclude that the
moduli space $\mathcal{M}$ of sheaves on $X$ supported on a
hyperplane section and with the same numerical data as $E$ (seen as
a sheaf on $X$) is birational to $\mathcal{J}_U$, where
 $\mathcal{J}_U\rightarrow U$ is the family of intermediate
 Jacobians.
 \begin{remark}{\rm  For general $X$, this birational isomorphism
 does not exist, or rather takes values in a torsor under
 $\mathcal{J}$.}
 \end{remark}
On the other hand, we also have the universal family
$$v:\mathcal{V}_{14,U}\rightarrow U$$
of corresponding linear sections of the Grassmannian. For each $t\in
U$ corresponding to a $W_{5,t}\subset W_6\subset \bigwedge^2V_6^*$,
the fiber $\mathcal{V}_{14,t}$ is the complete intersection of
$G(2,V_6)$ with $5$ Pl\"ucker hypersurfaces defined by $W_{5,t}$. We
thus have a natural inclusion $\Sigma\subset \mathcal{V}_{14,t}$ as
a Pl\"ucker hypersurface since by definition $\Sigma\subset
G(2,V_6)$ is   the vanishing locus in  $G(2,V_6)$ of the $6$
Pl\"ucker equations defined by $W_6$.

The construction described above done in
family over $U$ now gives us the following: There exists a smooth
projective variety $W$ which admits a morphism $g:W\rightarrow
\mathcal{M}\cong_{birat}\mathcal{J}_U $ with rationally connected
fibers and a rational map
\be \label{rational map!}
f:W\dashrightarrow \mathcal{M}_{2,0,4}(\Sigma).
\ee
The general point of  $W$  parametrizes  a general rank $2$
vector bundle $E$  with $c_1=0$ and ${\rm deg}_{{Y_t}}c_2=2$
on a fiber $Y_t$ of $\mathcal{Y}_U$,  the choice of a general nonzero
morphism $\sigma:E\rightarrow \mathcal{E}_{3,t}$ defined up to a
coefficient,  a general member $C$ of the linear system
$|\mathcal{O}_S(C)|$, where $\mathcal{O}_S(C)$ is defined by the
exact sequence (\ref{eaxctY3}), and a general   $(2,2,2,1)$ complete
intersection curve $C'$ contained in $Q_t$, residual in $Q_t$ to the $(2,2,2,1)$
complete intersection curve $\psi_t(C)\subset Q_t$ (we will see in
fact $C'$ as living in $\mathbb{P}(\mathcal{E}_{14,t})$). Thus the
general fiber of the map $g$ has dimension $10$ and $W$ has
dimension $20=10+10$. The rational map $f$ associates then to these
data the vector bundle $E'_{\mid \Sigma}$, which is stable by Lemma \ref{lestable}, where the vector bundle
$E'=E_{S',C'}$ on $\mathcal{V}_{14,t}$ is associated as in Section \ref{subsecV1428dec} to the
 curve $C'$. Here $S'$ is the generically unique
Pl\"ucker hyperplane section of $\mathcal{V}_{14,t}$ containing
$C'$.
\begin{remark}{\rm One
easily checks that $E'$ does not depend on the choice of $C$ or  the
residual curve $C'$. The only reason to introduce these curves was
the fact that they do not meet the  singular locus of $Q_t$, which
is not true for the associated K3 surfaces where they lie. It will
also appear below that $E'$ neither depends on the choice of
$\sigma$.}
\end{remark}
The proof of Theorem \ref{theobirat} will be completed using  the
following:
\begin{prop} \label{propfactor} The rational map $f$ factors through $\mathcal{J}_U$ and induces
 a birational isomorphism
$$g:\mathcal{J}_U\dashrightarrow\mathcal{M}_{2,0,4}(\Sigma).$$
\end{prop}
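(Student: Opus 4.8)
The plan is to prove Proposition \ref{propfactor} in two logically separate steps: first that the rational map $f$ descends to a rational map $g$ defined on $\mathcal{J}_U$ itself (i.e. that $f$ is constant along the fibers of the rationally connected fibration $g:W\dashrightarrow \mathcal{M}\cong_{birat}\mathcal{J}_U$), and second that the resulting $g:\mathcal{J}_U\dashrightarrow \mathcal{M}_{2,0,4}(\Sigma)$ is birational. For the first step, I would exploit the fact that $g$ has rationally connected fibers: the target $\mathcal{M}_{2,0,4}(\Sigma)$ is a moduli space of stable sheaves on a K3 surface, hence (by Mukai) carries a holomorphic symplectic form and in particular has no nonconstant maps from a rationally connected variety. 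Thus any rational map from a rationally connected variety to $\mathcal{M}_{2,0,4}(\Sigma)$ is constant, and since the general fiber of $g$ is rationally connected, $f$ must be constant on it. Concretely, I would check that the vector bundle $E'_{\mid \Sigma}$ produced by the construction does not depend on the auxiliary choices (the morphism $\sigma$, the curve $C$, and the residual curve $C'$) — this is precisely what the remark following the statement asserts, and it can be verified by tracing through the construction of Sections \ref{subseccubic28dec}--\ref{subsecV1428dec}: the bundle $E'$ on $\bV_{14}$ is determined by the pair $(S',\mathcal{O}_{S'}(C'))$ via the exact sequence (\ref{eqexa29dec}), and the Abel--Jacobi class recovered from $E$ pins down the relevant data up to the choices that the rationally connected fibers absorb.

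The crux of the second step is to identify the composite birational transformation at the level of the fibers over a point $t\in U$. On the cubic threefold side, by \cite{ilievmarku} the moduli space $\mathcal{M}_{2,0,2}(Y_t)$ is birational to $J(Y_t)$ via the Abel--Jacobi map $E\mapsto \Phi_{Y_t}(c_2(E)-c_2(\mathcal{E}_{3,t}(-1)))$. On the $\bV_{14}$ side, restriction to $\Sigma$ produces a point of $\mathcal{M}_{2,0,4}(\Sigma)$, stable by Lemma \ref{lestable} whenever $\rho(\Sigma)=1$. I would verify that these two constructions fit into a fiberwise correspondence: the key geometric input is that both $\mathbb{P}(\mathcal{E}_3)$ and $\mathbb{P}(\mathcal{E}_{14})$ are small resolutions of the same quartic $Q\subset \mathbb{P}(V_6)$, related by the flop $\theta$ (as recalled before Lemma \ref{ledegVYQ}), so that the passage $C\rightsquigarrow C'$ via the residual-curve construction inside $Q\cap H$ is canonical. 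The degree computation of Lemma \ref{ledegC1} guarantees that $C'$ lands in the correct component of the Hilbert scheme of genus $5$ curves on $\bV_{14}$, so that the construction of Section \ref{subsecV1428dec} applies and yields $E'$ with ${\rm deg}_{\bV_{14}}c_2(E')=4$.

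To prove birationality, I would argue that the construction is generically reversible. Starting from a general stable bundle $E'_{\mid\Sigma}\in\mathcal{M}_{2,0,4}(\Sigma)$, I would first extend it to a bundle $E'$ on $\bV_{14}$ (using that $\Sigma$ is a Plücker hyperplane section of $\bV_{14}$ and controlling the relevant cohomology vanishings as in Lemma \ref{lestable}), then recover a genus $5$, $\bV_{14}$-degree $13$ curve $C'\subset\mathbb{P}(\mathcal{E}_{14})$ from the exact sequence (\ref{eqexa29dec}) via the unique K3 surface $S'\in|\mathcal{L}|$ containing it, then flop back across $Q$ to obtain the residual curve $\psi(C)$ and hence $C\subset\mathbb{P}(\mathcal{E}_3)$, and finally apply Lemma \ref{lecurveY3reconsE} to reconstruct the bundle $E$ on $Y_t$ and thus the point of $J(Y_t)$. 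The matching dimension counts ($W$ has dimension $20$, the general fiber of $g$ has dimension $10$, and both $\mathcal{J}_U$ and $\mathcal{M}_{2,0,4}(\Sigma)$ are $10$-dimensional) confirm that $g$ is generically finite, and the explicit reversal shows it is generically one-to-one, hence birational.

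The main obstacle I anticipate is controlling the genericity hypotheses so that everything is simultaneously valid: one needs the general $Y_t$ to be a general cubic threefold with Pfaffian structure (so that Lemmas \ref{lecurveY3} and \ref{lecurveY3reconsE} apply), the general associated $\Sigma$ to have Picard rank $1$ (so that $E'_{\mid\Sigma}$ is stable by Lemma \ref{lestable}), and the curves $C$, $C'$ to avoid the singular loci of $Q$ (the indeterminacy loci of $\phi^{-1}$ and $\psi^{-1}$) so that the flop $\theta$ is genuinely an isomorphism in a neighborhood of the curves. Since all of these are open conditions and the total family over $U$ is irreducible, I expect them to hold simultaneously on a dense open set; the delicate point is to ensure that the relative construction over $U$ respects these open conditions fiberwise and that the recovered data (especially the unique surface $S'$ and the residual curve) vary algebraically, which is what makes $g$ a genuine birational map rather than merely a fiberwise bijection.
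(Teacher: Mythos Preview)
Your two-step strategy matches the paper's in outline, but both steps have genuine gaps as written.

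In Step 1, the key inference is wrong: the claim that a holomorphic symplectic form on $\mathcal{M}_{2,0,4}(\Sigma)$ forces every rational map from a rationally connected variety to be constant is false (K3 surfaces, and hyper-K\"ahler manifolds generally, contain many rational curves). What is true is that a \emph{dominant} rational map to a non-uniruled target factors through the MRC quotient of the source. The paper therefore first proves that $f$ is dominant, and this is not a formality: it is done by showing $f^*\sigma_{\mathcal{M}_{2,0,4}}\neq 0$ via a Chow-theoretic argument (Mumford's theorem applied to the fiberwise isomorphism ${\rm CH}_1(\mathcal{V}_{14,t})_{hom}\cong{\rm CH}_1(Y_t)_{hom}$), which forces the two $2$-forms on $W$ to be proportional and hence $f$ to have generic rank $10$. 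You do not address dominance at all, and your alternative suggestion of directly verifying independence of $E'_{\mid\Sigma}$ from all auxiliary choices (including $\sigma$) is exactly what the paper's Remark flags as needing proof.

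In Step 2, your inverse construction breaks at the first move: you propose to extend a general $E\in\mathcal{M}_{2,0,4}(\Sigma)$ to a bundle $E'$ on ``$\bV_{14}$'', but there is a $5$-dimensional family $\{\mathcal{V}_{14,t}\}_{t\in U}$ of such threefolds containing $\Sigma$, and nothing in your outline explains how $E$ determines the parameter $t$ or why the extension to a given $\mathcal{V}_{14,t}$ is unique (Lemma~\ref{lestable} concerns restriction, not extension). This uniqueness is the crux of the paper's argument, handled not by building an inverse to $g$ but by showing the general fiber of $f$ is rationally connected: first the restriction map ${\rm Hom}(E_{S',C',t},\mathcal{E}_{14,t})\to{\rm Hom}(E,\mathcal{E}_{14\mid\Sigma})$ is shown to be an isomorphism, and then a normal-bundle computation proves that the resulting curve $\widetilde{D_\sigma}\subset\mathbb{P}(\mathcal{E}_{14})$ lies on a \emph{unique} lifted K3 surface of the required type, which pins down both $t$ and $E_{S',C',t}$. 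Once $f$ is dominant with rationally connected fibers, the conclusion follows as you indicate: $g$ inherits rationally connected fibers, and since $\mathcal{J}_U$ is not uniruled these fibers are points.
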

\begin{proof} We know by  Theorem \ref{theoconst}
 that  $\mathcal{J}_U$ is not uniruled (this is indeed implied by
the fact  that a smooth projective completion of $\mathcal{J}_U$ admits a generically
nondegenerate holomorphic $2$-form). It follows that $\mathcal{J}_U$
is birational to the basis of the maximal rationally connected fibration of
$W$. We now have the following lemma:
\begin{lemma}\label{lesurj28dec} (i) The rational map $f$ is
dominating.

(ii) The general fiber of $f$ is rationally connected.
\end{lemma}

\begin{proof} (i) The variety $W$ has two holomorphic (in fact
algebraic) $2$-forms, namely the pull-back
$\widetilde{\sigma_{\mathcal{J}}}$ to $W$ of the holomorphic $2$-form
${\sigma}_{\mathcal{J}}$ on $\mathcal{J}_U$ constructed in Theorem
\ref{theoconst}, and the form $f^*\sigma_{\mathcal{M}_{2,0,4}}$. We
claim that for some $\lambda\not=0$
\begin{eqnarray}\label{eqtwoform}
\widetilde{\sigma_{\mathcal{J}}}=\lambda
f^*\sigma_{\mathcal{M}_{2,0,4}}.
\end{eqnarray}
This equation  immediately implies the surjectivity of $f$, since
the generic rank of $\widetilde{\sigma_{\mathcal{J}}}$ is equal to ${\rm
dim}\,\mathcal{J}_U=10$ and the rank of
$f^*\sigma_{\mathcal{M}_{2,0,4}}$ is not greater than the rank of
$f$, so the equality (\ref{eqtwoform}) implies that the generic rank
of $f$ is $10={\rm dim}\,\mathcal{M}_{2,0,4}(\Sigma)$, implying that $f$
is dominant. We prove now the claim: Note that $W$ is a fibration
over $\mathcal{J}_U$ (or rather a smooth projective
compactification $\overline{\mathcal{J}}$ of $\mathcal{J}_U$)  with
rationally connected general fiber, hence
$H^{2,0}(W)=H^{2,0}(\overline{\mathcal{J}})$ is of dimension $1$ by
Theorem \ref{theoconst}(iii). As
$\widetilde{\sigma_{\mathcal{J}}}\not=0$, it thus suffices to prove that
$f^*\sigma_{\mathcal{M}_{2,0,4}}\not=0$. This can be proved by a
Chow-theoretic argument using Mumford's theorem \cite{mumford0}.
Indeed, there is a natural inclusion
$$\Sigma\times U\subset \mathcal{V}_{14,U}$$
which is the restriction over $U$ of the natural inclusion
$$j:\Sigma\times \mathbb{P}^5\subset \mathcal{V}_{14},$$
where $\mathcal{V}_{14}$ is the universal family of
$\bV_{14}$-threefolds containing $\Sigma$. It is immediate to check
that $pr_{1*}\circ j^*:{\rm CH}_1(\mathcal{V}_{14})_{hom}\rightarrow
{\rm CH}_0(\Sigma)_{hom}$ (which is just the restriction map $j_t^*$
on each ${\rm CH}_1(\mathcal{V}_{14,t})_{hom}$, where $j_t$ is the
inclusion of $\Sigma$ in $\mathcal{V}_{14,t}$), is an isomorphism.
On the other hand, $\mathcal{V}_{14}$ is birationally equivalent to
the universal family $\mathcal{Y}$ of hyperplane sections of $X$,
which satisfies ${\rm CH}_1(\mathcal{Y})_{hom}\cong {\rm
CH}_1(X)_{hom}$ since ${\rm CH}_0(X)_{hom}=0$ and $\mathcal{Y}$ is a
projective bundle over $X$. This fibered birational isomorphism
 induces an isomorphism between the intermediate Jacobian
fibrations over $U$ (see \cite{ilievmarku}) hence a fiberwise
isomorphism
\begin{eqnarray}
\label{eqisojac}{\rm CH}_1(\mathcal{V}_{14,t})_{hom}\cong {\rm
CH}_1({Y_t})_{hom}, \end{eqnarray} since for rationally
connected threefolds $Y$, the Abel-Jacobi map ${\rm
CH}_1(Y)_{hom}\rightarrow J(Y)$ is an isomorphism (see
\cite{blochsrinivas}). This easily implies that
$${\rm CH}_1(\mathcal{V}_{14})_{\mathbb{Q},hom}\cong {\rm
CH}_1(\mathcal{Y})_{\mathbb{Q},hom}$$ since ${\rm
CH}_0(\mathcal{V}_{14,t})_{hom}=0,\,{\rm
CH}_0({Y_t})_{hom}=0$.
 We now observe that
each point $w$ of fiber $W_t$ of the variety $W$ over $U$
parametrizes vector bundles $E'_w$, resp. $E_w$, on fibers
$\mathcal{V}_{14,t}$, resp. ${Y_t}$, and that for each $t\in
U$, the two maps
$$c_V:W_t\rightarrow {\rm CH}_1(\mathcal{V}_{14,t}),\,\,c_Y: W\rightarrow {\rm
CH}_1({Y_t}),$$
$$c_V(w)=c_2(E'_w),\,\,c_Y(w)=c_2(E_w)$$
coincide up to sign and a constant via (\ref{eqisojac}). With the notation above,
this follows from the construction for a given $t$ of the curve $C'$ as residual to
the curve $C$ in a $(2,2,1)$ complete intersection in $Q_t$. Combining
these observations, we conclude that the map
$${\rm CH}_0(W)_{hom}\rightarrow {\rm CH}_0(\Sigma)_{hom},$$
$$w\mapsto c_2(E'_{w\mid \Sigma})$$
is surjective, hence by Mumford's theorem \cite{mumford0} that the
corresponding pull-back of the holomorphic $2$-form on $\Sigma$ is
nonzero. However, by construction of the holomorphic $2$-form on
$\mathcal{M}_{2,0,4}(\Sigma)$, this pull-back is nothing but
$f^*\sigma_{\mathcal{M}_{2,0,4}}$.

 (ii) Let $E$ be a general stable rank $2$ vector bundle on $\Sigma$ with trivial determinant
 and $c_2=4$.  The fiber of $f$ over $E$ essentially consists of vector bundles
 $E_{S',C',t}$ on threefolds $\mathcal{V}_{14,t}$ containing $\Sigma$ such that
 $$E_{S',C',t\mid \Sigma}\cong E.$$
More precisely, for each such vector bundle, one can apply the results of
Section \ref{subsecV1428dec}: choosing a general section
$\tilde{\sigma}$ of ${\rm Hom}(E_{S',C',t},\mathcal{E}_{14,t})$ one gets a degeneracy K3 surface
$S_{\tilde{\sigma}}\subset \mathcal{V}_{14,t}$ and a line bundle $\mathcal{O}_{S_{\tilde{\sigma}}}(D)$
on
 $S_{\tilde{\sigma}}$ which is a quotient of $\mathcal{E}_{14,t\mid S_{\tilde{\sigma}}}$,  providing  a section $\phi_{\tilde{\sigma}}:S_{\tilde{\sigma}}\rightarrow \mathbb{P}(\mathcal{E}_{14,t})$.
 For a general curve $D_0\in |\mathcal{O}_{S_{\tilde{\sigma}}}(D)|$, the lifted curve $\phi_{\tilde{\sigma}}(D_0)$ is of genus
 $5$, $Q$-degree $8$ and $\bV_{14}$-degree $13$ and
 using the map $\phi:\mathbb{P}(\mathcal{E}_{14,t})\rightarrow Q_t$, it provides a complete
 intersection curves of type $(1,2,2,2)$ contained in $Q_t$ and the residual curve $D'_0$ in a
 $(2,2,1)$ complete intersection of $Q_t$ provides a genus $5$ curves in $\mathbb{P}(\mathcal{E}_{3,t})$
 of $Q$-degree $8$ and $Y_3$-degree $9$. Applying Lemma \ref{lecurveY3reconsE}, we then reconstructed an element of $W$ with image $E$ under $f$.  Given the vector bundle $E_{S',C',t}$ on
 a threefold $\mathcal{V}_{14,t}$, the extra  data described above, namely the choices
 of $\sigma$ and of the curves $D_0,\,D'_0$, are parametrized  by a rationally connected variety
 so the proof will be finished once we  know that  $E_{S',C',t}$ is
 determined by $E$.
\begin{lemma}\label{sublemma} Let $E_{S',C',t}$ be a general rank $2$ vector bundle on
$\mathcal{V}_{14,t}$ constructed as in Section \ref{subsecV1428dec}
and let $E$ be its restriction to $\Sigma$. Then the restriction map
$${\rm Hom}({E}_{S',C',t},\mathcal{E}_{14,t})\rightarrow {\rm
Hom}(E,\mathcal{E}_{14\mid \Sigma})$$ is an isomorphism, where
$\mathcal{E}_{14\mid \Sigma}$ denotes the Pl\"ucker rank $2$ vector
bundle restricted to  $\Sigma$. \end{lemma} \begin{proof} Indeed, the injectivity
is obvious and on the other hand, both sides have dimension $4$.
This was already proved in Remark \ref{rema29dec} for the left hand
side. For the right hand side, we can specialize the general vector
bundle $E$   to the case where $E^*=\mathcal{I}_{z_1}\oplus
\mathcal{I}_{z_2}$ where $z_1,\,z_2$ are two length two subschemes
of $\Sigma$; then $H^0(\Sigma, E^*\otimes
\mathcal{E}_{14\mid \Sigma})=H^0(\Sigma,\mathcal{E}_{14\mid \Sigma}\otimes
\mathcal{I}_{z_1})\oplus H^0(\Sigma,\mathcal{E}_{14\mid \Sigma}\otimes
\mathcal{I}_{z_2})$ has dimension $4$ while $H^i(\Sigma, E^*\otimes
\mathcal{E}_{14\mid \Sigma})=H^i(\Sigma,\mathcal{E}_{14\mid \Sigma}\otimes
\mathcal{I}_{z_1})\oplus H^i(\Sigma,\mathcal{E}_{14\mid \Sigma}\otimes
\mathcal{I}_{z_2})=0$ for $i>0$. The conclusion then follows from a
deformation argument.\end{proof}

Let now $\sigma\in{\rm Hom}(E,\mathcal{E}_{14\mid \Sigma})$ be a general section. Then
we get a degeneracy curve $D_\sigma\in|\mathcal{L}_{\mid \Sigma}|$, where
$\mathcal{L}$ is the Pl\"ucker line bundle, and an exact sequence
$$0\rightarrow E\rightarrow \mathcal{E}_{14\mid \Sigma}\rightarrow \mathcal{O}_{D_\sigma}(Z)\rightarrow 0$$
where $Z$ is a divisor of degree $13$ on $D_\sigma$. This gives a section
$\phi_\sigma:D_\sigma\rightarrow \mathbb{P}(\mathcal{E}_{14})$ with image $\widetilde{D_\sigma}$.
For each vector bundle $E_{S',C',t}$ on some $\mathcal{V}_{14,t}\supset \Sigma$ restricting to $E$ on
$\Sigma$, the section $\sigma$ extends to a section
$\tilde{\sigma}$ by Lemma \ref{sublemma}, and thus there is
a K3 surface
$$\widetilde{S_{\tilde{\sigma}}}:=\phi_{\tilde{\sigma}}(S_{\tilde{\sigma}})\subset \mathbb{P}(\mathcal{E}_{14}),$$  which
is a lift of the degeneracy surface $S_{\tilde{\sigma}}\subset \mathcal{V}_{14,t}$. The surface
$\widetilde{S_{\tilde{\sigma}}}$
  intersects $\mathbb{P}(\mathcal{E}_{14\mid \Sigma})$ along the curve $\widetilde{D_\sigma}=\phi_\sigma(D_\sigma)$. The surface $S_{\tilde{\sigma}}$ carries a line bundle
  $\mathcal{O}_{S_{\tilde{\sigma}}}(C_\sigma)$ which restricts to $\mathcal{O}_{D\sigma}(Z)$ on
  $D_\sigma$. Note that the curve ${D_\sigma}\subset S_{\tilde{\sigma}}$ is a member
  of $|\mathcal{L}_{\mid S_{\tilde{\sigma}}}|$.
The uniqueness of $E_{S',C',t}$ then follows from the results of Section \ref{subsecV1428dec} and
from the following:
\begin{lemma}  For a general curve
$\widetilde{D_\sigma}\subset \mathbb{P}(\mathcal{E}_{14})$ as above, there exists a unique surface $\widetilde{S_{\tilde{\sigma}}}\subset \mathbb{P}(\mathcal{E}_{14})$ satisfying the conditions above, that is, lifting a $K3$ surface
in some $V_{14,t}$ containing $\Sigma$ and intersecting $\mathbb{P}(\mathcal{E}_{14\mid \Sigma})$ along the curve $\widetilde{D_\sigma}$.
\end{lemma}
\begin{proof} Let $N_{\widetilde{D_\sigma}/\mathbb{P}(\mathcal{E}_{14})}$ be the normal bundle
of $ \widetilde{D_\sigma}$ in $ \mathbb{P}(\mathcal{E}_{14})$. There is an exact sequence
\begin{eqnarray}\label{eqnormalbundle1jan} 0\rightarrow {T_{\mathbb{P}(\mathcal{E}_{14})/G(2,6)}}_{\mid \widetilde{D_\sigma}}\rightarrow N_{\widetilde{D_\sigma}/\mathbb{P}(\mathcal{E}_{14})}\rightarrow
N_{D_\sigma/G(2,6)}=\mathcal{L}_{\mid D_\sigma}^7\rightarrow 0,
\end{eqnarray}
and each surface $\widetilde{S_{\tilde{\sigma}}}$ extending $\widetilde{D_\sigma}$ as above
provides an inclusion
$$N_{D_\sigma/S_{\tilde{\sigma}}}=\mathcal{L}_{\mid D_\sigma}\subset N_{\widetilde{D_\sigma}/\mathbb{P}(\mathcal{E}_{14})},$$
or equivalently a nonzero section of $N_{\widetilde{D_\sigma}/\mathbb{P}(\mathcal{E}_{14})}\otimes \mathcal{L}^{-1}$. It is not hard to see that this section determines the surface $\widetilde{S_{\tilde{\sigma}}}$, so we only have to prove that, for general
$\widetilde{D_\sigma}$ as above:
\begin{eqnarray}\label{eqsecnorm1jan} h^0(\widetilde{D_\sigma},N_{\widetilde{D_\sigma}/\mathbb{P}(\mathcal{E}_{14})}\otimes \mathcal{L}^{-1})=1.
\end{eqnarray}
In order to prove (\ref{eqsecnorm1jan}), we write the normal bundle sequence twisted by $\mathcal{L}^{-1}$ for
$\widetilde{D_\sigma}\subset \widetilde{S_{\tilde{\sigma}}}\subset \mathbb{P}(\mathcal{E}_{14})$. This gives
$$0\rightarrow \mathcal{O}_{D_\sigma}\rightarrow N_{\widetilde{D_\sigma}/\mathbb{P}(\mathcal{E}_{14})}\otimes \mathcal{L}^{-1}\rightarrow
({N_{\widetilde{S_{\tilde{\sigma}}}/ \mathbb{P}(\mathcal{E}_{14})}})_{\mid \widetilde{D_\sigma}}\otimes \mathcal{L}^{-1}\rightarrow 0$$
and (\ref{eqsecnorm1jan}) will follow from $h^0(\widetilde{D_\sigma},({N_{\widetilde{S_{\tilde{\sigma}}}/ \mathbb{P}(\mathcal{E}_{14})}})_{\mid \widetilde{D_\sigma}}\otimes \mathcal{L}^{-1})=0$ which itself will be a consequence of
\begin{eqnarray} \label{eqsurSigmavan} h^0(\widetilde{S_{\tilde{\sigma}}},N_{\widetilde{S_{\tilde{\sigma}}}/ \mathbb{P}(\mathcal{E}_{14})}\otimes \mathcal{L}^{-1})=0,\\ \nonumber
h^1(\widetilde{S_{\tilde{\sigma}}},N_{\widetilde{S_{\tilde{\sigma}}}/ \mathbb{P}(\mathcal{E}_{14})}\otimes \mathcal{L}^{-2})=0.
\end{eqnarray}
The second vanishing statement is obtained by writing the normal bundle sequence (\ref{eqnormalbundle1jan}) for
$\widetilde{S_{\tilde{\sigma}}}$:
\begin{eqnarray}0\rightarrow ({T_{\mathbb{P}(\mathcal{E}_{14})/G(2,6)}})_{\mid \widetilde{S_{\tilde{\sigma}}}}\rightarrow N_{\widetilde{S_{\tilde{\sigma}}}/\mathbb{P}(\mathcal{E}_{14})}\rightarrow
N_{S_{\tilde{\sigma}}/G(2,6)}=\mathcal{L}_{\mid S_{\tilde{\sigma}}}^6\rightarrow 0, \end{eqnarray}
where
the line bundle $({T_{\mathbb{P}(\mathcal{E}_{14})/G(2,6)}})_{\mid \widetilde{S_{\tilde{\sigma}}}}$
is isomorphic to $\mathcal{L}^{-1}_{\mid S_{\tilde{\sigma}}}(2C_\sigma)$. One then concludes using
$$H^1(S_{\tilde{\sigma}},\mathcal{L}_{\mid S_{\tilde{\sigma}}}^{-1})=0,\,H^1(S_{\tilde{\sigma}},\mathcal{L}^{-3}_{\mid S_{\tilde{\sigma}}}(2C_\sigma))=0,$$
which both follow from standard vanishing theorems on the K3 surface $S_{\tilde{\sigma}}$.
It remains to prove the first vanishing statement. However, according to
 Section \ref{subsecV1428dec}, the deformation space of $\widetilde{S_{\tilde{\sigma}}}$
in $\mathbb{P}(\mathcal{E}_{14})$ is smooth and isomorphic to a $\mathbb{P}^3$-bundle over
the $10$-dimensional moduli space of sheaves on $\mathcal{V}_{14}$ supported on fibers $\mathcal{V}_{14,t}$ and
are locally free on $\mathcal{V}_{14,t}$ of rank $2$, with trivial determinant and ${\rm deg}_{\bV_{14}}c_2=4$. By (i) and Lemma \ref{sublemma}, the
restriction map $(E_t,\tilde{\sigma})\rightarrow (E_{t\mid\Sigma},\tilde{\sigma}_{\mid \Sigma})$ has generically surjective differential, hence also injective differential. It is clear however that sections of
$N_{\widetilde{S_{\tilde{\sigma}}}/ \mathbb{P}(\mathcal{E}_{14})}\otimes \mathcal{L}^{-1}$, seen as sections of
$N_{\widetilde{S_{\tilde{\sigma}}}/ \mathbb{P}(\mathcal{E}_{14})}$ vanishing on $\widetilde{D_\sigma}$, belong to the kernel of
this differential. Hence they must be trivial.
\end{proof}
This concludes the proof of Lemma \ref{lesurj28dec}.
\end{proof}
Lemma \ref{lesurj28dec} implies Proposition \ref{propfactor} as
follows: since $f$ is dominating by (i) and
$\mathcal{M}_{2,0,4}(\Sigma)$ is not uniruled, $f$ must factor
through the MRC fibration of $W$, that is, through $\mathcal{J}_U$.
The general fiber of the induced rational map
$g:\mathcal{J}_U\dashrightarrow \mathcal{M}_{2,0,4}(\Sigma)$ are
then rationally connected by (ii). But as $\mathcal{J}_U$ is not
uniruled, the general fiber of $g$ is a point, so $g$ is birational.
\end{proof}
The proof of Theorem \ref{theobirat} is now finished.
\end{proof}

 \bibliography{hkref}

\providecommand{\bysame}{\leavevmode\hbox to3em{\hrulefill}\thinspace}
\providecommand{\MR}{\relax\ifhmode\unskip\space\fi MR }
\providecommand{\MRhref}[2]{%
  \href{http://www.ams.org/mathscinet-getitem?mr=#1}{#2}
}
\providecommand{\href}[2]{#2}
\begin{thebibliography}{CMGHL15}

\bibitem[ABH02]{ABH}
V.~Alexeev, Ch. Birkenhake, and K.~Hulek, \emph{Degenerations of {P}rym
  varieties}, J. Reine Angew. Math. \textbf{553} (2002), 73--116.

\bibitem[AK79]{AKII}
A.~B. Altman and S.~L. Kleiman, \emph{Compactifying the {P}icard scheme. {II}},
  Amer. J. Math. \textbf{101} (1979), no.~1, 10--41.

\bibitem[AK80]{AK}
\bysame, \emph{Compactifying the {P}icard scheme}, Adv. in Math. \textbf{35}
  (1980), no.~1, 50--112.

\bibitem[AL14]{addle}
N.~Addington and M.~Lehn, \emph{On the symplectic eightfold associated to a
  {P}faffian cubic fourfold}, arXiv:1404.5657, 2014.

\bibitem[All03]{allcock}
D.~Allcock, \emph{The moduli space of cubic threefolds}, J. Algebraic Geom.
  \textbf{12} (2003), no.~2, 201--223.

\bibitem[Ame09]{amerik}
E.~Amerik, \emph{A computation of invariants of a rational self-map}, Ann. Fac.
  Sci. Toulouse Math. (6) \textbf{18} (2009), no.~3, 445--457.

\bibitem[ASF15]{ASF}
E.~Arbarello, G.~Sacc{\`a}, and A.~Ferretti, \emph{Relative {P}rym varieties
  associated to the double cover of an {E}nriques surface}, J. Differential
  Geom. \textbf{100} (2015), no.~2, 191--250.

\bibitem[BD85]{bedo}
A.~Beauville and R.~Donagi, \emph{La vari\'et\'e des droites d'une hypersurface
  cubique de dimension {$4$}}, C. R. Acad. Sci. Paris S\'er. I Math.
  \textbf{301} (1985), no.~14, 703--706.

\bibitem[Bea77]{bprym}
A.~Beauville, \emph{Vari\'et\'es de {P}rym et jacobiennes interm\'ediaires},
  Ann. Sci. \'Ecole Norm. Sup. (4) \textbf{10} (1977), no.~3, 309--391.

\bibitem[Bea83]{beauville}
\bysame, \emph{Vari\'et\'es {K}\"ahleriennes dont la premi\`ere classe de
  {C}hern est nulle}, J. Differential Geom. \textbf{18} (1983), no.~4, 755--782
  (1984).

\bibitem[Bea86]{monodromy}
\bysame, \emph{Le groupe de monodromie des familles universelles
  d'hypersurfaces et d'intersections compl\`etes}, Complex analysis and
  algebraic geometry ({G}\"ottingen, 1985), Lecture Notes in Math., vol. 1194,
  Springer, Berlin, 1986, pp.~8--18.

\bibitem[Bea00]{beau}
\bysame, \emph{Determinantal hypersurfaces}, Michigan Math. J. \textbf{48}
  (2000), 39--64.

\bibitem[Bea02]{beauvillevb}
\bysame, \emph{Vector bundles on the cubic threefold}, Symposium in {H}onor of
  {C}. {H}. {C}lemens ({S}alt {L}ake {C}ity, {UT}, 2000), Contemp. Math., vol.
  312, Amer. Math. Soc., Providence, RI, 2002, pp.~71--86.

\bibitem[BS83]{blochsrinivas}
S.~Bloch and V.~Srinivas, \emph{Remarks on correspondences and algebraic
  cycles}, Amer. J. Math. \textbf{105} (1983), no.~5, 1235--1253.

\bibitem[CG72]{CG}
C.~H. Clemens and P.~A. Griffiths, \emph{The intermediate {J}acobian of the
  cubic threefold}, Ann. of Math. (2) \textbf{95} (1972), 281--356.

\bibitem[CMGHL14]{casaprym}
S.~Casalaina-Martin, S.~Grushevsky, K.~Hulek, and R.~Laza, \emph{Extending the
  {P}rym map to toroidal compactifications of the moduli space of abelian
  varieties}, arXiv:1403.1938 (to appear in J. Eur. Math. Soc.), 2014.

\bibitem[CMGHL15]{casaetal2}
\bysame, \emph{Complete moduli of cubic threefolds and their intermediate
  {J}acobians}, arXiv:1510.08891, 2015.

\bibitem[CML09]{casaetal}
S.~Casalaina-Martin and R.~Laza, \emph{The moduli space of cubic threefolds via
  degenerations of the intermediate {J}acobian}, J. Reine Angew. Math.
  \textbf{633} (2009), 29--65.

\bibitem[CML13]{cml2}
\bysame, \emph{Simultaneous semi-stable reduction for curves with {ADE}
  singularities}, Trans. Amer. Math. Soc. \textbf{365} (2013), no.~5,
  2271--2295.

\bibitem[Coo98]{Cook}
P.~R. Cook, \emph{{Compactified Jacobians and curves with simple
  singularities}}, {Algebraic geometry (Catania, 1993/Barcelona, 1994)}, vol.
  {200}, {Dekker}, {New York}, {1998}, pp.~37--47.

\bibitem[Del71]{deligne}
P.~Deligne, \emph{Th\'eorie de {H}odge. {II}}, Inst. Hautes \'Etudes Sci. Publ.
  Math. (1971), no.~40, 5--57.

\bibitem[DM96a]{DM1}
R.~Donagi and E.~Markman, \emph{Cubics, integrable systems, and {C}alabi-{Y}au
  threefolds}, Proceedings of the {H}irzebruch 65 {C}onference on {A}lgebraic
  {G}eometry ({R}amat {G}an, 1993), Israel Math. Conf. Proc., vol.~9, 1996,
  pp.~199--221.

\bibitem[DM96b]{DM2}
\bysame, \emph{Spectral covers, algebraically completely integrable,
  {H}amiltonian systems, and moduli of bundles}, Integrable systems and quantum
  groups ({M}ontecatini {T}erme, 1993), Lecture Notes in Math., vol. 1620,
  Springer, Berlin, 1996, pp.~1--119.

\bibitem[dPW98]{dpw}
A.~A. du~Plessis and C.~T.~C. Wall, \emph{Versal deformations in spaces of
  polynomials of fixed weight}, Compositio Math. \textbf{114} (1998), no.~2,
  113--124.

\bibitem[dPW10]{dpw2}
\bysame, \emph{Hypersurfaces in {${\bf P}^n$} with 1-parameter symmetry groups.
  {II}}, Manuscripta Math. \textbf{131} (2010), no.~1-2, 111--143.

\bibitem[FGvS99]{FGvS}
B.~Fantechi, L.~G{\"o}ttsche, and D.~van Straten, \emph{Euler number of the
  compactified {J}acobian and multiplicity of rational curves}, J. Algebraic
  Geom. \textbf{8} (1999), no.~1, 115--133.

\bibitem[FS86]{FS}
R.~Friedman and R.~Smith, \emph{Degenerations of {P}rym varieties and
  intersections of three quadrics}, Invent. Math. \textbf{85} (1986), no.~3,
  615--635.

\bibitem[Ful98]{fulton}
W.~Fulton, \emph{Intersection theory}, second ed., Ergebnisse der Mathematik
  und ihrer Grenzgebiete. 3. Folge. A Series of Modern Surveys in Mathematics,
  vol.~2, Springer-Verlag, Berlin, 1998.

\bibitem[GHS11]{ghs}
V.~Gritsenko, K.~Hulek, and G.~K. Sankaran, \emph{Moduli spaces of polarized
  symplectic {O}'{G}rady varieties and {B}orcherds products}, J. Differential
  Geom. \textbf{88} (2011), no.~1, 61--85.

\bibitem[GL87]{greenlaz}
M.~Green and R.~Lazarsfeld, \emph{Special divisors on curves on a {$K3$}
  surface}, Invent. Math. \textbf{89} (1987), no.~2, 357--370.

\bibitem[Gri69]{griffiths}
P.~A. Griffiths, \emph{On the periods of certain rational integrals. {I},
  {II}}, Ann. of Math. (2) 90 (1969), 460-495; ibid. (2) \textbf{90} (1969),
  496--541.

\bibitem[Gro61]{EGAII}
A.~Grothendieck, \emph{\'{E}l\'ements de g\'eom\'etrie alg\'ebrique. {II}.
  \'{E}tude globale \'el\'ementaire de quelques classes de morphismes}, Inst.
  Hautes \'Etudes Sci. Publ. Math. (1961), no.~8, 222.

\bibitem[Har77]{hartshorne}
R.~Hartshorne, \emph{Algebraic geometry}, Springer-Verlag, New York-Heidelberg,
  1977, Graduate Texts in Mathematics, No. 52.

\bibitem[Has00]{hassett}
B.~Hassett, \emph{Special cubic fourfolds}, Compositio Math. \textbf{120}
  (2000), no.~1, 1--23.

\bibitem[HL10]{Huy-Lehn}
D.~Huybrechts and M.~Lehn, \emph{The geometry of moduli spaces of sheaves},
  second ed., Cambridge Mathematical Library, Cambridge University Press,
  Cambridge, 2010.

\bibitem[HN08]{hwangnagai}
J.M. Hwang and Y.~Nagai, \emph{Algebraic complete integrability of an
  integrable system of {B}eauville}, Ann. Inst. Fourier (Grenoble) \textbf{58}
  (2008), no.~2, 559--570.

\bibitem[HRS05]{harrisrothstarr}
J.~Harris, M.~Roth, and J.~Starr, \emph{Curves of small degree on cubic
  threefolds}, Rocky Mountain J. Math. \textbf{35} (2005), no.~3, 761--817.

\bibitem[Huy99]{huybrechts}
D.~Huybrechts, \emph{Compact hyper-{K}\"ahler manifolds: basic results},
  Invent. Math. \textbf{135} (1999), no.~1, 63--113.

\bibitem[IM00]{ilievmarku}
A.~Iliev and D.~Markushevich, \emph{The {A}bel-{J}acobi map for a cubic
  threefold and periods of {F}ano threefolds of degree 14}, Doc. Math.
  \textbf{5} (2000), 23--47 (electronic).

\bibitem[IM08]{ilievmanivel}
A.~Iliev and L.~Manivel, \emph{Cubic hypersurfaces and integrable systems},
  Amer. J. Math. \textbf{130} (2008), no.~6, 1445--1475.

\bibitem[KLS06]{kls}
D.~Kaledin, M.~Lehn, and Ch. Sorger, \emph{Singular symplectic moduli spaces},
  Invent. Math. \textbf{164} (2006), no.~3, 591--614.

\bibitem[KM09]{Mark-Kuzn}
A.~Kuznetsov and D.~Markushevich, \emph{Symplectic structures on moduli spaces
  of sheaves via the {A}tiyah class}, J. Geom. Phys. \textbf{59} (2009), no.~7,
  843--860.

\bibitem[Kuz04]{kuznetsov}
A.~G. Kuznetsov, \emph{Derived category of a cubic threefold and the variety
  {$V_{14}$}}, Tr. Mat. Inst. Steklova \textbf{246} (2004), no.~Algebr. Geom.
  Metody, Svyazi i Prilozh., 183--207.

\bibitem[LLSvS13]{llss}
Ch. Lehn, M.~Lehn, Ch. Sorger, and D.~van Straten, \emph{Twisted cubics on
  cubic fourfolds}, arXiv:1305.0178 (to appear in Crelle), 2013.

\bibitem[Mar12]{marku}
D.~Markushevich, \emph{Integrable systems from intermediate {J}acobians of
  5-folds}, Mat. Contemp. \textbf{41} (2012), 49--60.

\bibitem[MT01]{markutikho}
D.~Markushevich and A.~S. Tikhomirov, \emph{The {A}bel-{J}acobi map of a moduli
  component of vector bundles on the cubic threefold}, J. Algebraic Geom.
  \textbf{10} (2001), no.~1, 37--62.

\bibitem[MT03]{manimarku}
\bysame, \emph{Symplectic structure on a moduli space of sheaves on a cubic
  fourfold}, Izv. Ross. Akad. Nauk Ser. Mat. \textbf{67} (2003), no.~1,
  131--158.

\bibitem[MT07]{mark_tik}
\bysame, \emph{{New symplectic {$V$}-manifolds of dimension four via the
  relative compactified {Prymian}}}, {Internat. J. Math.} \textbf{{18}}
  ({2007}), no.~{10}, 1187--1224.

\bibitem[Muk84]{mukai}
S.~Mukai, \emph{Symplectic structure of the moduli space of sheaves on an
  abelian or {$K3$} surface}, Invent. Math. \textbf{77} (1984), no.~1,
  101--116.

\bibitem[Mum68]{mumford0}
D.~Mumford, \emph{Rational equivalence of {$0$}-cycles on surfaces}, J. Math.
  Kyoto Univ. \textbf{9} (1968), 195--204.

\bibitem[Mum74]{Mumford}
\bysame, \emph{Prym varieties. {I}}, Contributions to analysis (a collection of
  papers dedicated to {L}ipman {B}ers), Academic Press, New York, 1974,
  pp.~325--350.

\bibitem[O{'}G99]{og10}
K.~G. O{'}Grady, \emph{{Desingularized moduli spaces of sheaves on a {$K3$}}},
  {J. Reine Angew. Math.} \textbf{{512}} ({1999}), 49--117.

\bibitem[O{'}G03]{og6}
\bysame, \emph{A new six-dimensional irreducible symplectic variety}, J.
  Algebraic Geom. \textbf{12} (2003), no.~3, 435--505. \MR{1966024}

\bibitem[Ran89]{ran}
Z.~Ran, \emph{Deformations of maps}, Algebraic curves and projective geometry
  ({T}rento, 1988), Lecture Notes in Math., vol. 1389, Springer, Berlin, 1989,
  pp.~246--253. \MR{1023402 (91f:32021)}

\bibitem[Rap08]{rapagnetta}
A.~Rapagnetta, \emph{On the {B}eauville form of the known irreducible
  symplectic varieties}, Math. Ann. \textbf{340} (2008), no.~1, 77--95.

\bibitem[Reg80]{Rego}
C.~J. Rego, \emph{The compactified {J}acobian}, Ann. Sci. \'Ecole Norm. Sup.
  (4) \textbf{13} (1980), no.~2, 211--223.

\bibitem[Rim80]{Rim}
D.~S. Rim, \emph{Equivariant {$G$}-structure on versal deformations}, Trans.
  Amer. Math. Soc. \textbf{257} (1980), no.~1, 217--226.

\bibitem[Sac13a]{thesisg}
G.~Sacc{\`{a}}, \emph{Fibrations in abelian varieties associated to enriques
  surfaces}, Ph.D Thesis, 2013.

\bibitem[Sac13b]{Sacca}
\bysame, \emph{Relative compactified {J}acobians of linear systems on
  {E}nriques surfaces}, arXiv:1210.7519, 2013.

\bibitem[Ser06]{sernesi}
E.~Sernesi, \emph{Deformations of algebraic schemes}, Grundlehren der
  Mathematischen Wissenschaften, vol. 334, Springer-Verlag, Berlin, 2006.

\bibitem[Shu87]{shustincurves}
E.~I. Shustin, \emph{Versal deformations in a space of plane curves of fixed
  degree}, Funktsional. Anal. i Prilozhen. \textbf{21} (1987), no.~1, 90--91.

\bibitem[ST99]{shustin}
E.~Shustin and I.~Tyomkin, \emph{Versal deformation of algebraic hypersurfaces
  with isolated singularities}, Math. Ann. \textbf{313} (1999), no.~2,
  297--314.

\bibitem[Tju72]{tjurin}
A.~N. Tjurin, \emph{Five lectures on three-dimensional varieties}, Uspehi Mat.
  Nauk \textbf{27} (1972), no.~5, (167), 3--50.

\bibitem[Voi04]{voisinfano}
C.~Voisin, \emph{Intrinsic pseudo-volume forms and {$K$}-correspondences}, The
  {F}ano {C}onference, Univ. Torino, Turin, 2004, pp.~761--792.

\bibitem[Voi07]{voisinbook}
\bysame, \emph{Hodge theory and complex algebraic geometry. {I, II}}, Cambridge
  Studies in Advanced Mathematics, vol. 76, 77, Cambridge University Press,
  Cambridge, 2007.

\bibitem[Voi14]{voisindecomp}
\bysame, \emph{Chow rings, decomposition of the diagonal, and the topology of
  families}, Annals of Mathematics Studies, vol. 187, Princeton University
  Press, Princeton, NJ, 2014.

\bibitem[Yos06]{yoshioka}
K.~Yoshioka, \emph{Moduli spaces of twisted sheaves on a projective variety},
  Moduli spaces and arithmetic geometry, Adv. Stud. Pure Math., vol.~45, Math.
  Soc. Japan, Tokyo, 2006, pp.~1--30.

\end{thebibliography}
\end{document}